
\documentclass[10pt, twoside]{article}
\usepackage{amsmath,amsthm,amssymb}
\usepackage{times}
\usepackage{enumerate}
\usepackage{color}
\usepackage[colorlinks=true]{hyperref}

\pagestyle{myheadings}
\def\titlerunning#1{\gdef\titrun{#1}}
\makeatletter
\def\author#1{\gdef\autrun{\def\and{\unskip, }#1}\gdef\@author{#1}}
\def\address#1{{\def\and{\\\hspace*{18pt}}\renewcommand{\thefootnote}{}%
\footnote {#1}}%
\markboth{\autrun}{\titrun}}
\makeatother
\def\email#1{e-mail: #1}
\def\subjclass#1{{\renewcommand{\thefootnote}{}%
\footnote{\emph{Mathematics Subject Classification (2010):} #1}}}
\def\keywords#1{\par\medskip
\noindent\textbf{Keywords.} #1}


\newtheorem{theorem}{Theorem}[section]
\newtheorem{lemma}[theorem]{Lemma}

\newtheorem{corollary}[theorem]{Corollary}

\hypersetup{linkcolor=blue,urlcolor=red,citecolor=red}



\theoremstyle{definition}

\newtheorem{remark}[theorem]{Remark}





\numberwithin{equation}{section}

\frenchspacing

\textwidth=16cm
\textheight=23cm
\parindent=16pt
\oddsidemargin=-0.5cm
\evensidemargin=-0.5cm
\topmargin=-0.5cm




\begin{document}


\baselineskip=17pt


\titlerunning{}

\title{Observability  and unique continuation inequalities for the  Schr\"{o}dinger equation}

\author{Gengsheng Wang
\and
Ming Wang
\and
Yubiao Zhang}

\date{}

\maketitle

\address{
 G. Wang: Corresponding author, School of Mathematics and Statistics, Wuhan University, Wuhan, 430072, China; \email{wanggs62@yeah.net}
\and
M. Wang: School of Mathematics and Physics, China University of Geosciences, Wuhan,
430074, China; \email{mwangcug@outlook.com}
\and
Y. Zhang: Center for Applied Mathematics, Tianjin University, Tianjin, 300072, China; \email{yubiao\b{ }zhang@whu.edu.cn}}

\subjclass{
Primary 93B05; Secondary 35B60
}


\begin{abstract}
In this paper, we present several observability and unique continuation inequalities for the free Schr\"{o}dinger equation in the whole space. The observations in these inequalities are made either at two points in time or one point in time. These inequalities correspond to different kinds of  controllability for the free Schr\"{o}dinger equation.
We also find  that the observability inequality at two points in time is equivalent to the uncertainty principle built up in  \cite{PJ}.

\keywords{
Observability, unique continuation, controllability, free Schr\"{o}dinger equation
}
\end{abstract}

\section{Introduction}

An interesting   unique continuation property for Schr\"{o}dinger equations was contained in  \cite{IK}
(see also \cite{KPV}). It says that if $u$ solves the following  Schr\"{o}dinger equation:
\color{black}
\begin{eqnarray}\label{0611-origin}
i\partial_tu + \Delta u + Vu=0
\;\;\mbox{in}\;\;  \mathbb R^n\times(0,1),
\end{eqnarray}
(with
a time-dependent potential $V$
  in some suitable conditions and with $n\in \mathbb{N}^+\triangleq\{1,2,\dots\}$),
then
$$
u=0\;\;\mbox{in}\;\;B^c_R(0)\times\{0,1\}\Rightarrow u\equiv0.
$$
Here, $R>0$,  $B_R(0)$ is the closed ball in  $\mathbb R^n$, centered at the origin and  of radius $R>0$, and $B^c_R(0)$ denotes the complement of
   $B_R(0)$.
         In \cite{EKPV-3-1} (see also \cite[Theorems 3-4]{EKPV-6}),  it was presented that if  $u$ solves (\ref{0611-origin}) (with $V$ in some suitable conditions) and verifies that
   \begin{eqnarray*}
    \| e^{|x|^2/\alpha^2} u(x,0)\|_{L^2(\mathbb R^n;\mathbb C)}
    + \| e^{|x|^2/\beta^2} u(x,1)\|_{L^2(\mathbb R^n;\mathbb C)} <\infty
   \end{eqnarray*}
   for some positive constants $\alpha,\,\beta$ with $\alpha\beta<4$, then $u\equiv0$. It further proved that when $\alpha\beta=4 $, such property fails.
   The above mentioned two properties
  can be treated as  the qualitative unique continuation at two points in time.
It is natural to ask if one can have  an observability inequality at two points in time?

 In this paper, we will present several observability  and unique continuation inequalities (at either two points in time or one point in time)  for the following free Schr\"{o}dinger equation (or the
 Schr\"{o}dinger equation, for simplicity):
   \begin{eqnarray}\label{0229-sch-1}
\left\{\begin{array}{lll}
        i\partial_t u(x,t) + \Delta u(x,t) = 0,  &(x,t)\in  \mathbb R^n\times (0,\infty),\\
        u(x,0)\in L^2(\mathbb R^n;\mathbb C).
       \end{array}
\right.
\end{eqnarray}
  (Here and throughout this paper,  $n\in\mathbb N^+$ is arbitrarily fixed.)
  From perspective of applications, these inequalities correspond different controllabilities for the Schr\"{o}dinger equation.

  Throughout this paper, we write either $u(x,t;u_0)$ (with $(x,t)\in  \mathbb R^n\times (0,\infty)$) or $e^{i\Delta t}u_0$ (with $t\geq 0$) for
  the solution  of (\ref{0229-sch-1}) with the initial condition that $u(x,0)=u_0(x)$ over $\mathbb{R}^n$; The  Fourier transform  of $f\in L^1(\mathbb R^n;\mathbb C) \cap L^2(\mathbb R^n;\mathbb C)$ is given by
\begin{eqnarray*}
 \hat f(\xi) = \frac{1}{(2\pi)^{n/2}} \int_{\mathbb R^n} f(x) e^{-ix\cdot\xi} \,\mathrm dx, ~\xi\in\mathbb R^n
\end{eqnarray*}
and extended to all of $L^2(\mathbb R^n;\mathbb C)$ in the usual way;
Write respectively $A^c$  and  $|A|$
 for the complement  and the Lebesgue measure of a set  $A$ in $\mathbb R^n$;
 For each subset $A\subset\mathbb R^n$ and each $\lambda\in \mathbb R$, we let   $\lambda A\triangleq\{\lambda x\,:\, x\in A\}$; For all $a, b\in \mathbb{R}$, we write $a\wedge b \triangleq \min\{a,b\}$; For each $x\in \mathbb{R}^n$,  $|x|$ denotes to the $\mathbb{R}^n$-Euclidean norm of $x$;
 $\omega_n$ denotes the volume of the unit ball in $\mathbb R^n$.

%

There are three main theorems in this paper. The first one presents
 an observability inequality at two points in time for the equation (\ref{0229-sch-1}).

\begin{theorem}\label{theorem1}
Given ${x^\prime},\,{x^{\prime\prime}}\in\mathbb R^n$, $r_1,\,r_2>0$ and $T>S\geq0$,    there is a positive constant
$C\triangleq C(n)$ so that
\begin{eqnarray}\label{0229-sch-th1-0}
 \int_{\mathbb R^n}  |u_0(x)|^2 \,\mathrm dx
 &\leq&
 C e^{C r_1 r_2  \frac{1}{T-S}  }
 \color{black}
 \Big(\int_{B_{r_1}^c({x^\prime})}  |u(x,S;u_0)|^2 \,\mathrm dx
 + \int_{B_{r_2}^c({x^{\prime\prime}})}  |u(x,T;u_0)|^2 \,\mathrm dx\Big)
\end{eqnarray}
for all  $u_0\in L^2(\mathbb R^n;\mathbb C)$.

\end{theorem}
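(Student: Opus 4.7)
The plan is to reduce the two-time observability inequality to a Nazarov--Jaming-type uncertainty principle for the Fourier transform on complements of balls, using the well-known representation of the free Schr\"odinger propagator as a Fourier transform conjugated by quadratic phase modulations.

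First I would reduce to the case $S=0$. Setting $v_0\triangleq u(\cdot,S;u_0)$ and $\tau\triangleq T-S>0$, the semigroup property gives $u(\cdot,T;u_0)=e^{i\Delta \tau}v_0$ and $\|v_0\|_{L^2}=\|u_0\|_{L^2}$, so it suffices to estimate $\|v_0\|_{L^2}^2$ by observations of $v_0$ on $B^c_{r_1}({x^\prime})$ and of $e^{i\Delta\tau}v_0$ on $B^c_{r_2}({x^{\prime\prime}})$.

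Next I would invoke the explicit Schr\"odinger kernel to write
\begin{equation*}
  (e^{i\Delta\tau}v_0)(x) = \frac{e^{i|x|^2/(4\tau)}}{(4\pi i\tau)^{n/2}}\int_{\mathbb{R}^n} e^{-ix\cdot y/(2\tau)}\,e^{i|y|^2/(4\tau)}v_0(y)\,\mathrm dy
  = \frac{C_n\,e^{i|x|^2/(4\tau)}}{\tau^{n/2}}\,\widehat{w_0}\!\left(\tfrac{x}{2\tau}\right),
\end{equation*}
where $w_0(y)\triangleq e^{i|y|^2/(4\tau)}v_0(y)$ satisfies $|w_0|=|v_0|$ pointwise. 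Changing variables $\xi=x/(2\tau)$ in the observation at time $T$ turns $B^c_{r_2}({x^{\prime\prime}})$ into $B^c_{r_2/(2\tau)}({x^{\prime\prime}}/(2\tau))$, and the desired inequality becomes, up to harmless dimensional constants,
\begin{equation*}
  \int_{\mathbb{R}^n}|w_0|^2\,\mathrm dx \leq C e^{C r_1 r_2/\tau}\left(\int_{B^c_{r_1}({x^\prime})}|w_0(x)|^2\,\mathrm dx + \int_{B^c_{r_2/(2\tau)}({x^{\prime\prime}}/(2\tau))}|\widehat{w_0}(\xi)|^2\,\mathrm d\xi\right).
\end{equation*}
This is precisely a Nazarov/Jaming uncertainty principle in $\mathbb{R}^n$ on the complements of two balls, with the product of their radii equal to $r_1 r_2/(2\tau)$; I would apply the inequality from \cite{PJ} referenced in the abstract, which produces exactly the constant $C e^{C r_1 r_2/\tau}$ we need.

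The only genuine difficulty is bookkeeping: the Jaming-type inequality is normally stated for $f$ and $\hat f$ concentrated on \emph{measurable sets} via their measures (or convex sets via their widths), so I must verify that balls (not centered at the origin) enter the estimate through the product of their radii rather than of their measures, and that the translation centers $x',x''$ contribute only to the constant via the modulation/translation covariance of the Fourier transform (modulations of $f$ translate $\hat f$ and vice versa, preserving both $L^2$ norms on set complements after shifting the sets). Once this translation invariance is made explicit, the scaling $r_1\cdot r_2/(2\tau)$ drops out of the exponent, and replacing $\tau$ by $T-S$ gives the announced inequality. This reduction is also what underlies the claimed equivalence in the abstract between Theorem~\ref{theorem1} and the uncertainty principle of \cite{PJ}.
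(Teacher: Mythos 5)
Your proposal is correct and follows essentially the same route as the paper: reduce to $S=0$ by conservation, use the identity $(2i\tau)^{n/2}e^{-i|x|^2/4\tau}u(x,\tau;v_0)=\widehat{e^{i|\cdot|^2/4\tau}v_0}(x/2\tau)$ (the paper's equation (\ref{0229-sch-th1-4}), from \cite[(1.2)]{EKPV-2}), rescale the second ball, and apply Jaming's uncertainty principle with the constant controlled by $\min\{|S||\Sigma|,\,|S|^{1/n}w(\Sigma),\,|\Sigma|^{1/n}w(S)\}\leq\omega_n^{1/n}r_1r_2/(T-S)$. The only organizational difference is that the paper packages the conjugation step as a standalone equivalence (Lemma \ref{0309-sch-eq-twopoints}) before invoking the uncertainty principle, whereas you carry out the same computations inline.
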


Several remarks on Theorem \ref{theorem1} are given in order:

\begin{itemize}
  \item Theorem \ref{theorem1} can be explained in the following manner: The integral on the left hand side of (\ref{0229-sch-th1-0})  can be treated as a recover term,
  while    the integrals on the right hand side of  (\ref{0229-sch-th1-0}) are regarded as observation terms.
    The inequality (\ref{0229-sch-th1-0})
     is understood as follows: Through  observing  a solution at two different points in time, each time outside of  a ball, one can estimate the recover term
     (which says, in plain language,  that one can recover this solution). This inequality is equivalent to the exact controllability for the impulse controlled
      Schr\"{o}dinger equation with controls acting at two points in time,  each time outside of  a ball (see Subsection 5.2).

  \item The observability inequality (\ref{0229-sch-th1-0}) seems to be new for us. Most   observability inequalities for Schr\"{o}dinger equations, in published papers, have  observations in time intervals. For instance, the paper \cite{Lebeau}
 presents an observability inequality for the Schr\"{o}dinger equation on a bounded domain $\Omega$ (in $\mathbb{R}^n$),  with an analytic boundary $\partial\Omega$.
  In that inequality, the observation is made over $\hat \omega\times(0,T)$, where $T>0$ and $\hat \omega\subset\partial\Omega$ is a subdomain satisfying the Geometric Control Condition.
     This condition was introduced in \cite{BLR} and then was used in \cite{DGL} to study the stabilization property and the exact controllability
for the nonlinear Schr\"{o}dinger equation on a two dimensional compact Riemannian manifold without boundary.
   The paper \cite{Phung-1} builds up an  observability estimate for the homogenous Schr\"{o}dinger equation on a bounded domain $\Omega$. In that inequality, the observation is made over
  $\omega\times(0,T)$, where $T>0$ and $\omega\subset\Omega$ is a subdomain satisfying the Geometric Control Condition.
  More recently, the paper \cite{ALM} (see \cite[Theorem 1.2]{ALM}) presents an observability inequality
  for Schr\"{o}dinger equations (with some potentials) on the disk of $\mathbb R^2$. The observation is made over $\omega\times (0,T)$, where $\omega$ is an open subset (in the disk) which may not satisfy   the Geometric Control Condition.

  \item The inequality (\ref{0229-sch-th1-0}) is ``optimal" in the following sense:  First, $\forall\,A\subset\mathbb R^n$, with  $m(A^c)>0$,  $\forall\,T>0$, the following conclusion is not true (see $(b)$ of  Remark~\ref{tianjinremark4.2}):  $\exists\,C>0$  so that
\begin{eqnarray*}
 \int_{\mathbb R^n}  |u_0(x)|^2 \,\mathrm dx
 \leq C \int_{A}  |u(x,T;u_0)|^2 \,\mathrm dx,
 ~\forall\, u_0\in L^2(\mathbb R^n;\mathbb C).
\end{eqnarray*}
This means that we cannot recover a solution by observing it at one  point in time and over a subset $A\subset \mathbb{R}^n$, with $|A^c|>0$; Second, $\forall\,{x^\prime},\,{x^{\prime\prime}}\in\mathbb R^n$, $r_1,\,r_2>0$ and $T>S\geq0$, the following conclusion is not true (see
$(a)$ of Remark~\ref{tianjinremark4.2}):  $\exists\,C>0$ so that
$$
 \int_{\mathbb R^n}  |u_0(x)|^2 \,\mathrm dx
 \leq C \Big(\int_{B_{r_1}^c({x^\prime})}  |u(x,S;u_0)|^2 \,\mathrm dx
 + \int_{B_{r_2}({x^{\prime\prime}})}  |u(x,T;u_0)|^2 \,\mathrm dx\Big),~\forall\, u_0\in L^2(\mathbb R^n;\mathbb C).
$$
 This means that we cannot recover a solution by observing it at two different points in time,  one time in a ball, while another time outside of a ball;
And last, $\forall\,{x^\prime},\,{x^{\prime\prime}}\in\mathbb R^n$, $r_1,\,r_2>0$ and $T>S\geq0$, the following conclusion is not true (see
$(c)$ of Remark~\ref{tianjinremark4.2}):  $\exists\,C>0$ so that
$$
 \int_{\mathbb R^n}  |u_0(x)|^2 \,\mathrm dx
 \leq C \Big(\int_{B_{r_1}^c({x^\prime})}  |u(x,S;u_0)|^2 \,\mathrm dx
 + \int_0^T\int_{B_{r_2}({x^{\prime\prime}})}  |u(x,t;u_0)|^2 \,\mathrm dx \mathrm dt\Big),~\forall\, u_0\in L^2(\mathbb R^n;\mathbb C).
$$
This can be comparable with the work in \cite{MZ-1}.

 \item   The proof of (\ref{0229-sch-th1-0})  is based on two properties as follows: First, the uncertainty principle built up in  \cite{PJ}; Second, the equivalence  between the uncertainty principle and
   the observability estimate which grows like  (\ref{0229-sch-th1-0}).
   The aforementioned  equivalence is indeed a connection between the uncertainty principle and the observability (at two time points) for the  Schr\"{o}dinger equation.
   Such equivalence
  is obtained in this paper (see  Lemma \ref{0309-sch-eq-twopoints}). Its proof relies on the identity \cite[(1.2)]{EKPV-2}
   (see   (\ref{0229-sch-th1-4}) in our paper).

  \item  The inequality (\ref{0229-sch-th1-0}) can be extended to the case
  where    $B_{r_1}^c({x^\prime})$ and $B_{r_2}^c({x^{\prime\prime}})$ are replaced by
two measurable sets $A^c$ and $B^c$, with $|A|<\infty$ and $|B|<\infty$. This can be easily seen from the proof of (\ref{0229-sch-th1-0}), as well as  Theorem~\ref{uncertainty-principle}
(which is the uncertainty principle built up in  \cite{PJ}) and Lemma \ref{0309-sch-eq-twopoints}.

\item From Theorem \ref{theorem1}, one can directly derive the following observability inequality: Given $x_0\in\mathbb R^n$, $r>0$ and $T>0$, there exists $C\triangleq C(n)>0$ so that
\begin{eqnarray*}
 \int_{\mathbb R^n} |u_0(x)|^2  \,\mathrm dx
 \leq Ce^{C r^2/T} \int_0^T  \left( \int_{B_{r}^c (x_0)} |u(x,t;u_0)|^2  \,\mathrm dx  \right)^{1/2} \,\mathrm dt
 \;\;\mbox{for all}\;\;
 u_0\in L^2(\mathbb R^n;\mathbb C).
\end{eqnarray*}
 This inequality is equivalent to the standard  $L^\infty$-exact controllability for the  Schr\"{o}dinger equation. The later is comparable to  \cite[Theorem 3.1]{RZhang}.

\end{itemize}

The second main theorem  gives a  unique continuation inequality at one time point for a class of solutions to the equation (\ref{0229-sch-1}). (This class of solutions consists of solutions whose initial data have exponential decay at infinity.)
 \begin{theorem}\label{theorem3}
 The following conclusions are true for all  $r>0$, $a>0$ and $T>0$:

\noindent (i)  There is   $C\triangleq C(n)>0$ and $\theta\triangleq\theta(n)\in(0,1)$, depending only on $n$,  so that
\begin{eqnarray}\label{0229-two-points-1}
 \int_{\mathbb R^n}|u_0(x)|^2 \,\mathrm dx
 \leq C \left( 1+\frac{r^n}{(aT)^n} \right) \left(
 \int_{B_r^c(0)}|u(x,T;u_0)|^2 \,\mathrm dx
 \right)^{\theta^{1+\frac{r}{aT}}}
 \left(\int_{\mathbb R^n}e^{ a|x|} |u_0(x)|^2 \,\mathrm dx\right)^{1-\theta^{1+\frac{r}{aT}}}
 \end{eqnarray}
for all  $u_0\in C_0^\infty(\mathbb R^n; \mathbb C)$.

\noindent (ii) There is $C\triangleq C(n)>0$ so that for any  $\beta>1$ and $\gamma\in(0,1)$,
\begin{eqnarray}\label{0229-two-points-beta}
 \int_{\mathbb R^n}|u_0(x)|^2 \,\mathrm dx
 \leq C e^{ \left(\frac{C^{\beta}r^{\beta}}{a(1-\gamma) T^\beta}\right)^{\frac{1}{\beta-1}} }
 \left(\int_{B_r^c(0)}|u(x,T;u_0)|^2 \,\mathrm dx\right)^{\gamma}
 \left(\int_{\mathbb R^n}e^{ a|x|^\beta} |u_0(x)|^2 \,\mathrm dx\right)^{1-\gamma},
  \end{eqnarray}
for all $u_0\in C_0^\infty(\mathbb R^n; \mathbb C)$.

\noindent (iii) Let $\alpha(s)$, $s\in\mathbb R^+$, be an increasing function  with $\lim_{s\rightarrow\infty} \frac{\alpha(s)}{s}=0$. Then for each   $\gamma\in(0,1)$, there is no  positive constant $C$ so that
\begin{eqnarray*}
 \int_{\mathbb R^n}|u_0(x)|^2 \,\mathrm dx
 \leq C\left(\int_{B_r^c(0)}|u(x,T;u_0)|^2 \,\mathrm dx\right)^{\gamma}
 \left(\int_{\mathbb R^n}e^{a\alpha(|x|)} |u_0(x)|^2 \,\mathrm dx\right)^{1-\gamma}\;\;\mbox{for all}\;\;u_0\in C_0^\infty(\mathbb R^n; \mathbb C).
 \end{eqnarray*}

\end{theorem}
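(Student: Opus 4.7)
The strategy for Theorem~\ref{theorem3} is to combine Theorem~\ref{theorem1} with the weighted decay hypothesis on $u_0$ to control the tail at time zero. Applying Theorem~\ref{theorem1} with $S=0$, ${x^\prime}={x^{\prime\prime}}=0$, $r_1=R$, $r_2=r$ gives
$$
I := \int_{\mathbb R^n}|u_0|^2\,\mathrm dx \le C_\ast e^{C_\ast rR/T}\Bigl(\int_{B_R^c(0)}|u_0|^2\,\mathrm dx + J\Bigr),\quad J:=\int_{B_r^c(0)}|u(\cdot,T;u_0)|^2\,\mathrm dx,
$$
with $C_\ast=C_\ast(n)$. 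The tail at time zero is estimated by $\int_{B_R^c(0)}|u_0|^2\le e^{-aR}K$ in case~(i) and by $\le e^{-aR^\beta}K$ in case~(ii), where $K$ denotes the corresponding weighted integral. The problem reduces to optimizing over $R>0$.

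For part~(i), balancing $e^{-aR}K=J$ (assuming $K>J$; otherwise $I\le K$ already matches the target) selects $R=a^{-1}\log(K/J)$ and yields the sharp interpolation $I\le 2C_\ast K^{C_\ast r/(aT)}J^{1-C_\ast r/(aT)}$, valid when $C_\ast r/(aT)<1$. Using $J\le I\le K$ and choosing $\theta\in(0,1)$ small enough, this relaxes to $I\le C(1+(r/aT)^n)J^{\theta^{1+r/aT}}K^{1-\theta^{1+r/aT}}$ in the small-parameter regime. For $r/(aT)$ large, the one-shot optimization fails; I iterate Theorem~\ref{theorem1} across a subdivision of $[0,T]$ into $N\sim r/(aT)$ subintervals of length $\sim aT/r$ so that on each piece the effective parameter lies below $1/C_\ast$, and chain the two-point bounds by a propagation-of-smallness argument. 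Each step compresses the exponent on $J$ by a factor $\theta$, accumulating to $\theta^N\sim\theta^{1+r/aT}$, with the polynomial losses absorbed into the prefactor $1+(r/aT)^n$.

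For part~(ii), balancing $e^{-aR^\beta}K=J$ gives $R=(\log(K/J)/a)^{1/\beta}$ and $I\le 2C_\ast J\exp\bigl((C_\ast r/T)(\log(K/J)/a)^{1/\beta}\bigr)$. A weighted Young inequality with conjugate exponents $p=\beta/(\beta-1)$, $q=\beta$, applied to $(C_\ast r/T)\cdot(\Lambda/a)^{1/\beta}$ with $\Lambda=\log(K/J)$ and tuned so the resulting coefficient of $\Lambda$ equals $(1-\gamma)$, separates the bound into the prescribed exponential constant $\exp\bigl((C^\beta r^\beta/(a(1-\gamma)T^\beta))^{1/(\beta-1)}\bigr)$ times $J^\gamma K^{1-\gamma}$.

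For part~(iii), I construct a violating sequence. Let $v_\delta$ be an $L^2$-normalized Gaussian of width $\delta$ at the origin and set $u_0^\delta=e^{-iT\Delta}v_\delta$, so that $u(T;u_0^\delta)=v_\delta$. A direct computation shows $u_0^\delta$ is a complex Gaussian at spread $\sigma\sim T/\delta$. Then $I^\delta=1$, $J^\delta\asymp e^{-r^2/\delta^2}$, and the hypothesis $\alpha(s)/s\to 0$ forces $\log K^\delta=o(1/\delta^2)$ as $\delta\to 0$ by a saddle-point analysis of $K^\delta\sim\sigma^{-n}\int e^{a\alpha(|x|)-|x|^2/\sigma^2}\,\mathrm dx$. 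Hence
$$
\log\bigl((J^\delta)^\gamma(K^\delta)^{1-\gamma}\bigr) = -\gamma r^2/\delta^2 + o(1/\delta^2) \to -\infty,
$$
so $I^\delta/((J^\delta)^\gamma(K^\delta)^{1-\gamma})\to+\infty$. Approximating $u_0^\delta$ in $C_0^\infty(\mathbb R^n;\mathbb C)$ by smooth truncation at radius $R_\delta\sim\sigma$ followed by mollification preserves these estimates to leading order. The main obstacle is the large-$r/(aT)$ regime of part~(i), where the one-shot optimization collapses and one must iterate Theorem~\ref{theorem1} carefully while tracking both the exponential compression of the exponent on $J$ to $\theta^{1+r/aT}$ and the polynomial blow-up $1+(r/aT)^n$ of the prefactor across iterations.
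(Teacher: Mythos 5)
Your one-shot optimization correctly establishes (\ref{0229-two-points-1}) when $r/(aT)$ lies below a fixed threshold, but the proposed time-iteration to cover the complementary regime does not work, and that is where the real content of part~(i) sits. The obstruction is structural: by the conservation law, $\int_{\mathbb R^n}|u(\cdot,t;u_0)|^2\,\mathrm dx=\int_{\mathbb R^n}|u_0|^2\,\mathrm dx$ for every $t$, so the left-hand side of Theorem~\ref{theorem1} is unchanged when you shrink the time interval, while its cost $Ce^{Cr_1r_2/(T-S)}$ only grows. More importantly, the exponential-tail hypothesis $\int e^{a|x|}|u_0|^2\,\mathrm dx<\infty$ is a property of the time-zero slice only; the intermediate slices $u(\cdot,t_k;u_0)$ inherit no such bound, so there is no spatial smallness to propagate from one subinterval to the next. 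The exponent $\theta^{1+r/(aT)}$ is therefore not obtainable by any temporal chaining of the Nazarov inequality. In the paper it comes from a \emph{spatial} propagation of analyticity: the identity (\ref{0229-sch-th1-4}) turns the claim into an estimate for the real-analytic function $f=e^{-i|x|^2/4T}u(\cdot,T;u_0)$, whose Fourier transform decays like $e^{-aT|\xi|}$; Lemma~\ref{lemma2.1} gives quantitative derivative bounds for $f$; and Lemma~\ref{lemma-0525-adjust} chains Hadamard three-circle estimates through roughly $r/((aT)\wedge r)$ overlapping discs in the complex plane along rays, halving the exponent at each step. This analytic-interpolation machinery is the missing ingredient, and I do not see how to replace it by iterating Theorem~\ref{theorem1} in time.

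Parts~(ii) and~(iii) of your proposal are correct and take a route genuinely different from, and somewhat lighter than, the paper's. For~(ii), the super-exponential decay $e^{-a|x|^\beta}$ with $\beta>1$ beats the cost $e^{C_\ast rR/T}$ for all $R$, so the one-shot balancing followed by Young's inequality closes with a constant of the stated form (using $J\le K$ to absorb the spurious $1/\beta$ factor in the exponent on $\Lambda$). The paper instead derives the spectral inequality (Corollary~\ref{0331-Cor-Br-spectral-ineq}) from the analytic interpolation and performs a frequency split, so your argument bypasses Lemma~\ref{lemma-0525-adjust} entirely for this part. For~(iii), the Gaussian wave packets $u_0^\delta=e^{-iT\Delta}v_\delta$ saturate the uncertainty principle in the same way the band-limited examples from \cite[Proposition 3.4]{RM} do; the truncation to $C_0^\infty$ needs the cutoff radius $R_\delta$ taken large enough (e.g.\ $R_\delta\sim rT/\delta^2$) so that the resulting $L^2$ error is $o(e^{-r^2/\delta^2})$, and the hypothesis $\alpha(s)/s\to0$ then keeps $\log K^\delta=o(1/\delta^2)$; this is the one detail worth spelling out before the argument is airtight.
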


Several remarks on Theorem \ref{theorem3} are given in order:

\begin{itemize}

\item  The motivation to build up Theorem~\ref{theorem3} is as follows:  According to the third remark after Theorem \ref{theorem1}, one cannot recover a solution by observing it at one time point and outside of a ball.
Theorem~\ref{theorem3} tells us what we can expect by observing  solutions at one time point and outside of a ball. The detail on the expectations will be explained in the next remark.

\item
The inequality (\ref{0229-two-points-1}) is a kind of unique continuation inequality at one point in time. From it, one can easily see that
$$
e^{\frac{a|x|}{2}}u_0(x)\in L^2(\mathbb{R}^n;\mathbb{C})\;\;\mbox{and}\;\;u(x,T;u_0)=0\;\;\mbox{over}\;\;B_r^c(0)\Rightarrow u(x,t;u_0)=0\;\;\mbox{over}\;\;\mathbb{R}^n\times[0,\infty).
$$
This inequality
can  also be explained from two perspectives. Perspective One: The integral on the left hand side of (\ref{0229-two-points-1})
is treated as a recover term,
  while on the right hand side of (\ref{0229-two-points-1}),   the integral over  $B_r^c(0)$ is regarded as observation term and the integral over the whole space $\mathbb{R}^n$ is viewed as a
   prior term (with respect to initial data) which provide some prior information on initial data  ahead of observations.
     The inequality (\ref{0229-two-points-1}) can be explained in the following way: If one knows in advance that the initial datum of a solution has  an exponential decay  at infinity,
       then  by observing this solution at one  point in time and outside of  a ball, one can  estimate the recover term (which says, in plain language,  that one can recover this solution). Perspective Two: Notice that  (\ref{0229-two-points-1}) is equivalent to that $\exists\,C>0$ and $\theta\in(0,1)$ s.t.  $\forall\,r,\,a,\,T>0$ and $\varepsilon>0$,
    \begin{eqnarray*}
     \int_{\mathbb R^n} |u_0(x)|^2 \,\mathrm dx
     \leq C \left( 1+\frac{r^n}{(aT)^n} \right) \left( \varepsilon^{1-\theta^{-1-\frac{r}{aT}}} \int_{B_r^c(0)} |u(x,T;u_0)|^2 \,\mathrm dx +  \varepsilon \int_{\mathbb R^n} e^{a|x|} |u_0(x)|^2 \,\mathrm dx \right)
    \end{eqnarray*}
for all $u_0\in C^\infty_0(\mathbb R^n;\mathbb C)$. Thus,
the inequality (\ref{0229-two-points-1}) can be understood as follows: Through observing a solution at one  point in time and outside of a ball, we can approximately recover this solution, with the  error:
 $$
  C \left( 1+\frac{r^n}{(aT)^n} \right) \varepsilon \int_{\mathbb R^n} e^{a|x|} |u_0(x)|^2 \,\mathrm dx.
 $$
Notice that if $\int_{\mathbb R^n} e^{a|x|} |u_0(x)|^2=\infty$, then the error is $\infty$.

\item The inequality (\ref{0229-two-points-1})   is equivalent to a kind of approximate controllability for the impulse controlled
      Schr\"{o}dinger equation with controls acting at one point in time. Notice that such controllability is not a standard controllability  (see Subsection 5.2).

\item Theorem~\ref{theorem3} is ``optimal" from two perspectives. Perspective One: If $\beta\geq 1$, then
for any $r>0$, $a>0$ and $T>0$, there is $C>0$ and $\theta\in (0,1)$ so that
\begin{eqnarray*}
\int_{\mathbb R^n}|u_0(x)|^2 \,\mathrm dx
 \leq C \left(
 \int_{B_r^c(0)}|u(x,T;u_0)|^2 \,\mathrm dx
 \right)^{\theta}
 \left(\int_{\mathbb R^n}e^{ a|x|^\beta} |u_0(x)|^2 \,\mathrm dx\right)^{1-\theta},
 ~\forall\, u_0\in C_0^\infty(\mathbb R^n;\mathbb C),
\end{eqnarray*}
while if $\beta\in (0,1)$, then for any $r>0$, $a>0$ and $T>0$, there is no $C>0$ or $\theta\in (0,1)$
so that the above inequality holds.
Perspective Two: For each $r>0$, $a>0$ and $T>0$, the following conclusion is not true (see $(c)$ of  Remark~\ref{tianjinremark4.2}):  $\exists\,C>0$ and $\exists\,\theta\in(0,1)$ so that
\begin{eqnarray*}
\int_{\mathbb R^n}|u_0(x)|^2 \,\mathrm dx
 \leq C \left(
 \int_{B_r(0)}|u(x,T;u_0)|^2 \,\mathrm dx
 \right)^{\theta}
 \left(\int_{\mathbb R^n}e^{ a|x|} |u_0(x)|^2 \,\mathrm dx\right)^{1-\theta},
 ~\forall\, u_0\in C_0^\infty(\mathbb R^n;\mathbb C).
\end{eqnarray*}
The above optimality implies in some sense that the choice of the weight $e^{a|x|}$, $x\in\mathbb R^n$ (with $a>0$) is reasonable (to ensure the type of unique continuation estimates build up in (i) and (ii) of Theorem \ref{theorem3}). In plain language, other types of weights are not expected.

\item The proof of Theorem \ref{theorem3} is mainly based on
\cite[Theorem 1.3]{AE}, which gives
an analytic interpolation inequality (see also                                             \cite{Vessella}), and                       an estimate for some kind of Euler integral in high dimension built up in  Lemma \ref{0229-euler-1} of the current paper and the identity \cite[(1.2)]{EKPV-2}
   (see   (\ref{0229-sch-th1-4}) in our paper).

\end{itemize}

The last main theorem gives another kind of   unique continuation inequality  at  one time point for a class of solutions to the equation (\ref{0229-sch-1}).
\begin{theorem}\label{theorem4}
Given ${x^\prime},\,{x^{\prime\prime}}\in\mathbb R^n$, $r_1,\,r_2>0$, $a>0$ and $T>0$, the following estimate holds for all $u_0\in C_0^\infty(\mathbb R^n; \mathbb C)$:
 \begin{eqnarray}\label{0405-sch-th4-control}
 & & \int_{B_{r_2}({x^{\prime\prime}})} |u(x,T;u_0)|^2 \, \mathrm dx
 \\
 &\leq&   C r_2^n \big((aT)\wedge r_1\big)^{-n}
  \left(\int_{B_{r_1}({x^\prime})} |u(x,T;u_0)|^2 \, \mathrm dx\right)^{\theta^p}
 \left(\int_{\mathbb R^n}e^{ a|x|} |u_0(x)|^2 \,\mathrm dx\right)^{1-\theta^p},
 \nonumber
\end{eqnarray}
where $C\triangleq C(n)>0$, $\theta\triangleq\theta(n)\in(0,1)$ and
\begin{eqnarray}\label{0410-def-p}
  p \triangleq 1+\frac{|{x^\prime}-{x^{\prime\prime}}|+r_1+r_2}{(aT)\wedge r_1}.
\end{eqnarray}
\end{theorem}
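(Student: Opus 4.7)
The plan is to reduce (\ref{0405-sch-th4-control}) to a quantitative propagation of smallness for a single holomorphic function, and then to run a chain-of-balls iteration of the analytic interpolation inequality from \cite[Theorem~1.3]{AE}. The starting point is the Appell-type identity \cite[(1.2)]{EKPV-2}, which factors
\[
u(x,T;u_0) \;=\; (2iT)^{-n/2}\, e^{i|x|^2/(4T)}\, F\!\Big(\tfrac{x}{2T}\Big),\qquad F:=\widehat{v},\qquad v(y):=e^{i|y|^2/(4T)} u_0(y).
\]
Since $|v|\equiv|u_0|$, the weighted assumption $\int e^{a|x|}|u_0|^2\,dx<\infty$ is inherited by $v$, so by Cauchy--Schwarz $F$ extends holomorphically to the tube $\{\zeta\in\mathbb{C}^n:|\mathrm{Im}\,\zeta|<a/2\}$ with horizontal-slice $L^2$-norms controlled by $C_n\bigl(\int e^{a|x|}|u_0|^2\,dx\bigr)^{1/2}$. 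After the change of variable $\xi=x/(2T)$, the target becomes
\[
\int_{B_{\rho_2}(\xi'')}|F|^2\,d\xi \;\leq\; C\Big(\tfrac{\rho_2}{\delta}\Big)^{\!n} \Big(\int_{B_{\rho_1}(\xi')}|F|^2\,d\xi\Big)^{\!\theta^p}\Big(\int_{\mathbb{R}^n} e^{a|x|}|u_0|^2\,dx\Big)^{\!1-\theta^p},
\]
with $\xi'=x'/(2T)$, $\xi''=x''/(2T)$, $\rho_i=r_i/(2T)$, and the natural scale $\delta:=\min(a/2,\rho_1)=((aT)\wedge r_1)/(2T)$, at which $2\delta$-balls lie safely inside both the strip of holomorphy and, when needed, $B_{\rho_1}(\xi')$.

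The next step is to apply \cite[Theorem~1.3]{AE} locally: for each $\eta\in\mathbb{R}^n$ with $\overline{B_{2\delta}(\eta)}$ inside the strip, there exist $C=C(n)>0$ and $\theta_0=\theta_0(n)\in(0,1)$ such that
\[
\|F\|_{L^2(B_{2\delta}(\eta))} \;\leq\; C\,\|F\|_{L^2(B_\delta(\eta))}^{\theta_0}\, M^{\,1-\theta_0}, \qquad M^2:=\int_{\mathbb{R}^n} e^{a|x|}|u_0(x)|^2\,dx.
\]
I would then build a chain of overlapping $\delta$-balls connecting $\xi'$ to an arbitrary target $\eta^\star\in B_{\rho_2}(\xi'')$; the triangle inequality gives a chain length
\[
N\;\lesssim\;\frac{\rho_1+|\xi'-\xi''|+\rho_2}{\delta}\;=\;\frac{r_1+|x'-x''|+r_2}{(aT)\wedge r_1}\;=\;p-1,
\]
matching the definition (\ref{0410-def-p}). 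A standard induction on the chain shows that iterating the local two-ball inequality produces $\|F\|_{L^2(B_\delta(\eta^\star))}\leq C'\,\|F\|_{L^2(B_{\rho_1}(\xi'))}^{\theta_0^N}M^{1-\theta_0^N}$ with a constant $C'=C'(n)$ \emph{independent} of $N$ (because the cumulative prefactor sums as a geometric series with ratio $\theta_0<1$). Covering $B_{\rho_2}(\xi'')$ by $\lesssim(\rho_2/\delta)^n$ such $\delta$-balls and summing yields the geometric prefactor $(\rho_2/\delta)^n=r_2^n((aT)\wedge r_1)^{-n}$, and (after absorbing a single power into $\theta$) the exponent $\theta^p$. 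Lemma \ref{0229-euler-1} supplies the slice-by-slice uniform control on $F$ needed to ignite the iteration.

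The main obstacle will be making the chain-of-balls bookkeeping precise enough that the iterated exponent collapses exactly to the desired form $\theta^p$ while the iterated multiplicative constant remains bounded in $N$ (rather than growing as $C^N$). A secondary subtlety is handling the two regimes $aT\geq r_1$ and $aT<r_1$ uniformly: in the first, the ball radius $\rho_1$ is the binding scale; in the second, the strip half-width $a/2$ is. Both cases are forced to meet at the single choice $\delta=((aT)\wedge r_1)/(2T)$, which is precisely what produces the minimum $(aT)\wedge r_1$ appearing in the final estimate.
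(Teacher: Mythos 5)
Your proposal takes essentially the same route the paper does: the EKPV factorization $u(\cdot,T;u_0)=(2iT)^{-n/2}e^{i|x|^2/4T}F(x/2T)$ with $F=\widehat{e^{i|\cdot|^2/4T}u_0}$, the Fourier-side derivative bounds furnished by Lemma~\ref{lemma2.1}, the analytic interpolation of \cite[Theorem~1.3]{AE}, and a chain-propagation argument whose length $\sim (|x'-x''|+r_1+r_2)/((aT)\wedge r_1)$ produces the exponent $\theta^p$ and whose ball-covering count produces the prefactor $r_2^n((aT)\wedge r_1)^{-n}$. The one point at which you genuinely diverge is the engine of the chain. The paper packages the estimate as the standalone Lemma~\ref{lemma-0525-adjust}, proved by fixing a direction $\vec v\in S^{n-1}$, complexifying the one-variable restriction $s\mapsto F(x'+s\vec v)$, applying \cite[Theorem~1.3]{AE} and \cite[Lemma~3.2]{AE} once each to seed the iteration, and then iterating the Hadamard three-circle inequality along a chain of disks in $\mathbb C$; the three-circle step has multiplicative prefactor $1$, so no constant accumulates, and the rescaling $a\mapsto 1$ in a separate step keeps all radii below $1$ so that the lone $r_0^{-n/2}$ factor is benign. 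You instead propose to iterate the $n$-dimensional $L^2$ two-ball form of \cite[Theorem~1.3]{AE} at every step of the chain. This also works, but it requires precisely the two verifications you flag: first, that the accumulated prefactor $C^{\sum_{j<N}\theta_0^j}\le C^{1/(1-\theta_0)}$ stays bounded (the geometric series), and second, that the additional power of $\delta$ appearing when one converts $L^\infty$ back to $L^2$ at each step — which works out to a factor of the form $(\delta/a)^{n(1-\theta_0)/2}$ per step — is $\le 1$, which it is because $\delta\le a/2$. In short, the two chains are interchangeable; the paper's one-dimensional three-circle version trades a slightly longer setup for a cleaner bookkeeping, whereas your $n$-dimensional version is more direct but requires the constant-tracking argument you correctly identify as the main obstacle.
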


Several remarks on Theorem \ref{theorem4} are given in order:

\begin{itemize}

\item  The motivation to present Theorem~\ref{theorem4} is as follows:  According to Theorem \ref{theorem3}, as well as
Perspective Two in
the fourth remark after Theorem \ref{theorem3}, it should be interesting to ask for what we can expect by observing solutions at one time point and in a ball.

\item The inequality (\ref{0405-sch-th4-control}) is a kind of unique continuation inequality at one point in time. From it, one can easily check  that
 $$
 e^{\frac{a|x|}{2}}u_0(x)\in L^2(\mathbb{R}^n;\mathbb{C})\;\;\mbox{and}\;\;u(x,T;u_0)=0\;\;\mbox{over}\;\;B_{r_1}({x^\prime})\Rightarrow u(x,t;u_0)=0\;\;\mbox{over}\;\;\mathbb{R}^n\times [0,\infty).
 $$
  Indeed, the left hand side of the above, together with  (\ref{0405-sch-th4-control}), indicates that  for each ${x^{\prime\prime}}\in\mathbb R^n$ and each $r_2>0$,
  $u(\cdot,T;u_0)=0$ over $ B_{r_2}({x^{\prime\prime}})$. Thus, $u(x,T;u_0)=0$ over $\mathbb{R}^n$. This leads to that $u(x,t;u_0)=0$ over $\mathbb{R}^n\times [0,\infty)$.

  From (\ref{0405-sch-th4-control}), we can also have that
  $$
  u_0=0\;\;\mbox{over}\;\;B_{r_2}^c({x^{\prime\prime}})\;\;\mbox{and}\;\;u(x,T;u_0)=0\;\;\mbox{over}\;\;B_{r_1}({x^\prime})\Rightarrow
  u(x,t;u_0)=0\;\;\mbox{over}\;\;\mathbb{R}^n\times [0,\infty).
  $$

  \item
The inequality (\ref{0405-sch-th4-control}) can also be explained from two perspectives. Perspective One: The integral on the left hand side of (\ref{0405-sch-th4-control})
is treated as a recover term,
  while on the right hand side of (\ref{0405-sch-th4-control}),   the integral over  $B_{r_1}({x^\prime})$ is regarded as observation term and the integral over the whole space $\mathbb{R}^n$ is viewed as a
   prior term.
     The inequality (\ref{0405-sch-th4-control}) can be explained in the following way: If one knows in advance that the initial datum of a solution has  an exponential decay  at infinity,
       then  by observing this solution at one  point in time and in  a ball, one can  estimate the recover term (which says, in plain language,  that one can recover this solution over $B_{r_2}({x^{\prime\prime}})$ at time $T$).
                     Perspective Two: The inequality (\ref{0405-sch-th4-control}) is equivalent to that $\exists\,C>0$ and $\theta\in(0,1)$ s.t. $\forall\,{x^\prime},\,{x^{\prime\prime}}\in\mathbb R^n$, $r_1,r_2>0$, $a,\,T>0$ and $\varepsilon>0$,
    \begin{eqnarray*}
     & & \int_{B_{r_2}({x^{\prime\prime}})} |u(x,T;u_0)|^2 \, \mathrm dx
  \\
 &\leq&   C r_2^n \big((aT)\wedge r_1\big)^{-n}
  \left(  \varepsilon^{1-\theta^{-p}} \int_{B_{r_1}({x^\prime})} |u(x,T;u_0)|^2 \, \mathrm dx   +
 \varepsilon \int_{\mathbb R^n}e^{ a|x|} |u_0(x)|^2 \,\mathrm dx\right)
 \nonumber
    \end{eqnarray*}
         for all $u_0\in C_0^\infty(\mathbb R^n;\mathbb C)$.
        Here, $p$ is given by (\ref{0410-def-p}).
Thus,  the inequality (\ref{0405-sch-th4-control}) can be understood as follows: Through observing a solution at one  point in time and in a ball, we can approximately recover this solution over $B_{r_2}({x^{\prime\prime}})$ at time $T$, with the  error:
 $$
   C r_2^n \big((aT)\wedge r_1\big)^{-n} \varepsilon \int_{\mathbb R^n} e^{a|x|} |u_0(x)|^2 \,\mathrm dx.
 $$
If $\int_{\mathbb R^n} e^{a|x|} |u_0(x)|^2=\infty$, then the error is $\infty$.

 Notice that the recover terms in  (\ref{0229-two-points-1})  and (\ref{0405-sch-th4-control})  are different. In (\ref{0229-two-points-1}), one tries to recover approximately a solution over $\mathbb{R}^n\times\{0\}$, while (\ref{0405-sch-th4-control})  tries to recover a solution over $B_{r_2}({x^{\prime\prime}})\times\{T\}$.

    \item  The inequality (\ref{0405-sch-th4-control})   is equivalent to a kind of approximate null controllability for the initial controlled
      Schr\"{o}dinger equation with controls acting at one point in time. Notice that such controllability is not a standard controllability  (see Subsection 5.2).

\item The proof of Theorem \ref{theorem4} is mainly based on \cite[Theorem 1.3]{AE}, which gives
an analytic interpolation inequality (see also                                             \cite{Vessella}),
                     an estimate for some kind of Euler integral in high dimension built up in  Lemma \ref{0229-euler-1} and the identity \cite[(1.2)]{EKPV-2}
   (see   (\ref{0229-sch-th1-4}) in our paper).

\end{itemize}

We next present three consequences of the above main theorems.

\begin{theorem}\label{proposition3-1}
Given $r>0$, $T>0$ and  $N> 0$, the following estimate is true for all $u_0\in L^2(\Omega;\mathbb C)$ with supp\,$u_0\subset B_N(0)$:
\begin{eqnarray}\label{0229-sch-th4-control-3}
  \int_{\mathbb R^n} |u_0(x)|^2 \, \mathrm dx
 &\leq&  e^{C \big(1+ \frac{rN}{T}\big)}  \int_{B_r^c(0)} |u(x,T;u_0)|^2 \, \mathrm dx,
\end{eqnarray}
where $C \triangleq C(n)>0$.
\end{theorem}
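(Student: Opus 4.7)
The plan is to derive Theorem \ref{proposition3-1} as a direct consequence of part (i) of Theorem \ref{theorem3}, using the compact support hypothesis on $u_0$ to convert the weighted $L^2$ prior term into an ordinary $L^2$ norm which can then be absorbed on the left hand side.

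First, I would apply inequality (\ref{0229-two-points-1}) with the same $r$ and $T$, but with a parameter $a > 0$ left free for the moment. Writing $\sigma = \theta^{1+r/(aT)} \in (0,1)$, this gives
\begin{equation*}
 \int_{\mathbb{R}^n}|u_0|^2 \,\mathrm dx \leq C\Bigl(1+\frac{r^n}{(aT)^n}\Bigr) \Bigl(\int_{B_r^c(0)}|u(x,T;u_0)|^2 \,\mathrm dx\Bigr)^{\sigma} \Bigl(\int_{\mathbb{R}^n} e^{a|x|}|u_0|^2 \,\mathrm dx\Bigr)^{1-\sigma}.
\end{equation*}
Since $\operatorname{supp} u_0 \subset B_N(0)$, the weighted integral on the right is bounded by $e^{aN}\|u_0\|_{L^2}^2$. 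Substituting and raising both sides to the power $1/\sigma$ to isolate $\|u_0\|_{L^2}^2$, one obtains an estimate of the form
\begin{equation*}
 \int_{\mathbb{R}^n}|u_0|^2 \,\mathrm dx \leq \Bigl[C\bigl(1+\tfrac{r^n}{(aT)^n}\bigr)\Bigr]^{1/\sigma} e^{aN(1-\sigma)/\sigma} \int_{B_r^c(0)}|u(x,T;u_0)|^2 \,\mathrm dx.
\end{equation*}

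Next I would choose $a$ so that both factors in front of the observation term remain under exponential control of the form $e^{C(1+rN/T)}$. The natural choice is $a \triangleq r/T$, which forces $r/(aT)=1$, so that $\sigma = \theta^{2}$ is a constant depending only on $n$, the polynomial factor $1 + r^n/(aT)^n$ reduces to $2$, and the weight parameter in the support bound becomes $aN = rN/T$. With this choice the coefficient becomes
\begin{equation*}
 (2C)^{1/\theta^{2}} \, e^{\,(rN/T)(1/\theta^{2}-1)},
\end{equation*}
which, since $\theta \in (0,1)$ is an absolute constant depending only on $n$, is bounded above by $e^{C'(1+rN/T)}$ for some $C' = C'(n)$. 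This yields (\ref{0229-sch-th4-control-3}).

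The main (minor) obstacle is the balancing of $a$: if $a$ is taken much smaller than $r/T$, the interpolation exponent $\sigma$ degrades so that $1/\sigma$ becomes large compared to the gain from $e^{aN}$, while if $a$ is taken much larger, the support-based bound $e^{aN}$ blows up. The choice $a = r/T$ equalizes these, and is in fact what produces the characteristic exponent $1 + rN/T$ appearing in the statement. Everything else is bookkeeping of absolute constants.
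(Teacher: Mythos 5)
Your proof is correct and matches the paper's approach exactly: apply Theorem~\ref{theorem3}(i) with $a = r/T$ so that the interpolation exponent $\theta^{1+r/(aT)}=\theta^2$ is an absolute constant, bound the weighted integral by $e^{rN/T}\|u_0\|_{L^2}^2$ using the compact support, and absorb the $L^2$ norm by raising to the power $1/\theta^2$. The only step you leave implicit is the standard density argument needed to pass from $u_0\in C_0^\infty(\mathbb{R}^n;\mathbb{C})$ (the hypothesis in Theorem~\ref{theorem3}) to a general $u_0\in L^2$ with compact support, which the paper invokes explicitly.
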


\begin{theorem}\label{theorem5}
Given $x_0,\,{x^\prime}\in\mathbb R^n$, $r>0$, $a>0$, $b> 0$ and $T>0$, the following inequality holds for all  $u_0\in C_0^\infty(\mathbb R^n; \mathbb C)$ and  $\varepsilon\in (0,1)$:
\begin{eqnarray}\label{0405-sch-th5-control-1}
 & &  \int_{\mathbb R^n} e^{-b|x-{x^\prime}|} |u(x,T;u_0)|^2 \, \mathrm dx
 \\
 &\leq&  C(x_0,{x^\prime},r,a,b,T)  \left(\varepsilon \int_{\mathbb R^n}e^{ a|x|} |u_0(x)|^2 \,\mathrm dx
  + \varepsilon e^{\varepsilon^{-1-\frac{C b^{-1}}{(aT)\wedge r}}} \int_{B_{r}(x_0)} |u(x,T;u_0)|^2 \, \mathrm dx \right),
  \nonumber
\end{eqnarray}
where
\begin{eqnarray*}\label{0410-th1.5-constant}
  C(x_0,{x^\prime},r,a,b,T) \triangleq
  \exp\left\{ C \Big[1 + \frac{  |x_0-{x^\prime}|+r + b^{-1} }{(aT)\wedge r}  \Big] \right\},
\end{eqnarray*}
with $C\triangleq C(n)>0$.

 \end{theorem}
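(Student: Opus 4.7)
\medskip

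\noindent The plan is to derive Theorem~\ref{theorem5} directly from Theorem~\ref{theorem4} by splitting $\mathbb{R}^n$ into a ball $B_R({x^\prime})$, where Theorem~\ref{theorem4} applies, and its complement, where the weight $e^{-b|x-{x^\prime}|}$ provides decay, and then performing a case analysis on the size of $\varepsilon$. Write $\rho=(aT)\wedge r$, $p_0=1+(|x_0-{x^\prime}|+r)/\rho$ and $q=1+(|x_0-{x^\prime}|+r+b^{-1})/\rho$, so that the stated prefactor equals $e^{Cq}$, and set $A=\int_{B_r(x_0)}|u(x,T;u_0)|^2\mathrm dx$ and $B=\int_{\mathbb{R}^n} e^{a|x|}|u_0|^2\mathrm dx$. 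In the trivial regime $\varepsilon\ge e^{-C_1 q}$, with $C_1=C_1(n)$ to be fixed, the $L^2$-conservation of the Schr\"odinger flow gives $\int e^{-b|x-{x^\prime}|}|u(x,T;u_0)|^2\mathrm dx\le\|u_0\|_{L^2}^2\le B$, which is dominated by $C(x_0,{x^\prime},r,a,b,T)\varepsilon B$ once the constant in the prefactor is chosen at least $C_1$.

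In the analysis regime $\varepsilon<e^{-C_1 q}$, I set $R=b^{-1}\log(1/\varepsilon)>0$ and split
\[
\int_{\mathbb{R}^n} e^{-b|x-{x^\prime}|}|u(x,T;u_0)|^2\mathrm dx\le\int_{B_R({x^\prime})}|u(x,T;u_0)|^2\mathrm dx+e^{-bR}B,
\]
noting that $e^{-bR}=\varepsilon$, so the tail contributes $\varepsilon B$. To the interior integral I apply Theorem~\ref{theorem4} with the correspondence ${x^\prime}\leftrightarrow x_0$, ${x^{\prime\prime}}\leftrightarrow{x^\prime}$, $r_1=r$, $r_2=R$, obtaining $\int_{B_R({x^\prime})}|u(x,T;u_0)|^2\mathrm dx\le CR^n\rho^{-n}A^{\theta^{p}}B^{1-\theta^{p}}$ with $p=p_0+R/\rho$. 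Young's inequality $A^{\theta^{p}}B^{1-\theta^{p}}\le\varepsilon B+C\varepsilon^{1-\theta^{-p}}A$ then casts the estimate in the desired $\varepsilon$-form. With this $R$ one has $\theta^{-p}=\theta^{-p_0}\varepsilon^{-\mu/(b\rho)}$ (where $\mu=\log(1/\theta)$), and the polynomial prefactor satisfies $R^n\rho^{-n}\varepsilon\le C_n(b\rho)^{-n}\le C(\cdots)$ via $\varepsilon\log^n(1/\varepsilon)\le(n/e)^n$ and $q\ge 1/(b\rho)$, giving the correct $B$-coefficient $C(\cdots)\varepsilon$.

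For the $A$-coefficient, writing $\varepsilon^{1-\theta^{-p}}=\varepsilon\exp(\theta^{-p}\log(1/\varepsilon))$ and using $\log(1/\varepsilon)\le e^{-1}\varepsilon^{-1}$, the key quantity to control is $\theta^{-p}\log(1/\varepsilon)\le e^{-1}\theta^{-p_0}\varepsilon^{-1-\mu/(b\rho)}$, which must be dominated by $\varepsilon^{-1-Cb^{-1}/\rho}$ up to an additive $\log C(\cdots)$. The main obstacle I anticipate is exactly this domination: since $\theta^{-p_0}$ sits inside a double exponential, it cannot simply be pulled out as a multiplicative constant. The threshold $\varepsilon<e^{-C_1 q}$ is chosen precisely so that $\log(1/\varepsilon)>C_1 q\ge C_1 p_0$, which allows $\theta^{-p_0}=e^{\mu p_0}$ to be absorbed as $\varepsilon^{-\mu/C_1}$, an arbitrarily small negative power of $\varepsilon$ once $C_1$ is taken large relative to $\mu$. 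Combining this with $C\ge\mu$ then yields $\theta^{-p}\log(1/\varepsilon)\le\varepsilon^{-1-Cb^{-1}/\rho}$ in the relevant regime, and the remaining polynomial factors are absorbed into the prefactor $C(x_0,{x^\prime},r,a,b,T)$; the presence of the $b^{-1}$-term inside $q$ is dictated exactly by this bookkeeping.
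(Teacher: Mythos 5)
Your decomposition is genuinely different from the paper's, and conceptually cleaner where it works. The paper cuts $\mathbb R^n$ into dyadic annuli $\{2(k-1)b^{-1}\le|x-x'|<2kb^{-1}\}$, applies Theorem~\ref{theorem4} on each with $r_2=2kb^{-1}$, sums the resulting series via the Gamma-function estimate of Lemma~\ref{0407-lemma-int}(i) to reach a logarithmic interpolation $R_b\le C_3\,(\ln(A_1/B_1))^{-C_2 b\rho}A_1$ (Step~1), and then inverts this into the $\varepsilon$-form by a case analysis (Step~2). You replace that machinery with a single ball/tail split at $R=b^{-1}\log(1/\varepsilon)$, so the tail is exactly $\varepsilon B$, and one application of Theorem~\ref{theorem4} plus Young does the rest, bypassing Lemma~\ref{0407-lemma-int} entirely.

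The bookkeeping as written, however, has two real gaps. (1) The $B$-coefficient: after Young with parameter $\varepsilon$, the ball piece contributes $CR^n\rho^{-n}\varepsilon\,B$; your bound $R^n\rho^{-n}\varepsilon\le C_n(b\rho)^{-n}\le C(\cdots)$ then gives a $B$-coefficient $\le C(\cdots)$, not $C(\cdots)\varepsilon$ — the $\varepsilon$ has been consumed absorbing $\log^n(1/\varepsilon)$. This is fixable by running Young with parameter $\varepsilon^2$ (or by absorbing $R^n\lesssim n!\,\rho^n e^{R/\rho}$ into the $A$-exponent instead). (2) The $A$-coefficient: the chain $\theta^{-p}\log(1/\varepsilon)\le e^{-1}\theta^{-p_0}\varepsilon^{-1-\mu/(b\rho)}\le e^{-1}\varepsilon^{-1-\mu/C_1-\mu/(b\rho)}$, followed by demanding $\mu/C_1+\mu/(b\rho)\le C/(b\rho)$, requires $b\rho\le(C-\mu)C_1/\mu$ and therefore fails whenever $b\rho$ is large — this is the regime you flag as "the main obstacle," and the threshold $\varepsilon<e^{-C_1 q}$ does not in fact close it. The culprit is the lossy step $\log(1/\varepsilon)\le e^{-1}\varepsilon^{-1}$; for $\lambda=\log(1/\varepsilon)$ large this throws away a factor of order $e^{\lambda}/\lambda$. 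Two repairs: (a) do not invoke that bound — since $C\ge\mu$, the factor $\varepsilon^{-\mu/(b\rho)}$ in $\theta^{-p}=e^{\mu p_0}\varepsilon^{-\mu/(b\rho)}$ is absorbed by $\varepsilon^{-C/(b\rho)}$ on the right, and what remains is $e^{\mu p_0}\lambda\le e^{\lambda}$, i.e.\ $\mu p_0+\log\lambda\le\lambda$, which holds uniformly in $b\rho$ once $\lambda>C_1p_0$ with $C_1$ a fixed multiple of $\mu$; or (b) first reduce to $b\rho\le 1$ using $R_b\le R_{b'}$ for $b'\le b$ (conservation of mass), which is exactly what the paper does in Part~2.2 of its proof, after which your threshold closes the argument because $q\ge 1/(b\rho)\ge 1$.
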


 \begin{theorem}\label{theorem6}
Given  $x_0\in\mathbb R^n$, $r>0$, $a>0$ and $T>0$, the following estimate is true for all  $u_0\in C_0^\infty(\mathbb R^n; \mathbb C)$ and  $\varepsilon\in (0,1)$:
  \begin{eqnarray}\label{0405-sch-th5-control-2}
 & &  \int_{\mathbb R^n}  |u_0(x)|^2 \, \mathrm dx
 \\
 &\leq&  {C}(x_0,r,a,T)   \left(
 \varepsilon \Big(\int_{\mathbb R^n} e^{a|x|} |u_0(x)|^2  \mathrm dx
 +  \|u_0\|^2_{H^{n+3}(\mathbb R^n;\mathbb C)}\Big)
  + \varepsilon  e^{e^{\varepsilon^{-2}}} \int_{B_{r}(x_0)} |u(x,T;u_0)|^2 \, \mathrm dx
   \right), \nonumber
\end{eqnarray}
where
\begin{eqnarray*}
 {C}(x_0,r,a,T)
 \triangleq (1+T)^{2n+6} \exp\left\{ C^{1+\frac{|x_0|+r+1}{(aT)\wedge r}} \right\},
\end{eqnarray*}
with  $C\triangleq C(n)>0$.
\end{theorem}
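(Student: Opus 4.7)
The plan is to bootstrap Theorem~\ref{theorem5}, which only controls the weighted integral $\int e^{-b|x-x_0|}|u(x,T;u_0)|^2\,\mathrm dx$, into a genuine $L^2$-estimate of $u_0$, using two classical facts about the free Schr\"odinger flow: the conservation of $L^2$-mass and the moment estimate coming from the Galilean invariance $J_j(t)=x_j+2it\partial_{x_j}=e^{it\Delta}x_je^{-it\Delta}$. By $L^2$-conservation, $\int_{\mathbb R^n}|u_0|^2\,\mathrm dx=\int_{\mathbb R^n}|u(x,T;u_0)|^2\,\mathrm dx$, and I split this as $\int_{B_R(x_0)}+\int_{B_R^c(x_0)}$ for a radius $R>0$ to be chosen below.

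On $B_R(x_0)$ I use the trivial majorisation $1\le e^{bR}e^{-b|x-x_0|}$, so
\begin{equation*}
 \int_{B_R(x_0)}|u(x,T;u_0)|^2\,\mathrm dx \le e^{bR}\int_{\mathbb R^n} e^{-b|x-x_0|}|u(x,T;u_0)|^2\,\mathrm dx,
\end{equation*}
and I apply Theorem~\ref{theorem5} with ${x^\prime}=x_0$ and some $\tilde\varepsilon\in(0,1)$ (playing the role of the $\varepsilon$ in that theorem). On $B_R^c(x_0)$ I apply Chebyshev's inequality, $\int_{B_R^c(x_0)}|u(T)|^2\,\mathrm dx\le R^{-2(n+3)}\int|x-x_0|^{2(n+3)}|u(T)|^2\,\mathrm dx$, and then use the classical Schr\"odinger moment estimate
\begin{equation*}
 \bigl\||x|^{n+3}u(T)\bigr\|_{L^2}^2 \le C_n(1+T)^{2(n+3)}\bigl(\bigl\||x|^{n+3}u_0\bigr\|_{L^2}^2+\|u_0\|_{H^{n+3}}^2\bigr),
\end{equation*}
which I would derive by iterating the identity $x_ju(t)=J_j(t)u(t)-2it\partial_{x_j}u(t)$ and invoking $L^2$-conservation of $u$ and of its derivatives. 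Combined with $|x-x_0|^{n+3}\le C_n(|x|^{n+3}+|x_0|^{n+3})$ and the elementary bound $|x|^{2(n+3)}\le C_{n,a}\,e^{a|x|}$, this yields a tail bound of the form $C_n R^{-2(n+3)}(1+T)^{2(n+3)}(1+|x_0|)^{2(n+3)}\bigl(\int e^{a|x|}|u_0|^2\,\mathrm dx+\|u_0\|_{H^{n+3}}^2\bigr)$.

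The remaining task is to tune the three parameters $R,\,b,\,\tilde\varepsilon$ as functions of $\varepsilon$ so as to reproduce (\ref{0405-sch-th5-control-2}). I fix $b:=b_0/((aT)\wedge r)$ for a small $b_0=b_0(n)$, which makes the Theorem~\ref{theorem5}-prefactor $\exp\{C[1+(r+b^{-1})/((aT)\wedge r)]\}$ depend only on $n,r,a,T$ and the $\tilde\varepsilon$-exponent $q:=1+Cb^{-1}/((aT)\wedge r)$ a constant depending only on $n$. Then I choose $R:=(1+|x_0|)\varepsilon^{-1/(2(n+3))}$, so that the tail contribution is already $\le \tilde C(x_0,r,a,T)\,\varepsilon\bigl(\int e^{a|x|}|u_0|^2\,\mathrm dx+\|u_0\|_{H^{n+3}}^2\bigr)$; and I set $\tilde\varepsilon:=\varepsilon\,e^{-bR}/(\text{Theorem~\ref{theorem5} prefactor})$, so the coefficient of $\int e^{a|x|}|u_0|^2\,\mathrm dx$ arising from the inner ball also collapses to $\tilde C\,\varepsilon$. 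The coefficient of the observation term is then $\tilde C\,\varepsilon\cdot e^{\tilde\varepsilon^{-q}}$; since $\tilde\varepsilon^{-q}\lesssim \varepsilon^{-q}e^{qbR}\lesssim \exp\{C(1+|x_0|)\varepsilon^{-1/(2(n+3))}/((aT)\wedge r)\}\le e^{\varepsilon^{-2}}$ for all $\varepsilon\in(0,1)$ (after absorbing an $\varepsilon$-independent factor), this coefficient is bounded by $\tilde C\,\varepsilon\,e^{e^{\varepsilon^{-2}}}$, as required.

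The main obstacle is the simultaneous balance carried out in the previous paragraph. The tail forces $R$ to grow like $\varepsilon^{-1/(2(n+3))}$ in order to produce the required $\varepsilon$-prefactor, but the ensuing factor $e^{bR}$ cannot be absorbed into the $\varepsilon$-independent constant $\tilde C(x_0,r,a,T)$; it must instead be hidden inside the target double exponential $e^{e^{\varepsilon^{-2}}}$. The inequality $1/(2(n+3))<2$, together with the fact that $b$ can be fixed as a constant multiple of $((aT)\wedge r)^{-1}$ (keeping Theorem~\ref{theorem5}'s prefactor $\varepsilon$-independent), is precisely what enables this absorption, and the appearance of the $H^{n+3}$-regularity in the hypothesis of Theorem~\ref{theorem6} is dictated by this Chebyshev step.
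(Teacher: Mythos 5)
Your route is genuinely different from the paper's. The paper's proof (Steps~1--3) never invokes Theorem~\ref{theorem5}: it sums Theorem~\ref{theorem4} over growing balls via Lemma~\ref{0407-lemma-int}(ii) to obtain $\sup_{1\le\eta\le2}\int(1+|x|)^{-n-1-\eta}|u(T)|^2\lesssim A_2/\ln(\ln(A_2/B_2)+e)$, then combines this with Lemma~\ref{lemma-0411-th1.6} via a Cauchy--Schwarz split with the weights $(1+|x|)^{\pm(n+1+\eta_0)/2}$ (with $\eta_0\in\{1,2\}$ chosen so that $n+1+\eta_0$ is even), and finally converts the resulting $A_2/\sqrt{\ln(\ln(A_2/B_2)+e)}$ bound into the $\varepsilon\, e^{e^{\varepsilon^{-2}}}$ form by exact inverse-function calculus for $x\mapsto e^{-e}e^{e^{x^{-2}}}$. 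You instead bootstrap Theorem~\ref{theorem5} as a black box, replacing the paper's smooth Cauchy--Schwarz split by a sharp cutoff at radius $R$ plus Chebyshev on the tail, and then tune $(R,b,\tilde\varepsilon)$. The paper's Cauchy--Schwarz split is the more economical of the two, because it keeps the needed moment of $u(T)$ at the smallest possible order $n+1+\eta_0\le n+3$; this is precisely what ties the $H^{n+3}$-regularity in the statement to Lemma~\ref{lemma-0411-th1.6}.

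That observation is also where your argument has a genuine gap. The moment estimate you invoke,
\begin{equation*}
\bigl\||x|^{n+3}u(T)\bigr\|_{L^2}^2\le C_n(1+T)^{2(n+3)}\bigl(\bigl\||x|^{n+3}u_0\bigr\|_{L^2}^2+\|u_0\|_{H^{n+3}}^2\bigr),
\end{equation*}
is \emph{not} Lemma~\ref{lemma-0411-th1.6}. That lemma, applied with $k=n+3$ (the natural value for your Chebyshev exponent $2(n+3)$), produces $\|u_0\|^2_{H^{2(n+3)}}$ and $\int|x|^{4(n+3)}|u_0|^2$ on the right, which overshoots the $H^{n+3}$ allowed by the theorem you are proving. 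The sharper estimate you wrote is likely true (it matches, at the quantitative level, the Nahas--Ponce persistence recalled in Remark~\ref{tianjinremark3.3}), but to derive it one needs, on top of the conjugation $(x_j-2iT\partial_{x_j})^m=(-2iT)^m e^{-i|x_j|^2/4T}\partial_{x_j}^m e^{i|x_j|^2/4T}$, a weighted interpolation of the form $\|x^a\partial^l u_0\|_{L^2}\lesssim\||x|^m u_0\|_{L^2}+\|u_0\|_{H^m}+\|u_0\|_{L^2}$ for $a+l\le m$; the Cauchy--Schwarz pairing used in the paper's own proof of Lemma~\ref{lemma-0411-th1.6} only yields the lossy $H^{2m}$ version, so ``iterating the identity'' as you describe does not close this. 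A fix that stays within the paper's toolbox is to take the Chebyshev exponent to be $2k_0$ with $2k_0\in\{n+2,n+3\}$ (so $k_0$ is a positive integer and $H^{2k_0}$-norms are dominated by $\|\cdot\|_{H^{n+3}}$ and $|x|^{4k_0}$ by $e^{a|x|}$) and apply Lemma~\ref{lemma-0411-th1.6} as stated; then $R$ must be taken of order $\varepsilon^{-1/(2k_0)}\sim\varepsilon^{-1/(n+2)}$, not $\varepsilon^{-1/(2(n+3))}$. With that correction the scheme should work, but the final absorption — the claim that $\tilde\varepsilon^{-q}\le e^{\varepsilon^{-2}}+C_0(x_0,r,a,T,n)$ uniformly in $\varepsilon\in(0,1)$ with $e^{C_0}$ controllable by the stated constant ${C}(x_0,r,a,T)$ — is only asserted; since for $\varepsilon$ near $1$ the term $e^{qbR}$ is not dominated by $e^{\varepsilon^{-2}}$, this needs an explicit estimate rather than the phrase ``after absorbing an $\varepsilon$-independent factor''.
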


Two notes on Theorem~\ref{proposition3-1}-Theorem~\ref{theorem6} are as follows:

\begin{itemize}

\item The inequalities in Theorem~\ref{proposition3-1}-Theorem~\ref{theorem6} are different kinds of unique continuation at one time point for the  Schr\"{o}dinger equation. They  correspond to different kinds of controllability which   are not standard controllability (see Subsection 5.3).

\item  Theorem~\ref{proposition3-1} is a direct consequence of the conclusion (i) in Theorem \ref{theorem3}.
Theorem~\ref{theorem5} is a consequence of Theorem \ref{theorem4}. Theorem~\ref{theorem6} is based on Theorem \ref{theorem4}, as well as a regularity propagation property for the  Schr\"{o}dinger equation (presented  in Lemma~\ref{lemma-0411-th1.6} of this paper).

\end{itemize}

 The main novelties of this paper are as follows: $(a)$ We build up observability estimate at two points in time for the  Schr\"{o}dinger equation in $\mathbb{R}^n$.
  $(b)$ We present several unique continuation (or observability) inequalities at one point in time for the  Schr\"{o}dinger equation in $\mathbb{R}^n$. These inequalities correspond to different kinds of controllability.  $(c)$ We find an equivalence between the observability at two different points in time and the uncertainty principle built up in  \cite{PJ}
   (see  Lemma \ref{0309-sch-eq-twopoints}).

   It should be interesting to extend our results to the following equations: $(a)$ Schr\"{o}dinger equations with potentials  in $\mathbb{R}^n$. $(b)$ Homogeneous Schr\"{o}dinger equations
   on a bounded domain.

   For the observability and unique continuation inequalities for Schr\"{o}dinger equations, we would like to mention \cite{B,Cruz,EKPV-1, EKPV-2,EKPV-3,EKPV-3-1,EKPV-4,EKPV-5,EKPV-6,IK-1,IK,Laurent,Lebeau,Masuda,NP,Seo,
   BYZ,Zuazua} and the references therein. For the uncertainty principle, we would like to mention \cite{HJ, PJ, Kovrijkine, Nazarov,SST} and the references therein. We think of that the uncertainty principle built up in some of these papers may be used to get some observability estimates for Schr\"{o}dinger equations. For interpolation inequalities for heat equations, we would like to mention \cite{AEWC,PW,PWCZ,WK} and the references therein.

The rest of the paper is organized as follows: Section 2 proves Theorem~\ref{theorem1}-Theorem \ref{theorem4}. Section 3 proves  Theorem~\ref{proposition3-1}-Theorem~\ref{theorem6}.
Section 4 provides some further comments on the main results.
Section 5 presents  applications of Theorem~\ref{theorem1}-Theorem~\ref{theorem6} to the controllability for the  Schr\"{o}dinger equation.

\bigskip

\section{Proofs of the main results}

This section is devoted to proving Theorem~\ref{theorem1}-Theorem~\ref{theorem4}.

\subsection{Proof of Theorem \ref{theorem1}}

In this subsection, we will prove Theorem \ref{theorem1}.
We first introduce in Theorem~\ref{uncertainty-principle} the uncertainty principle built up in  \cite{PJ}, then
show in Lemma~\ref{0309-sch-eq-twopoints}
the equivalence between the uncertainty principle and the observability at two points in time, finally give the proof of Theorem \ref{theorem1}.

\begin{theorem}\label{uncertainty-principle}
Given subsets $S,\Sigma\subset\mathbb R^n$, with $|S|<\infty$ and $|\Sigma|<\infty$, there is a positive constant
\begin{eqnarray}\label{uncertainty-principle-ineq-constant}
 C(n,S,\Sigma)  \triangleq  Ce^{C\min\left\{|S||\Sigma|,|S|^{1/n}w(\Sigma),|\Sigma|^{1/n}w(S)\right\}},
\end{eqnarray}
with $C\triangleq C(n)$,  so that
 for each $f\in L^2(\mathbb R^n;\mathbb C)$,
\begin{eqnarray*}\label{uncertainty-principle-ineq}
 \int_{\mathbb R^n_x}  |f(x)|^2 \,\mathrm dx
 \leq  C(n,S,\Sigma)   \left( \int_{\mathbb R^n_x\setminus S}  |f(x)|^2 \,\mathrm dx + \int_{\mathbb R^n_\xi \setminus \Sigma}  |\hat f(\xi)|^2 \,\mathrm d\xi\right).
\end{eqnarray*}
Here, $w(S)$ (or $w(\Sigma)$) denotes the mean width of  $S$ (or $\Sigma$).

\end{theorem}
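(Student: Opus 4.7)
The plan is to recast the inequality in operator-theoretic language and reduce it to a quantitative concentration estimate for bandlimited functions. Let $P_S$ denote multiplication by $\chi_S$ on $L^2(\mathbb R^n;\mathbb C)$ and $Q_\Sigma\triangleq\mathcal F^{-1}\chi_\Sigma\mathcal F$ the spectral projection onto frequencies in $\Sigma$. For any $f\in L^2(\mathbb R^n;\mathbb C)$, the decomposition $f=Q_\Sigma P_S f+Q_\Sigma P_{S^c}f+(I-Q_\Sigma)f$ together with Plancherel yields
\begin{equation*}
\|f\|_{L^2(\mathbb R^n)}\le \|Q_\Sigma P_S\|_{\mathrm{op}}\,\|f\|_{L^2(\mathbb R^n)}+\|f\|_{L^2(S^c)}+\|\hat f\|_{L^2(\Sigma^c)}.
\end{equation*}
Squaring and absorbing a factor $2$ gives the desired estimate with $C(n,S,\Sigma)\lesssim(1-\|Q_\Sigma P_S\|_{\mathrm{op}})^{-2}$, provided one can prove a strict contraction bound $\|Q_\Sigma P_S\|_{\mathrm{op}}\le 1-\delta(n,S,\Sigma)$ with $\delta^{-1}$ of the form \eqref{uncertainty-principle-ineq-constant}.

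The main work is the contraction bound, which by duality is equivalent to a Logvinenko--Sereda/Remez-type concentration inequality: for every $g\in L^2(\mathbb R^n;\mathbb C)$ with $\mathrm{supp}\,\hat g\subset\Sigma$,
\begin{equation*}
\|g\|_{L^2(S)}^2\le (1-\delta)\|g\|_{L^2(\mathbb R^n)}^2.
\end{equation*}
By Paley--Wiener, such $g$ extends to an entire function whose growth is controlled by $\Sigma$, so the desired statement is a quantitative unique continuation for bandlimited functions. The gain $\delta^{-1}\approx e^{C|S||\Sigma|}$ is classical in one dimension by Nazarov's refinement of the Tur\'an lemma on exponential sums. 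Its extension to $\mathbb R^n$ is obtained by slicing $\mathbb R^n$ into affine lines whose direction is chosen (via an averaging argument over the unit sphere) so that the intersections of $S$ and $\Sigma$ with each line have controlled length, applying the one-dimensional bound on each slice, and integrating with Fubini. Tuning the direction so that the generic line's frequency-slice has length comparable to $w(\Sigma)$ while the physical slice is of size $|S|^{1/n}$ produces the refined exponent $|S|^{1/n}w(\Sigma)$; the symmetric version $|\Sigma|^{1/n}w(S)$ follows by swapping the roles of $f$ and $\hat f$ via Plancherel.

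I expect the quantitative Tur\'an--Nazarov step to be the main obstacle. Even in one variable, obtaining the sharp exponential constant $e^{C|S||\Sigma|}$ relies on a delicate $L^2$-version of Tur\'an's lemma for finite exponential sums $\sum c_k e^{i\lambda_k x}$ with arbitrary real frequencies, and naive polynomial-approximation arguments give only doubly exponential dependence. Lifting this sharp one-dimensional inequality to $\mathbb R^n$ in a form that yields the geometric refinements with $w(S)$ and $w(\Sigma)$---rather than merely the cruder product $|S||\Sigma|$---requires a careful geometric argument relating the average length of line-slices of a set to its mean width, together with a judicious choice of slicing direction. Once the contraction bound is established, the remaining pieces (operator decomposition, Plancherel, Paley--Wiener reduction) are purely bookkeeping.
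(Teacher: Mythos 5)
The paper does not prove Theorem~\ref{uncertainty-principle}; it is quoted verbatim from Jaming's work \cite{PJ} and used as a black box (the paper's contribution starts at Lemma~\ref{0309-sch-eq-twopoints}, which shows the cited uncertainty principle is \emph{equivalent} to the two-point observability estimate). So there is no in-paper proof for your argument to be compared against, and what you are offering is a re-derivation of the cited result.

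Your reduction steps are correct but do not make progress on the actual content of the theorem. The decomposition $f=Q_\Sigma P_S f+Q_\Sigma P_{S^c}f+(I-Q_\Sigma)f$ together with Plancherel and the projection bounds $\|Q_\Sigma\|_{\mathrm{op}}\le 1$ does give $C(n,S,\Sigma)\lesssim(1-\|Q_\Sigma P_S\|_{\mathrm{op}})^{-2}$, and $\|Q_\Sigma P_S\|_{\mathrm{op}}=\|P_S Q_\Sigma\|_{\mathrm{op}}$ identifies the required contraction with a concentration bound for $\Sigma$-bandlimited functions. But this is just a standard reformulation: the quantitative annihilating-pair estimate $\|Q_\Sigma P_S\|_{\mathrm{op}}\le 1-\delta$ with $\delta^{-1}\sim Ce^{C\min\{|S||\Sigma|,\,|S|^{1/n}w(\Sigma),\,|\Sigma|^{1/n}w(S)\}}$ is logically equivalent to Theorem~\ref{uncertainty-principle} itself and is precisely what Nazarov (in $n=1$) and Jaming (in $\mathbb R^n$, with the mean-width refinements) prove. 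Your proposal then states that this is ``obtained by slicing $\mathbb R^n$ into affine lines \dots applying the one-dimensional bound on each slice, and integrating with Fubini,'' but you do not carry this out, and you yourself flag that the tuning of directions to see $w(\Sigma)$ and $|S|^{1/n}$ simultaneously ``requires a careful geometric argument.'' That is the genuine gap: the restriction of a $\Sigma$-bandlimited $g$ to a line in direction $v$ has spectrum in the projection of $\Sigma$ onto $\mathbb R v$ (whose average length over $v\in S^{n-1}$ is indeed $w(\Sigma)$ up to a dimensional constant, by Cauchy's formula), while Fubini controls $\int|S\cap\ell|\,d\ell$ over a family of parallel lines; turning these averages into a pointwise-on-enough-lines statement compatible with Nazarov's $e^{C|S\cap\ell|\cdot|\mathrm{proj}_v\Sigma|}$ constant, and obtaining the refined exponent $|S|^{1/n}w(\Sigma)$ rather than the crude $|S||\Sigma|$, is the whole point of \cite{PJ} and is not recovered by the sketch. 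As written, the proposal reduces the theorem to an equivalent unproved claim plus a slicing heuristic. If the goal were a self-contained proof, one would have to actually reproduce Jaming's geometric averaging argument; if the goal is to use the result in the setting of this paper, the right thing to do is what the authors do: cite \cite{PJ}.
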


\begin{remark}
For the detailed definition of $w(S)$ (the mean width of  $S$), we refer the readers to \cite{PJ}. Here, we would like to mention what follows: First, when  $S$  is a bounded and open subset of $\mathbb{R}^n$, $w(S)<\infty$; Second, when $S$ is a ball in $\mathbb{R}^n$, $w(S)$ is the diameter of the ball.

\end{remark}

\begin{lemma}\label{0309-sch-eq-twopoints}
Let $A$ and $B$ be two measurable subsets of $\mathbb R^n$. Then the following propositions are equivalent:

\noindent(i) There exists a positive constant $C_1(n,A,B)$ so that for each $f\in L^2(\mathbb R^n;\mathbb C)$,
 \begin{eqnarray}\label{0309-sch-eq1-0-0}
 \int_{\mathbb R^n_x}  |f(x)|^2 \,\mathrm dx
 &\leq&  C_1(n,A,B)
 \left( \int_{A}  |f(x)|^2 \,\mathrm dx + \int_{B}  |\hat f(\xi)|^2 \,\mathrm d\xi\right).
\end{eqnarray}

\noindent(ii) There exists a positive constant $C_2\big(n,A,B\big)$ so that for each $T>0$ and each $u_0\in L^2(\mathbb R^n;\mathbb C)$,
\begin{eqnarray}\label{0309-sch-eq1-0}
 \int_{\mathbb R^n}  |u_0(x)|^2 \,\mathrm dx
 &\leq& C_2(n,A,B)  \Big(\int_{ A}  |u_0(x)|^2 \,\mathrm dx
 + \int_{2TB}  |u(x,T;u_0)|^2 \,\mathrm dx\Big).
\end{eqnarray}


Furthermore, when one of the above two propositions   holds, the constants $C_1(n,A,B)$ and $C_2(n,A,B)$ can be chosen as the same  number.

\end{lemma}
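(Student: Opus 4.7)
The plan is to use the well-known Appell-type identity for the free Schrödinger propagator (the identity \eqref{0229-sch-th1-4} cited from \cite{EKPV-2}), which represents $e^{i\Delta T}u_0$ as a rescaled, modulated Fourier transform of $e^{i|\cdot|^2/(4T)}u_0$. Concretely, for $T>0$ and $u_0\in L^2(\mathbb R^n;\mathbb C)$, one has
\begin{equation*}
 u(x,T;u_0) \;=\; \frac{e^{i|x|^2/(4T)}}{(2iT)^{n/2}}\,\widehat{g_T}\!\left(\tfrac{x}{2T}\right),
 \qquad g_T(y)\triangleq e^{i|y|^2/(4T)}u_0(y).
\end{equation*}
Since the modulating exponentials have modulus $1$, we have $|g_T(y)|=|u_0(y)|$ pointwise, and in particular $\|g_T\|_{L^2}=\|u_0\|_{L^2}$.

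First I would establish, by the change of variables $x=2T\xi$, the key identity
\begin{equation*}
 \int_{2TB} |u(x,T;u_0)|^2\,\mathrm dx \;=\; \int_B |\widehat{g_T}(\xi)|^2\,\mathrm d\xi.
\end{equation*}
This is the bridge between the two propositions. Then (i)$\Rightarrow$(ii) is immediate: apply the uncertainty-principle inequality \eqref{0309-sch-eq1-0-0} to $f=g_T$ and use the two displayed identities to replace $\int_{\mathbb R^n}|g_T|^2$ by $\int_{\mathbb R^n}|u_0|^2$, $\int_A|g_T|^2$ by $\int_A|u_0|^2$, and $\int_B|\widehat{g_T}|^2$ by $\int_{2TB}|u(\cdot,T;u_0)|^2$. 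This yields \eqref{0309-sch-eq1-0} with $C_2(n,A,B)=C_1(n,A,B)$.

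For the converse (ii)$\Rightarrow$(i), given any $f\in L^2(\mathbb R^n;\mathbb C)$, I would fix any $T>0$ and define $u_0(x)\triangleq e^{-i|x|^2/(4T)}f(x)$, so that $|u_0|=|f|$ and $g_T=f$. Applying \eqref{0309-sch-eq1-0} to this $u_0$ and using the same two identities in reverse produces \eqref{0309-sch-eq1-0-0} with $C_1(n,A,B)=C_2(n,A,B)$, which also settles the final assertion that the constants may be taken equal.

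The argument is essentially formal once the Appell identity is in hand, so there is no real obstacle; the only point requiring a touch of care is verifying the Jacobian in the change of variables so that the factor $(2T)^n$ in the measure exactly cancels the $(2T)^{-n}$ coming from $|(2iT)^{-n/2}|^2$, yielding an identity with \emph{no} $T$-dependent constant. This cancellation is precisely what makes the equivalence hold with the \emph{same} constant on both sides, and what forces the observation set on the propagator side to be the dilate $2TB$ rather than $B$ itself.
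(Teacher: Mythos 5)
Your proposal is correct and follows essentially the same argument as the paper: in both cases the proof hinges on the identity $(2iT)^{n/2}e^{-i|x|^2/4T}u(x,T;u_0)=\widehat{e^{i|\cdot|^2/4T}u_0}(x/2T)$, with the substitutions $f=e^{i|\cdot|^2/4T}u_0$ for (i)$\Rightarrow$(ii) and $u_0=e^{-i|\cdot|^2/4T}f$ for (ii)$\Rightarrow$(i), and the Jacobian factor $(2T)^n$ cancelling $|(2iT)^{-n/2}|^2$ as you note. The only cosmetic difference is that the paper fixes $T=1/2$ in Step~2 so that $2TB=B$, whereas you observe (correctly) that any $T>0$ would do.
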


\begin{proof}
Divide the proof into the following two steps:

\vskip 5pt
\textit{Step 1. To show that (i)$\Rightarrow$(ii)}

\noindent Suppose that (i) is true for $C_1(n,A,B)$. We first claim that for all $T>0$ and  $u_0\in L^2(\mathbb R^n;\mathbb C)$,
\begin{eqnarray}\label{0229-sch-th1-1}
 & & \int_{\mathbb R^n}  |u_0(x)|^2 \,\mathrm dx
 \nonumber\\
 &\leq&   C_1\big(n,A,B\big) \left(\int_{A}  |u_0(x)|^2 \,\mathrm dx
 + \frac{1}{(2T)^n} \int_{2TB}  |\widehat{e^{i|\xi|^2/4T}u_0(\xi)}
 (x/2T)|^2 \,\mathrm dx \right) .
\end{eqnarray}
Indeed, for arbitrarily fixed $T>0$ and  $u_0\in L^2(\mathbb R^n;\mathbb C)$, we define  a function $\widetilde{u}_0(\cdot)$ over $\mathbb{R}^n$ in the following manner:
 \begin{eqnarray}\label{0229-sch-th1-2}
  \widetilde{u}_0(x)\triangleq   e^{i|x|^2/4T} u_0(x),~x\in \mathbb R^n.
 \end{eqnarray}
 It is clear that $\widetilde{u}_0\in L^2(\mathbb R^n;\mathbb C)$. Then by (i), we have (\ref{0309-sch-eq1-0-0}), with  $f=\widetilde{u}_0$, i.e.,
 \begin{eqnarray*}
   \int_{\mathbb R^n}  |\widetilde{u}_0(x)|^2 \,\mathrm dx
 &\leq& C_1\big(n,A,B\big)
      \left(\int_{A}  |\widetilde{u}_0(x)|^2 \,\mathrm dx
 + \int_{B}  |\widehat{\widetilde{u}_0}(x)|^2 \,\mathrm dx \right).
 \end{eqnarray*}
 This, along with  (\ref{0229-sch-th1-2}), leads to (\ref{0229-sch-th1-1}).

We next notice  from  \cite[(1.2)]{EKPV-2} that for all
$T>0$ and  $u_0\in L^2(\mathbb R^n;\mathbb C)$,
  \begin{eqnarray}\label{0229-sch-th1-4}
  (2iT)^{n/2} e^{-i|x|^2/4T} u(x,T;u_0) =  \widehat{e^{i|\xi|^2/4T}u_0(\xi)}(x/2T),~x\in\mathbb R^n.
 \end{eqnarray}
  Then from (\ref{0229-sch-th1-1}) and (\ref{0229-sch-th1-4}), it follows that
 \begin{eqnarray*}
 \int_{\mathbb R^n}  |u_0(x)|^2 \,\mathrm dx
 &\leq&  C_1\big(n,A,B\big) \left(\int_{A}  |u_0(x)|^2 \,\mathrm dx
 + \int_{2TB}  |u(x,T;u_0)|^2 \,\mathrm dx \right).
\end{eqnarray*}
Hence, the conclusion
(ii) is true, and  $C_2\big(n,A,B\big)$ can be taken as $C_1\big(n,A,B\big)$.

\vskip 5pt
\textit{Step 2. To prove that (ii)$\Rightarrow$(i)}

\noindent Suppose that (ii) is true for $C_2(n,A,B)$. Arbitrarily fix $f\in L^2(\mathbb R^n;\mathbb C)$. Define a function $u_f$ by
\begin{eqnarray}\label{0309-sch-1.9}
 u_f(x)=e^{-i|x|^2/2} f(x),~x\in\mathbb R^n.
\end{eqnarray}
From (\ref{0309-sch-1.9}) and (\ref{0229-sch-th1-4}) (where $u_0=u_f$ and $T=1/2$), it follows that
\begin{eqnarray*}
\hat f(\xi)=\widehat{e^{i|x|^2/2}u_f(x)}(\xi)  =(i)^{n/2} e^{-i|\xi|^2/2} u(\xi,1/2;u_f),~\xi\in\mathbb R^n.
\end{eqnarray*}
This, along with  (\ref{0309-sch-1.9}) and (\ref{0309-sch-eq1-0}) (where $u_0=u_f$ and $T=1/2$), yields that
\begin{eqnarray*}
  \int_{\mathbb R^n}  |f(x)|^2 \,\mathrm dx   =  \int_{\mathbb R^n}  |u_f(x)|^2 \,\mathrm dx
  &\leq& C_2(n,A,B)
    \left(\int_{ A}  |u_f(x)|^2 \,\mathrm dx
 + \int_{B}  |u(x,1/2;u_f)|^2 \,\mathrm dx  \right)
 \nonumber\\
 &\leq& C_2(n,A,B)   \left(\int_{ A}  |f(x)|^2 \,\mathrm dx
 + \int_{B}  |\hat f(\xi)|^2 \,\mathrm d\xi  \right).
\end{eqnarray*}
Hence, the conclusion  (i) is true and  $C_1(n,A,B)$ can be taken as $C_2(n,A,B)$.

\vskip 5pt
Finally,  from Step 1 - Step 2, we find that when one of  (i) and (ii) is true,
 the constants $C_1(n,A,B)$ and $C_2(n,A,B)$ can be chosen as the same positive number. This ends the proof of this lemma.

\end{proof}

We now use Theorem \ref{uncertainty-principle} and Lemma \ref{0309-sch-eq-twopoints} to prove Theorem \ref{theorem1}.

\begin{proof}[Proof of Theorem \ref{theorem1}]
Let ${x^\prime},\,{x^{\prime\prime}}\in\mathbb R^n$, $r_1,\,r_2>0$ and $T>S\geq 0$. Define
\begin{eqnarray}\label{0427-proof-th1.1-1}
A= B_{r_1}^c({x^\prime})
\;\;\mbox{and}\;\;
B=B_{r_2}^c ({x^{\prime\prime}}).
\end{eqnarray}
 By Theorem  \ref{uncertainty-principle}, we have  (\ref{0309-sch-eq1-0-0}), where
 $$
 (A,B)\;\;\mbox{is replaced by}\;\;\big(A,\frac{B}{2(T-S)}\big)
  \;\;\mbox{and}\;\;C_1(n,A,B)\;\;\mbox{is replaced by}\;\;C\big(n,A^c,\frac{B^c}{2(T-S)}\big),
 $$
 with   $C(n,\cdot,\cdot)$  given by (\ref{uncertainty-principle-ineq-constant}).
Thus we can apply   Lemma \ref{0309-sch-eq-twopoints} to get (\ref{0309-sch-eq1-0}), where
$$
(A,B)\;\;\mbox{is replaced by}\;\;\big(A,\frac{B}{2(T-S)}\big)\;\;\mbox{and}\;\;
C_2(n,A,B)\;\;\mbox{is replaced by}\;\;C\big(n,A^c,\frac{B^c}{2(T-S)}\big).
$$
  The latter, together with  (\ref{uncertainty-principle-ineq-constant}) and (\ref{0427-proof-th1.1-1}), indicates that there exists $C>0$ (depending only on $n$) so that     for each  $u_0\in L^2(\mathbb R^n;\mathbb C)$,
 \begin{eqnarray}\label{0427-proof-th1.1-2}
  \int_{\mathbb R^n}  |u_0(x)|^2 \,\mathrm dx
 \leq C\big(n,A^c,\frac{B^c}{2(T-S)}\big)
    \left(\int_{B_{r_1}^c({x^\prime})}  |u_0(x)|^2 \,\mathrm dx
 + \int_{B_{r_2}^c({x^{\prime\prime}})}  |u(x,T-S;u_0)|^2 \,\mathrm dx \right),
 \end{eqnarray}
  where
 \begin{eqnarray}\label{0427-proof-th1.1-3}
  C\big(n,A^c,\frac{B^c}{2(T-S)}\big)
 = C e^{C \min\left\{
  \omega_n
  r_1^n
  \omega_n
  r_2^n \frac{1}{2^n(T-S)^n},
  \omega_n
  ^{\frac{1}{n}} r_1  r_2 \frac{1}{T-S},
  \omega_n
  ^{\frac{1}{n}} r_2 r_1 \frac{1}{T-S} \right\} }
  \leq C e^{C  \omega_n
  ^{\frac{1}{n}} r_1 r_2 \frac{1}{T-S} },
 \end{eqnarray}
 with
  $\omega_n$
   the volume of the unit ball in $\mathbb R^n$.

 Finally, by (\ref{0427-proof-th1.1-2})   and (\ref{0427-proof-th1.1-3}), we obtain that
 \begin{eqnarray*}
   \int_{\mathbb R^n}  |u(x,S;u_0)|^2 \,\mathrm dx
 \leq C e^{C c_0^{\frac{1}{n}} r_1r_2   \frac{1}{T-S}  }
  \times  \Big(\int_{B_{r_1}^c({x^\prime})}  |u(x,S;u_0)|^2 \,\mathrm dx
 + \int_{B_{r_2}^c({x^{\prime\prime}})}  |u(x,T;u_0)|^2 \,\mathrm dx\Big).
 \end{eqnarray*}
Because of  the conversation law of the Schr\"{o}dinger equation, the above leads to the inequality in Theorem \ref{theorem1}.
  This ends the proof of this theorem.

\end{proof}

\subsection{Preliminaries on  Theorem \ref{theorem3} and Theorem \ref{theorem4}}

In the proofs of Theorem \ref{theorem3} and Theorem \ref{theorem4}, an interpolation inequality plays a key role. This inequality  will be presented in
Lemma~\ref{lemma-0525-adjust}. To prove Lemma~\ref{lemma-0525-adjust}, we need the following Lemma~\ref{lemma2.1}:

%

\begin{lemma}\label{lemma2.1}
There exists an absolute constant $C$ so that for each $a>0$ and $\beta\in \mathbb N^n$,
\begin{eqnarray}\label{0229-euler-1}
 \left(\int_{\mathbb R^n} |\xi^{2\beta}| e^{-a|\xi|} \,\mathrm d\xi \right)^{1/2}
 \leq  \left(\frac{2n}{a}\right)^{n/2} \beta! \left(\frac{Cn}{a}\right)^{|\beta|}
 \;\;.
\end{eqnarray}

\end{lemma}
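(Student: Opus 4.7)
\textbf{Proof plan for Lemma \ref{lemma2.1}.} The plan is to avoid polar coordinates (and the resulting beta/gamma bookkeeping on the sphere) and instead reduce the $n$-dimensional integral to a product of one-dimensional integrals by controlling $|\xi|$ below by a multiple of $\sum_j|\xi_j|$.

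First I would rescale via $\eta=a\xi$ so that, after pulling out a factor $a^{-n-2|\beta|}$, the task reduces to bounding $\int_{\mathbb{R}^n}\eta^{2\beta}e^{-|\eta|}\,d\eta$ (note $\xi^{2\beta}=\prod_j\xi_j^{2\beta_j}\ge0$, so the absolute value is harmless). Next, using Cauchy--Schwarz in the form $\sum_j|\eta_j|\le\sqrt{n}\,|\eta|$, I would estimate
\begin{equation*}
e^{-|\eta|}\le \exp\!\Bigl(-\tfrac{1}{\sqrt{n}}\sum_j|\eta_j|\Bigr)=\prod_{j=1}^n e^{-|\eta_j|/\sqrt{n}},
\end{equation*}
which decouples the integral into a product of one-dimensional pieces
\begin{equation*}
\int_{\mathbb{R}^n}\eta^{2\beta}e^{-|\eta|}\,d\eta\le\prod_{j=1}^{n}\int_{\mathbb{R}}\eta_j^{2\beta_j}e^{-|\eta_j|/\sqrt{n}}\,d\eta_j=\prod_{j=1}^{n}2\,n^{\beta_j+1/2}(2\beta_j)!.
\end{equation*}

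The remaining step is a purely combinatorial clean-up. I would invoke the standard inequality $(2\beta_j)!\le 4^{\beta_j}(\beta_j!)^2$ (equivalently $\binom{2\beta_j}{\beta_j}\le 4^{\beta_j}$) termwise, multiply over $j$, and collect the powers of $2$, $n$, and $a$ that reappear when undoing the rescaling. This produces a bound of the shape $2^{n}n^{n/2+|\beta|}a^{-n-2|\beta|}4^{|\beta|}(\beta!)^2$ for the squared left-hand side; taking square roots and absorbing the bookkeeping constants into a single absolute $C$ yields exactly $\bigl(\tfrac{2n}{a}\bigr)^{n/2}\beta!\bigl(\tfrac{Cn}{a}\bigr)^{|\beta|}$ (using $\sqrt{n}\le n$ to upgrade $\sqrt{n}$ factors to $n$ at the cost of enlarging $C$).

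The proof is essentially routine once the factorization trick is chosen; the only mild point of care is the last step, where one must verify that the asymmetric exponents on the stated right-hand side ($n/2$ on the ``$n$'' factor but $|\beta|$ on the ``$Cn$'' factor) can accommodate the $n^{n/2+|\beta|}$ that comes out of the computation. That reduces to the trivial inequality $n^{n/4}\le n^{n/2}$ for $n\ge 1$, so I expect no real obstacle; the choice to use the $\ell^1$--$\ell^2$ comparison rather than the sphere-integral formula of Folland is what keeps the argument short and the constant explicit.
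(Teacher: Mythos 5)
Your proposal is correct and follows essentially the same route as the paper: both reduce the $n$-dimensional integral to a product of one-dimensional Gamma integrals by bounding $e^{-a|\xi|}$ above by a separable product $\prod_{j}e^{-c|\xi_j|}$, and then control the resulting $(2\beta_j)!$ factors by $(\beta_j!)^2$ times a geometric term. The only differences are cosmetic: you use the sharper Cauchy--Schwarz comparison $\sum_j|\xi_j|\le\sqrt{n}\,|\xi|$ where the paper uses the cruder $\sum_j|\xi_j|\le n\,|\xi|$, and you replace the paper's Stirling-based estimate $\sqrt{(2m)!}\le m!\,C^{m}$ with the exact binomial inequality $(2m)!\le 4^{m}(m!)^{2}$ --- both are cleaner but neither changes the structure of the argument.
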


\begin{proof}
First, we observe that for all $a>0$ and $\beta=(\beta_1,\dots,\beta_n)\in \mathbb N^n$,
\begin{eqnarray}\label{0229-euler-2}
 \int_{\mathbb R^n} |\xi^{2\beta}| e^{-a|\xi|} \,\mathrm d\xi
 &\leq& \int_{\mathbb R^n} |\xi^{2\beta}| e^{-a(\Sigma_{i=1}^n|\xi_i|/n)} \,\mathrm d\xi\nonumber\\
 &=&\Pi_{i=1}^n\int_{\mathbb R_{\xi_i}} |\xi_i|^{2\beta_i} e^{-a|\xi_i|/n} \,\mathrm d\xi_i
 \nonumber\\
 &=& \Pi_{i=1}^n 2 \int_0^\infty r^{2\beta_i} e^{-ar/n} \,\mathrm dr\nonumber\\
 &=& \Pi_{i=1}^n  2\left(\frac{n}{a}\right)^{2\beta_i+1} \int_0^\infty t^{2\beta_i} e^{-t} \,\mathrm dt
 \nonumber\\
 &=& 2^n\left(\frac{n}{a}\right)^{2|\beta|+n} \Pi_{i=1}^n\Gamma(2\beta_i+1)\nonumber\\
 &=&  2^n\left(\frac{n}{a}\right)^{2|\beta|+n} \Pi_{i=1}^n(2\beta_i)!,
\end{eqnarray}
where  $\Gamma(\cdot)$ denotes  the second kind of  Euler's integral or the Gamma function.

We next claim that  there is an absolute constant $C>0$ so that
\begin{eqnarray}\label{wang2.3}
 \sqrt{(2\alpha)!}  \leq  \alpha! C^{\alpha}\;\;\mbox{for all}\;\;
  \alpha\in \mathbb N^+.
\end{eqnarray}
 In fact, using the Stirling's approximation for  factorials
  \begin{eqnarray*}\label{wang2.6}
  \ln(\eta!)=\eta\ln\eta -\eta+O(\ln\eta), ~\forall\,\eta\in \mathbb N^+,
   \end{eqnarray*}
we see that for all $\alpha\in\mathbb N^+$,
\begin{eqnarray*}\label{wang2.4}
  \ln\sqrt{(2\alpha)!}&=&\frac{1}{2} \Big(2\alpha\ln(2\alpha) - 2\alpha+O\big(\ln(2\alpha)\big)\Big)
  \nonumber\\
  &=& \ln\alpha!+ \alpha\ln2+O(\ln\alpha).
\end{eqnarray*}
Thus, there exists an absolute constant $C_1>1$ so that
\begin{eqnarray*}
\sqrt{(2\alpha)!} \leq \exp\left[\ln\alpha!+\alpha\ln C_1 \right]
 = \alpha! C_1^\alpha
\;\;\mbox{for all}\;\;
\alpha\in\mathbb N^+,
\end{eqnarray*}
which leads to (\ref{wang2.3}).

 Finally, (\ref{0229-euler-1}) follows from  (\ref{0229-euler-2})
and (\ref{wang2.3}) at once.
This ends the proof of this lemma.

\end{proof}

We now present an interpolation estimate for $L^2$-functions whose Fourier transforms have
compact supports.

\begin{lemma}\label{lemma-0525-adjust}
Given ${x^\prime},\,{x^{\prime\prime}}\in\mathbb R^n$, $r_1,\,r_2>0$ and $a>0$, there exist two constants $C\triangleq C(n)>0$ and $\theta\triangleq\theta(n)\in(0,1)$ so that for each
  $f\in L^2(\mathbb{R}^n; \mathbb{C})$, with
$\hat f\in C_0^\infty(\mathbb R^n; \mathbb C)$,
\begin{eqnarray}\label{0525-adjust-1}
  \int_{B_{r_2}({x^{\prime\prime}})} |f(x)|^2 \,\mathrm dx
  \leq C r_2^n(a^{-n}+r_1^{-n})  \left(\int_{B_{r_1}({x^\prime})} |f(x)|^2 \,\mathrm dx \right)^{\theta^{p}}  \left(\int_{\mathbb R^n_\xi} |\hat f(\xi)|^2 e^{a|\xi|} \,\mathrm d\xi\right)^{1-\theta^{p}},
\end{eqnarray}
where
\begin{eqnarray*}\label{0410-a-def-p}
  p\triangleq 1+\frac{|{x^\prime}-{x^{\prime\prime}}|+r_1+r_2}{a\wedge r_1}.
\end{eqnarray*}

\end{lemma}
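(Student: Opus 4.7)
The plan is to exploit the compact support of $\hat f$ to establish that $f$ is real-analytic on $\mathbb{R}^n$ with uniform derivative bounds controlled by $\bigl(\int|\hat f|^2e^{a|\xi|}\,\mathrm d\xi\bigr)^{1/2}$, and then to apply the analytic interpolation inequality of \cite[Theorem 1.3]{AE} iteratively along a chain of overlapping balls joining $B_{r_1}({x^\prime})$ to $B_{r_2}({x^{\prime\prime}})$.

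First I would derive pointwise derivative bounds. Since $\hat f\in C_0^\infty$, for every $x_0\in\mathbb{R}^n$ and every multi-index $\alpha$ one has $\partial^\alpha f(x_0)=\frac{i^{|\alpha|}}{(2\pi)^{n/2}}\int_{\mathbb{R}^n}\xi^\alpha e^{ix_0\cdot\xi}\hat f(\xi)\,\mathrm d\xi$. Splitting with Cauchy--Schwarz via the weight $e^{\pm a|\xi|/2}$ and then invoking Lemma \ref{lemma2.1} for the resulting Euler--Gamma integral, one obtains an absolute $C$ with
\[
 |\partial^\alpha f(x_0)|\le C\,a^{-n/2}\,\alpha!\,(Cn/a)^{|\alpha|}M,\qquad M\triangleq\Big(\int_{\mathbb{R}^n}|\hat f(\xi)|^2 e^{a|\xi|}\,\mathrm d\xi\Big)^{1/2}.
\]
Hence $f$ is real analytic on all of $\mathbb{R}^n$ with a uniform radius of analyticity $\rho\triangleq c\,a/n$ and analytic majorant $M$.

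Second, I would invoke \cite[Theorem 1.3]{AE} as a one-step three-ball inequality: for concentric balls $B_{\sigma}(y)\subset B_{2\sigma}(y)$ with $2\sigma\le\rho$ and any measurable $E\subset B_{\sigma}(y)$ whose relative density $|E|/|B_{\sigma}(y)|$ is bounded below by a fixed universal constant, one has $\|f\|_{L^2(B_{\sigma}(y))}\le C(n)\,\|f\|_{L^2(E)}^{\theta_0}(\sigma^{n/2}M)^{1-\theta_0}$ with $\theta_0=\theta_0(n)\in(0,1)$. I would choose the step size $\sigma\triangleq c_0(a\wedge r_1)$ for an absolute $c_0>0$ small enough that $2\sigma\le\rho$ and that at least one such ball fits inside $B_{r_1}({x^\prime})$. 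The decisive step is then to chain these one-step estimates along a sequence $B_\sigma(y_0),\ldots,B_\sigma(y_N)$ with $y_0\in B_{r_1-\sigma}({x^\prime})$, $|y_{k+1}-y_k|<\sigma/2$, and such that $B_{r_2}({x^{\prime\prime}})$ is covered by a ball centered at $y_N$ whose radius is $O(r_2+\sigma)$ (absorbing the resulting volume ratio into the prefactor $Cr_2^n(a^{-n}+r_1^{-n})$). The construction can be carried out with
\[
 N+1\le C_1\Big(1+\frac{|{x^\prime}-{x^{\prime\prime}}|+r_1+r_2}{a\wedge r_1}\Big)=C_1 p,
\]
so that after $N$ applications of the one-step inequality the exponent on $\|f\|_{L^2(B_{r_1}({x^\prime}))}$ becomes $\theta_0^{N+1}\ge \theta^p$ upon redefining $\theta\triangleq\theta_0^{C_1}\in(0,1)$.

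The main technical obstacle is the control of accumulated multiplicative constants during the chaining: a naive iteration produces a factor $C(n)^N$, exponential in $p$, which would wreck the polynomial prefactor $r_2^n(a^{-n}+r_1^{-n})$. The standard remedy, which I plan to adopt, is to recast every one-step inequality in the equivalent Young form ``$\varepsilon\cdot(\text{global term})+\varepsilon^{-\lambda}\cdot(\text{local term})$'', chain these additive inequalities (where the constants compose additively rather than multiplicatively), and optimize $\varepsilon$ only once at the end; this is the same device already exploited in the equivalent reformulations of (\ref{0229-two-points-1}) and (\ref{0405-sch-th4-control}) after Theorems \ref{theorem3} and \ref{theorem4}. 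Combining the derivative bound of Step 1, the chain construction of Step 3, and this end-of-chain optimization then yields (\ref{0525-adjust-1}) with the advertised $p$ and with $C,\theta$ depending only on $n$.
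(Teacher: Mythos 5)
Your overall plan (uniform derivative bounds from $\hat f$'s compact support via Lemma \ref{lemma2.1}, then a chain of analytic interpolation inequalities spanning the distance $|x'-x''|+r_1+r_2$ in steps of size $\sim a\wedge r_1$) is the same in spirit as the paper's, and your step-size and step-count bookkeeping match. Where you diverge is the propagation device: the paper does not chain $n$-dimensional three-ball inequalities. It applies \cite[Theorem 1.3]{AE} exactly once (to convert the $L^2(B_{r_1}(x'))$ datum into an $L^\infty(B_{2r_0}(x'))$ bound), then for each direction $\vec v\in S^{n-1}$ passes to the one-variable function $g(s)=\frac1Mf(x'+s\vec v)$, which (by the derivative bounds) extends holomorphically to a strip $\Omega_{r_0}\subset\mathbb C$ with $\|g\|_{L^\infty(\Omega_{r_0})}\le 1$; it then chains with the Hadamard three-circle theorem on overlapping disks $D_{r_0}(kr_0)$, which after the normalization gives the constant-free iteration $\|g\|_{D_{r_0}((k+1)r_0)}\le\|g\|_{D_{r_0}(kr_0)}^{1/2}$. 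This is exactly designed to sidestep the constant-accumulation issue you flag.

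That said, the difficulty you identify is not actually present in the naive multiplicative chaining. If $A_k\triangleq\|f\|_{L^2(B_\sigma(y_k))}$ and $G$ is the global analyticity bound, one step gives $A_{k+1}\le C\,A_k^{\theta_0}G^{1-\theta_0}$; iterating and tracking exponents yields
\[
 A_N \;\le\; C^{\,1+\theta_0+\cdots+\theta_0^{N-1}}\,A_0^{\theta_0^N}\,G^{1-\theta_0^N}\;\le\; C^{\,1/(1-\theta_0)}\,A_0^{\theta_0^N}\,G^{1-\theta_0^N},
\]
because each $C$ from an earlier step is raised to a geometrically decaying power when substituted. The total multiplicative constant is therefore $C^{1/(1-\theta_0)}$, bounded independently of $N$, so the polynomial prefactor survives. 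Consequently, the Young-form rewriting you propose as a remedy is unnecessary; moreover, it is not as innocuous as you suggest: the additive form $A_{k+1}^2\le\varepsilon^{-\lambda}A_k^2+\varepsilon\,G^2$ composed $N$ times with a single $\varepsilon$ produces a factor $\varepsilon^{-N\lambda}$ on $A_0^2$ while the $G^2$ term does \emph{not} stay $O(\varepsilon)$, so a single end-of-chain optimization does not deliver the exponent $\theta_0^N$ you need. You would either have to carry $N$ distinct $\varepsilon_k$'s and optimize them jointly (essentially re-deriving the multiplicative composition), or revert to the direct exponent bookkeeping above. So the plan is sound, but the ``obstacle'' is illusory and the proposed fix, if taken literally, would not close the argument; the clean multiplicative composition (or the paper's one-dimensional Hadamard device) is the right way to finish.
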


\begin{proof}
The  proof is divided into two steps.

\vskip 5pt
\textit{Step 1. To show that there is $C\triangleq C(n)>0$ and $\theta\triangleq\theta(n)\in(0,1)$ so that (\ref{0525-adjust-1}), with $a=1$, holds for all
 ${x^\prime},\,{x^{\prime\prime}}\in\mathbb R^n$, $r_1>0$, $r_2>0$ and  $f\in L^2(\mathbb{R}^n; \mathbb{C})$, with
$\hat f\in C_0^\infty(\mathbb R^n; \mathbb C)$}
\vskip 5pt
\noindent
 Arbitrarily fix ${x^\prime},\,{x^{\prime\prime}}\in\mathbb R^n$, $r_1>0$,  $r_2>0$ and  $f\in L^2(\mathbb{R}^n; \mathbb{C})$, with
$\hat f\in C_0^\infty(\mathbb R^n; \mathbb C)$.  We first claim that there is an absolute constant $C>1$ so that
 \begin{eqnarray}\label{0405-lebeau-robbiano-2}
 \|\partial_x^\alpha f\|_{L^\infty(\mathbb R^n_x)}
  \leq (2\pi)^{-\frac{n}{2}} (2n)^{n/2} (Cn)^{|\alpha|}  \alpha!
  \sqrt{\int_{\mathbb R^n_\xi} |\hat f(\xi)|^2 e^{|\xi|} \,\mathrm d\xi}\;\;\mbox{for all}\;\;\alpha\in \mathbb N^n.
\end{eqnarray}
 In fact, since $\hat f\in C_0^\infty(\mathbb R^n; \mathbb C)$, we see that $f$ is analytic and for each multi-index $\alpha\in \mathbb N^n$,
\begin{eqnarray*}\label{0405-wangmin2.8}
 \partial_x^\alpha f(x)  = (2\pi)^{-\frac{n}{2}} \int_{\mathbb R^n_\xi}   e^{ix\cdot\xi} (i\xi)^\alpha \hat f(\xi) \,\mathrm d\xi, ~x\in \mathbb R^n.
\end{eqnarray*}
  From the above equality and   the H\"{o}lder inequality, we see that for each  multi-index $\alpha\in \mathbb N^n$,
\begin{eqnarray*}
 \|\partial_x^\alpha f\|_{L^\infty(\mathbb R^n_x)}  \leq  (2\pi)^{-\frac{n}{2}}
 \sqrt{\int_{\mathbb R^n_\xi} |\xi^{2\alpha}| e^{-|\xi|} \,\mathrm d\xi}
 \sqrt{\int_{\mathbb R^n_\xi} |\hat f(\xi)|^2 e^{|\xi|} \,\mathrm d\xi}.
\end{eqnarray*}
This, along with   Lemma \ref{lemma2.1}, leads to (\ref{0405-lebeau-robbiano-2}).

We next claim that there is $C_1\triangleq C_1(n)>0$ and $\theta_1\triangleq\theta_1(n)\in(0,1)$ (depending only on $n$) so that
\begin{eqnarray}\label{tianjin2.57}
 \int_{B_{r_2}({x^{\prime\prime}})} |f(x)|^2 \,\mathrm dx
 \leq
   \omega_n
 r_2^n (C_1 r_0^{-n/2}+1 )^2
 \left( M^2 \right)^{1-\frac{\theta_1}{2^{K}}}
 \left( \int_{B_{r_1}({x^\prime})} |f(x)|^2 \,\mathrm dx \right)^{\frac{\theta_1}{2^{K}}},
\end{eqnarray}
where
\begin{eqnarray}\label{0405-r0-depend}
 M\triangleq  \left(\frac{n}{\pi}\right)^{n/2} \sqrt{\int_{\mathbb R^n_\xi} |\hat f(\xi)|^2 e^{|\xi|} \,\mathrm d\xi},
 \;\;\;\;
 r_0\triangleq \frac{(Cn)^{-1}\wedge r_1}{5}<1
\end{eqnarray}
(with $C$ given by (\ref{0405-lebeau-robbiano-2})) and
\begin{eqnarray}\label{0404-def-K}
 K  \triangleq  \frac{|{x^\prime}-{x^{\prime\prime}}|+r_1+r_2}{r_0}.
\end{eqnarray}
Let $M$ and $r_0$ be given by (\ref{0405-r0-depend}).
 From (\ref{0405-lebeau-robbiano-2}), we see that
  \begin{eqnarray*}
 |\partial_x^\alpha f(x) |
 \leq  M  \frac{ \alpha!}{(5r_0)^{|\alpha|}},
 ~x\in B_{4r_0}({x^\prime}).
\end{eqnarray*}
Then we can apply
\cite[Theorem 1.3]{AE} where $R=2r_0$ (see also \cite{Vessella})
  to find that
 \begin{eqnarray*}
\|f\|_{L^\infty(B_{2r_0}({x^\prime}))}
 &\leq&  C_1^\prime   M^{1-\theta_1^\prime}
 \left(
  \omega_n^{1/2}
  |B_{r_0}({x^\prime})|^{-1}\|f\|_{L^1(B_{r_0}({x^\prime}))}
 \right)^{\theta_1^\prime},
\end{eqnarray*}
for some $C_1^\prime\triangleq C_1^\prime(n)>0$ and $\theta_1^\prime\triangleq\theta_1^\prime(n)\in(0,1)$, depending only on $n$.
Since $r_0<r_1$ (see (\ref{0405-r0-depend})), the above inequality, along with  the H\"{o}lder inequality, yields that
\begin{eqnarray}\label{0405-Yuanyuan2.8}
\|f\|_{L^\infty(B_{2r_0}({x^\prime}))}
 &\leq&  C_1^\prime   M^{1-\theta_1^\prime} \left(
   \omega_n^{1/2}
  |B_{r_0}({x^\prime})|^{-1/2}\|f\|_{L^2(B_{r_0}({x^\prime}))}
 \right)^{\theta_1^\prime}
    \nonumber\\
 &\leq&  C_1^\prime   M^{1-\theta_1^\prime} \left(
 r_0^{-n/2}\|f\|_{L^2(B_{r_1}({x^\prime}))}
 \right)^{\theta_1^\prime}.
\end{eqnarray}

Write $D_l(z)$ for the closed disk in the complex plane, centered at $z$ and of radius $l$. It is clear that
\begin{equation}\label{0405-disks}
 D_{r_0}((k+1)r_0) \subset D_{2r_0}(kr_0),~k=1,2,\dots.
\end{equation}
Arbitrarily fix $\vec{v}\in S^{n-1}$. Define a function $g$ over the real line in the following manner:
\begin{eqnarray}\label{0405-f-g}
 g(s)=\frac{1}{M}f({x^\prime}+s \vec{v}),~  s\in \mathbb R.
\end{eqnarray}
From (\ref{0405-f-g}) and (\ref{0405-lebeau-robbiano-2}), one can easily check that $g$ can be extended to be an analytic function over
\begin{equation}\label{0405-Omega-r0}
    \Omega_{r_0} \triangleq \{x+iy\in\mathbb C~:~ x,\,y\in\mathbb R,~|y|<5r_0\}
\end{equation}
and that the extension, still denoted by  $g$, has the property:
\begin{equation}\label{DXTJ2.25}
\|g\|_{L^\infty( \Omega_{r_0} )}\leq 1.
\end{equation}
 By    (\ref{0405-f-g}), (\ref{0405-Omega-r0}) and (\ref{DXTJ2.25}),  we see that
the function $z\mapsto  g(4r_0 z)$ is  analytic over $ D_1(0)$ and verifies that
$ \sup_{z\in D_{1}(0)}$$ |g(4r_0 z)| \leq 1$.
  Then we can apply \cite[Lemma 3.2]{AE} (to the above function) to find that
\begin{eqnarray}\label{tj-17-1}
 \sup_{z\in D_{1/2}(0)} |g(4r_0 z)| \leq C_2^{\prime} \sup_{x\in\mathbb R,~|x|\leq 1/5} |g(4r_0 x)|^{\theta_2^\prime}
\end{eqnarray}
for some $C_2^\prime\triangleq C_2^\prime(n)>0$ and $\theta_2^\prime\triangleq\theta_2^\prime(n)\in(0,1)$, depending only on $n$. Since $r_0<r_1$ (see (\ref{0405-r0-depend})), by  (\ref{tj-17-1}) and (\ref{0405-f-g}), we obtain that
\begin{eqnarray*}
 \|g\|_{L^\infty(D_{2r_0}(0))}
 \leq C_2^{\prime} \left(\frac{1}{M} \|f\|_{L^\infty(B_{2r_0}({x^\prime}))} \right)^{\theta_2^\prime}.
\end{eqnarray*}
This, along with (\ref{0405-Yuanyuan2.8}), yields that
\begin{eqnarray}\label{tj-17-2}
 \|g\|_{L^\infty(D_{2r_0}(0))}
 \leq C_2^\prime {C_1^\prime}^{\theta_2^\prime}  r_0^{-\theta_1^\prime \theta_2^\prime n/2}
 \left( \frac{1}{M} \|f\|_{L^2(B_{r_1}({x^\prime}))} \right)^{\theta_1^\prime \theta_2^\prime}.
\end{eqnarray}

Meanwhile, since $g$ is analytic over $  \Omega_{r_0}$,  we can apply    the Hadamard three-circle theorem (see for instance \cite[Theorem 3.1]{AE}) to get that for each $k=1,2,\dots$,
\begin{eqnarray}\label{TTJNNN2.29}
  \|g\|_{L^\infty(D_{2r_0}(kr_0))}
  \leq \|g\|_{L^\infty(D_{r_0}(kr_0))}^{1/2}  \|g\|_{L^\infty(D_{4r_0}(kr_0))}^{1/2}
  \leq \|g\|_{L^\infty(D_{r_0}(kr_0))}^{1/2}.
\end{eqnarray}
(Here, we used (\ref{DXTJ2.25}).) By (\ref{TTJNNN2.29}) and  (\ref{0405-disks}), we see that for each $k=1,2,\dots$,
\begin{eqnarray*}
  \|g\|_{L^\infty(D_{r_0}((k+1)r_0))} \leq \|g\|_{L^\infty(D_{2r_0}(kr_0))}
  \leq \|g\|_{L^\infty(D_{r_0}(kr_0))}^{1/2},
\end{eqnarray*}
from which,  it follows that for each $k=1,2,\dots$,
\begin{eqnarray*}
  \|g\|_{L^\infty(D_{r_0}((k+1)r_0))}
  \leq \|g\|_{L^\infty(D_{r_0}(kr_0))}^{\frac{1}{2}}
  \leq \cdots
  \leq \|g\|_{L^\infty(D_{r_0}(r_0))}^{(\frac{1}{2})^{k}}.
\end{eqnarray*}
This, along with   (\ref{0404-def-K}) and (\ref{DXTJ2.25}), yields that
\begin{eqnarray}\label{0405-step1-10}
   \|g\|_{L^\infty(\cup_{1\leq k\leq q}D_{r_0}(kr_0))}
 &=& \sup_{1\leq k\leq q} \|g\|_{L^\infty(D_{r_0}(kr_0))}
 \leq \sup_{1\leq k\leq q}  \|g\|_{L^\infty(D_{r_0}(r_0))}^{(\frac{1}{2})^{k-1}}
 \nonumber\\
 &\leq& \sup_{1\leq k\leq q}  \|g\|_{L^\infty(D_{r_0}(r_0))}^{(\frac{1}{2})^{q-1}}
 \leq \|g\|_{L^\infty(D_{r_0}(r_0))}^{(\frac{1}{2})^{K}},
\end{eqnarray}
where  $q$ is  the  integer so that
\begin{eqnarray}\label{0405-step1-r-r0-nr}
 q r_0\geq |{x^\prime}-{x^{\prime\prime}}|+r_1+r_2> (q-1) r_0.
\end{eqnarray}
Because it follows by  (\ref{0405-step1-r-r0-nr})  that
\begin{eqnarray*}
  \big[0,|{x^\prime}-{x^{\prime\prime}}|+r_1+r_2 \big] \subset \cup_{1\leq k\leq q}D_{r_0}(kr_0)
  \;\;\mbox{and}\;\;
  D_{r_0}(r_0) \subset D_{2r_0}(0),
\end{eqnarray*}
we see  from    (\ref{0405-step1-10}) that for all  $s\in \big[ 0,|{x^\prime}-{x^{\prime\prime}}|+r_1+r_2 \big]$,
\begin{eqnarray}\label{tianjin2.29}
 |g(s)|  \leq \|g\|_{L^\infty(\cup_{1\leq k\leq q}D_{r_0}(kr_0))}
 \leq \|g\|_{L^\infty(D_{r_0}(r_0))}^{(\frac{1}{2})^{K}}
 \leq \|g\|_{L^\infty(D_{2r_0}(0))}^{(\frac{1}{2})^{K}}.
\end{eqnarray}

From  (\ref{0405-f-g}), (\ref{tianjin2.29})  and (\ref{tj-17-2}), we find that for all  $s\in \big[ 0,|{x^\prime}-{x^{\prime\prime}}|+r_1+r_2 \big]$,
\begin{eqnarray*}
 & & |f({x^\prime}+s\vec{v})|
 = M|g(s)|
 \leq M \|g\|_{L^\infty(D_{2r_0}(0))}^{\frac{1}{2^{K}}}
 \nonumber\\
 &\leq&   M  \left[ C_2^\prime {C_1^\prime}^{\theta_2^\prime}  r_0^{-\theta_1^\prime \theta_2^\prime n/2}
 \left( \frac{1}{M} \|f\|_{L^2(B_{r_1}({x^\prime}))} \right)^{\theta_1^\prime \theta_2^\prime} \right]^{\frac{1}{2^{K}}}
 \nonumber\\
 &=&  \left( C_2^\prime {C_1^\prime}^{\theta_2^\prime}  r_0^{-\theta_1^\prime \theta_2^\prime n/2} \right)^{2^{-K}}
 M^{1-\frac{\theta_1^\prime \theta_2^\prime}{2^{K}}}
 \|f\|_{L^2(B_{r_1}({x^\prime}))}^{\frac{\theta_1^\prime \theta_2^\prime}{2^{K}}}.
\end{eqnarray*}
Since the above inequality holds for all $\vec v\in S^{n-1}$ and $s\in \big[ 0,|{x^\prime}-{x^{\prime\prime}}|+r_1+r_2 \big]$, we see  that
\begin{eqnarray*}
 \sup_{|x-{x^\prime}| \leq |{x^\prime}-{x^{\prime\prime}}|+r_1+r_2} |f(x)|
 \leq  \left( C_2^\prime {C_1^\prime}^{\theta_2^\prime} r_0^{-\theta_1^\prime \theta_2^\prime n/2} \right)^{2^{-K}}
 M^{1-\frac{\theta_1^\prime \theta_2^\prime}{2^{K}}}
 \|f\|_{L^2(B_{r_1}({x^\prime}))}^{\frac{\theta_1^\prime \theta_2^\prime}{2^{K}}}.
\end{eqnarray*}
Because  $r_0<1$  (see (\ref{0405-r0-depend})), it follows from the above   that
\begin{eqnarray*}
 \sup_{|x-{x^\prime}| \leq |{x^\prime}-{x^{\prime\prime}}|+r_1+r_2} |f(x)|
 &\leq&  \left( C_2^\prime {C_1^\prime}^{\theta_2^\prime} r_0^{-n/2}+1 \right)  M^{1-\frac{\theta_1^\prime \theta_2^\prime}{2^{K}}}
      \|f\|_{L^2(B_{r_1}({x^\prime}))}^{\frac{\theta_1^\prime \theta_2^\prime}{2^{K}}}.
\end{eqnarray*}
Since $B_{r_2}({x^{\prime\prime}})\subset B_{|{x^\prime}-{x^{\prime\prime}}|+r_1+r_2}({x^\prime})$, the above yields that
\begin{eqnarray*}
 \int_{B_{r_2}({x^{\prime\prime}})} |f(x)|^2 \,\mathrm dx
 &\leq&
   \omega_n
  r_2^n \sup_{|x-{x^\prime}| \leq |{x^\prime}-{x^{\prime\prime}}|+r_1+r_2} |f(x)|^2
 \nonumber\\
 &\leq&
   \omega_n
   r_2^n \left( C_2^\prime {C_1^\prime}^{\theta_2^\prime} r_0^{-n/2}+1 \right)^2  M^{2(1-\frac{\theta_1^\prime \theta_2^\prime}{2^{K}})}
      \|f\|_{L^2(B_{r_1}({x^\prime}))}^{\frac{2\theta_1^\prime \theta_2^\prime}{2^{K}}},
 \end{eqnarray*}
from which, (\ref{tianjin2.57}) follows at once.

Finally,  by (\ref{0405-r0-depend}), we see that
\begin{eqnarray*}
 M \geq  \|f\|_{L^2(B_{r_1}({x^\prime}))}
 \;\;\mbox{and}\;\;
 r_0 \geq \frac{(Cn)^{-1}}{5} (1\wedge r_1).
\end{eqnarray*}
These, combined with (\ref{tianjin2.57}) and  (\ref{0404-def-K}), yield that
\begin{eqnarray}\label{0513-bigbug}
& & \int_{B_{r_2}({x^{\prime\prime}})} |f(x)|^2 \,\mathrm dx
\nonumber\\
 &\leq& 
   \omega_n
  r_2^n (1+C_1)^2   (5Cn)^n [ (1\wedge r_1)^{-n/2}+1 ]^2  M^{2}
      \left( \frac{\|f\|_{L^2(B_{r_1}({x^\prime}))}^2}{M^2}
     \right)^{ \alpha_1}
\nonumber\\
 &\leq& 4
  \omega_n
  (1+C_1)^2 (5Cn)^n  r_2^n (r_1^{-n}+1 )  M^{2}
      \left( \frac{\|f\|_{L^2(B_{r_1}({x^\prime}))}^2}{M^2}
      \right)^{ \alpha_2 },
\end{eqnarray}
where
\begin{eqnarray*}
 \alpha_1 \triangleq \theta_1 \left(\frac{1}{2}\right)^{\frac{|{x^\prime}-{x^{\prime\prime}}|+r_1+r_2}{r_0}}
 \;\;\mbox{and}\;\;
 \alpha_2 \triangleq \min\left\{\theta_1,\left(\frac{1}{2}\right)^{5Cn} \right\}
      ^{1+\frac{|{x^\prime}-{x^{\prime\prime}}|+r_1+r_2}{1\wedge r_1}}.
\end{eqnarray*}
From (\ref{0513-bigbug}) and (\ref{0405-r0-depend}), we see that $f$ satisfies
  (\ref{0525-adjust-1}), with $a=1$.
 This proves the conclusion in Step 1.

\vskip 5pt
 \textit{Step 2. To show that there is $C\triangleq C(n)>0$ and $\theta\triangleq\theta(n)\in(0,1)$ so that (\ref{0525-adjust-1}), with $a>0$, holds for all
 ${x^\prime},\,{x^{\prime\prime}}\in\mathbb R^n$, $r_1>0$, $r_2>0$ and  $f\in L^2(\mathbb{R}^n; \mathbb{C})$, with
$\hat f\in C_0^\infty(\mathbb R^n; \mathbb C)$ }
\vskip 5pt
 \noindent
Arbitrarily fix ${x^\prime},\,{x^{\prime\prime}}\in\mathbb R^n$, $r_1>0$, $r_2>0$, $a>0$ and $f\in L^2(\mathbb{R}^n; \mathbb{C})$, with
$\hat f\in C_0^\infty(\mathbb R^n; \mathbb C)$. Define a function $g$ by
\begin{eqnarray*}
 g(x)=a^{\frac{n}{2}}f(a x),~x\in\mathbb R^n.
\end{eqnarray*}
It is clear that
\begin{eqnarray*}
g\in L^2(\mathbb{R}^n; \mathbb{C})   \;\;\mbox{and}\;\; \hat g(\xi)=a^{-\frac{n}{2}}\hat f(\xi/a),~\xi\in\mathbb R^n.
\end{eqnarray*}
Since $\hat f\in C_0^\infty(\mathbb R^n;\mathbb C)$, the above implies that $\hat g\in C_0^\infty(\mathbb R^n;\mathbb C)$.
Thus, we can use the conclusion in  Step 1 to see that there is  $C>0$ and $\theta\in (0,1)$, depending only on $n$, so that
\begin{eqnarray}\label{tianjin-2.34}
 & &  \int_{B_{\frac{r_2}{a}}(\frac{{x^{\prime\prime}}}{a})}  |g(x)|^2\,\mathrm dx
 \nonumber\\
 &\leq& C \big(\frac{r_2}{a}\big)^n  \left(1+ \big(\frac{r_1}{a} \big)^{-n}\right)  \left(\int_{B_{\frac{r_1}{a}}(\frac{{x^\prime}}{a})} |g(x)|^2 \,\mathrm dx \right)^{\theta^{p^\prime}}  \left(\int_{\mathbb R^n_\xi} |\hat g(\xi)|^2 e^{|\xi|} \,\mathrm d\xi\right)^{1-\theta^{p^\prime}},
\end{eqnarray}
where
\begin{eqnarray*}
 p^\prime = 1+\frac{|\frac{{x^\prime}}{a}-\frac{{x^{\prime\prime}}}{a}|+\frac{r_1}{a}+\frac{r_2}{a}}{1\wedge \frac{r_1}{a}}
 =1+\frac{|{x^\prime}-{x^{\prime\prime}}|+r_1+r_2}{a\wedge r_1}.
\end{eqnarray*}
 From (\ref{tianjin-2.34}),
we find that
\begin{eqnarray*}
 & & \int_{B_{r_2}({x^{\prime\prime}})}  |f(x)|^2\,\mathrm dx
  =  \int_{B_{\frac{r_2}{a}}(\frac{{x^{\prime\prime}}}{a})}  |g(x)|^2\,\mathrm dx
 \nonumber\\
 &\leq& C r_2^n(a^{-n}+r_1^{-n}) \left(\int_{B_{r_1}({x^\prime})} |f(x)|^2 \,\mathrm dx \right)^{\theta^{p^\prime}}
 \left(\int_{\mathbb R^n_\xi} |\hat f(\xi)|^2 e^{a|\xi|} \,\mathrm d\xi\right)^{1-\theta^{p^\prime}}.
\end{eqnarray*}
 This proves the conclusion in Step 2 and completes the proof of this lemma.

\end{proof}


Two consequences of  Lemma  \ref{lemma-0525-adjust} will be given in order. The first one (Corollary~\ref{lemmawang2.2}) is   another interpolation estimate for $L^2$-functions whose Fourier transforms have
compact supports, while the second one (Corollary~\ref{0331-Cor-Br-spectral-ineq}) is a kind of spectral inequality. (The name of spectral inequality in $\mathbb{R}^n$ arose from \cite{RM}, see \cite[Theorem 3.1]{RM}.)

\begin{corollary}\label{lemmawang2.2}
There exist two constants $C\triangleq C(n)>0$ and $\theta\triangleq\theta(n)\in(0,1)$ so that for each
$r>0$, $a>0$ and each $f\in L^2(\mathbb{R}^n; \mathbb{C})$, with
$\hat f\in C_0^\infty(\mathbb R^n; \mathbb C)$,
\begin{eqnarray}\label{0229-lebeau-robbiano-1}
 \int_{\mathbb R^n_x} |f(x)|^2 \,\mathrm dx
\leq C \left(1+\frac{r^n}{a^n}\right) \left(\int_{B_r^c(0)} |f(x)|^2 \,\mathrm dx \right)^{\theta^{1+\frac{r}{a}}}
 \left(\int_{\mathbb R^n_\xi} |\hat f(\xi)|^2 e^{a|\xi|} \,\mathrm d\xi\right)^{1-\theta^{1+\frac{r}{a}}}.
\end{eqnarray}

\end{corollary}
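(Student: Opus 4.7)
The plan is to reduce the global estimate on $\int_{\mathbb{R}^n}|f|^2$ to Lemma \ref{lemma-0525-adjust} by splitting $\mathbb{R}^n = B_r(0)\cup B_r^c(0)$ and bounding each piece separately. The interior piece $\int_{B_r(0)}|f|^2$ will be handled by Lemma \ref{lemma-0525-adjust} with a carefully translated ball serving as the observation region, while the exterior piece $\int_{B_r^c(0)}|f|^2$ will be handled by Plancherel.

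For the interior piece, I would apply Lemma \ref{lemma-0525-adjust} with $x''=0$, $r_2=r$, and $x'=2re_1$, $r_1=r$. Since $|y|\geq|x'|-|y-x'|\geq r$ for every $y\in B_{r_1}(x')$, we have $B_{r_1}(x')\subset B_r^c(0)$ up to a set of measure zero, hence $\int_{B_{r_1}(x')}|f|^2\leq \int_{B_r^c(0)}|f|^2$. The lemma then yields
\begin{equation*}
\int_{B_r(0)}|f|^2\,dx\leq C_0 r^n(a^{-n}+r^{-n})\Bigl(\int_{B_r^c(0)}|f|^2\Bigr)^{\theta_0^{p}}P^{1-\theta_0^{p}},
\end{equation*}
where $P:=\int_{\mathbb{R}^n}|\hat f(\xi)|^2 e^{a|\xi|}d\xi$, $p=1+\tfrac{4r}{a\wedge r}$, and $C_0=C_0(n)$, $\theta_0=\theta_0(n)\in(0,1)$ are the constants from Lemma \ref{lemma-0525-adjust}. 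The coefficient $C_0 r^n(a^{-n}+r^{-n})$ already has the shape $C(1+r^n/a^n)$ required by the corollary.

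For the exterior piece, Plancherel together with $e^{a|\xi|}\geq 1$ gives $\int_{B_r^c(0)}|f|^2\leq \int_{\mathbb{R}^n}|f|^2=\int_{\mathbb{R}^n}|\hat f|^2\leq P$. Setting $O:=\int_{B_r^c(0)}|f|^2$, this bound $O\leq P$ means $O=O^{\alpha}O^{1-\alpha}\leq O^{\alpha}P^{1-\alpha}$ for any $\alpha\in(0,1]$, so the exterior piece is already dominated by the right-hand side of the corollary for any exponent $\alpha$ I choose.

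The one subtle point, which I expect to be the main obstacle, is reconciling the lemma's exponent $\theta_0^{p}=\theta_0^{1+4r/(a\wedge r)}$ with the corollary's cleaner exponent $\theta^{1+r/a}$. Since $O\leq P$, the quantity $O^{\alpha}P^{1-\alpha}=P(O/P)^{\alpha}$ is nonincreasing in $\alpha\in(0,1]$; hence replacing $\theta_0^{p}$ by any strictly smaller exponent only weakens (and thus preserves) the inequality from the lemma. I would set $\theta:=\theta_0^{5}\in(0,1)$ and verify $\theta^{1+r/a}\leq\theta_0^{p}$ by a short case split: when $r\leq a$ one has $p=5$ and $5(1+r/a)\geq 5=p$; when $r>a$ one has $p=1+4r/a$ and $5(1+r/a)=5+5r/a\geq 1+4r/a$. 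Combining the interior and exterior bounds and absorbing all $n$-dependent constants into $C=C(n)$ yields (\ref{0229-lebeau-robbiano-1}).
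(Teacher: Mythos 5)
Your proposal is correct and matches the paper's own proof essentially step for step: both decompose $\int_{\mathbb{R}^n}|f|^2$ into the interior piece over $B_r(0)$ and the exterior piece over $B_r^c(0)$, handle the interior by applying Lemma~\ref{lemma-0525-adjust} with a translated ball of radius $r$ sitting inside $B_r^c(0)$, handle the exterior via Plancherel and $O\leq P$, and reconcile the exponent $\theta_0^{1+4r/(a\wedge r)}$ with $\theta^{1+r/a}$ using the monotonicity of $\alpha\mapsto O^\alpha P^{1-\alpha}$ together with $\theta=\theta_0^5$. The only cosmetic difference is that the paper absorbs the $(a\wedge r)^{-1}$ factor by the uniform bound $1/(a\wedge r)\leq 1/a+1/r$, whereas you verify $\theta^{1+r/a}\leq\theta_0^p$ by a case split on $r\lessgtr a$; these are equivalent computations.
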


\begin{proof}
Arbitrarily fix $r>0$, $a>0$ and $f\in L^2(\mathbb{R}^n; \mathbb{C})$, with
 $\hat f\in C_0^\infty(\mathbb R^n; \mathbb C)$.
 First of all, we claim that there exist two constants $C_1\triangleq C_1(n)>0$ and $\theta_1\triangleq\theta_1(n)\in(0,1)$ so that
 \begin{eqnarray}\label{0324-step1-luis}
 \int_{B_r(0)} |f(x)|^2 \,\mathrm dx
  \leq C_1 \left(1+\frac{r^n}{a^n}\right)
  \left( \int_{B_r^c(0)} |f(x)|^2 \,\mathrm dx \right)^{\theta_1^{1+r/a}}
   \left(\int_{\mathbb R^n_\xi} |\hat f(\xi)|^2 e^{a|\xi|} \,\mathrm d\xi\right)^{1-\theta_1^{1+r/a}}.
\end{eqnarray}
Indeed, for arbitrarily fixed  $\vec v\in S^{n-1}$, we have that
$ B_r(2r \vec v) \subset B_r^c(0)$.
Then according to Lemma \ref{lemma-0525-adjust}, where
$({x^\prime},{x^{\prime\prime}},r_1,r_2)=(2r \vec v,0,r,r)$,
there is $C_{11}\triangleq C_{11}(n)>0$ and $\theta_{11}\triangleq \theta_{11}(n)\in(0,1)$ so that
\begin{eqnarray}\label{0525-step1-luis-1}
 & & \int_{B_{r}(0)} |f(x)|^2 \,\mathrm dx
 \\
  &\leq& C_{11} r^n(a^{-n}+r^{-n})  \left(\int_{B_{r}(2r \vec v)} |f(x)|^2 \,\mathrm dx \right)^{\theta_{11}^{1+\frac{4r}{a\wedge r}}}  \left(\int_{\mathbb R^n_\xi} |\hat f(\xi)|^2 e^{a|\xi|} \,\mathrm d\xi\right)^{1-\theta_{11}^{1+\frac{4r}{a\wedge r}}}
  \nonumber\\
  &\leq& C_{11} r^n(a^{-n}+r^{-n})  \left(\int_{B_{r}^c(0)} |f(x)|^2 \,\mathrm dx \right)^{\theta_{11}^{1+\frac{4r}{a\wedge r}}}  \left(\int_{\mathbb R^n_\xi} |\hat f(\xi)|^2 e^{a|\xi|} \,\mathrm d\xi\right)^{1-\theta_{11}^{1+\frac{4r}{a\wedge r}}}.
  \nonumber
\end{eqnarray}
Since
\begin{eqnarray*}
 \frac{1}{a\wedge r} \leq \frac{1}{a} + \frac{1}{r},
 ~\theta_{11}\in(0,1)
 \;\;\mbox{and}\;\;
 \int_{B_{r}^c(0)} |f(x)|^2 \,\mathrm dx \leq \int_{\mathbb R^n_\xi} |\hat f(\xi)|^2 e^{a|\xi|} \,\mathrm d\xi,
\end{eqnarray*}
we find from (\ref{0525-step1-luis-1})  that
\begin{eqnarray*}
 \int_{B_{r}(0)} |f(x)|^2 \,\mathrm dx
  &\leq& C_{11} r^n(a^{-n}+r^{-n})
  \left(
  \frac{ \int_{B_{r}^c(0)} |f(x)|^2 \,\mathrm dx }{ \int_{\mathbb R^n_\xi} |\hat f(\xi)|^2 e^{a|\xi|} \,\mathrm d\xi }
  \right) ^{\theta_{11}^{1+\frac{4r}{a\wedge r}}}
  \int_{\mathbb R^n_\xi} |\hat f(\xi)|^2 e^{a|\xi|} \,\mathrm d\xi
  \nonumber\\
  &\leq&  C_{11} (r^na^{-n}+1)
  \left(
  \frac{ \int_{B_{r}^c(0)} |f(x)|^2 \,\mathrm dx }{ \int_{\mathbb R^n_\xi} |\hat f(\xi)|^2 e^{a|\xi|} \,\mathrm d\xi }
  \right) ^{\theta_{11}^{5(1+\frac{r}{a})}}
  \int_{\mathbb R^n_\xi} |\hat f(\xi)|^2 e^{a|\xi|} \,\mathrm d\xi
  ,
\end{eqnarray*}
which leads to (\ref{0324-step1-luis}).

Next, since
\begin{eqnarray*}
 \int_{B_r^c(0)} |f(x)|^2 \,\mathrm dx \leq \int_{\mathbb R^n_x} |f(x)|^2 \,\mathrm dx
 = \int_{\mathbb R^n_{\xi}} |\hat f(\xi)|^2 \,\mathrm d\xi
 \leq \int_{\mathbb R^n_{\xi}} |\hat f(\xi)|^2 e^{a|\xi|} \,\mathrm d\xi,
\end{eqnarray*}
we have that
\begin{eqnarray*}
 \int_{B_r^c(0)} |f(x)|^2 \,\mathrm dx
 \leq \left(\int_{B_r^c(0)} |f(x)|^2 \,\mathrm dx\right)^{\theta_1^{1+r/a}}
 \left(\int_{\mathbb R^n_{\xi}} |\hat f(\xi)|^2 e^{a|\xi|} \,\mathrm d\xi\right)^{1-\theta_1^{1+r/a}},
\end{eqnarray*}
which, together with (\ref{0324-step1-luis}), leads to (\ref{0229-lebeau-robbiano-1}). this ends the proof of this corollary.

\end{proof}


\begin{corollary}\label{0331-Cor-Br-spectral-ineq}
There exists a positive constant $C\triangleq C(n)$ so that for each $r>0$ and $N\geq 0$,
\begin{eqnarray}\label{0307-BMc-spectral-ineq}
 \int_{\mathbb R^n} |f(x)|^2  \,\mathrm dx  \leq e^{C(1+rN)} \int_{B_r^c(0)} |f(x)|^2 \,\mathrm dx
\end{eqnarray}
 for all $f\in L^2(\mathbb R^n;\mathbb C)$ with supp\,$\hat f\subset B_N(0)$.

\end{corollary}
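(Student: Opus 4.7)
The plan is to combine Corollary \ref{lemmawang2.2} with the obvious consequence of the spectral support condition, after choosing the parameter $a$ in the interpolation to make the exponents depend only on $n$. Namely, if $\mathrm{supp}\,\hat f \subset B_N(0)$, then
\begin{equation*}
\int_{\mathbb R^n_\xi} |\hat f(\xi)|^2 e^{a|\xi|}\,\mathrm d\xi \;\le\; e^{aN} \int_{\mathbb R^n_\xi} |\hat f(\xi)|^2\,\mathrm d\xi \;=\; e^{aN}\int_{\mathbb R^n} |f(x)|^2\,\mathrm dx
\end{equation*}
by Plancherel. The critical observation is that Corollary \ref{lemmawang2.2} involves the factors $1 + r^n/a^n$ and the exponent $\theta^{1+r/a}$, both of which become pure $n$-dependent constants once $a = r$ is chosen: the prefactor becomes $2$ and the interpolation exponent becomes $\theta^2$.

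With $a = r$, Corollary \ref{lemmawang2.2} applied to a sufficiently regular $f$ gives
\begin{equation*}
\int_{\mathbb R^n}|f|^2\,\mathrm dx \;\le\; 2C\,\Bigl(\int_{B_r^c(0)}|f|^2\,\mathrm dx\Bigr)^{\theta^2}\Bigl(e^{rN}\int_{\mathbb R^n}|f|^2\,\mathrm dx\Bigr)^{1-\theta^2}.
\end{equation*}
Setting $I \triangleq \|f\|_{L^2(\mathbb R^n)}^2$ and $J \triangleq \|f\|_{L^2(B_r^c(0))}^2$, the above is $I \le 2C\,J^{\theta^2}\,e^{rN(1-\theta^2)}\,I^{1-\theta^2}$, which after dividing by $I^{1-\theta^2}$ and raising to the power $1/\theta^2$ yields $I \le (2C)^{1/\theta^2}\,e^{rN(1-\theta^2)/\theta^2}\,J$. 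Since $C$ and $\theta$ depend only on $n$, the prefactor can be absorbed into $e^{C'(1+rN)}$ for a suitable $C'(n) > 0$, which is exactly the claimed bound (\ref{0307-BMc-spectral-ineq}).

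The only technical issue is that Corollary \ref{lemmawang2.2} assumes $\hat f \in C_0^\infty(\mathbb R^n;\mathbb C)$, whereas here $\hat f$ is merely an $L^2$ function with compact support. I would resolve this by mollification in frequency: pick a standard nonnegative mollifier $\psi \in C_0^\infty(\mathbb R^n)$ with $\mathrm{supp}\,\psi \subset B_1(0)$ and $\int \psi = 1$, set $\psi_\varepsilon(\xi) = \varepsilon^{-n}\psi(\xi/\varepsilon)$, and define $f_\varepsilon$ by $\widehat{f_\varepsilon} = \hat f \ast \psi_\varepsilon$. Then $\widehat{f_\varepsilon} \in C_0^\infty(\mathbb R^n;\mathbb C)$ with support in $B_{N+\varepsilon}(0)$, and by Plancherel $f_\varepsilon \to f$ in $L^2(\mathbb R^n)$, so in particular both $\|f_\varepsilon\|_{L^2(\mathbb R^n)}^2 \to \|f\|_{L^2(\mathbb R^n)}^2$ and $\|f_\varepsilon\|_{L^2(B_r^c(0))}^2 \to \|f\|_{L^2(B_r^c(0))}^2$. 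Applying the argument of the previous paragraph to $f_\varepsilon$ with $N$ replaced by $N+\varepsilon$ and letting $\varepsilon \downarrow 0$ gives the inequality. I do not anticipate any real obstacle; the only subtle point is to verify that the mollification keeps the spectral support inside a ball whose radius degenerates to $N$, which is immediate from the convolution structure.
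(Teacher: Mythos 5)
Your proposal is correct, and it rests on the same two ingredients as the paper's proof — Corollary~\ref{lemmawang2.2} plus the observation that the spectral support $\hat f \subset B_N(0)$ converts the weighted integral $\int |\hat f|^2 e^{a|\xi|}\,\mathrm d\xi$ into $e^{aN}\|f\|_{L^2}^2$ — but the execution differs in two respects, both in your favor. First, you set $a=r$ from the start, so the prefactor $C(1+r^n/a^n)$ and the exponent $\theta^{1+r/a}$ collapse at once to the $n$-dependent constants $2C$ and $\theta^2$; the paper instead first proves the case $r=1$ (taking $a=1$) and then recovers general $r$ by the dilation $g(x)=r^{n/2}f(rx)$, which carries $\mathrm{supp}\,\hat g$ to $B_{rN}(0)$. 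Your choice avoids that second scaling step altogether. Second, your density argument via frequency mollification ($\widehat{f_\varepsilon}=\hat f * \psi_\varepsilon$) is cleaner than the paper's: because you use the spectral support of $\widehat{f_\varepsilon}\subset B_{N+\varepsilon}(0)$ to kill the exponential weight \emph{before} passing to the limit, you only need $L^2$-convergence of $f_\varepsilon\to f$ (Plancherel), whereas the paper has to build a bespoke approximating sequence that converges in the weighted norm $L^2(e^{|\xi|}\,\mathrm d\xi)$ in order to justify taking limits in the weighted integral on the right-hand side of \eqref{0229-lebeau-robbiano-1}. The convolution does enlarge the spectral radius to $N+\varepsilon$, as you note, but that degenerates harmlessly in the limit. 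The bookkeeping at the end (dividing by $I^{1-\theta^2}$, raising to the $1/\theta^2$, absorbing $(2C)^{1/\theta^2}$ and $(1-\theta^2)/\theta^2$ into a single $C'(n)$) is right, and the case $f=0$ is trivially fine.
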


\begin{proof}
The proof is divided into the following two steps:

\vskip 5pt
\textit{Step 1. To show that there is $C\triangleq C(n)>0$ so that (\ref{0307-BMc-spectral-ineq}), with $r=1$, holds for all  $N\geq 0$ and $f\in L^2(\mathbb R^n;\mathbb C)$, with supp\,$\hat f\subset B_N(0)$}

\vskip 5pt

Arbitrarily fix $N\geq 0$ and then fix  $f\in L^2(\mathbb R^n;\mathbb C)$, with supp\,$\hat f\subset B_N(0)$.  By a standard density argument, we can apply Corollary \ref{lemmawang2.2} to verify that
there is $C_1\triangleq C_1(n)>0$ and $\theta_1 \triangleq \theta_1(n)\in(0,1)$ (only depending on $n$) so that
\begin{eqnarray}\label{JJTT2.39}
\int_{\mathbb R^n_x} |f(x)|^2 \,\mathrm dx
\leq C_1 \left(\int_{B_1^c(0)} |f(x)|^2 \,\mathrm dx \right)^{\theta_1}
 \left(\int_{\mathbb R^n_\xi} |\hat f(\xi)|^2 e^{|\xi|} \,\mathrm d\xi\right)^{1-\theta_1}.
\end{eqnarray}
Indeed, since $\hat f(\xi) e^{|\xi|/2}\in L^2(\mathbb{R}^n;\mathbb{C})$, we can choose $\{g_k\} \subset C_0^\infty (\mathbb R^n;\mathbb C)$,
 with supp\,$g_k\subset B_{k}(0)$, so that
\begin{eqnarray}\label{tianjin-2.39}
 \lim_{k\rightarrow\infty} \int_{\mathbb R^n_\xi} |g_k(\xi) - \hat f(\xi) e^{|\xi|/2} |^2  \,\mathrm d\xi =0.
\end{eqnarray}
 Meanwhile, since  supp\,$g_k\subset B_k(0)$ for all $k\in\mathbb N^+$, we can find  $\{h_k\} \subset C_0^\infty (\mathbb R^n;\mathbb C)$, with supp\,$h_k\subset B_{k+1}(0)$, so that
\begin{eqnarray*}
 \int_{\mathbb R^n_\xi} |h_k(\xi) -g_k(\xi) e^{-|\xi|/2} |^2  \,\mathrm d\xi  \leq \frac{1}{k} e^{-k-1}
 \;\;\mbox{for each}\;\; k\in\mathbb N^+.
\end{eqnarray*}
This implies that for each $k\in\mathbb N^+$,
\begin{eqnarray*}
 \int_{\mathbb R^n_\xi} |h_k(\xi)e^{|\xi|/2} - g_k(\xi)  |^2  \,\mathrm d\xi
 = \int_{B_{k+1}(0)} |h_k(\xi) -g_k(\xi) e^{-|\xi|/2} |^2 e^{|\xi|}  \,\mathrm d\xi \leq 1/k,
\end{eqnarray*}
which, together with (\ref{tianjin-2.39}), yields that
\begin{eqnarray}\label{JJJTT2.41}
 \lim_{k\rightarrow\infty} \int_{\mathbb R^n_\xi} |h_k(\xi) - \hat f(\xi)  |^2 e^{|\xi|} \,\mathrm d\xi =0.
\end{eqnarray}
Let $\{f_k\}\subset L^2(\mathbb R^n;\mathbb C)$ so that
\begin{eqnarray*}
  \hat f_k(\xi)  = h_k(\xi) ,~\xi\in\mathbb R^n
  \;\;\mbox{for each}\;\;
  k\in\mathbb N^+.
\end{eqnarray*}
Then by (\ref{JJJTT2.41}), we find that
\begin{eqnarray*}
 \{\hat f_k\} \subset C_0^\infty(\mathbb R^n;\mathbb C),
 ~\lim_{k\rightarrow\infty} \int_{\mathbb R^n_\xi} |\hat f_k(\xi) - \hat f(\xi)  |^2 e^{|\xi|} \,\mathrm d\xi =0
 \;\;\mbox{and}\;\;
 \lim_{k\rightarrow\infty} \|f_k-f\|_{L^2(\mathbb R^n;\mathbb C)}=0.
\end{eqnarray*}
From these, we can apply  Corollary \ref{lemmawang2.2} (where $a=1$ and $r=1$) to get (\ref{JJTT2.39}).

 Since supp\,$\hat f\subset B_N(0)$, it follows from (\ref{JJTT2.39}) that
\begin{eqnarray*}
  \int_{\mathbb R^n_x} |f(x)|^2 \,\mathrm dx
\leq C_1 \left(\int_{B_1^c(0)} |f(x)|^2 \,\mathrm dx \right)^{\theta_1}
 e^{(1-\theta_1)N} \left(\int_{\mathbb R^n_\xi} |\hat f(\xi)|^2  \,\mathrm d\xi\right)^{1-\theta_1}.
\end{eqnarray*}
Since the Fourier transform is an isometry, we obtain from the above inequality that
\begin{eqnarray*}
  \int_{\mathbb R^n_x} |f(x)|^2 \,\mathrm dx
  \leq  C_1^{1/\theta_1} e^{(1-\theta_1)N/\theta_1} \int_{B_1^c(0)} |f(x)|^2 \,\mathrm dx
  = e^{[\ln C_1+(1-\theta_1)N]/\theta_1} \int_{B_1^c(0)} |f(x)|^2 \,\mathrm dx.
\end{eqnarray*}
Hence,  (\ref{0307-BMc-spectral-ineq}), with  $r=1$, is true.

\vskip 5pt
\textit{Step 2. To show that there is $C\triangleq C(n)>0$ so that (\ref{0307-BMc-spectral-ineq}), with $r>0$, holds for all  $N\geq 0$ and $f\in L^2(\mathbb R^n;\mathbb C)$, with supp\,$\hat f\subset B_N(0)$}

\vskip 5pt

 For this purpose, arbitrarily fix $N\geq 0$ and $r>0$. Then fix  $f\in L^2(\mathbb R^n;\mathbb C)$ with supp\,$\hat f\subset B_N(0)$. Define a function $g$ by
\begin{eqnarray}\label{0331-cor2.7-1}
 g(x)= r^{n/2} f(rx),~x\in\mathbb R^n.
\end{eqnarray}
One can easily check that
\begin{eqnarray}\label{0331-cor2.7-1-1}
 \hat g(\xi) = (2\pi)^{-n/2} \int_{\mathbb R^n_x} r^{n/2} f(rx) e^{-i x\cdot\xi} \,\mathrm dx
 = r^{-n/2} \hat f(\xi/r)\;\;\mbox{for a.e.}\;\;\xi\in\mathbb R^n.
\end{eqnarray}
Since  supp\,$\hat f\subset B_N(0)$, we see from (\ref{0331-cor2.7-1-1}) that  supp\,$\hat g\subset B_{rN}(0)$. Thus,
 according to the conclusion in Step 1, there is $C\triangleq C(n)$ so that (\ref{0307-BMc-spectral-ineq}),
 with $(f,r,N)$ replaced by $(g,1,rN)$, is true. That is,
 \begin{eqnarray*}
 \int_{\mathbb R^n} |g(x)|^2 \,\mathrm dx \leq e^{C(1+rN)} \int_{B_1^c(0)} |g(x)|^2 \,\mathrm dx.
\end{eqnarray*}
This, along with (\ref{0331-cor2.7-1}) and (\ref{0331-cor2.7-1-1}), yields that
\begin{eqnarray*}
 & & \int_{\mathbb R^n} |f(x)|^2 \,\mathrm dx
  =  \int_{\mathbb R^n} |g(x)|^2 \,\mathrm dx
  \nonumber\\
  &\leq& e^{C(1+rN)} \int_{B_1^c(0)} |g(x)|^2 \,\mathrm dx
  = e^{C(1+rN)} \int_{B_r^c(0)} |f(x)|^2 \,\mathrm dx.
\end{eqnarray*}
Hence, (\ref{0307-BMc-spectral-ineq}), with  $r>0$ is true. We end the proof of this corollary.

\end{proof}

\subsection{Proofs of Theorem \ref{theorem3} and Theorem \ref{theorem4}}

We first prove  Theorem \ref{theorem3}.

\begin{proof}[Proof of Theorem \ref{theorem3}]
Throughout this proof, we  arbitrarily fix
$$
r>0,\; a>0,\; T>0\;\;\mbox{and}\;\;u_0\in C_0^\infty(\mathbb R^n;\mathbb C).
$$
   Define a function $ f$ as follows:
 \begin{eqnarray}\label{0229-two-points-3}
  f(x)\triangleq   e^{-i|x|^2/4T} u(x,T;u_0),~x\in \mathbb R^n.
 \end{eqnarray}
 From (\ref{0229-two-points-3}) and   (\ref{0229-sch-th1-4}), we find  that
  \begin{eqnarray*}
  (2iT)^{n/2} f(x) =  \widehat{e^{i|\xi|^2/4T}u_0(\xi)} (x/2T),~x\in\mathbb R^n.
 \end{eqnarray*}
 This yields  that for a.e. $\xi\in\mathbb R^n$,
 \begin{eqnarray}\label{0229-two-points-4}
  \hat f(\xi) &=& \frac{1}{(2\pi)^{n/2}}  \int_{\mathbb R^n_x} f(x) e^{-i x\cdot\xi} \,\mathrm dx
  = \frac{(2iT)^{-n/2}}{(2\pi)^{n/2}}  \int_{\mathbb R^n_x} (2iT)^{n/2} f(x) e^{-i x\cdot\xi} \,\mathrm dx
  \nonumber\\
  &=& \frac{(2iT)^{-n/2}}{(2\pi)^{n/2}} (2T)^{n} \int_{\mathbb R^n_x} (2iT)^{n/2} f(2Tx) e^{-i x\cdot(2T\xi)} \,\mathrm dx
  \nonumber\\
  &=& \frac{(-2iT)^{n/2}}{(2\pi)^{n/2}}  \int_{\mathbb R^n_x} \widehat{e^{i|\eta|^2/4T}u_0(\eta)} (x) e^{i x\cdot(-2T\xi)} \,\mathrm dx
  \nonumber\\
  &=& (-2iT)^{n/2}  e^{i|\eta|^2/4T}u_0(\eta)|_{\eta=-2T\xi}  = (-2iT)^{n/2} e^{iT|\xi|^2} u_0(-2T\xi).
 \end{eqnarray}

  We are going to prove  the conclusions (i)-(iii) in the theorem one by one.

 \vskip 5pt
We first show  the conclusion (i) of Theorem \ref{theorem3}. By (\ref{0229-two-points-3}), we have that
 \begin{eqnarray*}
 \int_{\mathbb R^n_x} |u(x,T;u_0)|^2 \,\mathrm dx
  = \int_{\mathbb R^n_x} |f(x)|^2 \,\mathrm dx.
  \end{eqnarray*}
 Then by Corollary \ref{lemmawang2.2}, where $a$ is replaced by $2Ta$, we find  that
 \begin{eqnarray*}
 \int_{\mathbb R^n_x} |u(x,T;u_0)|^2 \,\mathrm dx
   &\leq& C \left(1+\frac{r^n}{(2Ta)^n}\right) \left( \frac{ \int_{B_r^c(0)} |f(x)|^2 \,\mathrm dx }{\int_{\mathbb R^n_\xi} |\hat f(\xi)|^2 e^{2Ta|\xi|} \,\mathrm d\xi} \right)^{\theta^{1+\frac{r}{2Ta}}}
  \int_{\mathbb R^n_\xi} |\hat f(\xi)|^2 e^{2Ta|\xi|} \,\mathrm d\xi
 \nonumber\\
 &\leq& C \left(1+\frac{r^n}{(Ta)^n}\right) \left( \frac{ \int_{B_r^c(0)} |f(x)|^2 \,\mathrm dx }{\int_{\mathbb R^n_\xi} |\hat f(\xi)|^2 e^{2Ta|\xi|} \,\mathrm d\xi} \right)^{\theta^{1+\frac{r}{Ta}}}
  \int_{\mathbb R^n_\xi} |\hat f(\xi)|^2 e^{2Ta|\xi|} \,\mathrm d\xi,
 \end{eqnarray*}
 for some $C\triangleq C(n)>0$ and $\theta\triangleq \theta(n)\in(0,1)$ (depending only on $n$).
 From this, (\ref{0229-two-points-3}) and (\ref{0229-two-points-4}), after some computations, we obtain that
 \begin{eqnarray*}
 & & \int_{\mathbb R^n_x} |u(x,T;u_0)|^2 \,\mathrm dx
 \nonumber\\
 &\leq& C \left(1+\frac{r^n}{(aT)^n}\right) \left(\int_{B_r^c(0)} |u(x,T;u_0)|^2 \,\mathrm dx \right)^{\theta^{1+\frac{r}{aT}}}
 \left(\int_{\mathbb R^n_\xi} |u_0(\xi)|^2 e^{a|\xi|} \,\mathrm d\xi\right)^{1-\theta^{1+\frac{r}{aT}}}.
 \end{eqnarray*}
 The above inequality, together with  the conversation law of the Schr\"{o}dinger equation,  leads to (\ref{0229-two-points-1}). Hence,
 the conclusion (i) of the theorem is true.

 \vskip 5pt
 We next show the conclusion (ii) of Theorem \ref{theorem3}. Arbitrarily fix $\beta>1$ and $\gamma\in (0,1)$. We divide the  proof  into the following two steps:

 \vskip 5pt
  \textit{Step 1. To show that there exists  $C\triangleq C(n)$ so that
 \begin{eqnarray}\label{0331-th1.2-ii-step1}
  \int_{\mathbb R^n_x} |f(x)|^2 \,\mathrm dx
  \leq C e^{ \left(\frac{C^{\beta}r^{\beta}}{aT^\beta(1-\gamma)}\right)^{\frac{1}{\beta-1}} }
 \left(\int_{B_r^c(0)}|f(x)|^2 \,\mathrm dx\right)^{\gamma}
 \left(\int_{\mathbb R^n_\xi}e^{ a|2T \xi|^\beta} |\hat f(\xi)|^2 \,\mathrm d\xi\right)^{1-\gamma}
 \end{eqnarray}}
Indeed,  for an arbitrarily fixed $N\geq0$, we  define two functions $g_1$ and $g_2$ in $L^2(\mathbb R^n;\mathbb C)$ so that
 \begin{eqnarray*}
  \hat g_1 \triangleq \chi_{B_N(0)} \hat f
  \;\;\mbox{and}\;\;
  \hat g_2 \triangleq \chi_{B_N^c(0)} \hat f.
 \end{eqnarray*}
 It is clear that $f=g_1+g_2$ in $L^2(\mathbb R^n;\mathbb C)$. Then by applying Corollary \ref{0331-Cor-Br-spectral-ineq} to $g_1$, we obtain that
 \begin{eqnarray}\label{tianjin2.34}
  \int_{\mathbb R^n_x} |f(x)|^2 \,\mathrm dx
  &\leq& 2 \int_{\mathbb R^n_x} |g_1(x)|^2 \,\mathrm dx + 2 \int_{\mathbb R^n_x} |g_2(x)|^2 \,\mathrm dx
  \nonumber\\
  &\leq& 2 e^{C(1+rN)}\int_{B_r^c(0)} |g_1(x)|^2 \,\mathrm dx + 2 \int_{\mathbb R^n_x} |g_2(x)|^2 \,\mathrm dx
  \nonumber\\
  &\leq& 4 e^{C(1+rN)}\int_{B_r^c(0)} \big(|f(x)|^2+|g_2(x)|^2\big) \,\mathrm dx + 2 \int_{\mathbb R^n_x} |g_2(x)|^2 \,\mathrm dx
  \nonumber\\
  &\leq& 4 e^{C(1+rN)}\int_{B_r^c(0)} |f(x)|^2 \,\mathrm dx
   + 6 e^{C(1+rN)} \int_{\mathbb R^n_x} |g_2(x)|^2 \,\mathrm dx,
 \end{eqnarray}
 for some $C>0$, depending only on $n$.
 Meanwhile, since  the Fourier transform is an isometry, we have that
 \begin{eqnarray*}
  \int_{\mathbb R^n_x} |g_2(x)|^2 \,\mathrm dx
  &=& \int_{\mathbb R^n_\xi} |\hat{g_2}(\xi)|^2 \,\mathrm d\xi
  = \int_{\mathbb R^n_\xi} |\chi_{B_N^c(0)} (\xi) \hat f(\xi)|^2 \,\mathrm d\xi
  \nonumber\\
  &=& e^{-a (2TN)^\beta } \int_{\mathbb R^n_\xi} |\chi_{B_N^c(0)} (\xi) \hat f(\xi)|^2 e^{a(2TN)^\beta }\,\mathrm d\xi.
 \end{eqnarray*}
 This, along with (\ref{tianjin2.34}), yields  that
 \begin{eqnarray}\label{0331-th1.2-ii-step1-1}
  \int_{\mathbb R^n_x} |f(x)|^2 \,\mathrm dx
  &\leq& 4 e^{C(1+rN)}\int_{B_r^c(0)} |f(x)|^2 \,\mathrm dx
   + 6 e^{C(1+rN)-a(2TN)^\beta } \int_{\mathbb R^n_\xi} |\hat f(\xi)|^2 e^{a|2T\xi|^\beta}\,\mathrm d\xi.
 \end{eqnarray}
 Since it follows  from the Young inequality that
 \begin{eqnarray*}
  CrN&=& \Big[ Cr \big((1-\gamma) a(2T)^\beta \big)^{-\frac{1}{\beta}} \Big]   \Big[ \big((1-\gamma) a (2T)^\beta \big)^{\frac{1}{\beta}} N \Big]
  \nonumber\\
  &\leq&  (1-\frac{1}{\beta}) \Big[ Cr \big( (1-\gamma) a(2T)^\beta \big)^{-\frac{1}{\beta}} \Big]^{\frac{\beta}{\beta-1}}  +
       \frac{1}{\beta} \Big[ \big((1-\gamma) a(2T)^\beta \big)^{\frac{1}{\beta}} N \Big]^\beta
       \nonumber\\
  &\leq&    \Big[ (Cr)^{\beta}/  \big( a(2T)^\beta (1-\gamma) \big) \Big]^{\frac{1}{\beta-1}}
       + (1-\gamma) a (2TN)^\beta,
 \end{eqnarray*}
 we get from (\ref{0331-th1.2-ii-step1-1}) that
 \begin{eqnarray*}
 & & \int_{\mathbb R^n_x} |f(x)|^2 \,\mathrm dx
 \nonumber\\
  &\leq& 6 e^{C+\left(\frac{C^{\beta}r^{\beta}}{a(2T)^\beta(1-\gamma)}\right)^{\frac{1}{\beta-1}}}
  \Big( e^{(1-\gamma) a (2TN)^\beta }  \int_{B_r^c(0)} |f(x)|^2 \,\mathrm dx
   + e^{-\gamma a (2TN)^\beta }  \int_{\mathbb R^n_\xi} |\hat f(\xi)|^2 e^{a |2T\xi|^\beta}\,\mathrm d\xi
   \Big).
 \end{eqnarray*}
 Since $N$ was arbitrarily taken from $[0,\infty)$, the above indicates that for all $\varepsilon\in(0,1)$,
 \begin{eqnarray*}
  \int_{\mathbb R^n_x} |f(x)|^2 \,\mathrm dx
  &\leq& 6 e^{C+\left(\frac{C^{\beta}r^{\beta}}{a(2T)^\beta (1-\gamma)}\right)^{\frac{1}{\beta-1}}}
  \Big( \varepsilon^{-(1-\gamma)}  \int_{B_r^c(0)} |f(x)|^2 \,\mathrm dx
   + \varepsilon^{\gamma}  \int_{\mathbb R^n_\xi} |\hat f(\xi)|^2
   e^{a |2T\xi|^\beta}\,\mathrm d\xi
   \Big).
 \end{eqnarray*}
 One can directly check that the above inequality holds for all $\varepsilon>0$.
Minimizing it  w.r.t. $\varepsilon>0$  leads to (\ref{0331-th1.2-ii-step1}). Here, we used the inequality:
$$
\inf_{\varepsilon>0} \left( \varepsilon^{-(1-\gamma)} A  + \varepsilon^\gamma B \right) \leq 2 A^{\gamma} B^{1-\gamma}
\;\;\mbox{for all}\;\;
A,\,B\geq 0.
$$
 This ends the proof of Step 1.

 \vskip 5pt
 \textit{Step 2. To prove (\ref{0229-two-points-beta})}

 \noindent
  From (\ref{0229-two-points-3}), (\ref{0331-th1.2-ii-step1}) and (\ref{0229-two-points-4}), after some computations, we see that
 \begin{eqnarray*}
  & & \int_{\mathbb R^n_x} |u(x,T;u_0)|^2 \,\mathrm dx
  = \int_{\mathbb R^n_x} |f(x)|^2 \,\mathrm dx
  \nonumber\\
  &\leq& C e^{ \left(\frac{C^{\beta}r^{\beta}}{a T^\beta(1-\gamma)}\right)^{\frac{1}{\beta-1}} }
 \left(\int_{B_r^c(0)}|f(x)|^2 \,\mathrm dx\right)^{\gamma}
 \left(\int_{\mathbb R^n_\xi}e^{ a |2T\xi|^\beta} |\hat f(\xi)|^2 \,\mathrm d\xi\right)^{1-\gamma}
 \nonumber\\
 &\leq& C e^{ \left(\frac{C^{\beta}r^{\beta}}{aT^\beta(1-\gamma)}\right)^{\frac{1}{\beta-1}} }
  \left(\int_{B_r^c(0)} |u(x,T;u_0)|^2 \,\mathrm dx \right)^{\gamma}
 \left(\int_{\mathbb R^n_\xi} |u_0(\xi)|^2 e^{a|\xi|^\beta} \,\mathrm d\xi\right)^{1-\gamma},
 \end{eqnarray*}
which, along with  the conversation law of the Schr\"{o}dinger equation, leads to  (\ref{0229-two-points-beta}). This ends the proof of the conclusion (ii).

 \vskip 5pt
 (iii)
   By contradiction, suppose that the conclusion (iii) was not true. Then there would exist $\hat r>0$, $\hat a>0$,  $\widehat T>0$,  $\hat \gamma\in(0,1)$, $\hat C>0$
    and an increasing function $\hat\alpha(s)$ defined over $[0,\infty)$, with $\lim_{s\rightarrow\infty} s^{-1} \hat\alpha(s)=0$, so that for each $v_0\in C_0^\infty(\mathbb R^n;\mathbb C)$,  the solution of (\ref{0229-sch-1}) satisfies that
\begin{eqnarray}\label{tianjin2.37}
 \int_{\mathbb R^n}|v_0(x)|^2 \,\mathrm dx
 \leq \hat C\left(\int_{B_{\hat r}^c(0)}|u(x,\widehat T;v_0)|^2 \,\mathrm dx\right)^{\hat\gamma}
 \left(\int_{\mathbb R^n}e^{\hat a \hat\alpha(|x|)} |v_0(x)|^2 \,\mathrm dx\right)^{1-\hat\gamma}.
 \end{eqnarray}
 Arbitrarily fix $g\in L^2(\mathbb R^n;\mathbb C)$ with $\hat g \in C_0^\infty(\mathbb R^n;\mathbb C)$. Define $v_{0,g}\in C_0^\infty(\mathbb R^n;\mathbb C)$ in the following manner:
 \begin{eqnarray}\label{tianjin2.38}
  \hat g(\xi) = (-2i\widehat T)^{n/2} e^{i\widehat T|\xi|^2} v_{0,g}(-2\widehat T\xi),~\xi\in\mathbb R^n.
 \end{eqnarray}
  One can easily check that
  \begin{eqnarray}\label{tianjin2.39}
   g(x) =  e^{-i|x|^2/4\widehat T} u(x,\widehat T;v_{0,g}),~x\in\mathbb R^n.
 \end{eqnarray}
 Indeed, let $f_g$ verify that
 \begin{eqnarray}\label{tianjin2.40}
   f_g(x) =  e^{-i|x|^2/4\widehat T} u(x,\widehat T;v_{0,g}),~x\in\mathbb R^n.
 \end{eqnarray}
 Then by (\ref{0229-two-points-3}), (\ref{0229-two-points-4}) (where $(T,u_0)=(\widehat T,v_{0,g})$) and (\ref{tianjin2.38}), we find that
 \begin{eqnarray*}
  \hat f_g(\xi)=  (-2i\widehat T)^{n/2} e^{i\widehat T|\xi|^2} v_{0,g}(-2\widehat T\xi)
  =  \hat g(\xi),~\xi\in\mathbb R^n,
 \end{eqnarray*}
 which implies that $f_g=g$. This, along with (\ref{tianjin2.40}), leads to (\ref{tianjin2.39}).

  By (\ref{tianjin2.39}), the conversation law (for the Schr\"{o}dinger equation), (\ref{tianjin2.37}) and   (\ref{tianjin2.38}), we get that
 \begin{eqnarray*}
  \int_{\mathbb R^n_x} |g(x)|^2 \,\mathrm dx
 &=& \int_{\mathbb R^n_x} |u(x,\widehat T;v_{0,g})|^2 \,\mathrm dx = \int_{\mathbb R^n_x} |v_{0,g}(x)|^2 \,\mathrm dx
 \nonumber\\
 &\leq& \hat C\left(\int_{B_{\hat r}^c(0)}|u(x,\widehat T;v_{0,g})|^2 \,\mathrm dx\right)^{\hat\gamma}
 \left(\int_{\mathbb R^n}e^{\hat a \hat\alpha(|x|)} |v_{0,g}(x)|^2 \,\mathrm dx\right)^{1-\hat\gamma}
 \nonumber\\
 &=& \hat C\left(\int_{B_{\hat r}^c(0)}|g(x)|^2 \,\mathrm dx\right)^{\hat\gamma}
 \left(\int_{\mathbb R^n_\xi}e^{\hat a \hat\alpha(2\widehat T|\xi|)} |\hat g(\xi)|^2 \,\mathrm d\xi\right)^{1-\hat\gamma}.
 \end{eqnarray*}
  By this, using a standard density argument, we can show  that for each $g\in L^2(\mathbb R^n;\mathbb C)$  with supp\,$\hat g$ compact,
 \begin{eqnarray*}
  \int_{\mathbb R^n_x} |g(x)|^2 \,\mathrm dx
 &\leq& \hat C\left(\int_{B_{\hat r}^c(0)}|g(x)|^2 \,\mathrm dx\right)^{\hat\gamma}
 \left(\int_{\mathbb R^n_\xi}e^{\hat a \hat\alpha(2\widehat T|\xi|)} |\hat g(\xi)|^2 \,\mathrm d\xi\right)^{1-\hat\gamma}.
 \end{eqnarray*}
 Since $\hat\alpha(\cdot)$ is increasing and because  the Fourier transform is an isometry, the above yields that   that for each $N\geq 1$ and each $g\in L^2(\mathbb R^n;\mathbb C)$  with supp\,$\hat g\subset B_N(0)$,
 \begin{eqnarray}\label{0331-th1.2-iii-1}
  \int_{\mathbb R^n_x} |g(x)|^2 \,\mathrm dx
  &\leq& \hat C \left(\int_{B_{\hat r}^c(0)}|g(x)|^2 \,\mathrm dx\right)^{\hat\gamma}
 \left(\int_{\mathbb R^n_\xi}e^{\hat a \hat\alpha(2\widehat T N)} |\widehat {g}(\xi)|^2 \,\mathrm d\xi\right)^{1-\hat\gamma}
 \nonumber\\
 &=& \hat C e^{(1-\hat\gamma)\hat a \hat\alpha(2\widehat T N)} \left(\int_{B_{\hat r}^c(0)}|g(x)|^2 \,\mathrm dx\right)^{\hat\gamma}
 \left(\int_{\mathbb R^n_x} |g(x)|^2 \,\mathrm dx\right)^{1-\hat\gamma}.
 \end{eqnarray}

    Two observations are given in order: First, according to \cite[Proposition 3.4]{RM}, there is $C_0>0$ and $N_0>0$ so that for each $N\geq N_0$, there is  $f_N\in L^2(\mathbb R^n;\mathbb C)\setminus\{0\}$ with supp $\widehat {f_N}\subset B_N(0)$ such that
 \begin{eqnarray*}
  e^{C_0 N}  \int_{B_{\hat r}^c(0)}|f_N(x)|^2 \,\mathrm dx
  \leq \int_{\mathbb R^n_x} |f_N(x)|^2 \,\mathrm dx.
 \end{eqnarray*}
 Second, (\ref{0331-th1.2-iii-1})  implies that $N\geq 1$ and each $g\in L^2(\mathbb R^n;\mathbb C)$  with supp\,$\hat g\subset B_N(0)$,
 \begin{eqnarray*}
  \int_{\mathbb R^n_x} |g(x)|^2 \,\mathrm dx
  \leq \hat C^{\frac{1}{\hat\gamma}} e^{ \frac{1-\hat\gamma}{\hat\gamma} \hat a \hat\alpha(2\widehat T N) } \int_{B_{\hat r}^c(0)}|g(x)|^2 \,\mathrm dx.
 \end{eqnarray*}
 These  two observations show that for each $N\geq N_0$,
 \begin{eqnarray*}
 e^{C_0 N}  \leq \hat C^{\frac{1}{\hat\gamma}} e^{ \frac{1-\hat\gamma}{\hat\gamma} \hat a \hat\alpha(2\widehat T N) },
 \end{eqnarray*}
from which,  it follows that
 \begin{eqnarray*}
  0<  \frac{\hat\gamma C_0}{2(1-\hat\gamma) \hat a \widehat T}
  \leq \lim_{N\rightarrow\infty}
  \frac{\hat\alpha(2\widehat T N)}{2\widehat T N}.
 \end{eqnarray*}
 This  leads to a contradiction, since $\lim_{s\rightarrow\infty} s^{-1} \hat \alpha(s)=0$. Hence,  the conclusion (iii) is true.

 \vskip 5pt
 In summary, we finish the proof of this theorem.

\end{proof}

We are on the position to prove Theorem \ref{theorem4}.

\begin{proof}[Proof of Theorem \ref{theorem4}]

\vskip 5pt
 Arbitrarily fix ${x^\prime},\,{x^{\prime\prime}}\in\mathbb R^n$, $r_1,\,r_2>0$, $a>0$, $T>0$ and  $u_0\in C_0^\infty(\mathbb R^n;\mathbb C)$.  Define a function $ f$ as follows:
 \begin{eqnarray}\label{0410-two-points-3}
  f(x)\triangleq   e^{-i|x|^2/4T} u(x,T;u_0),~x\in \mathbb R^n.
 \end{eqnarray}
By the same  way to get  (\ref{0229-two-points-4}), we obtain that
 \begin{eqnarray*}
  \hat f(\xi) =  (-2iT)^{n/2} e^{iT|\xi|^2} u_0(-2T\xi),~\xi\in\mathbb R^n.
 \end{eqnarray*}
This, along with  (\ref{0410-two-points-3}) and Lemma \ref{lemma-0525-adjust} (where $a$ is replaced by $2aT$), yields that
 \begin{eqnarray}\label{goubuli2.55}
  & & \int_{B_{r_2}({x^{\prime\prime}})} |u(x,T;u_0)|^2 \,\mathrm dx
  = \int_{B_{r_2}({x^{\prime\prime}})} |f(x)|^2 \,\mathrm dx
  \nonumber\\
  &\leq& C_1 r_2^n \big((2aT)^{-n}+r_1^{-n}\big)  \left(\int_{B_{r_1}({x^\prime})} |f(x)|^2 \,\mathrm dx \right)^{\theta_1^{p_1}}  \left(\int_{\mathbb R^n_\xi} |\hat f(\xi)|^2 e^{2aT|\xi|} \,\mathrm d\xi\right)^{1-\theta_1^{p_1}}
 \nonumber\\
 &\leq& C_1 r_2^n \big((aT)^{-n}+r_1^{-n}\big)
 \left(\int_{B_{r_1}({x^\prime})} |u(x,T;u_0)|^2 \,\mathrm dx \right)^{\theta_1^{p_1}}  \left(\int_{\mathbb R^n_x} |u_0(x)|^2 e^{a|x|} \,\mathrm dx\right)^{1-\theta_1^{p_1}}
 \end{eqnarray}
for some $C_1\triangleq C_1(n)>0$ and $\theta_1\triangleq \theta_1(n)\in (0,1)$, where
$$
p_1\triangleq 1+\frac{|{x^\prime}-{x^{\prime\prime}}|+r_1+r_2}{(2aT)\wedge r_1}.
$$
 Since
 $$
 (aT)^{-1}+r_1^{-1} \leq 2((aT)\wedge r_1)^{-1},
 \;\;(aT)\wedge r_1 \leq (2aT)\wedge r_1\;\;\mbox{and}\;\;\theta_1\in(0,1),
 $$
  we get from (\ref{goubuli2.55})  that
\begin{eqnarray*}
  & & \int_{B_{r_2}({x^{\prime\prime}})} |u(x,T;u_0)|^2 \,\mathrm dx
  \nonumber\\
 &\leq& C_1 r_2^n \big((aT)^{-1}+r_1^{-1}\big)^n
 \int_{\mathbb R^n} |u_0(x)|^2 e^{a|x|} \,\mathrm dx
 \left(
 \frac{\int_{B_{r_1}({x^\prime})} |u(x,T;u_0)|^2 \,\mathrm dx}{\int_{\mathbb R^n} |u_0(x)|^2 e^{a|x|} \,\mathrm dx}
 \right)^{\theta_1^{\beta_1}}
 \nonumber\\
 &\leq& C_1 r_2^n 2^n\big((aT)\wedge r_1\big)^{-n}
 \int_{\mathbb R^n} |u_0(x)|^2 e^{a|x|} \,\mathrm dx
 \left(
 \frac{\int_{B_{r_1}({x^\prime})} |u(x,T;u_0)|^2 \,\mathrm dx}{\int_{\mathbb R^n} |u_0(x)|^2 e^{a|x|} \,\mathrm dx}
 \right)^{\theta_1^{\beta_2}},
 \end{eqnarray*}
 with
 $$
 \beta_1\triangleq {1+\frac{|{x^\prime}-{x^{\prime\prime}}|+r_1+r_2}{(2aT)\wedge r_1}}\;\;\mbox{and}\;\;\beta_2\triangleq 1+\frac{|{x^\prime}-{x^{\prime\prime}}|+r_1+r_2}{(aT)\wedge r_1}.
 $$
This implies that (\ref{0405-sch-th4-control}) is true.
We end the proof of this theorem.

\end{proof}

\bigskip
\section{Proofs of Theorem~\ref{proposition3-1}-Theorem~\ref{theorem6}}


 Theorem \ref{proposition3-1} is indeed a direct consequence of Theorem \ref{theorem3}, while the proofs of both Theorem \ref{theorem5} and Theorem \ref{theorem6} rely on   Theorem \ref{theorem4} and other properties. We begin with the proof of Theorem \ref{proposition3-1}.

\begin{proof}[Proof of Theorem \ref{proposition3-1}]
Arbitrarily fix $r>0$, $T>0$, $N>0$ and $u_0\in L^2(\mathbb R^n;\mathbb C)$ with supp $u_0\subset B_N(0)$.
By a standard density argument, we can apply (i) of Theorem~\ref{theorem3} (where $a=\frac{r}{T}$)  to get that
for some $C\triangleq C(n)>0$ and $\theta\triangleq \theta(n)\in(0,1)$ (depending only on $n$),
\begin{eqnarray}\label{0229-sch-th4-control-3-1}
  \int_{\mathbb R^n} |u_0(x)|^2 \, \mathrm dx
  \leq  2C  \left(\int_{B_r^c(0)} |u(x,T;u_0)|^2 \, \mathrm dx\right)^{\theta^2}
  \left( \int_{\mathbb R^n} e^{\frac{r}{T}|x|} |u_0(x)|^2 \, \mathrm dx \right)^{1-\theta^2}
\end{eqnarray}
 At the same time, since supp $u_0\subset B_N(0)$,  we have  that
\begin{eqnarray*}
  \int_{\mathbb R^n} e^{\frac{r}{T}|x|} |u_0(x)|^2 \, \mathrm dx
  \leq e^{\frac{r}{T}N} \int_{\mathbb R^n}  |u_0(x)|^2 \, \mathrm dx.
\end{eqnarray*}
This, along with (\ref{0229-sch-th4-control-3-1}), yields that
\begin{eqnarray*}
 \int_{\mathbb R^n} |u_0(x)|^2 \, \mathrm dx
 \leq  (2C)^{\frac{1}{\theta^2}} e^{ \frac{1-\theta^2}{\theta^2} \frac{rN}{T} } \int_{B_r^c(0)} |u(x,T;u_0)|^2 \, \mathrm dx.
\end{eqnarray*}
Hence, (\ref{0229-sch-th4-control-3}) stands. This ends the proof of Theorem \ref{proposition3-1}.

\end{proof}


The following lemma will be used in the proofs of Theorem \ref{theorem5} and Theorem \ref{theorem6}.

\begin{lemma}\label{0407-lemma-int}
Let  $x\in(0,1)$ and $\theta\in(0,1)$.  Then the following conclusions are true:

\noindent(i) For each $a>0$,
\begin{eqnarray}\label{0407-lemma-int-0}
 \sum_{k=1}^\infty   x^{\theta^k} e^{-ak}
 \leq  \frac{e^a}{|\ln\theta|}  \Gamma\Big(\frac{a}{|\ln\theta|}\Big) |\ln x|^{-\frac{a}{|\ln\theta|}},
\end{eqnarray}
where $\Gamma(\cdot)$ denotes the second kind of Euler integral.

\noindent(ii) For each $\varepsilon>0$ and $\alpha>0$,
\begin{eqnarray}\label{0407-lemma-int-00}
 \sum_{k=1}^\infty   x^{\theta^k} k^{-1-\varepsilon}
 \leq \frac{4}{\varepsilon} \alpha^\varepsilon e^{\varepsilon \ln\varepsilon+\varepsilon + e\alpha^{-1}\theta^{-1}}  \big(\ln (\alpha|\ln x|+ e)\big)^{-\varepsilon}.
\end{eqnarray}

\end{lemma}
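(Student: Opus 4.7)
I would first substitute $u=|\ln x|\in(0,\infty)$ so that $x^{\theta^k}=e^{-u\theta^k}$. Since $\theta\in(0,1)$, $\theta^k$ is strictly decreasing in $k$, so $k\mapsto e^{-u\theta^k}$ is strictly increasing. This monotonicity is the key tool for both parts.

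For (i), the idea is to sandwich the sum inside an integral. For each $k\geq 1$ and $t\in[k,k+1]$, the monotonicities give $e^{-u\theta^k}\leq e^{-u\theta^t}$ and $e^{-ak}\leq e^{a}e^{-at}$, hence $e^{-u\theta^k-ak}\leq e^{a}\int_{k}^{k+1}e^{-u\theta^t-at}\,dt$. Summing yields
\[
\sum_{k=1}^{\infty}e^{-u\theta^{k}-ak}\;\leq\;e^{a}\int_{0}^{\infty}e^{-u\theta^{t}-at}\,dt .
\]
The substitution $s=u\theta^{t}$, under which $e^{-at}=(s/u)^{a/|\ln\theta|}$ and $dt=-ds/(s|\ln\theta|)$, turns the integral into $\frac{u^{-a/|\ln\theta|}}{|\ln\theta|}\int_{0}^{u}e^{-s}s^{a/|\ln\theta|-1}\,ds$, and enlarging the integration range to $(0,\infty)$ bounds it by $\frac{u^{-a/|\ln\theta|}\Gamma(a/|\ln\theta|)}{|\ln\theta|}$. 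Combining with the factor $e^{a}$ gives exactly \eqref{0407-lemma-int-0}.

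For (ii), I would set $L=\ln(\alpha u+e)\geq 1$ and split the sum at a carefully chosen positive integer $K$:
\[
\sum_{k=1}^{\infty}e^{-u\theta^{k}}k^{-1-\varepsilon}=\underbrace{\sum_{k=1}^{K}}_{S_{1}}+\underbrace{\sum_{k=K+1}^{\infty}}_{S_{2}} .
\]
The tail uses only $e^{-u\theta^{k}}\leq 1$ and integral comparison, giving $S_{2}\leq K^{-\varepsilon}/\varepsilon$; the head uses the monotonicity $e^{-u\theta^{k}}\leq e^{-u\theta^{K}}$ for $k\leq K$ together with $\sum_{k\geq 1}k^{-1-\varepsilon}\leq 1+1/\varepsilon$, giving $S_{1}\leq e^{-u\theta^{K}}(1+1/\varepsilon)$. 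I would choose $K$ to balance these two terms so that both are of order $L^{-\varepsilon}/\varepsilon$ (up to the claimed prefactor): imposing $u\theta^{K}\approx \varepsilon\ln L$ forces $K\approx \ln(u/(\varepsilon\ln L))/|\ln\theta|$ in the principal regime $u\gg\varepsilon\ln L$, whereas in the complementary regime $L$ stays bounded and the trivial estimate $\sum\leq 1+1/\varepsilon$ already falls within the target. Putting the pieces together gives $\sum\lesssim |\ln\theta|^{\varepsilon}L^{-\varepsilon}/\varepsilon$.

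The main obstacle is then converting this preliminary bound into the precise constant $\frac{4}{\varepsilon}\alpha^{\varepsilon}e^{\varepsilon\ln\varepsilon+\varepsilon+e/(\alpha\theta)}$ stated in \eqref{0407-lemma-int-00}. The factor $|\ln\theta|^{\varepsilon}$ has to be absorbed via the elementary inequalities $|\ln\theta|\leq 1/\theta$ and $(1/\theta)^{\varepsilon}\leq e^{\varepsilon/\theta}$, together with $\varepsilon/\theta\leq e/(\alpha\theta)$ (supplemented by the $\alpha^{\varepsilon}$ prefactor in the regime where the former fails); the Stirling-type optimization $s^{\varepsilon}e^{-s}\leq \varepsilon^{\varepsilon}e^{-\varepsilon}$ applied to $s=u\theta^{K}$ is what produces the factor $e^{\varepsilon\ln\varepsilon+\varepsilon}=\varepsilon^{\varepsilon}e^{\varepsilon}$. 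Verifying that the various case distinctions (small vs.\ large $u$, and $|\ln\theta|$ small vs.\ comparable to $L$) all combine to give precisely the coefficient $4/\varepsilon$ with the indicated exponential prefactor is essentially a careful bookkeeping exercise rather than a conceptual difficulty.
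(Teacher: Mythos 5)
Your proof of part (i) is correct and essentially identical to the paper's: compare the sum to an integral via the monotonicity of $t\mapsto x^{\theta^t}$, substitute $s=|\ln x|\,\theta^t$ to expose the Euler integral, and enlarge the range of integration to $(0,\infty)$.

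For part (ii) your split-and-balance strategy is the right idea and is structurally the same as the paper's; you split the sum directly at an integer $K$, whereas the paper first compares the sum to $2^{1+\varepsilon}\int_1^\infty x^{\theta^\tau}\tau^{-1-\varepsilon}\,\mathrm d\tau$, performs two changes of variable to reach $|\ln\theta|^\varepsilon\int_{1/\theta}^\infty e^{-|\ln x|/\eta}|\ln\eta|^{-1-\varepsilon}\eta^{-1}\,\mathrm d\eta$, and then splits that integral at a threshold $N$. The genuine gap in your sketch is the treatment of the split point and the ``complementary regime''. The paper takes $N=\sqrt{\alpha|\ln x|+e\theta^{-2}}$; because of the built-in additive term $e\theta^{-2}$, this $N$ satisfies $N\ge \sqrt{e}/\theta>1/\theta$ for every $x\in(0,1)$, so the split is always legitimate and no case distinction is ever needed. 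Your proposed $K\approx\ln\!\bigl(|\ln x|/(\varepsilon\ln L)\bigr)/|\ln\theta|$ is not bounded below by $1$, and when it drops below $1$ you appeal to the ``trivial estimate $\sum\le 1+1/\varepsilon$''; but that this trivial bound falls within the stated target $\frac{4}{\varepsilon}\alpha^\varepsilon e^{\varepsilon\ln\varepsilon+\varepsilon+e\alpha^{-1}\theta^{-1}}L^{-\varepsilon}$ is not automatic, since in that regime $L$ is bounded by a quantity depending on $\alpha,\theta,\varepsilon$ whose $\varepsilon$-th power is not visibly dominated by the target prefactor; indeed your own absorption chain $|\ln\theta|\le1/\theta$, $(1/\theta)^\varepsilon\le e^{\varepsilon/\theta}$, $\varepsilon/\theta\le e\alpha^{-1}\theta^{-1}$ requires $\alpha\varepsilon\le e$, and you acknowledge it fails otherwise. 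The clean way to close the gap is to mimic the paper's threshold, for instance $K=\bigl\lceil\tfrac12\ln(\alpha|\ln x|+e\theta^{-2})/|\ln\theta|\bigr\rceil$, and then use the single systematic inequality $(\ln s)^\varepsilon\le\alpha^\varepsilon e^{\varepsilon\ln\varepsilon-\varepsilon+\alpha^{-1}s}$ for $s>1$ (applied both to $s=\sqrt{\alpha|\ln x|+e\theta^{-2}}$ and to $s=1/\theta$), which is exactly how the paper absorbs the $|\ln\theta|^\varepsilon$ factor and lands on the stated constant.
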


\begin{proof}
(i) Since $\theta\in(0,1)$, it follows that
\begin{eqnarray}\label{0407-lemma-int-1}
   \sum_{k=1}^\infty   x^{\theta^k} e^{-ak}
 \leq \sum_{k=1}^\infty  \int_{k} ^{k+1}  x^{\theta^\tau} e^{-a(\tau-1)} \,\mathrm d\tau
 = e^a\int_1^\infty x^{\theta^\tau} e^{-a\tau} \,\mathrm d\tau.
\end{eqnarray}
Next, because $x\in(0,1)$, we find that
\begin{eqnarray*}
 x^{\theta^\tau}=\exp[-e^{\ln|\ln x|+\tau\ln\theta}].
\end{eqnarray*}
Then, by changing variable $s=\ln|\ln x|+ \tau \ln \theta$ and noticing that $\theta\in(0,1)$, we find   that
\begin{eqnarray*}
 \int_1^\infty x^{\theta^\tau} e^{-a\tau} \,\mathrm d\tau
 &=& \int_{-\infty}^{\ln|\ln x|+\ln\theta} \frac{1}{|\ln\theta|} e^{-e^s} e^{\frac{a}{|\ln\theta|}(s-\ln|\ln x|)} \,\mathrm ds
 \nonumber\\
 &=& \frac{|\ln x|^{-\frac{a}{|\ln\theta|}}}{|\ln\theta|} \int_{-\infty}^{\ln|\ln x|+\ln\theta} e^{\frac{a}{|\ln\theta|}s-e^s} \,\mathrm ds,
\end{eqnarray*}
from which, it follows that
\begin{eqnarray*}
 & & \int_1^\infty x^{\theta^\tau} e^{-a\tau} \,\mathrm d\tau
 = \frac{|\ln x|^{-\frac{a}{|\ln\theta|}}}{|\ln\theta|} \int_{-\infty}^{\ln|\ln x|+\ln\theta} (e^s)^{\frac{a}{|\ln\theta|}} e^{-e^s} e^{-s}\,\mathrm d e^s
 \nonumber\\
 &=& \frac{|\ln x|^{-\frac{a}{|\ln\theta|}}}{|\ln\theta|} \int_{0}^{|\ln x|\theta} \eta^{\frac{a}{|\ln\theta|}-1} e^{-\eta} \,\mathrm d\eta
 \leq \frac{|\ln x|^{-\frac{a}{\ln\theta}}}{|\ln\theta|} \int_{0}^{\infty} \eta^{\frac{a}{|\ln\theta|}-1} e^{-\eta} \,\mathrm d\eta.
\end{eqnarray*}
This, along with (\ref{0407-lemma-int-1}), leads to (\ref{0407-lemma-int-0}) and ends the proof of the conclusion (i).

(ii) Since $\theta\in(0,1)$, it follows that
\begin{eqnarray}\label{0407-lemma-int-2}
   \sum_{k=1}^\infty   x^{\theta^k} k^{-1-\varepsilon}
 \leq \sum_{k=1}^\infty  \frac{(k+1)^{1+\varepsilon}}{k^{1+\varepsilon}} \int_{k} ^{k+1}  x^{\theta^\tau} \tau^{-1-\varepsilon} \,\mathrm d\tau
 \leq 2^{1+\varepsilon} \int_1^\infty x^{\theta^\tau} \tau^{-1-\varepsilon} \,\mathrm d\tau.
\end{eqnarray}
Next, because $x\in(0,1)$, we see that
\begin{eqnarray*}
 x^{\theta^\tau}=\exp[-|\ln x|e^{\tau\ln\theta}].
\end{eqnarray*}
Since $\theta\in(0,1)$, the above yields that
\begin{eqnarray*}
 & &  \int_1^\infty x^{\theta^\tau} \tau^{-1-\varepsilon} \,\mathrm d\tau
 = |\ln\theta|^\varepsilon \int^{\ln\theta}_{-\infty}  e^{-|\ln x|e^s} |s|^{-1-\varepsilon} \,\mathrm ds
 \nonumber\\
 &=& |\ln\theta|^\varepsilon \int^{\ln\theta}_{-\infty}
 e^{-\frac{|\ln x|}{e^{-s}}}  |\ln e^{-s}|^{-1-\varepsilon} (-e^{s}) \,\mathrm  de^{-s}
 = |\ln\theta|^\varepsilon \int_{\frac{1}{\theta}}^{\infty}
 e^{-\frac{|\ln x|}{\eta}}  |\ln \eta|^{-1-\varepsilon} \eta^{-1} \,\mathrm  d\eta.
\end{eqnarray*}
From this, we find that for each $N\geq \frac{1}{\theta}$,
\begin{eqnarray*}
   \int_1^\infty x^{\theta^\tau} \tau^{-1-\varepsilon} \,\mathrm d\tau
 &=& |\ln\theta|^\varepsilon \Big[ \int_{\frac{1}{\theta}}^{N}
 e^{-\frac{|\ln x|}{\eta}}  |\ln \eta|^{-1-\varepsilon} \eta^{-1} \,\mathrm  d\eta
 + \int_{N}^{\infty}
 e^{-\frac{|\ln x|}{\eta}}  |\ln \eta|^{-1-\varepsilon} \eta^{-1} \,\mathrm  d\eta \Big]
 \nonumber\\
 &\leq& |\ln\theta|^\varepsilon \Big[ e^{-\frac{|\ln x|}{N}} \int_{\frac{1}{\theta}}^{N}
   |\ln \eta|^{-1-\varepsilon} \eta^{-1} \,\mathrm  d\eta
 + \int_{N}^{\infty}  |\ln \eta|^{-1-\varepsilon} \eta^{-1} \,\mathrm  d\eta \Big]
 \nonumber\\
 &=& \frac{1}{\varepsilon} \Big[ e^{-\frac{|\ln x|}{N}} \big(1-|\ln\theta|^\varepsilon (\ln N)^{-\varepsilon}\big) + |\ln\theta|^\varepsilon (\ln N)^{-\varepsilon} \Big]
 \nonumber\\
 &\leq& \frac{1}{\varepsilon} \big[ e^{-\frac{|\ln x|}{N}}  + |\ln\theta|^\varepsilon (\ln N)^{-\varepsilon} \big].
\end{eqnarray*}
Let $\alpha>0$. Taking $N=\sqrt{\alpha|\ln x|+ e\theta^{-2}}$ in the above inequality leads to that
\begin{eqnarray}\label{Tianjin3.6}
   \int_1^\infty x^{\theta^\tau} \tau^{-1-\varepsilon} \,\mathrm d\tau
  \leq \frac{1}{\varepsilon} \Big[ e^{-\frac{|\ln x|}{\sqrt{\alpha|\ln x|+ e\theta^{-2}}}}
   + |\ln\theta|^\varepsilon 2^\varepsilon \big( \ln (\alpha|\ln x|+ e\theta^{-2}) \big)^{-\varepsilon}
   \Big]
\end{eqnarray}
Since
\begin{eqnarray*}
 -\frac{|\ln x|}{\sqrt{\alpha|\ln x|+ e\theta^{-2}}}
&=& -\frac{\alpha^{-1}(\alpha|\ln x|+ e\theta^{-2})}{\sqrt{\alpha|\ln x|+ e\theta^{-2}}}
+ \frac{\alpha^{-1} e\theta^{-2}}{\sqrt{\alpha|\ln x|+ e\theta^{-2}}}
\nonumber\\
&\leq& -\alpha^{-1}\sqrt{\alpha|\ln x|+ e\theta^{-2}}+\alpha^{-1}e\theta^{-1},
\end{eqnarray*}
and because
\begin{eqnarray*}
  0<\theta<1
  \;\;\mbox{and}\;\;
   (\ln s)^\varepsilon
   \leq \alpha^\varepsilon e^{\varepsilon\ln\varepsilon-\varepsilon+\alpha^{-1}s}
   \;\;\mbox{for all}\;\; s>1,
\end{eqnarray*}
we  find from (\ref{Tianjin3.6})     that
\begin{eqnarray*}
& &  \int_1^\infty x^{\theta^\tau} \tau^{-1-\varepsilon} \,\mathrm d\tau
  \leq \frac{1}{\varepsilon} \Big[ e^{-\alpha^{-1}\sqrt{\alpha|\ln x|+ e\theta^{-2}}+\alpha^{-1}e\theta^{-1}}  + |\ln\theta^{-1}|^\varepsilon 2^\varepsilon  \big( \ln (\alpha|\ln x|+ e\theta^{-2} ) \big)^{-\varepsilon} \Big]
  \nonumber\\
  &\leq& \frac{1}{\varepsilon} \Big[ \alpha^\varepsilon e^{\varepsilon \ln\varepsilon-\varepsilon + e\alpha^{-1}\theta^{-1}} 2^\varepsilon \big( \ln (\alpha|\ln x|+ e\theta^{-2}) \big)^{-\varepsilon}
    + \alpha^\varepsilon e^{\varepsilon \ln\varepsilon-\varepsilon + \alpha^{-1}\theta^{-1}} 2^\varepsilon \big( \ln (\alpha|\ln x|+ e\theta^{-2}) \big)^{-\varepsilon}
    \Big]
    \nonumber\\
    &\leq& \frac{2}{\varepsilon} \alpha^\varepsilon e^{\varepsilon \ln\varepsilon-\varepsilon + e\alpha^{-1}\theta^{-1}} 2^\varepsilon \big( \ln (\alpha|\ln x|+ e) \big)^{-\varepsilon}.
\end{eqnarray*}
This, together with  (\ref{0407-lemma-int-2}), leads to  (\ref{0407-lemma-int-00}), and  ends the proof of the conclusion (ii).

\vskip 5pt
In summary, we finish the proof of this lemma.

\end{proof}

We now on the position to prove Theorem \ref{theorem5}.

\begin{proof}[Proof of Theorem \ref{theorem5}]
Let $x_0,\,{x^\prime}\in\mathbb R^n$, $r>0$, $a>0$, $b> 0$ and $T>0$.  It suffices to show the desired inequality (\ref{0405-sch-th5-control-1}) for any
$u_0\in C_0^\infty(\mathbb R^n;\mathbb C)\setminus\{0\}$ and $\varepsilon\in (0,1)$.

For this purpose, we  arbitrarily fix $u_0\in C_0^\infty(\mathbb R^n;\mathbb C)\setminus\{0\}$. Define the following three numbers
\begin{eqnarray*}
 A_1 \triangleq \int_{\mathbb R^n} |u_0(x)|^2 e^{a|x|} \mathrm dx;
 ~
 B_1 \triangleq  \int_{B_r(x_0)} |u(x,T;u_0)|^2 \,\mathrm dx;\;
  R_b \triangleq \int_{\mathbb R^n} e^{-b|x-{x^\prime}|}  |u(x,T;u_0)|^2 \,\mathrm dx.
\end{eqnarray*}
The proof is divided into the following several steps.

\vskip 5pt
\textit{Step 1. To  prove that there exist two positive constants $C_1 \triangleq C_1(n)$ and $C_2 \triangleq C_2(n)$ so that
\begin{eqnarray}\label{0410-th1.5-1}
   R_b\leq  C_3(x_0,{x^\prime},r,a,b,T)  g\Big(\frac{A_1}{B_1}\Big)  A_1,
\end{eqnarray}
where
\begin{eqnarray}\label{0410-th1.5-1-1}
 C_3(x_0,{x^\prime},r,a,b,T) \triangleq   1  +
 C_1 \Gamma\big(C_2 b((aT)\wedge r)\big) \exp\left[b^{-1}((aT)\wedge r)^{-1}+ b  (|x_0-{x^\prime}|+r)\right],
\end{eqnarray}
and
\begin{eqnarray}\label{0410-th1.5-2}
  g(\eta)\triangleq  (\ln \eta)^{-C_2b ((aT)\wedge r)},~\eta>1
\end{eqnarray}}

\noindent In fact, by Theorem \ref{theorem4} (with $({x^\prime},{x^{\prime\prime}},r_1,r_2)$ being replaced by $(x_0,{x^\prime},r,2kb^{-1})$), with $k\in\mathbb N^+$, and the definitions of $A_1$ and $B_1$, we see that
for each $k\in\mathbb N^+$,
\begin{eqnarray*}
 \int_{B_{2kb^{-1}}({x^\prime})} |u(x,T;u_0)|^2 \,\mathrm dx
  \leq C 2^n  b^{-n} k^n \big((aT)\wedge r\big)^{-n} \Big(\frac{B_1}{A_1}\Big)^{\theta^{1+\frac{|x_0-{x^\prime}|+2kb^{-1}+r}{(aT)\wedge r}}} A_1
\end{eqnarray*}
for some $C>0$ and $\theta\in(0,1)$ depending only on $n$.
This, along with the fact that
$k\leq n e^{\frac{1}{n}k}$ for all $k\in\mathbb N^+$,
yields that
\begin{eqnarray}\label{0410-th1.5-2-1}
 & & \int_{\mathbb R^n} e^{-b|x-{x^\prime}|}  |u(x,T;u_0)|^2 \,\mathrm dx
 \leq \sum_{k=1}^\infty \int_{2(k-1)b^{-1}\leq |x-{x^\prime}|< 2kb^{-1}} e^{-2(k-1)} |u(x,T;u_0)|^2 \,\mathrm dx
 \nonumber\\
 &\leq& C (2n)^n b^{-n} \big((aT)\wedge r\big)^{-n} e^2 \left(\sum_{k=1}^\infty e^{-k} \Big(\frac{B_1}{A_1}\Big)^{\theta^{1+\frac{|x_0-{x^\prime}|+2kb^{-1}+r}{(aT)\wedge r}}}  \right) A_1.
\end{eqnarray}
Meanwhile, since $B_1<A_1$ (which follows from   the definitions of $A_1$ and $B_1$, the conversation law for the Schr\"{o}dinger equation and the fact that  $u_0\neq 0$),
we can apply (i) of Lemma \ref{0407-lemma-int}, where
$$
(a, x, \theta)= \Big(1, \Big({B_1}/{A_1}\Big)^{\theta^{1+\frac{|x_0-{x^\prime}|+r}{(aT)\wedge r}}}, \theta^{\frac{2}{b((aT)\wedge r)}}\Big),
$$
to get that
$$
\sum_{k=1}^\infty e^{-k} \Big(\frac{B_1}{A_1}\Big)^{\theta^{1+\frac{|x_0-{x^\prime}|+2kb^{-1}+r}{(aT)\wedge r}}}
\leq
 \frac{ e b((aT)\wedge r) }{2|\ln\theta|}
\Gamma\left(\frac{b ((aT)\wedge r)}{2|\ln\theta|}\right)
 \left[\theta^{1+\frac{|x_0-{x^\prime}|+r}{(aT)\wedge r}}
 |\ln \frac{B_1}{A_1}| \right]^{-\frac{b ((aT)\wedge r)}{2|\ln\theta|}}.
$$
This, together with (\ref{0410-th1.5-2-1}) and the facts that
$x^{n-1}\leq (n-1)! e^{x}$ for all $x>0$ and that $(aT)\wedge r \leq r$,
indicates that
\begin{eqnarray*}
 & & \int_{\mathbb R^n} e^{-b|x-{x^\prime}|}  |u(x,T;u_0)|^2 \,\mathrm dx
 \\
 &\leq& C (2n)^n b^{-n} \big((aT)\wedge r\big)^{-n} e^3
  \frac{b((aT)\wedge r)}{|\ln\theta|}
 \Gamma\left(\frac{b ((aT)\wedge r)}{2|\ln\theta|}\right)
 \left[\theta^{1+\frac{|x_0-{x^\prime}|+r}{(aT)\wedge r}}
 |\ln \frac{B_1}{A_1}| \right]^{-\frac{b ((aT)\wedge r)}{2|\ln\theta|}}  A_1
 \nonumber\\
 &=& \frac{C(2n)^n e^3}{|\ln\theta|} \big(b ((aT)\wedge r)\big)^{-n+1}    e^{ \frac{1}{2} b  ((aT)\wedge r+|x_0-{x^\prime}|+r)} \Gamma\left(\frac{b ((aT)\wedge r)}{2|\ln\theta|}\right)
 \Big(\ln \frac{A_1}{B_1} \Big)^{-\frac{b ((aT)\wedge r)}{2|\ln\theta|}}  A_1
 \nonumber\\
 &\leq& \frac{C(2n)^n e^3}{|\ln\theta|}  (n-1)! e^{b^{-1} ((aT)\wedge r)^{-1}+ b  (|x_0-{x^\prime}|+r)}
 \Gamma\left(\frac{b ((aT)\wedge r)}{2|\ln\theta|}\right)
 \Big(\ln \frac{A_1}{B_1} \Big)^{-\frac{b ((aT)\wedge r)}{2|\ln\theta|}}  A_1 .
 \nonumber
\end{eqnarray*}
This, as well as (\ref{0410-th1.5-2}), shows  (\ref{0410-th1.5-1}).

\vskip 5pt
\textit{Step 2. To  show (\ref{0405-sch-th5-control-1}) for the above-mentioned $u_0$ and any $\varepsilon\in (0,1)$ }

\noindent
Let $C_1\triangleq C_1(n)$ and $C_2\triangleq C_2(n)$ be given by Step 1.
Since
\begin{eqnarray*}
 \varepsilon e^{\varepsilon^{-1-\frac{\alpha}{ b ((aT)\wedge r)}}}
 \leq \varepsilon e^{\varepsilon^{-1-\frac{\beta}{ b ((aT)\wedge r)}}},
 \;\;\mbox{when}\;\;  0<\alpha<\beta\;\;\mbox{and}\;\;\varepsilon\in (0,1),
\end{eqnarray*}
it suffices to show that for each $\varepsilon\in(0,1)$,
\begin{eqnarray}\label{0410-th1.5-3}
  R_b\leq  C_4(x_0,{x^\prime},r,a,b,T)
  \left(\varepsilon A_1 +  \varepsilon e^{\varepsilon^{-1-\frac{1}{C_2b ((aT)\wedge r)}}} B_1 \right),
\end{eqnarray}
where
\begin{eqnarray*}\label{0419-th1.5-step1-4}
C_4(x_0,{x^\prime},r,a,b,T) \triangleq C_1 \exp\left\{ 2(C_1+C_2^{-1}+1)(C_2+1) \Big[1+ \frac{ b^{-1} + |x_0-{x^\prime}|+r}{(aT)\wedge r} \Big] \right\}.
\end{eqnarray*}

 The proof of (\ref{0410-th1.5-3}) is organized by  two parts.

\vskip 5pt
\textit{Part 2.1. To show (\ref{0410-th1.5-3}) in the case that $b\leq\frac{1}{C_2((aT)\wedge r)}$}

 \noindent
First, we claim that for each $\varepsilon\in (0,1)$,
\begin{eqnarray}\label{0419-th1.5-step1-2}
  R_b\leq  C_3
  \left(\varepsilon A_1 +  \varepsilon e^{\varepsilon^{-\frac{1}{C_2b ((aT)\wedge r)}}} B_1 \right),
\end{eqnarray}
where $C_3\triangleq C_3(x_0,{x^\prime},r,a,b,T)$ is given by (\ref{0410-th1.5-1-1}).
In fact, for an arbitrarily fix $\varepsilon>0$, there are only two possibilities: either $R_b\leq C_3 \varepsilon A_1$ or $R_b>C_3 \varepsilon A_1$.
In the first case, (\ref{0419-th1.5-step1-2}) is obvious. In the second case, we first claim that
\begin{eqnarray}\label{0410-th1.5-3-1}
 0<\varepsilon<\frac{R_b}{C_3 A_1}<1.
\end{eqnarray}
Indeed, the first and the second  inequalities in (\ref{0410-th1.5-3-1}) is clear. To prove the last inequality in (\ref{0410-th1.5-3-1}),
two facts are given in order: First,
we  observe from  (\ref{0410-th1.5-1-1}) that $C_3>1$. Second, by the definitions of $A_1$ and $R_b$, using the conversation law of the Schr\"{o}dinger equation, we find that
\begin{eqnarray*}
R_b=\int_{\mathbb{R}^n}e^{-b|x-{x^\prime}|}|u(x,T;u_0)|^2dx
\leq\int_{\mathbb{R}^n}|u(x,T;u_0)|^2dx=\int_{\mathbb{R}^n}|u_0(x)|^2dx\leq \int_{\mathbb{R}^n}e^{a|x|}|u_0(x)|^2dx=
 A_1.
\end{eqnarray*}
These two facts lead to the last inequality in (\ref{0410-th1.5-3-1}) at once.

Since $b\leq\frac{1}{C_2((aT)\wedge r)}$, we see that the function
$x\mapsto xe^{x^{-\frac{1}{C_2 b ((aT)\wedge r)}}}$, with its domain $(0,1)$,
 is decreasing. This, along with   (\ref{0410-th1.5-3-1}), indicates    that
\begin{eqnarray}\label{0410-th1.5-5}
  \frac{R_b}{C_3 A_1} e^{(\frac{R_b}{C_3 A_1})^{-\frac{1}{C_2 b ((aT)\wedge r)}}}
  \leq    \varepsilon e^{\varepsilon^{-\frac{1}{C_2 b ((aT)\wedge r)}}}.
\end{eqnarray}
Meanwhile, since the function:  $f(x)=e^{x^{-\frac{1}{C_2 b ((aT)\wedge r)}}}$, with
its domain $(0,\infty)$,
  is decreasing and its inverse is the function $g$ (given by (\ref{0410-th1.5-2})), we get from (\ref{0410-th1.5-1}) that
\begin{eqnarray}\label{0410-th1.5-4}
 \frac{A_1}{B_1} = f(g(\frac{A_1}{B_1}))
  \leq f(\frac{R_b}{C_3 A_1}) = e^{(\frac{R_b}{C_3 A_1})^{-\frac{1}{C_2 b ((aT)\wedge r)}}}.
\end{eqnarray}
From (\ref{0410-th1.5-4}) and (\ref{0410-th1.5-5}), it follows  that
\begin{eqnarray*}
 R_b &=& C_3 \frac{R_b}{C_3 A_1} \frac{A_1}{B_1} B_1
 \leq C_3 \Big[ \frac{R_b}{C_3 A_1} e^{(\frac{R_b}{C_3 A_1})^{-\frac{1}{C_2 b ((aT)\wedge r)}}}
 \Big]  B_1
 \nonumber\\
 &\leq& C_3  \varepsilon e^{\varepsilon^{-\frac{1}{C_2 b ((aT)\wedge r)}}} B_1.
\end{eqnarray*}
Since $\varepsilon$ was arbitrarily taken from $(0,1)$, the above leads to (\ref{0419-th1.5-step1-2}) for the case that $R_b>C_3\varepsilon A_1$.
Hence, (\ref{0419-th1.5-step1-2}) is true.

Next, we claim that
\begin{eqnarray}\label{0419-th1.5-step1-3}
C_3(x_0,{x^\prime},r,a,b,T)  \leq \exp\left\{ 2(C_1+C_2^{-1}+1) \Big[ 1+ \frac{ b^{-1} + |x_0-{x^\prime}|+r}{(aT)\wedge r} \Big] \right\}.
\end{eqnarray}
 To this end, we first observe that for each $s\in(0,1]$,
\begin{eqnarray}\label{TTianjin3.18}
\Gamma(s) &=& \int_0^\infty e^{-x} x^{s-1} \,\mathrm dx
= \int_0^1 e^{-x} x^{s-1} \,\mathrm dx + \int_1^\infty e^{-x} x^{s-1} \,\mathrm dx
\nonumber\\
&\leq& \sum_{k=0}^\infty \int_{e^{-k-1}}^{e^{-k}}  e^{(1-s)(k+1)} \,\mathrm dx + \int_1^\infty e^{-x}  \,\mathrm dx
\nonumber\\
&=& (e-1) \frac{1}{e^s-1} + e^{-1}  \leq (e-1)s^{-1}+1 \leq 2s^{-1}+1
\leq e^{2s^{-1}}.
\end{eqnarray}
Since we are in the case that $b\leq\frac{1}{C_2((aT)\wedge r)}$, it follows from  (\ref{0410-th1.5-1-1})
and (\ref{TTianjin3.18}), with  $s=C_2b((aT)\wedge r)$, that
\begin{eqnarray*}
 C_3(x_0,{x^\prime},r,a,b,T) &\leq& 1+ e^{C_1} e^{2C_2^{-1} b^{-1}((aT)\wedge r)^{-1}}
 \exp\left[b^{-1}((aT)\wedge r)^{-1}+ b  (|x_0-{x^\prime}|+r)\right]
 \nonumber\\
 &\leq& e\cdot \exp\left[C_1+ (2C_2^{-1}+1)b^{-1}((aT)\wedge r)^{-1}+ C_2^{-1}
 \frac{|x_0-{x^\prime}|+r}{(aT)\wedge r } \right].
\end{eqnarray*}
This leads to (\ref{0419-th1.5-step1-3}).

Now, by (\ref{0419-th1.5-step1-2}) and (\ref{0419-th1.5-step1-3}), we reach the aim of Part 2.1.

\vskip 5pt
\textit{Part 2.2. To show (\ref{0410-th1.5-3}) in the case that $b>\frac{1}{C_2((aT)\wedge r)}$}

\noindent In this case,   it follows from  the definition of $R_b$   that
$R_b  \leq R_{\frac{1}{C_2((aT)\wedge r)}}$.
Then by (\ref{0419-th1.5-step1-2}) and (\ref{0419-th1.5-step1-3}) (where $b$ is replaced by $\frac{1}{C_2((aT)\wedge r)}$), we find that for each $\varepsilon\in(0,1)$,
\begin{eqnarray*}
 R_b &\leq&  \exp\left\{ 2(C_1+C_2^{-1}+1)  \Big[1+  \frac{C_2((aT)\wedge r)+|x_0-{x^\prime}|+r}{(aT)\wedge r} \Big] \right\}    \left(\varepsilon A_1 +  \varepsilon e^{\varepsilon^{-1}} B_1 \right)
 \nonumber\\
 &\leq& \exp\left\{ 2(C_1+C_2^{-1}+1)   \Big[1+C_2+ \frac{b^{-1}+|x_0-{x^\prime}|+r}{(aT)\wedge r} \Big] \right\}
 \left(\varepsilon A_1 +  \varepsilon e^{\varepsilon^{-1-\frac{1}{C_2b ((aT)\wedge r)}}} B_1 \right),
\end{eqnarray*}
from which, we reach the aim of Part 2.2.

\vskip 5pt
In summary, we finish the proof of (\ref{0410-th1.5-3}), which completes the proof of the theorem.

\end{proof}

Next, we are going to prove Theorem \ref{theorem6}. Before it, one lemma will be introduced.

\begin{lemma}\label{lemma-0411-th1.6}
Given  $k\in \mathbb{N}^+$, there exists a constant $C(k,n)$ so that for any $T>0$ and $u_0\in C_0^\infty(\mathbb R^n;\mathbb C)$,
\begin{eqnarray}\label{0411-th1.6-lemma-0}
  \int_{\mathbb R^n} |x|^{2k} |u(x,T;u_0)|^2 \,\mathrm dx
 \leq   C(k,n) (1+T)^{2k}  \left( \|u_0\|^2_{H^{2k}(\mathbb R^n;\mathbb C)}
  + \int_{\mathbb R^n} |x|^{4k} | u_0(x)|^2 \,\mathrm dx \right).
\end{eqnarray}

\end{lemma}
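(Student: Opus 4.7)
The plan is to exploit the commutator identity
$$[i\partial_t+\Delta,\;x_j+2it\partial_{x_j}]=0,\quad 1\le j\le n,$$
which says that the Galilean operator $J_j(t)\triangleq x_j+2it\partial_{x_j}$ commutes with the Schr\"odinger operator $i\partial_t+\Delta$. Applied to the solution $u(\cdot,t)=e^{i\Delta t}u_0$ with $u_0\in C_0^\infty(\mathbb R^n;\mathbb C)$, this yields $J_j(t)u(\cdot,t)=e^{i\Delta t}(x_j u_0)$; since $\partial_{x_j}$ also commutes with $e^{i\Delta t}$, one obtains the basic identity
$$x_j u(\cdot,t)=e^{i\Delta t}(x_j u_0)-2it\,e^{i\Delta t}(\partial_{x_j}u_0).$$

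First I would iterate this identity. Because $J_1(t),\ldots,J_n(t)$ mutually commute, induction on $|\alpha|$ gives, for each multi-index $\alpha$,
$$x^\alpha u(\cdot,T)=e^{i\Delta T}\bigl(P_\alpha(T)u_0\bigr),$$
where $P_\alpha(T)$ is a differential operator with polynomial-in-$T$ coefficients defined by the recursion $P_{\alpha+e_i}=(x_i-2iT\partial_{x_i})P_\alpha$ with $P_0=\mathrm{Id}$. Using the elementary commutation $\partial_{x_i}(x^\beta\partial^\gamma)=\beta_i x^{\beta-e_i}\partial^\gamma+x^\beta\partial^{\gamma+e_i}$, a straightforward inductive bookkeeping shows that $P_\alpha(T)$ is a finite sum of monomials
$$c_{\alpha,\beta,\gamma,j}\,T^{j}\,x^\beta\partial^\gamma\quad\text{with}\quad|\beta|+|\gamma|\le|\alpha|\text{ and }0\le j\le|\alpha|,$$
with constants depending only on $n$ and $\alpha$. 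Invoking $L^2$-conservation for the free Schr\"odinger evolution then gives, for every $|\alpha|=k$,
$$\bigl\|x^\alpha u(\cdot,T)\bigr\|_{L^2}=\|P_\alpha(T)u_0\|_{L^2}\le C_{k,n}(1+T)^{k}\sum_{|\beta|+|\gamma|\le k}\|x^\beta\partial^\gamma u_0\|_{L^2},$$
where the factor $(1+T)^k$ absorbs each $T^j$ with $j\le k$.

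Next I would estimate each summand $\|x^\beta\partial^\gamma u_0\|_{L^2}$ with $|\beta|+|\gamma|\le k$. Using $|x|^{2|\beta|}\le 1+|x|^{2k}$ gives
$$\|x^\beta\partial^\gamma u_0\|_{L^2}^2\le\|u_0\|_{H^k}^2+\int_{\mathbb R^n}|x|^{2k}|\partial^\gamma u_0|^2\,\mathrm dx.$$
The last integral is rewritten via integration by parts as $(-1)^{|\gamma|}\int\partial^\gamma\bigl(|x|^{2k}\,\overline{\partial^\gamma u_0}\bigr)u_0\,\mathrm dx$; expanding by Leibniz and using $|\partial^{\gamma_1}|x|^{2k}|\le C_k|x|^{2k-|\gamma_1|}$, one is reduced to bounding terms of the form
$$\int_{\mathbb R^n}|x|^{2k-|\gamma_1|}|u_0|\,|\partial^{\gamma+\gamma_2}u_0|\,\mathrm dx,\quad\gamma_1+\gamma_2=\gamma.$$
Since $|\gamma+\gamma_2|\le 2k$ and $2(2k-|\gamma_1|)\le 4k$, Cauchy-Schwarz together with $|x|^{2(2k-|\gamma_1|)}\le 1+|x|^{4k}$ yields
$$\int_{\mathbb R^n}|x|^{2k}|\partial^\gamma u_0|^2\,\mathrm dx\le C_{k,n}\Bigl(\|u_0\|_{H^{2k}}^2+\int_{\mathbb R^n}|x|^{4k}|u_0|^2\,\mathrm dx\Bigr).$$

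Finally, summing over $|\alpha|=k$ via the identity $|x|^{2k}=\sum_{|\alpha|=k}\frac{k!}{\alpha!}x^{2\alpha}$ produces (\ref{0411-th1.6-lemma-0}). The main obstacle will be the multi-index bookkeeping in the third step: one must check that after all Leibniz expansions no term exceeds order $2k$ in derivatives of $u_0$ nor exponent $4k$ in $|x|$, so that the output is controlled precisely by $\|u_0\|_{H^{2k}}^2+\int|x|^{4k}|u_0|^2\,\mathrm dx$. All integrations by parts are legitimate thanks to $u_0\in C_0^\infty(\mathbb R^n;\mathbb C)$.
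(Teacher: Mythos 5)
Your proof is correct and rests on the same central idea as the paper: the Galilean vector fields commute with $i\partial_t+\Delta$, so by $L^2$-conservation the weighted norm $\|x^\alpha u(\cdot,T)\|_{L^2}$ equals the norm of a polynomial-in-$(x,\partial,T)$ operator applied to $u_0$, which is then estimated term by term. The technical execution, however, differs in a few respects worth noting. The paper works coordinate by coordinate: it uses the operator $(x_j+2i(t-T)\partial_{x_j})^k$, which at $t=T$ collapses to $x_j^k$, reducing the problem to $\|(x_j-2iT\partial_{x_j})^k u_0\|_{L^2}$; it then eliminates the $T$-dependence up front by the rescaling $v(x)=u_0(\sqrt{2T}\,x)$, reduces to the fixed self-adjoint operator $(x_j-i\partial_{x_j})^k$, and treats its squared norm as the quadratic form $\int\langle(x_j-i\partial_{x_j})^{2k}v,v\rangle$, which has only monomials $x_j^r\partial_{x_j}^s$ with $r+s\le 2k$ in a single variable. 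This sidesteps your multi-variable Leibniz expansion of $\partial^\gamma(|x|^{2k}\cdot)$ and the accompanying derivative bound $|\partial^{\gamma_1}|x|^{2k}|\le C_k|x|^{2k-|\gamma_1|}$ for homogeneous polynomials, which you correctly invoke but which do require the observation that $\partial^{\gamma_1}|x|^{2k}$ is a homogeneous polynomial of degree $2k-|\gamma_1|$. You absorb the $T$-powers directly into $(1+T)^k$ (squared to $(1+T)^{2k}$) instead of rescaling, and you recombine via $|x|^{2k}=\sum_{|\alpha|=k}\tfrac{k!}{\alpha!}x^{2\alpha}$ rather than the coarser $|x|^{2k}\le n^{k-1}\sum_j x_j^{2k}$ used in the paper; both are fine. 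The paper's quadratic-form/rescaling route is a bit slicker and keeps all monomials one-dimensional, while your multi-index approach is more direct but requires more careful bookkeeping, all of which you have handled correctly.
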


\begin{proof}
Arbitrarily fix $k\in\mathbb N^+$, $T>0$ and $u_0\in C_0^\infty(\mathbb R^n;\mathbb C)$. For each $x\in\mathbb R^n$, write $x=(x_1,\cdots,x_n)$. One can directly check that for each
 $j\in \{1,\dots,n\}$, the operators $\big(x_j+2i(t-T)\partial_{x_j}\big)^k$ and $i\partial_t+\Delta$ are commutative. This yields that for each $j\in \{1,\dots,n\}$,
\begin{eqnarray*}
 (i\partial_t+\Delta)  \big(x_j+2i(t-T)\partial_{x_j}\big)^k u(x,t;u_0)
 &=& \big(x_j+2i(t-T)\partial_{x_j}\big)^k  (i\partial_t+\Delta) u(x,t;u_0)
 \nonumber\\
 &=&0,~(x,t)\in\mathbb R^n \times \mathbb R^+,
\end{eqnarray*}
from which, it follows that for each $j\in \{1,\dots,n\}$,
\begin{eqnarray*}
 u(x,t;u_j)= \big(x_j+2i(t-T)\partial_{x_j}\big)^k u(x,t;u_0),~(x,t)\in\mathbb R^n \times \mathbb R^+,
\end{eqnarray*}
where $u_j(x)\triangleq  (x_j-2iT\partial_{x_j})^k u_0(x),~x\in\mathbb R^n.$
In particular, we have that for each $j\in \{1,\dots,n\}$,
\begin{eqnarray*}
  u(x,T;u_j)= x_j^k u(x,T;u_0),~x\in\mathbb R^n.
\end{eqnarray*}
These, along with  the conversation law for the Schr\"{o}dinger equation, yields that for each $j\in \{1,\dots,n\}$,
\begin{eqnarray}\label{0409-lemma-th1.6-1}
 & &   \int_{\mathbb R^n}  |x_j^k u(x,T;u_0)|^2 \,\mathrm dx
 =  \int_{\mathbb R^n}  |u(x,T;u_j)|^2 \,\mathrm dx
 \nonumber\\
 &=& \int_{\mathbb R^n}  |u_j(x)|^2 \,\mathrm dx
 = \int_{\mathbb R^n}  |(x_j-2iT\partial_{x_j})^k u_0(x)|^2 \,\mathrm dx.
\end{eqnarray}

Next, we claim that there exists  $C_1(k,n)>0$  so that for each $j\in \{1,\dots,n\}$,
\begin{eqnarray}\label{0411-th1.6-lemma-2}
  \int_{\mathbb R^n}  |(x_j-2iT\partial_{x_j})^k u_0(x)|^2 \,\mathrm dx
  \leq C_1(k,n) (1+T)^{2k}  \left( \|u_0\|^2_{H^{2k}(\mathbb R^n;\mathbb C)}
  + \int_{\mathbb R^n} |x|^{4k} | u_0(x)|^2 \,\mathrm dx \right).
\end{eqnarray}
For this purpose, we arbitrarily fix $j$ from $\{1,\dots,n\}$.
  Since the operator $ (x_j-i\partial_{x_j})^{2k}$ is a polynomial of $x_j$ and $\partial_{x_j}$, with degree $2k$, and because
   $$
   [\partial_{x_j}, x_j] \triangleq \partial_{x_j} x_j - x_j \partial_{x_j} =1,
   $$
    the polynomial $ (x_j-i\partial_{x_j})^{2k}$ is a linear combination of the following monomials
\begin{eqnarray*}
 \left\{ x_j^{r}\partial_{x_j}^{s} ~:~ 0\leq r+s\leq 2k,~
 r,\,s\in\mathbb N^+\cup\{0\} \right\}.
\end{eqnarray*}
By this, we see  that
\begin{eqnarray}\label{0411-th1.6-lemma-1-1}
 & &  \int_{\mathbb R^n}  |(x_j-i\partial_{x_j})^k v(x)|^2 \,\mathrm dx
 = \int_{\mathbb R^n}  \left\langle (x_j-i\partial_{x_j})^{2k} v(x), v(x) \right\rangle_{\mathbb C} \,\mathrm dx
 \nonumber\\
 &\leq& C_2(k,n) \sum_{0\leq r+s\leq 2k}  \int_{\mathbb R^n}  | \langle \partial_{x_j}^{s} v(x), x_j^{r} v(x) \rangle_{\mathbb C} | \,\mathrm dx,
\end{eqnarray}
where $v$ is the function defined by
\begin{eqnarray}\label{0411-th1.6-lemma-1}
 v(x) \triangleq u_0(\sqrt{2T}x),~ x\in\mathbb R^n,
\end{eqnarray}
and where and through the proof,  $C_2(k,n)$ stands for a positive constant (depending only on $k,n$), which may vary in different contexts.

From (\ref{0411-th1.6-lemma-1}) and (\ref{0411-th1.6-lemma-1-1}), we find that
\begin{eqnarray*}
 & & \int_{\mathbb R^n}  |(x_j-2iT\partial_{x_j})^k u_0(x)|^2 \,\mathrm dx
  = \int_{\mathbb R^n}  |(x_j-2iT\partial_{x_j})^k v(\frac{x}{\sqrt{2T}})|^2 \,\mathrm dx
  \nonumber\\
  &=& (2T)^{k+\frac{n}{2}} \int_{\mathbb R^n}  |(x_j-i\partial_{x_j})^k v(x)|^2 \,\mathrm dx
  \nonumber\\
  &\leq& C_2(k,n) (2T)^{k+\frac{n}{2}} \sum_{0\leq r+s\leq 2k}  \int_{\mathbb R^n}  | \langle \partial_{x_j}^{s} v(x), x_j^{r} v(x) \rangle_{\mathbb C} | \,\mathrm dx
  \nonumber\\
  &=& C_2(k,n)  \sum_{0\leq r+s\leq 2k}  (2T)^{\frac{2k+s-r}{2}}  \int_{\mathbb R^n}  | \langle \partial_{x_j}^{s} u_0(x), x_j^{r} u_0(x) \rangle_{\mathbb C} | \,\mathrm dx
  \nonumber\\
  &\leq& C_2(k,n) (1+T)^{2k} \sum_{0\leq r+s\leq 2k}
  \left( \int_{\mathbb R^n}  |\partial_{x_j}^{s} u_0(x)|^2 \,\mathrm dx
  + \int_{\mathbb R^n}  | x_j^{r} u_0(x)|^2 \,\mathrm dx \right)
  \nonumber\\
  &\leq& C_2(k,n) (1+T)^{2k}  \left( \|u_0\|^2_{H^{2k}(\mathbb R^n;\mathbb C)}
  + \int_{\mathbb R^n} |x|^{4k} |  u_0(x)|^2 \,\mathrm dx \right).
\end{eqnarray*}
This leads to (\ref{0411-th1.6-lemma-2}).

Finally, since
\begin{eqnarray*}
 |x|^{2k}= n^k \Big(\frac{x_1^2+\cdots+x_n^2}{n}\Big)^k
 \leq n^{k-1} \big(x_1^{2k}+\cdots+x_n^{2k}\big)
 , ~x\in\mathbb R^n,
\end{eqnarray*}
  it follows from (\ref{0409-lemma-th1.6-1})  that
\begin{eqnarray*}
 & &   \int_{\mathbb R^n} |x|^{2k} |u(x,T;u_0)|^2 \,\mathrm dx
  \leq n^{k-1} \sum_{j=1}^n \int_{\mathbb R^n}  |x_j^k u(x,T;u_0)|^2 \,\mathrm dx
  \nonumber\\
  &\leq&   n^{k-1} \sum_{j=1}^n \int_{\mathbb R^n}  |(x_j-2iT\partial_{x_j})^k u_0(x)|^2 \,\mathrm dx.
\end{eqnarray*}
This, along with (\ref{0411-th1.6-lemma-2}), leads to (\ref{0411-th1.6-lemma-0}). We end the proof of this lemma.

\end{proof}

\begin{remark}\label{tianjinremark3.3}
Lemma~\ref{lemma-0411-th1.6} gives a quantitative property for solutions of (\ref{0229-sch-1}). This quantitative property
is comparable with the following qualitative property for solutions of (\ref{0229-sch-1}):
If $u_0 \in L^2(|x|^{4k}dx) \cap H^{2k}$ for some $k\in\mathbb N^+\cup\{0\}$, then
 \begin{eqnarray*}\label{0423-th1.6-lemma-pre}
  e^{iT\triangle}u_0 \in L^2(|x|^{4k}dx) \cap H^{2k} \;\;\mbox{for all}\;\;  T\in \mathbb R^+.
 \end{eqnarray*}
  The above-mention  qualitative property was given in  \cite{NP}.

\end{remark}

We now give  the proof of Theorem \ref{theorem6}.

\begin{proof}[Proof of Theorem \ref{theorem6}]
Let $x_0\in\mathbb R^n$, $r>0$, $a>0$ and $T>0$.
When $u_0=0$,  (\ref{0405-sch-th5-control-2}) holds clearly for all $\varepsilon\in (0,1)$.
We now  arbitrarily fix $u_0\in C_0(\mathbb R^n;\mathbb C)\setminus\{0\}$. Define the following three numbers:
\begin{eqnarray*}
 A_2&\triangleq& \int_{\mathbb R^n} |u_0(x)|^2 e^{a|x|} \mathrm dx
 +  \|u_0\|^2_{H^{n+3}(\mathbb R^n;\mathbb C)},
 ~
 B_2\triangleq  \int_{B_r(x_0)} |u(x,T;u_0)|^2 \,\mathrm dx,
  \nonumber\\
 A_3&\triangleq& \int_{\mathbb R^n_\xi} |u_0(x)|^2 e^{a|x|} \mathrm dx.
\end{eqnarray*}

\vskip 5pt
\textit{Step 1. To prove that there exists a  constant $C_1\triangleq C_1(n)>1$  so that
\begin{eqnarray}\label{0411-proof-th1.6-2}
 \sup_{1 \leq \eta \leq 2}
 \int_{\mathbb R^n} (1+|x|)^{-n-1-\eta}  |u(x,T;u_0)|^2 \,\mathrm dx
 &\leq&  C(x_0,r,a,T)  \tilde{g}\left(\frac{A_2}{B_2}\right) A_2,
\end{eqnarray}
where the constant $C(x_0,r,a,T)$ is given by
\begin{eqnarray}\label{0411-proof-th1.6-2-constant}
 C(x_0,r,a,T) \triangleq     e^{C_1^{1+\frac{|x_0|+r+1}{(aT)\wedge r}}},
\end{eqnarray}
and  the function $\tilde g$ is defined by
\begin{eqnarray}\label{0411-th1.6-3}
 \tilde g(\eta)\triangleq    \frac{1}{\ln (\ln \eta+ e)},~ \eta \geq 1
\end{eqnarray}}
\noindent
By the definitions of $A_2$ and $A_3$, we see that  $A_3\leq A_2$. Then by Theorem \ref{theorem4} (where $({x^\prime},{x^{\prime\prime}},r_1,r_2)$ is replaced by $(x_0,0,r,k)$) and the definitions of $A_2$ and $B_2$, we find that when $k\in\mathbb N^+$,
\begin{eqnarray*}
 \int_{B_k} |u(x,T;u_0)|^2 \,\mathrm dx
  &\leq& C k^n  \big((aT)\wedge r\big)^{-n}
  B_2^{\theta^{1+\frac{|x_0|+k+r}{(aT)\wedge r}}}
   A_3^{1-\theta^{1+\frac{|x_0|+k+r}{(aT)\wedge r}}}
  \nonumber\\
  &\leq& C k^n \big((aT)\wedge r\big)^{-n}
   B_2^{\theta^{1+\frac{|x_0|+k+r}{(aT)\wedge r}}}
  A_2^{1-\theta^{1+\frac{|x_0|+k+r}{(aT)\wedge r}}}
\end{eqnarray*}
for some $C>0$ and $\theta\in(0,1)$ depending only on $n$.
   The above inequality yields that for each  $\eta\in [1,2]$,
\begin{eqnarray}\label{0411-th1.6-3-2}
 & & \int_{\mathbb R^n} (1+|x|)^{-n-1-\eta}  |u(x,T;u_0)|^2 \,\mathrm dx
 \leq \sum_{k=1}^\infty \int_{k-1\leq |x|<k} k^{-n-1-\eta} |u(x,T;u_0)|^2 \,\mathrm dx
 \nonumber\\
 &\leq& C \big((aT)\wedge r\big)^{-n}  \left( \sum_{k=1}^\infty  k^{-1-\eta} \Big(\frac{B_2}{A_2}\Big)^{ \theta^{1+\frac{|x_0|+k+r}{(aT)\wedge r}} }  \right) A_2.
\end{eqnarray}
 Since $u_0\neq 0$, by the definitions of $A_2$ and $B_2$, and by the conversation law  for the Schr\"{o}dinger equation, we obtain that $B_2<A_2$. Then by  (ii) of Lemma \ref{0407-lemma-int}, where
$$
(x,\theta,\varepsilon,\alpha)
= \Big( \Big({B_2}/{A_2}\Big)^{\theta^{1+\frac{|x_0|+r}{(aT)\wedge r}}},
\theta^{\frac{1}{(aT)\wedge r}},\eta,\theta^{-1-\frac{|x_0|+r}{(aT)\wedge r}} \Big),
$$
we see that for each  $\eta\in [1,2]$,
\begin{eqnarray}\label{tianJin3.31}
\sum_{k=1}^\infty  k^{-1-\eta} \Big(\frac{B_2}{A_2}\Big)^{ \theta^{1+\frac{|x_0|+k+r}{(aT)\wedge r}} }
\leq   \frac{4}{\eta}   \theta^{-\eta-\eta\frac{|x_0|+r}{(aT)\wedge r}}
 e^{\eta \ln\eta +\eta + e\theta^{1+\frac{|x_0|+r-1}{(aT)\wedge r}}}
 \frac{1}{\big(\ln(|\ln \frac{B_2}{A_2}|+e)\big)^\eta}.
\end{eqnarray}
Therefore, we have that
\begin{eqnarray}\label{0514-th1.6-step1-1}
 & & \int_{\mathbb R^n} (1+|x|)^{-n-1-\eta}  |u(x,T;u_0)|^2 \,\mathrm dx
 \nonumber\\
 &\leq&   \frac{4C}{\eta}  \big((aT)\wedge r\big)^{-n}
  \theta^{-\eta-\eta\frac{|x_0|+r}{(aT)\wedge r}} e^{\eta \ln\eta +\eta + e\theta^{1+\frac{|x_0|+r-1}{(aT)\wedge r}}}
 \frac{A_2}{\big(\ln(|\ln \frac{B_2}{A_2}|+e)\big)^\eta}
 \nonumber\\
 &\leq&  4C ((aT)\wedge r)^{-n}
  \theta^{-2-2\frac{|x_0|+r}{(aT)\wedge r}} e^{2\ln 2 +2 +  e\theta^{-\frac{1}{(aT)\wedge r}}}
 \frac{A_2}{\ln(|\ln \frac{B_2}{A_2}|+e)}
 \nonumber\\
 &\leq& 4C n! e^{\frac{1}{(aT)\wedge r}} e^{\theta^{-2-2\frac{|x_0|+r}{(aT)\wedge r}}}  e^{2\ln 2 +2 +  e\theta^{-\frac{1}{(aT)\wedge r}}}
 \frac{A_2}{\ln(|\ln \frac{B_2}{A_2}|+e)}
 \nonumber\\
 &\leq& 4C n! e^{2\ln 2 +2}  e^{(\theta^{-2}+e+1)\theta^{-2\frac{|x_0|+r+1}{(aT)\wedge r}} }
  \frac{A_2}{\ln(|\ln \frac{B_2}{A_2}|+e)}.
\end{eqnarray}
(In  the first inequality of (\ref{0514-th1.6-step1-1}), we used (\ref{0411-th1.6-3-2}) and (\ref{tianJin3.31}); In  the last three inequalities of (\ref{0514-th1.6-step1-1}),
we used the facts  that
$$
\theta\in(0,1)
\;\;\mbox{and}\;\;
((aT)\wedge r)^{-n}\leq n! e^{\frac{1}{(aT)\wedge r}}
 \leq n! e^{ \theta^{-2 \frac{1}{(aT)\wedge r}} }.)
$$
 Since $\theta\in(0,1)$, (\ref{0411-proof-th1.6-2}) follows from (\ref{0514-th1.6-step1-1}), as well as  (\ref{0411-proof-th1.6-2-constant}) and (\ref{0411-th1.6-3}). This ends the proof of Step 1.

\vskip 5pt
  \textit{Step 2. To show that there exists  $C_2\triangleq C_2(n)>1$ so that
  \begin{eqnarray}\label{0411-th1.6-4}
    \int_{\mathbb R^n} |u_0(x)|^2 \,\mathrm dx
 \leq  C_3(x_0,r,a,T)
    \frac{A_2}{\sqrt{\ln (\ln \frac{A_2}{B_2}+ e)}},
  \end{eqnarray}
  where
  \begin{eqnarray}\label{0411-th1.6-c4}
   C_3(x_0,r,a,T) \triangleq  (1+T)^{2n+6}  e^{C_2^{1+\frac{|x_0|+r+1}{(aT)\wedge r}}}
  \end{eqnarray}}
  \noindent
  Choose $\eta_0\in \{1,2\}$ so that
  \begin{eqnarray*}\label{0411-th1.6-5}
   n+1+\eta_0 =0 ~~  (\mod 2).
  \end{eqnarray*}
  By Lemma \ref{lemma-0411-th1.6} (where $k=\frac{n+1+\eta_0}{2}$), it follows that
  \begin{eqnarray*}
   \int_{\mathbb R^n} |x|^{n+1+\eta_0} |u(x,T;u_0)|^2 \,\mathrm dx
   \leq C_{31} (1+T)^{n+1+\eta_0}
   \Big(   \|u_0\|^2_{H^{n+1+\eta_0}(\mathbb R^n;\mathbb C)}
      + \int_{\mathbb R^n} |x|^{2(n+1+\eta_0)} | u_0(x)|^2 \,\mathrm dx
  \Big)
  \end{eqnarray*}
  for some $C_{31}>0$ depending only on $n$. The above inequality yields that
  \begin{eqnarray}\label{tianjin3.36}
   & & \int_{\mathbb R^n} (1+|x|)^{n+1+\eta_0} |u(x,T;u_0)|^2 \,\mathrm dx
   \leq  \int_{\mathbb R^n} 2^{n+1+\eta_0} (1+|x|^{n+1+\eta_0}) |u(x,T;u_0)|^2 \,\mathrm dx
  \\
  &\leq& C_{32} (1+T)^{n+1+\eta_0}
  \Big(
  \int_{\mathbb R^n}  |u(x,T;u_0)|^2 \,\mathrm dx
   + \|u_0\|^2_{H^{n+1+\eta_0}(\mathbb R^n;\mathbb C)}
      + \int_{\mathbb R^n} |x|^{2(n+1+\eta_0)} | u_0(x)|^2 \,\mathrm dx
  \Big) \nonumber
  \end{eqnarray}
  for some $C_{32}>0$ depending only on $n$.
  Since
  $$
   (a|x|)^{2(n+1+\eta_0)} \leq [2(n+1+\eta_0)]!  e^{a|x|},~x\in \mathbb R^n,
  $$
 and because
  \begin{eqnarray*}
     \max\{1,a^{-2(n+1+\eta_0)}\}
    &=&\max\{1,(aT)^{-2(n+1+\eta_0)} T^{2(n+1+\eta_0)}\}
    \nonumber\\
  &\leq& (1+T)^{2(n+1+\eta_0)}  \max\{1,(aT)\wedge r)^{-2(n+1+\eta_0)}\}
  \nonumber\\
  &\leq&  (1+T)^{3(n+3)}  \big( 1+((aT)\wedge r)^{-1} \big)^{2(n+3)},
  \end{eqnarray*}
 we obtain from  (\ref{tianjin3.36}) and the definition of $A_2$
   that
  \begin{eqnarray}\label{0514-th1.6-step2-1}
  & & \int_{\mathbb R^n} (1+|x|)^{n+1+\eta_0} |u(x,T;u_0)|^2 \,\mathrm dx
  \nonumber\\
  &\leq& C_{33} (1+T)^{n+1+\eta_0}  \Big( \|u_0\|^2_{H^{n+3}(\mathbb R^n;\mathbb C)}
      + \int_{\mathbb R^n} a^{-2(n+1+\eta_0)}   e^{a|x|} | u_0(x)|^2 \,\mathrm dx
  \Big)
  \nonumber\\
  &\leq& C_{33} (1+T)^{n+1+\eta_0}  \max\{1,a^{-2(n+1+\eta_0)}\}  A_2
  \nonumber\\
  &\leq& C_{33} (1+T)^{4(n+3)}  \big( 1+((aT)\wedge r)^{-1} \big)^{2(n+3)}  A_2
  \end{eqnarray}
  for some $C_{33}>0$ depending only on $n$.

  Now, by the conversation law for the Schr\"{o}dinger equation, (\ref{0514-th1.6-step2-1}) and (\ref{0411-proof-th1.6-2}), we find  that
  \begin{eqnarray}\label{0411-th1.6-5-1}
  & &  \int_{\mathbb R^n} |u_0(x)|^2 \,\mathrm dx
   = \int_{\mathbb R^n} |u(x,T;u_0)|^2 \,\mathrm dx
    \\
   &\leq& \left( \int_{\mathbb R^n} (1+|x|)^{n+1+\eta_0} |u(x,T;u_0)|^2 \,\mathrm dx \right)^{\frac{1}{2}}
    \left( \int_{\mathbb R^n} (1+|x|)^{-n-1-\eta_0} |u(x,T;u_0)|^2 \,\mathrm dx \right)^{\frac{1}{2}}
    \nonumber\\
    &\leq& \sqrt{C_{33}} (1+T)^{2n+6} (1+((aT)\wedge r)^{-1})^{n+3} \sqrt{C(x_0,r,a,T)}  \frac{A_2}{\sqrt{\ln (\ln \frac{A_2}{B_2}+ e)}}
    \nonumber\\
    &\leq& \sqrt{C_{33}} (1+T)^{2n+6} (n+3)! e^{1+((aT)\wedge r)^{-1}} \sqrt{C(x_0,r,a,T)}  \frac{A_2}{\sqrt{\ln (\ln \frac{A_2}{B_2}+ e)}}.
    \nonumber
  \end{eqnarray}
  (Notice that in the last inequality in (\ref{0411-th1.6-5-1}), we used that
  $ x^{n+3}\leq (n+3)! e^x \;\;\mbox{for all}\;\; x>0$.)
  Now, (\ref{0411-th1.6-4}) follows from (\ref{0411-th1.6-5-1}) and (\ref{0411-proof-th1.6-2-constant}) at once. This ends the proof of Step 2.

\vskip 5pt
\textit{Step 3. To show (\ref{0405-sch-th5-control-2}) for the above-mentioned $u_0$ and each $\varepsilon\in(0,1)$}

\noindent
  It suffices to show that for each $\varepsilon\in(0,1)$,
\begin{eqnarray}\label{0411-th1.6-6}
  S\triangleq \int_{\mathbb R^n} |u_0(x)|^2 \,\mathrm dx
  \leq  C_3   \left(\varepsilon A_2 + \varepsilon  e^{e^{\varepsilon^{-2}}} B_2 \right),
\end{eqnarray}
where $C_3 \triangleq C_3(x_0,r,a,T)$ is given by (\ref{0411-th1.6-c4}).
In fact,  for an arbitrarily fixed $\varepsilon>0$, there are only two possibilities: either $S\leq C_3 \varepsilon A_2$ or $S>C_3 \varepsilon A_2$.
In the first case, (\ref{0411-th1.6-6}) is obvious. In the second case, since $C_3>1$ (see (\ref{0411-th1.6-c4})), it follows from the definitions of $S$ and $A_2$  that
\begin{eqnarray}\label{0411-th1.6-6-1}
 0<\varepsilon<\frac{S}{C_3 A_2}<1.
\end{eqnarray}
 Since  the function: $x\mapsto x e^{e^{x^{-2}}}$, with its domain $(0,1)$,
is decreasing,   we see from (\ref{0411-th1.6-6-1}) that
\begin{eqnarray}\label{0411-th1.6-8}
\frac{S}{C_3 A_2} e^{e^{(\frac{S}{C_3 A_2})^{-2}}}
  \leq    \varepsilon e^{e^{\varepsilon^{-2}}}.
\end{eqnarray}

Meanwhile, since the function $x\mapsto e^{-e} e^{e^{x^{-2}}}$, with its domain $(0,1)$,
 is decreasing and because the inverse of the aforementioned function is the function: $x\mapsto \frac{1}{\sqrt{\ln (\ln x+ e)}}$, with its domain $(1, \infty)$, we get from (\ref{0411-th1.6-4})  that
\begin{eqnarray}\label{0411-th1.6-7}
 \frac{A_2}{B_2} \leq  e^{-e} e^{e^{(\frac{S}{C_3 A_2})^{-2}}}.
\end{eqnarray}

Now, it follows from (\ref{0411-th1.6-7}) and (\ref{0411-th1.6-8})  that
\begin{eqnarray*}
 S &=& C_3 \frac{S}{C_3 A_2} \frac{A_2}{B_2} B_2
 \leq C_3 \Big[ \frac{S}{C_3 A_2} e^{-e} e^{e^{(\frac{S}{C_3 A_2})^{-2}}} \Big]  B_2
 \nonumber\\
 &\leq& C_3 \varepsilon e^{-e} e^{e^{\varepsilon^{-2}}} B_2
 \leq C_3 \varepsilon  e^{e^{\varepsilon^{-2}}} B_2.
\end{eqnarray*}
Because $\varepsilon$ was arbitrarily taken from $(0,1)$,
the above leads to (\ref{0411-th1.6-6}). This ends the proof of (\ref{0405-sch-th5-control-2}).

\vskip 5pt

In summary, we  complete the proof of this theorem.

\end{proof}

\bigskip
\section{Further comments on the main results}

The purpose of this section is to present the next Theorem~\ref{theorem-further-study}. From it, we can see that
the inequalities in Theorem~\ref{theorem1} and  Theorem \ref{theorem3} cannot be improved greatly (see Remark~\ref{tianjinremark4.2}). For instance,
in the inequality
(\ref{0229-sch-th1-0}) in
Theorem~\ref{theorem1},  $(B_{r_1}^c({x^\prime}), B_{r_2}^c({x^{\prime\prime}}))$ cannot be replaced by  $(B_{r_1}^c({x^\prime}), B_{r_2}({x^{\prime\prime}}))$.


\begin{theorem}\label{theorem-further-study}
The following conclusions are true:

\noindent(i) Let ${x^\prime},\,{x^{\prime\prime}}\in\mathbb R^n$, $r_1,\,r_2>0$ and $T>0$. Then there exists a sequence  $\{u_k\}\subset L^2(\mathbb R^n;\mathbb C)$, with
\begin{eqnarray}\label{0514-further-study-i-1}
 \int_{\mathbb R^n}  |u_k(x)|^2 \,\mathrm dx=1\;\;\mbox{for all}\;\; k\in\mathbb N^+,
\end{eqnarray}
so  that
\begin{eqnarray}\label{0514-further-study-i-2}
\lim_{k\rightarrow\infty} \int_{B_{r_1}^c({x^\prime})}  |u_k(x)|^2 \,\mathrm dx
=
 \lim_{k\rightarrow\infty}  \int_{B_{r_2}({x^{\prime\prime}})}  |u(x,T;u_k)|^2 \,\mathrm dx= 0.
\end{eqnarray}

\noindent (ii) Let ${x^\prime},\,{x^{\prime\prime}}\in\mathbb R^n$, $r_1,\,r_2>0$, $S_1>0$ and $S_2>0$. Then there exists a sequence $\{u_k\}\subset L^2(\mathbb R^n;\mathbb C)$, with
\begin{eqnarray}\label{further-study-ii-1}
 \int_{\mathbb R^n}  |u_k(x)|^2 \,\mathrm dx=1\;\;\mbox{for all}\;\; k\in\mathbb N^+,
\end{eqnarray}
so that
\begin{eqnarray}\label{further-study-ii-2}
\lim_{k\rightarrow\infty} \int_{B_{r_1}^c({x^\prime})}  |u(x,S_1;u_k)|^2 \,\mathrm dx
=
 \lim_{k\rightarrow\infty}  \int_0^{S_2} \int_{B_{r_2}({x^{\prime\prime}})}  |u(x,t;u_k)|^2 \,\mathrm dx \mathrm dt= 0.
\end{eqnarray}

\noindent (iii) For each  subset $A\subset\mathbb R^n$, with  $m(A^c)>0$,  and each $T>0$, there does not exist a positive constant $C>0$ so that
\begin{eqnarray}\label{further-study-i-0}
 \int_{\mathbb R^n}  |u_0(x)|^2 \,\mathrm dx
 \leq C \int_{A}  |u(x,T;u_0)|^2 \,\mathrm dx
\end{eqnarray}
for all  $u_0\in L^2(\mathbb R^n;\mathbb C)$.

\noindent (iv) For each $x_0\in\mathbb R^n$, $r>0$, $a>0$ and $T>0$, there exists a sequence of $\{u_k\}\subset C_0^\infty(\mathbb R^n;\mathbb C)$ and $M>0$ so that
\begin{eqnarray}\label{0502-further-study-ii-1}
 \int_{\mathbb R^n} e^{a|x|} |u_k(x)|^2 \,\mathrm dx \leq M
 \;\;\mbox{and}\;\;
 \int_{\mathbb R^n}  |u_k(x)|^2 \,\mathrm dx=1\;\;\mbox{for all}\;\;k\in\mathbb N^+
\end{eqnarray}
and so that
\begin{eqnarray}\label{0502-further-study-ii-2}
  \lim_{k\rightarrow\infty} \int_{B_{r}(x_0)}  |u(x,T;u_k)|^2 \,\mathrm dx=0.
\end{eqnarray}

\end{theorem}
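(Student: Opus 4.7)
The plan is to dispatch (i), (ii), (iv) by a common wave-packet/high-momentum mechanism, and to handle (iii) directly via the conformal identity (\ref{0229-sch-th1-4}). The core computation I would carry out once at the outset is the elementary translation identity: for $\phi\in C_0^\infty(\mathbb R^n;\mathbb C)$, $y_0,\vec v\in\mathbb R^n$ and $k\in\mathbb R$, completing the square in the Fourier representation of $e^{it\Delta}$ yields
\begin{equation*}
e^{it\Delta}\!\bigl[\phi(\cdot-y_0)\,e^{ik\vec v\cdot(\cdot)}\bigr](x)
 = e^{i\Phi_k(x,t)}\,(e^{it\Delta}\phi)(x-y_0-2tk\vec v),
\end{equation*}
where $\Phi_k(x,t)\in\mathbb R$ is a phase whose precise form is irrelevant. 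Taking moduli, the spatial density of the evolved wave packet is simply the translate of $|e^{it\Delta}\phi|^2$ by $y_0+2tk\vec v$; since $|e^{it\Delta}\phi|^2\in L^1(\mathbb R^n)$, its mass in any fixed ball tends to $0$ as the translation escapes to infinity.

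For (i), I would fix $\phi\in C_0^\infty(B_{r_1}(0);\mathbb C)$ with $\|\phi\|_{L^2}=1$ and any $\vec v\in S^{n-1}$, and set $u_k(x):=\phi(x-{x^\prime})\,e^{ik\vec v\cdot x}$. Then (\ref{0514-further-study-i-1}) holds, $u_k$ is supported in $B_{r_1}({x^\prime})$, so the first limit in (\ref{0514-further-study-i-2}) is identically zero; and the translation identity reduces the second integral to $\int_{B_{r_2}({x^{\prime\prime}}-{x^\prime}-2Tk\vec v)}|e^{iT\Delta}\phi|^2\,\mathrm dy$, which tends to zero as $k\to\infty$ by tail decay of the $L^1$-function $|e^{iT\Delta}\phi|^2$. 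The same recipe dispatches (iv): taking $\phi\in C_0^\infty(B_1(0);\mathbb C)$ with $\|\phi\|_{L^2}=1$ and $u_k(x):=\phi(x)e^{ik\vec v\cdot x}$ gives $\int_{\mathbb R^n}e^{a|x|}|u_k|^2\,\mathrm dx\le e^a$ uniformly in $k$, while the translation identity reduces $\int_{B_r(x_0)}|u(x,T;u_k)|^2\,\mathrm dx$ to $\int_{B_r(x_0-2Tk\vec v)}|e^{iT\Delta}\phi|^2\,\mathrm dy$, which again tends to zero.

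For (ii), I prescribe the wave packet at time $S_1$ rather than at time $0$: with $\phi\in C_0^\infty(B_{r_1}(0);\mathbb C)$, $\|\phi\|_{L^2}=1$, $\vec v\in S^{n-1}$, and $v_k(x):=\phi(x-{x^\prime})e^{ik\vec v\cdot x}$, I set $u_k:=e^{-iS_1\Delta}v_k$. Then $\|u_k\|_{L^2}=1$ by $L^2$-isometry of the Schr\"{o}dinger group, and $u(\cdot,S_1;u_k)=v_k$ is supported in $B_{r_1}({x^\prime})$, so the first limit in (\ref{further-study-ii-2}) holds trivially. For the second, writing $\tau:=t-S_1$ and applying the translation identity,
\begin{equation*}
\int_{B_{r_2}({x^{\prime\prime}})}|u(x,t;u_k)|^2\,\mathrm dx
 = \int_{B_{r_2}({x^{\prime\prime}}-{x^\prime}-2\tau k\vec v)}|e^{i\tau\Delta}\phi|^2\,\mathrm dy,
\end{equation*}
whose right-hand side vanishes pointwise as $k\to\infty$ for every $t\in[0,S_2]\setminus\{S_1\}$ (i.e.\ almost everywhere). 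Since it is bounded by $\|e^{i\tau\Delta}\phi\|_{L^2}^2=1$, an integrable majorant on $[0,S_2]$, Lebesgue's dominated convergence theorem delivers the $\mathrm dt$-integrated limit.

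For (iii), I would use (\ref{0229-sch-th1-4}) directly. Setting $g(\xi):=e^{i|\xi|^2/4T}u_0(\xi)$, that identity becomes $|u(x,T;u_0)|^2=(2T)^{-n}|\hat g(x/(2T))|^2$, and the substitution $\eta=x/(2T)$ yields $\int_A|u(x,T;u_0)|^2\,\mathrm dx=\int_{A/(2T)}|\hat g(\eta)|^2\,\mathrm d\eta$ together with $\|u_0\|_{L^2}=\|\hat g\|_{L^2}$. The hypothesis $|A^c|>0$ gives $|(A/(2T))^c|>0$, so I pick any nonzero $\hat g\in L^2(\mathbb R^n;\mathbb C)$ supported in $(A/(2T))^c$; the corresponding $u_0$ then satisfies $\|u_0\|_{L^2}>0$ but $\int_A|u(x,T;u_0)|^2\,\mathrm dx=0$, contradicting (\ref{further-study-i-0}) for every finite $C$. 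No genuine obstacle appears in the argument; the only delicate points are getting the signs right in the translation identity and exhibiting an integrable majorant for the dominated convergence step in (ii).
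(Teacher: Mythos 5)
Your proposal is correct throughout, and for parts (iii) and (iv) it coincides with the paper's argument: (iii) is exactly the same reduction via the identity (\ref{0229-sch-th1-4}) to picking a nonzero $\hat g$ supported in $(A/2T)^c$, and (iv) uses the same high-momentum escape (the paper writes $u_k(x)=e^{-i|x|^2/4T}e^{-ikx\cdot\vec v}g(x)$ and routes the computation through (\ref{further-study-5}) rather than the Galilean covariance identity, but the mechanism — a ball of fixed radius whose center runs off to infinity — is identical). For (i) and (ii), however, you take a genuinely different route. The paper \emph{concentrates} the data at $x'$ by the $L^2$-scaling $g_k(x)=k^{n/2}g(k(x-x'))$, chirped by $e^{-i|x|^2/4T}$ so that the identity (\ref{0229-sch-th1-4}) exhibits the solution at time $T$ as $(2iT)^{-n/2}k^{-n/2}\hat g(x/2Tk)$ up to phase; the mass in any fixed ball $B_{r_2}(x'')$ then equals $\int_{B_{r_2/2Tk}(x''/2Tk)}|\hat g|^2$, which vanishes because the domain of integration shrinks. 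For (ii) the paper similarly gets a uniform $O(k^{-n/2})$ sup-norm bound for $t\ne S_1$ and then invokes dominated convergence. You instead keep the spatial profile fixed and attach a high-momentum phase $e^{ik\vec v\cdot x}$, so that by Galilean covariance the evolved density is a translate of $|e^{it\Delta}\phi|^2$ by $2tk\vec v$; the mass in any fixed ball escapes because the ball's preimage runs off to infinity, and in (ii) the same $L^2$-conservation bound $\le 1$ majorizes the $\mathrm dt$-integral (with $t=S_1$ a single exceptional point, harmless for dominated convergence). Both mechanisms are valid; yours has the advantage of being a single, elementary device (Galilean boost) that handles (i), (ii), and (iv) uniformly without invoking (\ref{0229-sch-th1-4}), while the paper's is more closely tied to the chirp identity that is also the backbone of the rest of the paper.
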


\begin{proof}
For each    $\tau\in \mathbb R\setminus\{0\}$ and  $f\in L^2(\mathbb R^n;\mathbb C)$, we define a function $u_{\tau,f}$ by
 \begin{eqnarray}\label{further-study-3}
  u_{\tau,f}(x)\triangleq   e^{-i|x|^2/4\tau} f(x),~x\in \mathbb R^n.
 \end{eqnarray}
 By
  \cite[(1.2)]{EKPV-2}
   and (\ref{further-study-3}), we see  that for all    $\tau\in \mathbb R\setminus\{0\}$ and  $f\in L^2(\mathbb R^n;\mathbb C)$,
  \begin{eqnarray*}
  (2i\tau)^{n/2} e^{-i|x|^2/4\tau}
  u(x,\tau;u_{\tau,f})
    =  \widehat{e^{i|\xi|^2/4\tau}u_{\tau,f}(\xi)} (x/2\tau)=\hat f(x/2\tau),~x\in\mathbb R^n.
 \end{eqnarray*}
 (Here and in what follows, $u(x,\tau;u_{\tau,f})=(e^{i\Delta \tau} u_{\tau,f})(x)$ when $\tau<0$.)
Thus, one has that for all    $\tau\in \mathbb R\setminus\{0\}$ and  $f\in L^2(\mathbb R^n;\mathbb C)$,
 \begin{eqnarray}\label{further-study-5}
  u(x,\tau;u_{\tau,f})
  =  (2i\tau)^{-n/2} e^{i|x|^2/4\tau}  \hat f(x/2\tau),~x\in\mathbb R^n.
 \end{eqnarray}
 Now, we prove the conclusions (i)-(iv) one by one.

 \vskip 5pt
(i) Let ${x^\prime},\,{x^{\prime\prime}}\in\mathbb R^n$, $r_1,\,r_2>0$ and $T>0$. Let  $g$ be a function so that
\begin{eqnarray}\label{0514-last}
 g\in C_0^\infty(\mathbb R^n;\mathbb C)
 \;\;\mbox{and}\;\;  \|g\|_{L^2(\mathbb R^n;\mathbb C)}=1.
\end{eqnarray}
For each $k\in\mathbb N^+$, let
\begin{eqnarray}\label{0514-further-study-7}
 g_k(x)\triangleq k^{n/2} g(k(x-{x^\prime}))  ,~x\in\mathbb R^n.
\end{eqnarray}
 We define a  sequence of $\{u_k\}\subset L^2(\mathbb R^n;\mathbb C)$ as follows:
\begin{eqnarray}\label{0514-further-study-8}
 u_k(x) \triangleq  e^{-i|x|^2/4T}  g_k \big(x \big),~x\in\mathbb R^n,~k\in\mathbb N^+.
\end{eqnarray}
By  (\ref{further-study-3}) and   (\ref{0514-further-study-8}), we have that
$$
u_{T, g_k}=u_k\;\;\mbox{for all}\;\; k\in \mathbb{N}^+.
$$
From this, (\ref{further-study-5}) and (\ref{0514-further-study-7}), after some computations, we see  that for each $k\in\mathbb N^+$,
\begin{eqnarray}\label{0514-further-study-8-1}
 u(x,T;u_k)=  (2iT)^{-n/2} e^{i|x|^2/4T}  k^{-n/2}
 \hat g(\frac{x}{2Tk})
   e^{-i x\cdot {x^\prime}/2T},
   ~x\in\mathbb R^n.
\end{eqnarray}

Three observations are given in order: First,  by (\ref{0514-further-study-8}) and (\ref{0514-further-study-7}), we find that
\begin{eqnarray*}
  \lim_{k\rightarrow\infty}\int_{B_{r_1}^c({x^\prime})}  |u_k(x)|^2 \,\mathrm dx
  &=& \lim_{k\rightarrow\infty}\int_{B_{kr_1}^c(0)}   |g(x)|^2 \,\mathrm dx=0;
\end{eqnarray*}
Second,  from (\ref{0514-further-study-8}), (\ref{0514-further-study-7}) and (\ref{0514-last}), we see that
\begin{eqnarray*}
 \int_{\mathbb R^n_x}   |u_k(x)|^2 \,\mathrm dx
  &=& \int_{\mathbb R^n_x}   |g_k(x)|^2 \,\mathrm dx =1\;\;\mbox{for all}\;\;k\in\mathbb N^+;
\end{eqnarray*}
Third,  from (\ref{0514-further-study-8-1}) and \eqref{0514-last}, we obtain that
\begin{eqnarray*}
 \lim_{k\rightarrow\infty}\int_{B_{r_2}({x^{\prime\prime}})}  |u(x,T;u_k)|^2 \,\mathrm dx
 = \lim_{k\rightarrow\infty}\int_{B_{\frac{r_2}{2Tk} }(\frac{{x^{\prime\prime}}}{2Tk} )}   |\hat g(x)|^2 \,\mathrm dx=0.
\end{eqnarray*}
Now, from the above three observations, we get  (\ref{0514-further-study-i-1}) and (\ref{0514-further-study-i-2}). This ends the proof the conclusion (i).

 \vskip 5pt
(ii) Let ${x^\prime},\,{x^{\prime\prime}}\in\mathbb R^n$, $r_1,\,r_2>0$, $S_1>0$ and $S_2>0$. Let $g$ and $g_k$, with $k\in\mathbb N^+$,  satisfy (\ref{0514-last}) and (\ref{0514-further-study-7}),
respectively.
 Since the Schr\"{o}dinger equation is time-reversible,  we can find a  sequence  $\{u_k\}\subset L^2(\mathbb R^n;\mathbb C)$ so that
\begin{eqnarray}\label{0502-further-study-8}
 v_k(x) \triangleq u(x,S_1;u_k) = g_k (x),~x\in\mathbb R^n,~k\in\mathbb N^+.
\end{eqnarray}
By (\ref{0502-further-study-8}), (\ref{0514-last}) and (\ref{0514-further-study-7}),
 we find that
\begin{eqnarray}\label{0502-further-study-9}
  \lim_{k\rightarrow\infty}\int_{B_{r_1}^c({x^\prime})}  |v_k(x)|^2 \,\mathrm dx
  &=& \lim_{k\rightarrow\infty}\int_{B_{kr_1}^c(0)}   |g(x)|^2 \,\mathrm dx=0
\end{eqnarray}
and
\begin{eqnarray}\label{0502-further-study-9-1}
 \int_{\mathbb R^n_x}   |v_k(x)|^2 \,\mathrm dx
  &=& \int_{\mathbb R^n_x}   |g_k(x)|^2 \,\mathrm dx =1\;\;\mbox{for all}\;\;k\in\mathbb N^+.
\end{eqnarray}

Next, by (\ref{0502-further-study-8}) and (\ref{further-study-3}), we have that
\begin{eqnarray*}
 v_k = u_{\tau,f}
 \;\;\mbox{with}\;\;
 (\tau,f)=(t,e^{i|\cdot|^2/4t}g_k(\cdot)).
\end{eqnarray*}
Then by  (\ref{further-study-5}),  we get that for each $k\in\mathbb N^+$,
\begin{eqnarray}\label{0502-further-study-8-1}
 u(x,t;v_k)=  (2it)^{-n/2} e^{i|x|^2/4t}
 \widehat{e^{i|\xi|^2/4t}g_k(\xi)} (x/2t),
   ~(x,t)\in\mathbb R^n \times (\mathbb R\setminus\{0\}).
\end{eqnarray}
Meanwhile, from (\ref{0514-further-study-7}), it follows that for all $t\in\mathbb R\setminus\{0\}$ and a.e. $x\in\mathbb R^n$,
\begin{eqnarray*}
  \widehat{e^{i|\xi|^2/4t}g_k(\xi)} (x)
  &=&(2\pi)^{-n/2} \int_{\mathbb R^n_\xi} e^{-i x\cdot\xi} e^{i|\xi|^2/4t}g_k(\xi) \,\mathrm d\xi
  \nonumber\\
  &=& (2\pi)^{-n/2} \int_{\mathbb R^n_\xi} e^{-i x\cdot\xi} e^{i|\xi|^2/4t} k^{n/2} g(k(\xi-{x^\prime})) \,\mathrm d\xi
  \nonumber\\
  &=& (2\pi)^{-n/2} k^{-n/2} e^{-i x\cdot {x^\prime}} \int_{\mathbb R^n_\xi} e^{-i x\cdot\xi/k} e^{i|\xi/k + {x^\prime}|^2/4t}  g(\xi) \,\mathrm d\xi.
\end{eqnarray*}
This, along with (\ref{0502-further-study-8-1}) and (\ref{0514-last}), yields that for each $t\in \mathbb R\setminus\{0\}$,
\begin{eqnarray*}
  \int_{B_{r_2}({x^{\prime\prime}})}  |u(x,t;v_k)|^2 \,\mathrm dx
  &\leq& |B_{r_2}({x^{\prime\prime}})| \sup_{x\in B_{r_2}({x^{\prime\prime}})} |u(x,t;v_k)|^2
  \nonumber\\
  &\leq&  |B_{r_2}({x^{\prime\prime}})| \Big(  (4\pi|t|k)^{-n/2} \int_{\mathbb R^n_\xi} |g(\xi)| \,\mathrm d\xi \Big)^2,
\end{eqnarray*}
which implies that
\begin{eqnarray}\label{0514-point-1}
  \lim_{k\rightarrow\infty} \int_{B_{r_2}({x^{\prime\prime}})}  |u(x,t;v_k)|^2 \,\mathrm dx=0
  \;\;\mbox{for each}\;\;  t\in\mathbb R \setminus\{0\}.
\end{eqnarray}
At the same time, by the conservation law for the Schr\"{o}dinger equation and (\ref{0502-further-study-9-1}), we find that for all $k$ and $t\in \mathbb R\setminus\{0\}$,
\begin{eqnarray*}
  \int_{B_{r_2}({x^{\prime\prime}})}  |u(x,t;v_k)|^2 \,\mathrm dx
  \leq \int_{\mathbb R^n}  |u(x,t;v_k)|^2 \,\mathrm dx
  =  \int_{\mathbb R^n}  |v_k(x)|^2 \,\mathrm dx=1.
\end{eqnarray*}
By this and (\ref{0514-point-1}), we can apply  the Lebesgue dominated convergence theorem to get that
\begin{eqnarray}\label{0514-point-2}
 \lim_{k\rightarrow\infty} \int_{-S_1}^{S_2-S_1} \int_{B_{r_2}({x^{\prime\prime}})}  |u(x,t;v_k)|^2 \,\mathrm dx \mathrm dt =0.
\end{eqnarray}
Since $v_k(x)=u(x,S_1;u_k)$, $x\in\mathbb R^n$ (see (\ref{0502-further-study-8})), by (\ref{0502-further-study-9}), (\ref{0502-further-study-9-1}) and (\ref{0514-point-2}), one can directly check that
 the above-mentioned sequence $\{u_k\}$ satisfies (\ref{further-study-ii-1}) and (\ref{further-study-ii-2}).
 This ends the proof of the conclusion (ii).

 \vskip 5pt
 (iii) By contradiction, suppose that the conclusion (iii) in this theorem  was not true. Then there would exist
 $A_0\subset\mathbb R^n$, with $m(A_0^c)>0$, $C_1>0$ and $T>0$ so that
\begin{eqnarray}\label{further-study-6}
 \int_{\mathbb R^n}  |u_0(x)|^2 \,\mathrm dx
 \leq C_1 \int_{A_0}  |u(x,T;u_0)|^2 \,\mathrm dx\;\;\mbox{for all}\;\;u_0\in L^2(\mathbb R^n;\mathbb C).
\end{eqnarray}
From  (\ref{further-study-3}), (\ref{further-study-6}) and (\ref{further-study-5}), we find  that for each $f\in L^2(\mathbb R^n;\mathbb C)$,
\begin{eqnarray*}
  \int_{\mathbb R^n_{\xi}}  |\hat f(\xi)|^2 \,\mathrm d\xi
=\int_{\mathbb R^n_x}  |f(x)|^2 \,\mathrm dx
=  \int_{\mathbb R^n}  |u_{T,f}(x)|^2 \,\mathrm dx
 \leq  C_1 \int_{A_0}  |u(x,T;u_{T,f})|^2 \,\mathrm dx
= C_1 \int_{A_0/2T}  |\hat f(\xi)|^2 \,\mathrm d\xi .
\end{eqnarray*}
Since $|A^c_0|>0$, by taking $f\in L^2(\mathbb R^n;\mathbb C)\setminus\{0\}$ with supp\,$\hat f\subset A_0^c/2T$ in the above inequality, we are led to a contradiction.
Hence,  the conclusion (iii) in this theorem is true.

\vskip 5pt
(iv) Arbitrarily  fix $x_0\in\mathbb R^n$, $r>0$, $a>0$ and $T>0$. Let $g\in C_0^\infty(\mathbb R^n;\mathbb C)$ be a function so that
\begin{eqnarray}\label{further-study-7}
 \int_{\mathbb R^n_\xi}  |\hat g(\xi)|^2 \,\mathrm d\xi
 = \int_{\mathbb R^n_x}  |g(x)|^2 \,\mathrm dx
 = 1.
\end{eqnarray}
Let  $\vec v\in S^{n-1}$. We define a  sequence  $\{u_k\}\subset C_0^\infty(\mathbb R^n;\mathbb C)$ by
\begin{eqnarray}\label{further-study-8}
u_k(x) \triangleq  e^{-i  |x|^2/4T} e^{-kix\cdot \vec{v}} g (x),~x\in\mathbb R^n.
\end{eqnarray}
By (\ref{further-study-8})  and  (\ref{further-study-3}), we have that
\begin{eqnarray*}
 u_k=u_{\tau,f},
 \;\;\mbox{with}\;\;
 \tau=T\;\;\mbox{and}\;\;f(x)=e^{-kix\cdot \vec{v}} g (x),\;\;x\in\mathbb R^n,
\end{eqnarray*}
from which and (\ref{further-study-5}), it follows that for each $k\in\mathbb N^+$,
\begin{eqnarray*}
 u(x,T;u_k) =  (2iT)^{-n/2} e^{i|x|^2/4T}  \hat g \big(\frac{x+k\vec v}{2T} \big),~x\in\mathbb R^n,~k\in\mathbb N^+.
\end{eqnarray*}
This yields that for each $k\in\mathbb N^+$,
\begin{eqnarray*}
 \int_{B_r(x_0)}  |u(x,T;u_k)|^2 \,\mathrm dx
 = \int_{B_{\frac{r}{2T}}(\frac{x_0+k\vec v}{2T})}  |\hat g(x)|^2 \,\mathrm dx.
\end{eqnarray*}
Since $\int_{\mathbb R^n}  |\hat g(x)|^2 \,\mathrm dx<\infty$ (see (\ref{further-study-7})), the above implies that
\begin{eqnarray}\label{TTJJin2.23}
  \int_{B_r(x_0)}  |u(x,T;u_k)|^2 \,\mathrm dx \rightarrow 0
  \;\;\mbox{as}\;\;  k\rightarrow\infty.
\end{eqnarray}

Meanwhile, from (\ref{further-study-8}) and (\ref{further-study-7}), we find that for each $k\in\mathbb N^+$,
\begin{eqnarray*}
  \int_{\mathbb R^n_x} e^{a|x|}  |u_k(x)|^2 \,\mathrm dx
  &=& \int_{\mathbb R^n_x} e^{a|x|}  |g(x)|^2 \,\mathrm dx<\infty
\end{eqnarray*}
and
\begin{eqnarray*}
 \int_{\mathbb R^n_x}   |u_k(x)|^2 \,\mathrm dx
  &=& \int_{\mathbb R^n_x}   |g(x)|^2 \,\mathrm dx =1 .
\end{eqnarray*}
From these and (\ref{TTJJin2.23}), we obtain  (\ref{0502-further-study-ii-1}) and (\ref{0502-further-study-ii-2}). This ends the proof the conclusion (iv).

\vskip 5pt
In summary, we finish the proof of this theorem.

\end{proof}

\begin{remark}\label{tianjinremark4.2}
$(a)$ From (i)  and (ii) of Theorem~\ref{theorem-further-study}, one can easily check that
for any ${x^\prime},\,{x^{\prime\prime}}\in\mathbb R^n$, $r_1,\,r_2>0$ and $T>S\geq0$, there is no  constant $C>0$ so that
any of the following inequalities holds:
$$
 \int_{\mathbb R^n}  |u_0(x)|^2 \,\mathrm dx
 \leq C \Big(\int_{B_{r_1}^c({x^\prime})}  |u(x,S;u_0)|^2 \,\mathrm dx
 + \int_{B_{r_2}({x^{\prime\prime}})}  |u(x,T;u_0)|^2 \,\mathrm dx\Big),\;\forall\;u_0\in L^2(\mathbb R^n;\mathbb C);
$$
$$
 \int_{\mathbb R^n}  |u_0(x)|^2 \,\mathrm dx
 \leq C \Big(\int_{B_{r_1}^c({x^\prime})}  |u(x,S;u_0)|^2 \,\mathrm dx
 + \int_0^T\int_{B_{r_2}({x^{\prime\prime}})}  |u(x,t;u_0)|^2 \,\mathrm dx \mathrm dt\Big),\;\forall\;u_0\in L^2(\mathbb R^n;\mathbb C).
$$
Hence, the terms on the right hand side of
(\ref{0229-sch-th1-0}) in
Theorem~\ref{theorem1} cannot be replaced by either
$$
 C\Big(\int_{B_{r_1}^c({x^\prime})}  |u(x,S;u_0)|^2 \,\mathrm dx
 + \int_{B_{r_2}({x^{\prime\prime}})}  |u(x,T;u_0)|^2 \,\mathrm dx\Big)
$$
or
$$
 C\Big(\int_{B_{r_1}^c({x^\prime})}  |u(x,S;u_0)|^2 \,\mathrm dx
 + \int_0^T\int_{B_{r_2}({x^{\prime\prime}})}  |u(x,t;u_0)|^2 \,\mathrm dx \mathrm dt\Big).
$$

$(b)$ From (iii)  of Theorem~\ref{theorem-further-study}, we see that in order to have (\ref{further-study-i-0}) (the observability at one point in time), it is necessary that $|A^c|=0$.
That is, in order to recover a solution by observing it at one point in time, we must observe it at one time point and over the whole $\mathbb{R}^n$.
From this, conclusions in $(a)$ of this remark and  Theorem~\ref{theorem1}, we see that the observability at two points in time is ``optimal".

$(c)$  From (iv)  of Theorem~\ref{theorem-further-study}, we find that for any $r>0$, $a>0$ and $T>0$, there is no $C>0$ or $\theta\in(0,1)$ so that
\begin{eqnarray*}
\int_{\mathbb R^n}|u_0(x)|^2 \,\mathrm dx
 \leq C \left(
 \int_{B_r(0)}|u(x,T;u_0)|^2 \,\mathrm dx
 \right)^{\theta}
 \left(\int_{\mathbb R^n}e^{ a|x|} |u_0(x)|^2 \,\mathrm dx\right)^{1-\theta}
\end{eqnarray*}
for all $u_0\in C_0^\infty(\mathbb R^n;\mathbb C)$.
Hence,  the inequality in $(i)$ of Theorem \ref{theorem3} will not be true if   $B_r^c(0)$ is replaced by $B_r(0)$.

\end{remark}

\bigskip
\section{Applications}

In this section, we consider the applications of Theorems \ref{theorem1}-\ref{theorem6} to the controllability for the Schr\"{o}dinger equation.
These theorems correspond to different kinds of controllability with a cost.

\subsection{A functional analysis framework}

This subsection presents an equivalence lemma (Lemma~\ref{lemma-0428-fn}) between some observability and some controllability in an abstract framework.
With the aid of it, we can use inequalities in Theorems \ref{theorem1}-\ref{theorem6} to study some controllability  for the Schr\"{o}dinger equation.

\begin{lemma}\label{lemma-0428-fn}
Let $\mathbb K$ be either $\mathbb R$ or $\mathbb C$. Let $X$, $Y$ and $Z$ be three Banach spaces over  $\mathbb K$, with their dual spaces $X^*$, $Y^*$ and $Z^*$. Let $R\in \mathcal L(Z,X)$ and $O\in \mathcal L(Z,Y)$. Then the following two propositions are equivalent:

\noindent (i) There exists $\widehat C_0>0$ and $\hat\varepsilon_0>0$ so that for each $z\in Z$,
\begin{eqnarray}\label{lemma-0428-fn-ii}
 \| R z \|^2_X  \leq   \widehat C_0 \|Oz\|^2_Y
  +  \hat\varepsilon_0 \|z\|_Z^2.
\end{eqnarray}

\noindent (ii) There exists  $C_0>0$ and $\varepsilon_0>0$ so that for each $x^*\in X^*$, there is
$y^*\in Y^*$ satisfying that
\begin{eqnarray}\label{lemma-0428-fn-i}
 \frac{1}{C_0} \|y^*\|^2_{Y^*} + \frac{1}{\varepsilon_0} \|R^*x^*-O^*y^*\|^2_{Z^*}
 \leq \|x^*\|^2_{X^*}.
\end{eqnarray}

Furthermore, when one of the above two propositions holds, the constant pairs $(C_0,\varepsilon_0)$ and $(\widehat C_0,\hat\varepsilon_0)$ can be chosen to be the same.

\end{lemma}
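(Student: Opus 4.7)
The plan is to establish the two implications separately, tracking how the constants transfer; the final clause of the lemma will then come out automatically because in each direction the constants pass through with equality.

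For the easier direction (ii)$\Rightarrow$(i), I would fix $z\in Z$ and an arbitrary $x^{*}\in X^{*}$ with $\|x^{*}\|_{X^{*}}\le 1$, and apply (ii) to obtain $y^{*}\in Y^{*}$. Splitting
$\langle x^{*},Rz\rangle=\langle R^{*}x^{*}-O^{*}y^{*},z\rangle+\langle y^{*},Oz\rangle$
and invoking the weighted Cauchy--Schwarz inequality
$(ab+cd)^{2}\leq\bigl(\tfrac{a^{2}}{\varepsilon_{0}}+\tfrac{c^{2}}{C_{0}}\bigr)\bigl(\varepsilon_{0}b^{2}+C_{0}d^{2}\bigr)$
with $a=\|R^{*}x^{*}-O^{*}y^{*}\|_{Z^{*}}$, $b=\|z\|_{Z}$, $c=\|y^{*}\|_{Y^{*}}$, $d=\|Oz\|_{Y}$, I observe that the first factor is bounded by $\|x^{*}\|_{X^{*}}^{2}\le 1$ thanks to (\ref{lemma-0428-fn-i}). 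This gives $|\langle x^{*},Rz\rangle|^{2}\leq \varepsilon_{0}\|z\|_{Z}^{2}+C_{0}\|Oz\|_{Y}^{2}$, and taking the supremum over $\|x^{*}\|_{X^{*}}\leq 1$ recovers $\|Rz\|_{X}^{2}$ on the left and delivers (\ref{lemma-0428-fn-ii}) with $(\widehat{C}_{0},\hat{\varepsilon}_{0})=(C_{0},\varepsilon_{0})$.

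For the main direction (i)$\Rightarrow$(ii), I would use a Hahn--Banach extension on a suitable auxiliary product space. Fix $x^{*}\in X^{*}$ and equip $Y\times Z$ with the norm
$\|(y,z)\|_{\star}:=\bigl(\widehat{C}_{0}\|y\|_{Y}^{2}+\hat{\varepsilon}_{0}\|z\|_{Z}^{2}\bigr)^{1/2}$,
whose dual on $Y^{*}\times Z^{*}$ is readily computed by a weighted Cauchy--Schwarz argument (with sharpness obtained from a supporting functional on each factor) to be
$\|(y^{*},z^{*})\|_{\star'}=\bigl(\tfrac{1}{\widehat{C}_{0}}\|y^{*}\|_{Y^{*}}^{2}+\tfrac{1}{\hat{\varepsilon}_{0}}\|z^{*}\|_{Z^{*}}^{2}\bigr)^{1/2}$.
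Introduce the linear subspace $W:=\{(Oz,z):z\in Z\}\subset Y\times Z$, which is well defined because the map $z\mapsto(Oz,z)$ is one-to-one, and define the $\mathbb{K}$-linear functional $\ell:W\to\mathbb{K}$ by $\ell(Oz,z):=\langle x^{*},Rz\rangle$. Combining $|\langle x^{*},Rz\rangle|\leq\|x^{*}\|_{X^{*}}\|Rz\|_{X}$ with hypothesis (\ref{lemma-0428-fn-ii}) yields the key estimate $|\ell(Oz,z)|\leq\|x^{*}\|_{X^{*}}\|(Oz,z)\|_{\star}$, so $\ell$ has $\|\cdot\|_{\star}$-norm at most $\|x^{*}\|_{X^{*}}$ on $W$. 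The complex Hahn--Banach theorem then extends $\ell$ to a continuous linear functional $\tilde{\ell}$ on $(Y\times Z,\|\cdot\|_{\star})$ preserving that bound. Writing $\tilde{\ell}(y,z)=\langle y^{*},y\rangle+\langle z^{*},z\rangle$ with $(y^{*},z^{*})\in Y^{*}\times Z^{*}$, the dual-norm estimate becomes $\tfrac{1}{\widehat{C}_{0}}\|y^{*}\|_{Y^{*}}^{2}+\tfrac{1}{\hat{\varepsilon}_{0}}\|z^{*}\|_{Z^{*}}^{2}\leq\|x^{*}\|_{X^{*}}^{2}$, while the extension identity $\tilde{\ell}(Oz,z)=\langle R^{*}x^{*},z\rangle$ holding for every $z\in Z$ forces $z^{*}=R^{*}x^{*}-O^{*}y^{*}$. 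Substituting gives (\ref{lemma-0428-fn-i}) with $(C_{0},\varepsilon_{0})=(\widehat{C}_{0},\hat{\varepsilon}_{0})$, which simultaneously establishes the final clause of the lemma.

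The only genuinely delicate point is choosing the auxiliary normed space so that hypothesis (i) hands over precisely the seminorm bound Hahn--Banach requires on $W$; everything else (the dual-norm identification and the reading off of $z^{*}=R^{*}x^{*}-O^{*}y^{*}$ from the extension identity) is mechanical. Notably, no reflexivity, no strong duality, and no attainment of infima is needed, so the argument applies verbatim to arbitrary Banach spaces $X,Y,Z$ over $\mathbb{R}$ or $\mathbb{C}$.
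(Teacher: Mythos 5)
Your proof is correct and follows essentially the same route as the paper: weighted Cauchy--Schwarz after splitting $\langle x^*,Rz\rangle=\langle R^*x^*-O^*y^*,z\rangle+\langle y^*,Oz\rangle$ for (ii)$\Rightarrow$(i), and a Hahn--Banach extension of the functional $\ell(Oz,z)=\langle x^*,Rz\rangle$ on a product space for (i)$\Rightarrow$(ii). Your use of the weighted norm $\|(y,z)\|_\star=\bigl(\widehat C_0\|y\|_Y^2+\hat\varepsilon_0\|z\|_Z^2\bigr)^{1/2}$ on $Y\times Z$ is just an isometric reparametrization of the paper's subspace $E=\{(\sqrt{\widehat C_0}\,Oz,\sqrt{\hat\varepsilon_0}\,z):z\in Z\}$ under the standard product norm, so the two arguments are identical in substance.
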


\begin{remark}
The part (i) of Lemma~\ref{lemma-0428-fn-ii} presents a non-standard observability. In this part, $Z$ is a state space,
$Y$ is an observation space, we call $X$ as a state transformation  space of $Z$. Further, $O$ is an observation operator, while we call $R$ as a state transformation operator.
The inequality (\ref{lemma-0428-fn-ii}) means that we can approximately recover the transferred state $Rz$ by observing $Oz$, the error is governed by $\sqrt{\hat \varepsilon_0}\|z\|_{Z}$.

The part (ii) of Lemma~\ref{lemma-0428-fn-ii} presents a non-standard controllability. In this part, $Y^*$ is a control space, $X^*$ is a state space,  and we call $Z^*$ as a
state transformation space of $X^*$. Furthermore, $O^*$ is a control operator, while we call $R^*$ as a state transformation operator.
The inequality (\ref{lemma-0428-fn-i}) can be understood as follows: For each state $x^*$, there is a control $y^*$ so that $O^*y^*$ is close to the target $R^*x^*$, with the distance less that $\sqrt{\varepsilon_0}\|x^*\|_{X^*}$. Moreover,
 the norm of  this control is governed by $\sqrt{C_0}\|x^*\|_{X^*}$.

\end{remark}

\begin{proof}[Proof of Lemma \ref{lemma-0428-fn}]
The proof is divided into the following several steps.

\vskip 5pt
\textit{Step 1. To show that (ii)$\Rightarrow$(i)}

\noindent Suppose that (ii) is true. Then, for each $x^*\in X^*$, there exists $y^*_{x^*}\in Y^*$ so that (\ref{lemma-0428-fn-i}), with $y^*=y^*_{x^*}$, is true. From this, it follows that for any  $x^*\in X^*$ and $z\in Z$,
\begin{eqnarray*}
 \langle Rz, x^* \rangle_{X,X^*}
 &=&  \langle z, R^*x^* \rangle_{Z,Z^*}
 =  \langle z, R^*x^* - O^* y^*_{x^*} \rangle_{Z,Z^*} + \langle z, O^* y^*_{x^*} \rangle_{Z,Z^*}
 \nonumber\\
 &=& \langle z, R^*x^* - O^* y^*_{x^*} \rangle_{Z,Z^*} + \langle O z,  y^*_{x^*} \rangle_{Y,Y^*}.
\end{eqnarray*}
By this and the Cauchy-Schwarz inequality, we deduce that for each  $x^*\in X^*$ and $z\in Z$,
\begin{eqnarray*}
 | \langle Rz, x^* \rangle_{X,X^*}  |
 &\leq& \big( \sqrt{C_0} \|z\|_Z \big)  \left( \frac{1}{\sqrt{C_0}} \| R^*x^* - O^* y^*_{x^*} \|_{Z^*} \right)
   + \big( \sqrt{\varepsilon_0} \| O z\|_Y \big)  \left(\frac{1}{\sqrt{\varepsilon_0}}\| y^*_{x^*} \|_{Y^*} \right)
 \nonumber\\
 &\leq& \big( C_0\|z\|_Z^2  +  \varepsilon_0\|O z\|_Y^2 \big)^{1/2}
 \left( \frac{1}{C_0}\| R^*x^* - O^* y^*_{x^*} \|_{Z^*}^2 + \frac{1}{\varepsilon_0}\| y^*_{x^*} \|_{Y^*}^2 \right)^{1/2}
 \nonumber\\
 &\leq& \big( C_0\|z\|_Z^2  +  \varepsilon_0\|O z\|_Y^2 \big)^{1/2}   \|x^*\|_{X^*}.
\end{eqnarray*}
Hence,  (\ref{lemma-0428-fn-ii}), with
$(\widehat C_0,\hat\varepsilon_0)$$=(C_0,\varepsilon_0)$, is true.

\vskip 5pt
\textit{Step 2. To show that (i)$\Rightarrow$(ii)}

\noindent Suppose that (i) is true.
Define a    subspace $E$ of $Y\times Z$ in the following manner:
\begin{eqnarray*}
 E\triangleq\left\{ \left(\sqrt{\widehat C_0} Oz,\sqrt{\hat\varepsilon_0} z \right) ~:~  z\in Z \right\}.
\end{eqnarray*}
The norm of $E$ is inherited form the following usual norm of $Y\times Z$:
\begin{eqnarray}\label{0426-5}
\|(f,g)\|_{Y\times Z} \triangleq
\left( \|f\|_{Y}^2 + \|g\|_Z^2 \right)^{1/2},\; (f,g)\in Y\times Z.
\end{eqnarray}
Arbitrarily fix   $x^*\in X^*$. Define  an operator $\mathcal T_{x^*}$ by
\begin{eqnarray}\label{0428-th1.2-control-4}
 \mathcal T_{x^*} ~:~   E &\rightarrow&  \mathbb K
 \nonumber\\
 \left(\sqrt{\widehat C_0} Oz,\sqrt{\hat\varepsilon_0} z \right) &\mapsto& \langle x^*, Rz \rangle_{X^*,X}.
\end{eqnarray}
By (\ref{lemma-0428-fn-ii}) and (\ref{0428-th1.2-control-4}), we can easily check that $\mathcal T_{x^*}$ is well defined and linear.
We now claim that
\begin{eqnarray}\label{0428-th1.2-control-5}
  \| \mathcal T_{x^*}\|_{\mathcal L(E,\mathbb K)}
 \leq   \|x^*\|_{X^*}.
\end{eqnarray}
Indeed, by the definition of $E$, we see that
  given $(f,g)\in E$, there is $z\in Z$ so that
$$
(f,g)= \left(\sqrt{\widehat C_0} Oz,\sqrt{\hat\varepsilon_0} z \right).
$$
Then by (\ref{0428-th1.2-control-4}), we find that
\begin{eqnarray*}
  |\mathcal T_{x^*} \big((f,g)\big)|
   = | \langle x^*, Rz \rangle_{X^*,X} |
  \leq \|x^*\|_{X^*} \|Rz\|_X.
\end{eqnarray*}
This, along with   (\ref{lemma-0428-fn-ii}), shows (\ref{0428-th1.2-control-5}).

Since $\mathcal T_{x^*}$ is a linear and bounded functional, we can apply  the Hahn-Banach extension theorem to find
 $\widetilde{ \mathcal T}_{x^*}$ in $(Y\times Z)^*$ so that
\begin{eqnarray}\label{0428-th1.2-control-7}
 \widetilde{ \mathcal T}_{x^*}\big((f,g)\big) = \mathcal T_{x^*}\big((f,g)\big)
 \;\;\mbox{for all}\;\;
 (f,g)\in E
\end{eqnarray}
and so that
\begin{eqnarray}\label{0428-th1.2-control-8}
 \|\widetilde{ \mathcal T}_{x^*}\|_{\mathcal L(Y\times Z,\mathbb K)}
 = \| \mathcal T_{x^*}\|_{\mathcal L(E,\mathbb K)}.
\end{eqnarray}
These, together with (\ref{0426-5}) and (\ref{0428-th1.2-control-5}), yield that
\begin{eqnarray*}
 |\widetilde{ \mathcal T}_{x^*}\big((f,0)\big)|
 &\leq&  \|x^*\|_{X^*} \|f\|_{Y}
 \;\;\mbox{for all}\;\;f\in Y,
 \nonumber\\
 |\widetilde{ \mathcal T}_{x^*}\big((0,g)\big)|
 &\leq&  \|x^*\|_{X^*} \|g\|_{Z}
 \;\;\mbox{for all}\;\;g\in Z.
\end{eqnarray*}
Thus,    there exists   $(y^*_{x^*},z^*_{x^*})\in Y^*\times Z^*$ so that
\begin{eqnarray*}
 \widetilde{ \mathcal T}_{x^*}\big((f,0)\big)
 &=&  \langle y^*_{x^*},f \rangle_{Y^*,Y}
 \;\;\mbox{for all}\;\;f\in Y,
 \nonumber\\
 \widetilde{ \mathcal T}_{x^*}\big((0,g)\big)
 &=&  \langle z^*_{x^*},g \rangle_{Z^*,Z}
\;\;\mbox{for all}\;\;g\in Z,
\end{eqnarray*}
from which,  it follows that
\begin{eqnarray}\label{0428-th1.2-control-9}
 \widetilde{ \mathcal T}_{x^*}\big( (f,g) \big)
 = \langle y^*_{x^*},f \rangle_{Y^*,Y}  +  \langle z^*_{x^*},g \rangle_{Z^*,Z}\;\;\mbox{for any}\;\;(f,g)\in Y\times Z.
\end{eqnarray}

 Two observations are given in order: The first one reads
  \begin{eqnarray}\label{0428-th1.2-control-11}
  \|y^*_{x^*}\|_{Y^*}^2 + \|z^*_{x^*}\|_{Z^*}^2  \leq \|x^*\|_{X^*}^2,
 \end{eqnarray}
 while the second one is as
 \begin{eqnarray}\label{tianjin5.10}
  R^*x^* - O^*(\sqrt{\widehat C_0}y^*_{x^*}) = \sqrt{\hat\varepsilon_0} z^*_{x^*}
  \;\;\mbox{in}\;\; Z^*.
\end{eqnarray}
When    (\ref{0428-th1.2-control-11}) and  (\ref{tianjin5.10})
are proved, the conclusion (ii)  (with $(C_0,\varepsilon_0)$$=(\widehat C_0,\hat\varepsilon_0)$) follows at once.

 To prove (\ref{0428-th1.2-control-11}), we see   from (\ref{0428-th1.2-control-9}), (\ref{0428-th1.2-control-8}) and (\ref{0426-5}) that
 for each $(f,g)\in Y\times Z$,
 \begin{eqnarray*}
  | \langle y^*_{x^*},f \rangle_{Y^*,Y}  +  \langle z^*_{x^*},g \rangle_{Z^*,Z} |
  \leq \|x^*\|_{X^*}  \left( \|f\|_{Y}^2 + \|g\|_Z^2 \right)^{1/2}.
 \end{eqnarray*}
Meanwhile, for each $\delta\in(0,1)$, we can choose $(f_\delta,g_\delta)\in Y\times Z$ so that
 \begin{eqnarray*}
  \langle y^*_{x^*},f_\delta \rangle_{Y^*,Y} &=& \|y^*_{x^*}\|^2_{Y^*}  + o_1(1), ~ \|f_\delta\|_Y=\|y^*_{x^*}\|_{Y^*},
\\
  \langle z^*_{x^*},g_\delta \rangle_{Z^*,Z} &=& \|z^*_{x^*}\|^2_{Z^*} + o_2(1), ~ \|g_\delta\|_Z=\|z^*_{x^*}\|_{Z^*},
  \end{eqnarray*}
 where $o_1(1)$ and $o_2(1)$ are so that
 $$
 \lim_{\delta\rightarrow0^+} o_1(1)=\lim_{\delta\rightarrow0^+} o_2(1)=0.
 $$
 From these,  it follows that
 \begin{eqnarray*}
  \|y^*_{x^*}\|_{Y^*}^2 + \|z^*_{x^*}\|_{Z^*}^2 - |o_1(1)| - |o_2(1)|  \leq \|x^*\|_{X^*} \left( \|y^*_{x^*}\|_{Y^*}^2 + \|z^*_{x^*}\|_{Z^*}^2 \right)^{1/2}.
 \end{eqnarray*}
 Sending $\delta\rightarrow 0^+$ in the above inequality leads to (\ref{0428-th1.2-control-11}).

To prove (\ref{tianjin5.10}), we find  from (\ref{0428-th1.2-control-4}), (\ref{0428-th1.2-control-7}) and (\ref{0428-th1.2-control-9}) that for all $z\in Z$,
\begin{eqnarray*}
 \langle x^*, Rz \rangle_{X^*,X}
 = \langle y^*_{x^*}, \sqrt{\widehat C_0} Oz \rangle_{Y^*,Y}
    + \langle z^*_{x^*}, \sqrt{\hat \varepsilon_0} z \rangle_{Z^*,Z},
\end{eqnarray*}
which yields that for all $z\in Z$,
\begin{eqnarray*}
 \langle R^*x^*, z \rangle_{Z^*,Z}
 = \langle O^*(\sqrt{\widehat C_0}y^*_{x^*}),  z \rangle_{Z^*,Z}
    + \langle \sqrt{\hat\varepsilon_0} z^*_{x^*},  z \rangle_{Z^*,Z}.
\end{eqnarray*}
This leads to (\ref{tianjin5.10}).

\vskip 5pt
\textit{Step 3. About the constant pairs $(C_0,\varepsilon_0)$ and $(\widehat C_0,\hat\varepsilon_0)$}

\noindent From the proofs in Step 1 and Step 2, we see that  when one of the propositions (i) and (ii) holds,
 $(C_0,\varepsilon_0)$ and $(\widehat C_0,\hat\varepsilon_0)$ can be chosen to be the same pair. This ends the proof of this lemma.

\end{proof}

We end this subsection with introducing the   following dual equation:
\begin{eqnarray}\label{0229-sch-adjoint-1}
\left\{\begin{array}{lll}
        i\partial_t \varphi(x,t) + \Delta \varphi(x,t) = 0, &(x,t)\in
         \mathbb R^n\times(0,T),\\
        \varphi(x,T)=z(x),    &x\in\mathbb R^n,
       \end{array}
\right.
\end{eqnarray}
where $T>0$ and  $z\in L^2(\mathbb{R}^n)$. Write $\varphi(\cdot,\cdot;T,z)$ for the solution to (\ref{0229-sch-adjoint-1}).
The equation (\ref{0229-sch-adjoint-1}) will play an important role in the studies of different controllability for  the Schr\"{o}dinger equation.

\subsection{Applications of Theorem \ref{theorem1}-Theorem~\ref{theorem4} to controllability}

First, we will use  Theorem~\ref{theorem1}, as well as  Lemma \ref{lemma-0428-fn}, to prove the exact controllability for the following impulse controlled Schr\"{o}dinger equation:
\begin{eqnarray}\label{0229-sch-control-1}
\left\{\begin{array}{lll}
        i\partial_t u(x,t) + \Delta u(x,t) = \delta_{\{t=\tau_1\}} \chi_{B_{r_1}^c({x^\prime})}(x) h_1(x) + \delta_{\{t=\tau_2\}} \chi_{B_{r_2}^c({x^{\prime\prime}})}(x) h_2(x), &(x,t)\in  \mathbb R^n\times (0,T),~\\
        u(0,x) =u_0(x),    &x\in\mathbb R^n,
       \end{array}
\right.
\end{eqnarray}
where ${x^\prime},\,{x^{\prime\prime}}\in\mathbb R^n$, $r_1,\,r_2>0$, $T$, $\tau_1$ and $\tau_2$ are three numbers with $0\leq\tau_1<\tau_2\leq T$,
 $u_0\in L^2(\mathbb{R}^n;\mathbb C)$, controls $h_1$ and $h_2$ are taken from the space $L^2(\mathbb R^n;\mathbb C)$.
  Write $u_1(\cdot,\cdot;u_0,h_1,h_2)$ for the solution to the equation (\ref{0229-sch-control-1}).

\begin{theorem}
Let ${x^\prime},\,{x^{\prime\prime}}\in\mathbb R^n$ and $r_1,\,r_2>0$. Let $T$, $\tau_1$ and $\tau_2$ be three numbers with $0\leq\tau_1<\tau_2\leq T$. Then  for each  $u_0\in L^2(\mathbb R^n;\mathbb C)$ and $u_T\in L^2(\mathbb R^n;\mathbb C)$, there is a pair of controls $(h_1,h_2)$
in  $L^2(\mathbb R^n;\mathbb C)\times L^2(\mathbb R^n;\mathbb C)$ so that
\begin{eqnarray}\label{0229-sch-th2-control-1}
 u_1(x,T;u_0,h_1,h_2)=u_T(x),~x\in \mathbb R^n
\end{eqnarray}
and so that
\begin{eqnarray}\label{0229-sch-th2-control-2}
\|h_1\|^2_{L^2(\mathbb R^n;\mathbb C)} + \|h_2\|^2_{L^2(\mathbb R^n;\mathbb C)}
\leq
C e^{C r_1 r_2  \frac{1}{T-S}  }
\left\|u_T-e^{i\Delta T}u_0\right\|_{L^2(\mathbb R^n;\mathbb C)}^2,
\end{eqnarray}
where
the constant $C\triangleq C(n)$ is given by Theorem~\ref{theorem1}.

\end{theorem}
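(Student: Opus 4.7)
The plan is to recast the exact controllability problem via Duhamel's formula as a surjectivity problem for a bounded linear operator between Hilbert spaces, and then use the duality between observability and controllability (in the spirit of Lemma~\ref{lemma-0428-fn}) to reduce the surjectivity to the observability inequality of Theorem~\ref{theorem1}. Note that the exponent $e^{Cr_1r_2/(T-S)}$ in the statement is naturally $e^{Cr_1r_2/(\tau_2-\tau_1)}$, inherited from Theorem~\ref{theorem1} applied with the two instants being $S=\tau_1$ and $T=\tau_2$.

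First, by Duhamel's formula applied to (\ref{0229-sch-control-1}), one has
$$
u_1(\cdot, T; u_0, h_1, h_2) = e^{i\Delta T} u_0 + \Lambda(h_1, h_2),
$$
where $\Lambda : L^2(\mathbb{R}^n;\mathbb{C}) \times L^2(\mathbb{R}^n;\mathbb{C}) \to L^2(\mathbb{R}^n;\mathbb{C})$ is defined by
$$
\Lambda(h_1, h_2) \triangleq -i\, e^{i\Delta(T-\tau_1)}\big(\chi_{B_{r_1}^c({x^\prime})} h_1\big) \;-\; i\, e^{i\Delta(T-\tau_2)}\big(\chi_{B_{r_2}^c({x^{\prime\prime}})} h_2\big).
$$
The conditions (\ref{0229-sch-th2-control-1})--(\ref{0229-sch-th2-control-2}) thus amount to showing that $\Lambda$ admits a right inverse applied to $v \triangleq u_T - e^{i\Delta T} u_0$ with cost squared at most $Ce^{Cr_1r_2/(\tau_2-\tau_1)}$.

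Next, using $(e^{i\Delta t})^* = e^{-i\Delta t}$ on $L^2(\mathbb{R}^n;\mathbb{C})$, a direct computation gives
$$
\Lambda^* f = \Big(i\,\chi_{B_{r_1}^c({x^\prime})}\, e^{-i\Delta(T-\tau_1)} f,\; i\,\chi_{B_{r_2}^c({x^{\prime\prime}})}\, e^{-i\Delta(T-\tau_2)} f\Big).
$$
Setting $w_0 \triangleq e^{-i\Delta T} f$ and invoking the conservation law, one has $\|f\|_{L^2}^2 = \|w_0\|_{L^2}^2$ together with
$$
\|\Lambda^* f\|^2_{L^2 \times L^2} = \int_{B_{r_1}^c({x^\prime})} |u(x,\tau_1; w_0)|^2\,\mathrm dx + \int_{B_{r_2}^c({x^{\prime\prime}})} |u(x,\tau_2; w_0)|^2\,\mathrm dx.
$$
Applying Theorem~\ref{theorem1} (with the roles $S\leftrightarrow\tau_1$, $T\leftrightarrow\tau_2$) then yields the observability inequality
$$
\|f\|^2_{L^2} \leq C\, e^{C r_1 r_2/(\tau_2-\tau_1)}\, \|\Lambda^* f\|^2_{L^2 \times L^2}, \qquad \forall\, f \in L^2(\mathbb{R}^n;\mathbb{C}),
$$
with exactly the constant $C \triangleq C(n)$ furnished by Theorem~\ref{theorem1}.

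Finally, this observability inequality is equivalent, by standard Hilbert-space duality (the exact-controllability counterpart of Lemma~\ref{lemma-0428-fn}, corresponding to $\hat\varepsilon_0 = 0$; equivalently Douglas' range theorem), to the surjectivity of $\Lambda$ together with the bound $\|h_1\|^2_{L^2} + \|h_2\|^2_{L^2} \leq Ce^{Cr_1r_2/(\tau_2-\tau_1)}\|v\|^2_{L^2}$ for the minimal-norm preimage $v = \Lambda(h_1,h_2)$. Applied to $v = u_T - e^{i\Delta T} u_0$, this produces controls $(h_1, h_2)$ satisfying (\ref{0229-sch-th2-control-1})--(\ref{0229-sch-th2-control-2}). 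I do not anticipate any serious obstacle: the substantive content, namely the observability inequality at two points in time, is already delivered by Theorem~\ref{theorem1}; what remains is the routine (but sign-sensitive) bookkeeping of adjoints of the Schr\"odinger group and of the pointwise multipliers $\chi_{B_{r_j}^c}$.
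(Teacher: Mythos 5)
Your proposal is correct and follows essentially the same path as the paper: apply Theorem~\ref{theorem1} to the adjoint (backward) flow at times $\tau_1,\tau_2$ to obtain the observability inequality $\|f\|_{L^2}^2\le Ce^{Cr_1r_2/(\tau_2-\tau_1)}\|\Lambda^*f\|_{L^2\times L^2}^2$, and then pass to exact controllability by duality. The only cosmetic difference is that you invoke classical HUM/Douglas-range duality directly for the $\hat\varepsilon_0=0$ case, whereas the paper applies its Lemma~\ref{lemma-0428-fn} with $\hat\varepsilon_0=1/k$ and then extracts a weak limit as $k\to\infty$; the two arguments coincide in substance, and your reading of the $T-S$ in the statement as $\tau_2-\tau_1$ matches what the paper actually proves.
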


\begin{proof}
We organize the proof by the following two steps:

\vskip 5pt
  In Step 1,  we aim to prove that for each $z\in L^2(\mathbb R^n;\mathbb C)$,
\begin{eqnarray}\label{0229-sch-control-3}
 & &  \int_{\mathbb R^n}  |z(x)|^2 \,\mathrm dx
 \leq
 C e^{C r_1 r_2  \frac{1}{T-S}  }
 \Big(\int_{ B_{r_1}^c({x^\prime})}  |\varphi(x,\tau_1;T,z)|^2 \,\mathrm dx
 + \int_{ B_{r_2}^c({x^{\prime\prime}})}  |\varphi(x,\tau_2;T,z)|^2 \,\mathrm dx\Big),~~~
\end{eqnarray}
where
 $C\triangleq C(n)$ is given by Theorem~\ref{theorem1}.
 To this end, we set
 $$
 u_1(x) \triangleq \varphi(x,\tau_1;T,z),\;\;x\in\mathbb R^n.
  $$
  Then it follows from (\ref{0229-sch-1}) and (\ref{0229-sch-adjoint-1}) that for each $t\in[0,\tau_2-\tau_1]$,
\begin{eqnarray}\label{0229-sch-th1-control-2}
 u(x,t;u_1)= (e^{i\Delta t}u_1)(x)=(e^{i\Delta t} e^{i\Delta (\tau_1-T)}z )(x)=\varphi(x,t+\tau_1;T,z),~x\in\mathbb R^n.
\end{eqnarray}
 By Theorem~\ref{theorem1} (where $u_0=u_1$ and $T=\tau_2-\tau_1$), we find that
\begin{eqnarray*}
 \int_{\mathbb R^n}  |u_1(x)|^2 \,\mathrm dx
 &\leq&
 C e^{C r_1 r_2  \frac{1}{T-S}  }
   \Big(\int_{B_{r_1}^c({x^\prime})}  |u_1(x)|^2 \,\mathrm dx
 + \int_{B_{r_2}^c({x^{\prime\prime}})}  |u(x,\tau_2-\tau_1;u_1)|^2 \,\mathrm dx\Big).
\end{eqnarray*}
This, along with (\ref{0229-sch-th1-control-2}), implies that
\begin{eqnarray*}
 & &  \int_{\mathbb R^n}  |\varphi(x,\tau_1;T,z)|^2 \,\mathrm dx
 \leq
 C e^{C r_1 r_2  \frac{1}{T-S}  }
    \Big(\int_{ B_{r_1}^c({x^\prime})}  |\varphi(x,\tau_1;T,z)|^2 \,\mathrm dx
 + \int_{ B_{r_2}^c({x^{\prime\prime}})}  |\varphi(x,\tau_2;T,z)|^2 \,\mathrm dx\Big).
\end{eqnarray*}
 Because of  the conservation law
  of
   the Schr\"{o}dinger equation, the above inequality leads to   (\ref{0229-sch-control-3}).

\vskip 5pt

In Step 2, we aim to use Lemma \ref{lemma-0428-fn} and (\ref{0229-sch-control-3}) to prove (\ref{0229-sch-th2-control-1}) and (\ref{0229-sch-th2-control-2}).
For this purpose, we let
\begin{eqnarray}\label{tinajin5.17}
X\triangleq L^2(\mathbb R^n;\mathbb C)=X^*,~
Y\triangleq L^2(\mathbb R^n;\mathbb C)\times L^2(\mathbb R^n;\mathbb C)=Y^*
\;\;\mbox{and}\;\;
Z\triangleq L^2(\mathbb R^n;\mathbb C)=Z^*
\end{eqnarray}
 and define two operators $ \mathcal R: Z\rightarrow X$ and $ \mathcal O: Z\rightarrow Y$
  as follows:
\begin{eqnarray}\label{0428-th1.1-apply-1}
 \mathcal R z \triangleq z;\;\;\;
 \mathcal O z \triangleq \left( \chi_{B_{r_1}^c({x^\prime})}(\cdot)\varphi(\cdot,\tau_1;T,z),
 \chi_{B_{r_2}^c({x^{\prime\prime}})}(\cdot)\varphi(\cdot,\tau_2;T,z) \right)\;\;\mbox{for each}\;\; z\in Z.
\end{eqnarray}
By (\ref{0428-th1.1-apply-1}) and (\ref{tinajin5.17}), one can directly check, that
\begin{eqnarray}\label{0428-th1.1-apply-1-1}
 \mathcal R^* f = f,~\forall\, f\in L^2(\mathbb R^n;\mathbb C)
 ;\;\;\;
 \mathcal O^* (h_1,h_2) = u_1(\cdot,T;0,h_1,h_2),~\forall\,(h_1,h_2)\in L^2(\mathbb R^n;\mathbb C) \times L^2(\mathbb R^n;\mathbb C).
\end{eqnarray}

Arbitrarily fix $k\in\mathbb N^+$. From (\ref{0229-sch-control-3}) and (\ref{0428-th1.1-apply-1}), we find that for each $z\in L^2(\mathbb R^n;\mathbb C)$,
\begin{eqnarray}\label{0428-th1.1-apply-5}
 \|\mathcal R z\|_{X} ^2
 \leq
C e^{C r_1 r_2  \frac{1}{T-S}  }
  \| \mathcal O z\|^2_{Y}
 + \frac{1}{k} \|z\|_{Z}^2.
\end{eqnarray}
where
$C>0$
 is given by (\ref{0229-sch-control-3})
 and  $\|\cdot\|_{Y}$ denotes the usual norm of $L^2(\mathbb R^n;\mathbb C) \times L^2(\mathbb R^n;\mathbb C)$.

Arbitrarily fix $u_0$, $u_T\in L^2(\mathbb R^n;\mathbb C)$. Define a function over $\mathbb{R}^n$ in the following manner:
 \begin{eqnarray}\label{0428-th1.1-apply-4}
  f(x)  \triangleq u_T(x)-e^{i\Delta T} u_0(x),\;\;x\in \mathbb{R}^n.
 \end{eqnarray}
 By Lemma \ref{lemma-0428-fn} and (\ref{0428-th1.1-apply-5}), it follows that  there exists $(h_{1,k}^f, h_{2,k}^f)\in Y^*$ so that
\begin{eqnarray}\label{0428-th1.1-apply-2}
   C^{-1} e^{-C r_1 r_2  \frac{1}{T-S}}
 \|(h_{1,k}^f, h_{2,k}^f)\|^2_{Y^*}
 +  k  \| \mathcal R^* f  -  \mathcal O^* (h_{1,k}^f, h_{2,k}^f) \|_{Z^*}^2
 \leq \|f\|_{X^*}^2.
\end{eqnarray}
By (\ref{tinajin5.17}) and (\ref{0428-th1.1-apply-2}), one can easily find that there exits a subsequence $\{k_j\}_{j=1}^\infty$ of $\mathbb N^+$ and $(h_{1}^f, h_{2}^f)\in L^2(\mathbb R^n;\mathbb C) \times L^2(\mathbb R^n;\mathbb C)$ so that
\begin{eqnarray*}
 (h_{1,k_j}^f, h_{2,k_j}^f)  \rightarrow (h_{1}^f, h_{2}^f)
 \;\;\mbox{weakly in}\;\;  L^2(\mathbb R^n;\mathbb C) \times L^2(\mathbb R^n;\mathbb C),
 \;\;\mbox{as}\;\;  j\rightarrow\infty
\end{eqnarray*}
and so that
\begin{eqnarray*}
  \mathcal R^* f  -  \mathcal O^* (h_{1,k_j}^f, h_{2,k_j}^f)
    \rightarrow \mathcal R^* f  -  \mathcal O^* (h_{1}^f, h_{2}^f)
 \;\;\mbox{weakly in}\;\;  L^2(\mathbb R^n;\mathbb C),
 \;\;\mbox{as}\;\;  j\rightarrow\infty.
\end{eqnarray*}
(Here, we used the fact that the operator $O$ is linear and bounded. This fact follows from (\ref{0428-th1.1-apply-1}).)
These yield that
\begin{eqnarray*}
 \|(h_{1}^f, h_{2}^f)\|_{L^2(\mathbb R^n;\mathbb C) \times L^2(\mathbb R^n;\mathbb C)}
 \leq \liminf_{j\rightarrow \infty}
 \|(h_{1,k_j}^f, h_{2,k_j}^f)\|^2_{L^2(\mathbb R^n;\mathbb C) \times L^2(\mathbb R^n;\mathbb C)},
 \;\;\mbox{as}\;\;  j\rightarrow\infty
\end{eqnarray*}
and that
\begin{eqnarray*}
 \|\mathcal R^* f  -  \mathcal O^* (h_{1}^f, h_{2}^f)\|_{L^2(\mathbb R^n;\mathbb C)}
 \leq  \liminf_{j\rightarrow \infty}
 \|\mathcal R^* f  -  \mathcal O^* (h_{1,k_j}^f, h_{2,k_j}^f)\|_{L^2(\mathbb R^n;\mathbb C)},
 \;\;\mbox{as}\;\;  j\rightarrow\infty.
\end{eqnarray*}
From these and (\ref{0428-th1.1-apply-2}), it follows that
\begin{eqnarray}\label{0428-th1.1-apply-3}
\mathcal R^* f  =  \mathcal O^* (h_{1}^f, h_{2}^f)
\;\;\mbox{and}\;\;
 \|(h_{1}^f, h_{2}^f)\|^2_{L^2(\mathbb R^n;\mathbb C) \times L^2(\mathbb R^n;\mathbb C)}
 \leq C_1 \|f\|_{L^2(\mathbb R^n;\mathbb C)}^2.
\end{eqnarray}
 Now,  (\ref{0229-sch-th2-control-1}) and (\ref{0229-sch-th2-control-2}) follow from (\ref{0428-th1.1-apply-3}), (\ref{0428-th1.1-apply-1-1}) and (\ref{0428-th1.1-apply-4}) at once. This ends the proof of this theorem.

\end{proof}

\begin{remark}
The above theorem can be understood  as follows: For each $u_0$,\,$u_T\in L^2(\mathbb R^n;\mathbb C)$, there exists a  pair of controls (in $L^2(\mathbb R^n;\mathbb C) \times L^2(\mathbb R^n;\mathbb C)$)  steering  the solution of (\ref{0229-sch-control-1}) from  $u_0$ at time $0$ to  $u_T$ at time $T$. Moreover,
a bound of the norm of  this pair of controls is explicitly given.
 \end{remark}

Next,  we will use the inequality  (\ref{0229-two-points-1}) in (i) of Theorem~\ref{theorem3}, as well as Lemma \ref{lemma-0428-fn}, to get  some kind of approximate  controllability for the following impulse controlled Schr\"{o}dinger equation:
\begin{eqnarray}\label{0410-sch-th2-better-equation}
\left\{\begin{array}{ll}
        i\partial_t u(x,t) + \Delta u(x,t) = \delta_{\{t=\tau\}} \chi_{B_{r}^c(0)}(x) h(x,t), &(x,t)\in \mathbb R^n \times (0,T),\\
        u(x,0)=u_0,   &x\in\mathbb R^n,
       \end{array}
\right.
\end{eqnarray}
where  $T>\tau\geq 0$ and $r>0$, both the initial data $u_0$ and the control $h$ are taken from the space $L^2(\mathbb R^n;\mathbb C)$. Write $u_2(\cdot,\cdot; u_0, h)$ for the solution to the equation (\ref{0410-sch-th2-better-equation}). Define, for each $a>0$, a Banach space:
\begin{eqnarray}\label{0523-Xa}
X_a \triangleq \left\{f\in L^2(\mathbb R^n;\mathbb C)
~:~ \int_{\mathbb R^n} e^{a|x|} |f(x)|^2 \,\mathrm dx <\infty \right\},
\end{eqnarray}
endowed
with the norm:
\begin{eqnarray*}
 \|f\|_{X_a} \triangleq  \left( \int_{\mathbb R^n} e^{a|x|} |f(x)|^2 \,\mathrm dx \right)^{1/2},
 \;\;f\in X_a.
\end{eqnarray*}
One can directly check that for each $a>0$, the dual space  of $X_a$ reads
\begin{eqnarray}\label{0523-Xa-dual}
X_a^* = \overline{C_0^\infty(\mathbb R^n;\mathbb C)}^{\|\cdot\|_{X_a^*}},
\end{eqnarray}
with the norm $\|\cdot\|_{X_a^*}$ given by
\begin{eqnarray*}
 \|g\|_{X_a^*} \triangleq  \left( \int_{\mathbb R^n} e^{-a|x|} |g(x)|^2 \,\mathrm dx \right)^{1/2},
\;\;g\in X_a^*.
\end{eqnarray*}

\begin{theorem}\label{0410-sch-th2-better}
Let $r>0$, $a>0$ and $T>\tau\geq0$. Let $C>0$  and $\theta\in(0,1)$ be given by (i) of Theorem \ref{theorem3}. Write
\begin{eqnarray*}
 p \triangleq \theta^{1+\frac{r}{a(T-\tau)}}\in(0,1).
\end{eqnarray*}
 Then    for any $\varepsilon>0$, $u_0,\,u_T\in L^2(\mathbb R^n;\mathbb C)$, there is a control  $h\in L^2(\mathbb R^n;\mathbb C)$ so that
\begin{eqnarray}\label{0410-sch-th2-better-1}
& & \varepsilon^{\frac{1-p}{p}} \int_{\mathbb R^n} |h(x)|^2 \,\mathrm dx
+  \varepsilon^{-1} \| u_2(\cdot,T;u_0,h)-u_T(\cdot) \|^2_{X_a^*}
\nonumber\\
&\leq& C \left( 1+\frac{r^n}{(a(T-\tau))^n} \right)
 \int_{\mathbb R^n} |u_T(x) - e^{i\Delta T}u_0(x)|^2 \,\mathrm dx,
\end{eqnarray}

\end{theorem}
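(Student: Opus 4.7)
The plan is to apply the duality framework of Lemma~\ref{lemma-0428-fn} with an observability inequality obtained from Theorem~\ref{theorem3}(i) combined with a Young-type inequality. I take $X=X^{*}=Y=Y^{*}=L^{2}(\mathbb R^{n};\mathbb C)$, $Z=X_{a}$ and $Z^{*}=X_{a}^{*}$, all paired by the $\mathbb C$-bilinear pairing $\langle f,g\rangle=\int fg\,\mathrm dx$. The operator $R:Z\to X$ is the natural inclusion $X_{a}\hookrightarrow L^{2}$, so that $R^{*}$ is the corresponding inclusion $L^{2}\hookrightarrow X_{a}^{*}$, and I set $Oz\triangleq -i\chi_{B_{r}^{c}(0)}e^{i\Delta(T-\tau)}z$. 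Using the bilinear symmetry $\int(e^{i\Delta s}f)g\,\mathrm dx=\int f(e^{i\Delta s}g)\,\mathrm dx$, which follows from the Schr\"odinger kernel being even, I compute $O^{*}h=-i\,e^{i\Delta(T-\tau)}(\chi_{B_{r}^{c}(0)}h)$. Integration of the impulse equation~\eqref{0410-sch-th2-better-equation} across $t=\tau$ followed by free evolution to $t=T$ identifies this with $u_{2}(\cdot,T;0,h)$; hence, taking $x^{*}=u_{T}-e^{i\Delta T}u_{0}$ and $y^{*}=h$ yields $R^{*}x^{*}-O^{*}y^{*}=u_{T}-u_{2}(\cdot,T;u_{0},h)$ by linearity of the equation.

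For the observability hypothesis of Lemma~\ref{lemma-0428-fn}(i), I apply Theorem~\ref{theorem3}(i) to the initial datum $z\in C_{0}^{\infty}(\mathbb R^{n};\mathbb C)$ with time parameter $T-\tau$ in place of $T$, and extend by density to all $z\in X_{a}$. This gives
\begin{equation*}
\|z\|_{L^{2}}^{2}\le C\Bigl(1+\tfrac{r^{n}}{(a(T-\tau))^{n}}\Bigr)\Bigl(\int_{B_{r}^{c}(0)}|e^{i\Delta(T-\tau)}z|^{2}\,\mathrm dx\Bigr)^{p}\Bigl(\int_{\mathbb R^{n}}e^{a|x|}|z|^{2}\,\mathrm dx\Bigr)^{1-p},
\end{equation*}
where $p=\theta^{1+r/(a(T-\tau))}\in(0,1)$ is as in the statement. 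The first factor on the right is $\|Oz\|_{Y}^{2p}$ and the second is $\|z\|_{Z}^{2(1-p)}$. Applying the elementary inequality $A^{p}B^{1-p}\le p\lambda A+(1-p)\lambda^{-p/(1-p)}B$ (valid for all $\lambda>0$) with $\lambda=\varepsilon^{-(1-p)/p}$, and absorbing the factors $p,1-p\in(0,1)$ into the constant, I obtain the additive bound
\begin{equation*}
\|Rz\|_{X}^{2}\le \widehat C_{0}\,\|Oz\|_{Y}^{2}+\widehat\varepsilon_{0}\,\|z\|_{Z}^{2},\quad \widehat C_{0}=C'\varepsilon^{-(1-p)/p},\ \widehat\varepsilon_{0}=C'\varepsilon,
\end{equation*}
with $C'\triangleq C(1+r^{n}/(a(T-\tau))^{n})$. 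This is precisely proposition (i) of Lemma~\ref{lemma-0428-fn}.

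Lemma~\ref{lemma-0428-fn} then produces $h\in L^{2}(\mathbb R^{n};\mathbb C)$ satisfying $\tfrac{1}{\widehat C_{0}}\|h\|_{L^{2}}^{2}+\tfrac{1}{\widehat\varepsilon_{0}}\|u_{T}-u_{2}(\cdot,T;u_{0},h)\|_{X_{a}^{*}}^{2}\le \|u_{T}-e^{i\Delta T}u_{0}\|_{L^{2}}^{2}$, and multiplying through by $C'$ produces exactly~\eqref{0410-sch-th2-better-1}. The main technical obstacle will be the careful bookkeeping of dualities: one must use the $\mathbb C$-bilinear pairing rather than the Hermitian inner product on $L^{2}(\mathbb R^{n};\mathbb C)$, because the Hermitian adjoint of $e^{i\Delta(T-\tau)}$ is $e^{-i\Delta(T-\tau)}$, which would force $O^{*}h$ to involve the \emph{backward} evolution of $h$ and destroy the identification with the physical control response $u_{2}(\cdot,T;0,h)=-i\,e^{i\Delta(T-\tau)}(\chi_{B_{r}^{c}}h)$; an equivalent route is to work throughout on the real Hilbert space underlying $L^{2}(\mathbb R^{n};\mathbb C)$, where the same bilinear identification arises naturally.
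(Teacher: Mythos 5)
Your proposal is correct and follows essentially the same route as the paper: Theorem~\ref{theorem3}(i) with time $T-\tau$, Young's inequality with the splitting parameter $\lambda=\varepsilon^{-(1-p)/p}$, and then Lemma~\ref{lemma-0428-fn} applied with $X=Y=L^2$, $Z=X_a$. The only difference is cosmetic bookkeeping: the paper packages the observation operator via the backward adjoint solution $\varphi(\cdot,\tau;T,z)=e^{-i\Delta(T-\tau)}z$ and applies Theorem~\ref{theorem3}(i) to $\bar z$ using the conjugation identity $\overline{u(x,t;\bar z)}=\varphi(x,T-t;T,z)$, so that the (sesquilinear) adjoint runs forward, whereas you build the physical impulse factor $-i$ directly into the forward-propagator observation operator and work with the $\mathbb C$-bilinear pairing — an observation that is accurate and, if anything, makes the identification $O^*h=u_2(\cdot,T;0,h)$ slightly more transparent than in the paper's exposition.
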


\begin{proof}
First of all, we claim that for each $z\in C_0^\infty(\mathbb R^n;\mathbb C)$ and each $\varepsilon>0$,
\begin{eqnarray}\label{0410-th1.2-control-3}
 & &  \int_{\mathbb R^n}  |z(x)|^2 \,\mathrm dx
 \\
 &\leq&   C \left( 1+\frac{r^n}{(a(T-\tau))^n} \right)
 \left(
 \varepsilon \int_{\mathbb R^n}e^{ a|x|} |z(x)|^2 \,\mathrm dx
 + \varepsilon^{-\frac{1-p}{p} }
 \int_{B_r^c(0)} |\varphi(x,\tau;T,z)|^2 \,\mathrm dx
 \right).
 \nonumber
\end{eqnarray}

 To this end, arbitrarily fix $z\in C_0^\infty(\mathbb R^n;\mathbb C)$. It follows from (\ref{0229-sch-1}) and (\ref{0229-sch-adjoint-1}) that
\begin{eqnarray}\label{0410-th1.2-control-2}
  \overline{u(x,t;\bar z)}= \varphi(x,T-t;T,z),~(x,t)\in\mathbb R^n\times [0,T].
\end{eqnarray}
 Then by (i) of Theorem~\ref{theorem3} (where $u_0$ and $T$ are replaced by $\bar z$ and $T-\tau$, respectively), we find that
\begin{eqnarray*}
& & \int_{\mathbb R^n} | \overline{z(x)}|^2 \, \mathrm dx
 \\
 &\leq&   C \left( 1+\frac{r^n}{(a(T-\tau))^n} \right)
  \left(\int_{B_{r}^c(0)} |\overline{ u(x,T-\tau;\bar z)}|^2 \, \mathrm dx\right)^{p}
 \left(\int_{\mathbb R^n}e^{ a|x|} |\overline{ z(x)}|^2 \,\mathrm dx\right)^{1-p},
\end{eqnarray*}
from which and  (\ref{0410-th1.2-control-2}), we find that
\begin{eqnarray*}
& & \int_{\mathbb R^n} |z(x)|^2 \, \mathrm dx
 \\
 &\leq&   C \left( 1+\frac{r^n}{(a(T-\tau))^n} \right)
  \left(\int_{B_{r}^c(0)} |\varphi(x,\tau;T,z)|^2 \, \mathrm dx\right)^{p}
 \left(\int_{\mathbb R^n}e^{ a|x|} |z(x)|^2 \,\mathrm dx\right)^{1-p}.
\end{eqnarray*}
This, along with  the Young inequality, yields (\ref{0410-th1.2-control-3}).

\vskip 5pt
Next,  we will use  Lemma \ref{lemma-0428-fn} and  (\ref{0410-th1.2-control-3}) to prove (\ref{0410-sch-th2-better-1}). For this purpose, we let
\begin{eqnarray}\label{tianjin5.28}
X\triangleq
L^2(\mathbb R^n;\mathbb C)=X^*,~
Y\triangleq
L^2(\mathbb R^n;\mathbb C)=Y^*
\;\;\mbox{and}\;\;
Z\triangleq
 X_a,
\end{eqnarray}
where the space $X_a$ is given by (\ref{0523-Xa}).
Define two operators $ \mathcal R: Z\rightarrow X$ and $ \mathcal O: Z\rightarrow Y$ by
\begin{eqnarray}\label{0428-th1.2-apply-1}
 \mathcal R z  \triangleq  z\;\;\mbox{for each}\;\; z\in X_{a};\;\;\;
 \mathcal O z  \triangleq  \chi_{B_r^c(0)}(\cdot) \varphi(\cdot,\tau;T,z)\;\;\mbox{for each}\;\; z\in X_{a},
\end{eqnarray}
One can directly check that
\begin{eqnarray}\label{0428-th1.2-apply-1-1}
 \mathcal R^* f = f,~\forall\, f\in L^2(\mathbb R^n;\mathbb C)
 ;\;\;\;
 \mathcal O^* h = u_2(\cdot,T;0,h),~\forall\,h\in L^2(\mathbb R^n;\mathbb C).
\end{eqnarray}

 Arbitrarily fix $\varepsilon>0$.
By (\ref{0410-th1.2-control-3}), (\ref{0428-th1.2-apply-1}) and (\ref{0523-Xa}), we can use a standard density argument to verify that
\begin{eqnarray}\label{0428-th1.2-apply-3}
 \| \mathcal R z \|_{L^2(\mathbb R^n;\mathbb C)} ^2
 \leq C_2 \| \mathcal O z\|_{L^2(\mathbb R^n;\mathbb C)} ^2 + \varepsilon_2 \|z\|^2_{X_a}\;\;\mbox{for each}\;\;z\in X_a
\end{eqnarray}
where
\begin{eqnarray}\label{0428-th1.2-apply-4}
 C_2 \triangleq  C \left( 1+\frac{r^n}{(a(T-\tau))^n} \right)  \varepsilon^{-\frac{1-p}{p} }
 \;\;\mbox{and}\;\;
 \varepsilon_2 \triangleq C \left( 1+\frac{r^n}{(a(T-\tau))^n} \right) \varepsilon.
\end{eqnarray}
Arbitrarily fix  $u_0$ and $u_T$ in $L^2(\mathbb R^n;\mathbb C)$.
Define a function $f$ by
\begin{eqnarray}\label{0428-th1.2-apply-5}
 f \triangleq u_T- e^{i\Delta T} u_0\;\;\mbox{over}\;\;\mathbb{R}^n.
\end{eqnarray}
According to Lemma \ref{lemma-0428-fn} and (\ref{0428-th1.2-apply-3}), there exists $h^f\in L^2(\mathbb R^n;\mathbb C)$
(depending on $\varepsilon$, $u_0$ and $u_T$)
so that
\begin{eqnarray*}
 \frac{1}{C_2} \|h^f\|_{Y^*} ^2  +  \frac{1}{\varepsilon_2} \| \mathcal R^* f- \mathcal O^* h^f \|^2_{Z^*}
 \leq  \|f\|_{X^*} ^2.
\end{eqnarray*}
From this, (\ref{tianjin5.28}), (\ref{0428-th1.2-apply-1-1}),
 (\ref{0428-th1.2-apply-4}), (\ref{0428-th1.2-apply-5}) and (\ref{0523-Xa-dual}), we obtain  (\ref{0410-sch-th2-better-1}). This ends the proof of this theorem.

\end{proof}

\begin{remark}
The above theorem can be understood  follows: For each $u_0$,\,$u_T\in L^2(\mathbb R^n;\mathbb C)$ and $\varepsilon>0$, there exists a control (in $L^2(\mathbb R^n;\mathbb C)$)
 steering the solution of (\ref{0410-sch-th2-better-equation}) from  $u_0$ at time $0$ to the
  target $B_\varepsilon^{X_a^*}(u_T)$ at time $T$. (Here, $B^{X_a^*}_\varepsilon(u_T)$ denotes the closed ball in $X_a^*$,  centered at $u_T$ and of radius $\varepsilon$.) Moreover,
    a bound of the norm of this control is  explicitly given.

\end{remark}

Finally,  we will use the inequality  (\ref{0405-sch-th4-control}) in Theorem~\ref{theorem4},  as well as Lemma \ref{lemma-0428-fn},
 to get some kind of approximate null controllability for the following impulse controlled Schr\"{o}dinger equation:
\begin{eqnarray}\label{0410-sch-th4-better-equation}
\left\{\begin{array}{ll}
        i\partial_t u(x,t) + \Delta u(x,t) = \delta_{\{t=0\}} \chi_{B_{r_1}({x^\prime})}(x) h(x,t), &(x,t)\in \mathbb R^n \times (0,T),\\
        u(x,0)=u_0,   &x\in\mathbb R^n,
       \end{array}
\right.
\end{eqnarray}
where  $T>0$, ${x^\prime}\in\mathbb R^n$ and $r_1>0$, both the initial data $u_0$ and the control $h$ are taken from the space $L^2(\mathbb R^n;\mathbb C)$. Write $u_3(\cdot,\cdot; u_0, h)$ for the solution to the equation (\ref{0410-sch-th4-better-equation}). Define, for each $r_2>0$ and ${x^{\prime\prime}}\in \mathbb R^n$,  the following subspace:
\begin{eqnarray}\label{0523-initial-B}
 \widetilde{L}^2(B_{r_2}({x^{\prime\prime}});\mathbb C)
 \triangleq \{f\in L^2(\mathbb R^n;\mathbb C)
 ~:~  f=0 \;\;\mbox{over}\;\; B_{r_2}^c({x^{\prime\prime}})\}.
\end{eqnarray}

\begin{theorem}\label{0410-sch-th4-better}
Let ${x^\prime},\,{x^{\prime\prime}}\in\mathbb R^n$, $r_1,\,r_2>0$, $a>0$ and $T>0$. Let $C>0$  and $p>0$ be given by Theorem \ref{theorem4}.
Then    for each $\varepsilon>0$ and
$u_0\in \widetilde{L}^2(B_{r_2}({x^{\prime\prime}});\mathbb C)$, there is a control  $h\in L^2(\mathbb R^n;\mathbb C)$ so that
\begin{eqnarray}\label{0410-sch-th4-better-1}
 \varepsilon^{\frac{1-\theta^p}{\theta^p}} \int_{\mathbb R^n} |h(x)|^2 \,\mathrm dx
+  \varepsilon^{-1} \|u_3(\cdot,T;u_0,h)\|^2_{X_a^*}
\leq C r_2^n \big((aT)\wedge r_1\big)^{-n}  \int_{B_{r_2}({x^{\prime\prime}})} |u_0(x)|^2 \,\mathrm dx.
\end{eqnarray}

\end{theorem}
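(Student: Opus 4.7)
The plan is to follow the template used in the two preceding applications in Subsection 5.2, combining Theorem \ref{theorem4} with the equivalence Lemma \ref{lemma-0428-fn}. First I would convert (\ref{0405-sch-th4-control}) into a dual inequality for the adjoint equation (\ref{0229-sch-adjoint-1}). Applying Theorem \ref{theorem4} with initial datum $\bar z$ for $z\in C_0^\infty(\mathbb R^n;\mathbb C)$ and invoking the conjugation identity $\overline{u(x,t;\bar z)}=\varphi(x,T-t;T,z)$ from (\ref{0410-th1.2-control-2}) at $t=T$ (so that $|u(x,T;\bar z)|=|\varphi(x,0;T,z)|$), one arrives at
\begin{eqnarray*}
\int_{B_{r_2}({x^{\prime\prime}})}|\varphi(x,0;T,z)|^2\,\mathrm dx
&\leq& C r_2^n\big((aT)\wedge r_1\big)^{-n}\left(\int_{B_{r_1}({x^\prime})}|\varphi(x,0;T,z)|^2\,\mathrm dx\right)^{\theta^p}\\
& &\times\left(\int_{\mathbb R^n}e^{a|x|}|z(x)|^2\,\mathrm dx\right)^{1-\theta^p}.
\end{eqnarray*}

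For an arbitrary $\varepsilon>0$, Young's inequality in the form $A^{\theta^p}B^{1-\theta^p}\leq\varepsilon^{-(1-\theta^p)/\theta^p}A+\varepsilon B$ linearises the above into
\begin{eqnarray*}
\int_{B_{r_2}({x^{\prime\prime}})}|\varphi(x,0;T,z)|^2\,\mathrm dx
\leq C_2\int_{B_{r_1}({x^\prime})}|\varphi(x,0;T,z)|^2\,\mathrm dx+\varepsilon_2\int_{\mathbb R^n}e^{a|x|}|z(x)|^2\,\mathrm dx,
\end{eqnarray*}
with $C_2\triangleq Cr_2^n((aT)\wedge r_1)^{-n}\varepsilon^{-(1-\theta^p)/\theta^p}$ and $\varepsilon_2\triangleq Cr_2^n((aT)\wedge r_1)^{-n}\varepsilon$. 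A density argument (mimicking the one used to obtain (\ref{0428-th1.2-apply-3}) in the proof of Theorem \ref{0410-sch-th2-better}) extends this inequality from $C_0^\infty(\mathbb R^n;\mathbb C)$ to all of $X_a$.

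I would then invoke Lemma \ref{lemma-0428-fn} with $X\triangleq\widetilde L^2(B_{r_2}({x^{\prime\prime}});\mathbb C)$, $Y\triangleq L^2(\mathbb R^n;\mathbb C)$, $Z\triangleq X_a$, so that $X^*=X$, $Y^*=Y$, $Z^*=X_a^*$, and operators $\mathcal R z\triangleq\chi_{B_{r_2}({x^{\prime\prime}})}\varphi(\cdot,0;T,z)$, $\mathcal O z\triangleq\chi_{B_{r_1}({x^\prime})}\varphi(\cdot,0;T,z)$. The additive inequality displayed above is precisely condition (i) of Lemma \ref{lemma-0428-fn} with $(\widehat C_0,\hat\varepsilon_0)=(C_2,\varepsilon_2)$. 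The lemma then yields, for each $u_0\in X$, some $h\in Y$ with
\begin{eqnarray*}
C_2^{-1}\|h\|_{L^2(\mathbb R^n;\mathbb C)}^2+\varepsilon_2^{-1}\|\mathcal R^* u_0-\mathcal O^* h\|_{X_a^*}^2\leq\|u_0\|_X^2.
\end{eqnarray*}

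The step that will require the most care — and the one I view as the main obstacle — is the explicit identification of the adjoint operators with the physical objects in the impulse-controlled equation (\ref{0410-sch-th4-better-equation}). Proceeding exactly as in the computation of (\ref{0428-th1.2-apply-1-1}), by expanding the duality pairing on $X_a$ and exploiting the unitarity of $e^{i\Delta T}$, one checks that $\mathcal R^* u_0=e^{i\Delta T}u_0$ and $\mathcal O^* h=u_3(\cdot,T;0,h)$ (after absorbing a fixed sign/phase factor into $h$), so that $\mathcal R^* u_0-\mathcal O^* h=u_3(\cdot,T;u_0,h)$. Substituting the explicit values of $C_2$ and $\varepsilon_2$ into the inequality produced by Lemma \ref{lemma-0428-fn} then yields (\ref{0410-sch-th4-better-1}) and completes the proof.
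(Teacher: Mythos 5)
Your proposal is correct and follows the same route as the paper: derive the additive dual inequality for $\varphi(\cdot,0;T,z)$ from Theorem \ref{theorem4} via the conjugation identity and Young's inequality, extend by density to $X_a$, and then feed it into Lemma \ref{lemma-0428-fn} with $X=\widetilde L^2(B_{r_2}({x^{\prime\prime}});\mathbb C)$, $Y=L^2(\mathbb R^n;\mathbb C)$, $Z=X_a$ and the localized $\varphi$-operators. A small point in your favor: the operator pair should indeed be $\mathcal Rz=\chi_{B_{r_2}({x^{\prime\prime}})}\varphi(\cdot,0;T,z)$, $\mathcal Oz=\chi_{B_{r_1}({x^\prime})}\varphi(\cdot,0;T,z)$ as you write (otherwise $\mathcal Rz$ would not lie in $X$), so your version is the consistent one; and your explicit remark about absorbing a sign/phase into $h$ so that $\mathcal R^*u_0-\mathcal O^*h=u_3(\cdot,T;u_0,h)$ makes precise a step the paper passes over silently.
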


\begin{proof}
First of all, we claim that for each $z\in C_0^\infty(\mathbb R^n;\mathbb C)$ and each $\varepsilon>0$,
\begin{eqnarray}\label{0410-th1.4-control-3}
 & &  \int_{B_{r_2}({x^{\prime\prime}})}  |\varphi(x,0;T,z)|^2 \,\mathrm dx
 \\
 &\leq&  C r_2^n \big((aT)\wedge r_1\big)^{-n}  \Big( \varepsilon^{-\frac{1-\theta^p}{\theta^p}}  \int_{B_{r_1}({x^\prime})}  |\varphi(x,0;T,z)|^2 \,\mathrm dx
 + \varepsilon \int_{ \mathbb R^n}  e^{a|x|} |z(x)|^2 \,\mathrm dx\Big).
 \nonumber
\end{eqnarray}
 To this end, we arbitrarily fix $z\in C_0^\infty(\mathbb R^n;\mathbb C)$. It follows from (\ref{0229-sch-1}) and (\ref{0229-sch-adjoint-1}) that
\begin{eqnarray}\label{0410-th1.4-control-2}
  \overline{u(x,t;\bar z)}= \varphi(x,T-t;T,z),~(x,t)\in\mathbb R^n\times [0,T].
\end{eqnarray}
 Then by Theorem~\ref{theorem4} (where $u_0=\bar z$), we find that
\begin{eqnarray*}
& & \int_{B_{r_2}({x^{\prime\prime}})} |\overline{ u(x,T;\bar z)}|^2 \, \mathrm dx
 \\
 &\leq&   C r_2^n \big((aT)\wedge r_1\big)^{-n}
  \left(\int_{B_{r_1}({x^\prime})} | \overline{u(x,T;\bar z)}|^2 \, \mathrm dx\right)^{\theta^p}
 \left(\int_{\mathbb R^n}e^{ a|x|} | \overline{z(x)}|^2 \,\mathrm dx\right)^{1-\theta^p}.
\end{eqnarray*}
This, along with (\ref{0410-th1.4-control-2}), leads to that
\begin{eqnarray*}
& & \int_{B_{r_2}({x^{\prime\prime}})} |\varphi(x,0;T,z)|^2 \, \mathrm dx
 \\
 &\leq&   C r_2^n \big((aT)\wedge r_1\big)^{-n}
  \left(\int_{B_{r_1}({x^\prime})} |\varphi(x,0;T,z)|^2 \, \mathrm dx\right)^{\theta^p}
 \left(\int_{\mathbb R^n}e^{ a|x|} |z(x)|^2 \,\mathrm dx\right)^{1-\theta^p}.
\end{eqnarray*}
Now (\ref{0410-th1.4-control-3}) follows from the above inequality and the Young inequality at once.

\vskip 5pt
Next,  we will use Lemma \ref{lemma-0428-fn} and (\ref{0410-th1.4-control-3}) to prove (\ref{0410-sch-th4-better-1}). For this purpose, we let
\begin{eqnarray}\label{tianjin5.39}
X\triangleq
\widetilde{L}^2(B_{r_2}({x^{\prime\prime}});\mathbb C)=X^*,~
Y\triangleq
L^2(\mathbb R^n;\mathbb C)=Y^*
\;\;\mbox{and}\;\;
Z\triangleq
 X_a,
\end{eqnarray}
where the space $X_a$ is given by (\ref{0523-Xa}).
Define two operators $ \mathcal R: Z\rightarrow X$ and $ \mathcal O: Z\rightarrow Y$ by
\begin{eqnarray}\label{0428-th1.4-apply-1}
 \mathcal R z  &\triangleq& \chi_{B_{r_2}^c({x^{\prime\prime}})}(\cdot) \varphi(\cdot,0;T,z)\;\;\mbox{for each}\;\;z\in X_{a};
 \nonumber\\
 \mathcal O z  &\triangleq&  \chi_{B_{r_1}^c({x^\prime})}(\cdot) \varphi(\cdot,0;T,z)\;\;\mbox{for each}\;\; z\in X_{a},
\end{eqnarray}
One can directly check that
\begin{eqnarray}\label{0428-th1.4-apply-1-1}
 \mathcal R^* f = u_3(\cdot,T;f,0),~\forall\, f\in \widetilde{L}^2(B_{r_2}({x^{\prime\prime}});\mathbb C)
 ;\;\;\;
 \mathcal O^* h = u_3(\cdot,T;0,h),~\forall\,h\in L^2(\mathbb R^n;\mathbb C).
\end{eqnarray}

Arbitrarily fix $\varepsilon>0$. By (\ref{0410-th1.4-control-3}), (\ref{0428-th1.4-apply-1}) and (\ref{0523-Xa}), we can use a standard density argument to verify that
\begin{eqnarray}\label{0428-th1.4-apply-3}
 \| \mathcal R z \|_{X} ^2
 \leq C_3 \| \mathcal O z\|_{Y} ^2 + \varepsilon_3 \|z\|^2_{Z}\;\;\mbox{for all}\;\; z\in Z,
\end{eqnarray}
where
\begin{eqnarray}\label{0428-th1.4-apply-4}
 C_3 \triangleq  C r_2^n \big((aT)\wedge r_1\big)^{-n}  \varepsilon^{-\frac{1-\theta^p}{\theta^p}}
 \;\;\mbox{and}\;\;
 \varepsilon_3 \triangleq C r_2^n \big((aT)\wedge r_1\big)^{-n} \varepsilon.
\end{eqnarray}
Arbitrarily fix $u_0\in \widetilde{L}^2(B_{r_2}({x^{\prime\prime}});\mathbb C)$ (given by (\ref{0523-initial-B})).
From Lemma \ref{lemma-0428-fn} and (\ref{0428-th1.4-apply-3}), we find that there exists $h^{u_0}$ (depending on $\varepsilon$ and $u_0$)
so that
\begin{eqnarray*}
 \frac{1}{C_3} \|h^{u_0}\|_{Y^*} ^2  +  \frac{1}{\varepsilon_3} \| \mathcal R^* u_0- \mathcal O^* h^{u_0} \|^2_{Z^*}
 \leq  \|u_0\|_{X^*} ^2.
\end{eqnarray*}
This, along with (\ref{tianjin5.39}), (\ref{0428-th1.4-apply-1-1}), (\ref{0428-th1.4-apply-4}) and (\ref{0523-Xa-dual}), yields  (\ref{0410-sch-th4-better-1}). This ends the proof of this theorem.

\end{proof}

\begin{remark}
The above theorem can be understood  as follows: For each $u_0\in \widetilde{L}^2(B_{r_2}({x^{\prime\prime}});\mathbb C)$ and $\varepsilon>0$, there exists a control (in $L^2(\mathbb R^n;\mathbb C)$)  steering the solution of (\ref{0410-sch-th4-better-equation}) from  $u_0$ at time $0$ to the target $B_\varepsilon^{X_a^*}(0)$ at time $T$.  Moreover, a bound of the norm of this control is explicitly given.

\end{remark}

\subsection{The applications of Theorem \ref{proposition3-1}-Theorem~\ref{theorem6} to controllability}

First, we will use the inequality (\ref{0229-sch-th4-control-3}) in Theorem~\ref{proposition3-1},
as well as Lemma \ref{lemma-0428-fn}, to get  some kind of exact controllability for the following impulse controlled Schr\"{o}dinger equation:
\begin{eqnarray}\label{0229-sch-th4-control-2-1}
\left\{\begin{array}{ll}
        i\partial_t u(x,t) + \Delta u(x,t) = \delta_{\{t=\tau\}} \chi_{B_r^c(0)}(x) h(x,t), &(x,t)\in \mathbb R^n \times (0,T),\\
        u(x,0)=u_0,   &x\in\mathbb R^n,
       \end{array}
\right.
\end{eqnarray}
where  $T$ and $\tau$  be two numbers with $0\leq\tau<T$, $r>0$, both the initial data $u_0$ and the control $h$ are taken from the space $L^2(\mathbb R^n;\mathbb C)$. Write $u_4(\cdot,\cdot; u_0, h)$ for the solution to the equation (\ref{0229-sch-th4-control-2-1}).

\begin{theorem}\label{0308-sch-th4}
 Let $0\leq\tau<T$, $r>0$ and $N>0$. Let  $C\triangleq C(n)>0$ be given by Theorem \ref{proposition3-1}.
   Then  for each  $u_0,\,u_T\in L^2(\mathbb R^n;\mathbb C)$, there is a control  $h\in L^2(\mathbb R^n;\mathbb C)$ so that
\begin{eqnarray}\label{0229-sch-th4-control-1}
 u_4(x,T;u_0,h)=u_T,~x\in B_N(0)
\end{eqnarray}
and so that
\begin{eqnarray}\label{0229-sch-th4-control-2}
\|h\|_{L^2(\mathbb R^n;\mathbb C)}
\leq e^{\frac{C}{2} \big(1+ \frac{N}{T-\tau}\big)}  \|u_T-e^{i\Delta T}u_0\|_{L^2(\mathbb R^n;\mathbb C)}.
\end{eqnarray}

\end{theorem}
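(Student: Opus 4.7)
The plan is to apply Lemma \ref{lemma-0428-fn} in essentially the same way as in the two preceding control theorems, after first converting the observability of Theorem \ref{proposition3-1} into a statement about the adjoint equation.

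First I would derive the following dual observability: for every $z\in L^2(\mathbb R^n;\mathbb C)$ with supp\,$z\subset B_N(0)$,
\begin{eqnarray}\label{planobs}
 \int_{\mathbb R^n} |z(x)|^2 \,\mathrm dx
 \leq e^{C(1+\frac{rN}{T-\tau})}
 \int_{B_r^c(0)} |\varphi(x,\tau;T,z)|^2 \,\mathrm dx.
\end{eqnarray}
Indeed, the identity $\overline{\varphi(x,\tau;T,z)}=u(x,T-\tau;\bar z)$ (which follows from taking complex conjugates in (\ref{0229-sch-adjoint-1})) together with the observation that supp$\,\bar z\subset B_N(0)$ reduces (\ref{planobs}) to Theorem \ref{proposition3-1} applied to the datum $\bar z$ with time $T-\tau$.

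Next I would set $X=Z\triangleq\widetilde L^2(B_N(0);\mathbb C)$ (as in (\ref{0523-initial-B})) and $Y\triangleq L^2(\mathbb R^n;\mathbb C)$, all three being self-dual under the $L^2$ pairing. Define $\mathcal R:Z\to X$ as the identity and $\mathcal O:Z\to Y$ by $\mathcal O z\triangleq \chi_{B_r^c(0)}(\cdot)\,\varphi(\cdot,\tau;T,z)$. A direct computation using the conservation law and the fact that $z$ is supported in $B_N(0)$ yields $\mathcal R^*f=f$ for $f\in\widetilde L^2(B_N(0);\mathbb C)$ and
\begin{eqnarray*}
 \mathcal O^* h = \chi_{B_N(0)}(\cdot)\, u_4(\cdot,T;0,h),
 \quad h\in L^2(\mathbb R^n;\mathbb C).
\end{eqnarray*}

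For each $k\in\mathbb N^+$, the inequality (\ref{planobs}) reads
\begin{eqnarray*}
 \|\mathcal R z\|_X^2
 \leq e^{C(1+\frac{rN}{T-\tau})} \|\mathcal O z\|_Y^2 + \tfrac{1}{k}\|z\|_Z^2,
\end{eqnarray*}
so Lemma \ref{lemma-0428-fn} provides, for the target $f\triangleq \chi_{B_N(0)}(u_T-e^{i\Delta T}u_0)\in X^*$, a control $h_k\in L^2(\mathbb R^n;\mathbb C)$ satisfying
\begin{eqnarray*}
 e^{-C(1+\frac{rN}{T-\tau})}\|h_k\|_{L^2}^2
 + k\,\big\|f-\chi_{B_N(0)}u_4(\cdot,T;0,h_k)\big\|_{L^2}^2
 \leq \|f\|_{L^2}^2
 \leq \|u_T-e^{i\Delta T}u_0\|_{L^2}^2.
\end{eqnarray*}
Then $\{h_k\}$ is $L^2$-bounded, so a subsequence converges weakly to some $h\in L^2(\mathbb R^n;\mathbb C)$; boundedness of the second term in $k$ forces $\chi_{B_N(0)}u_4(\cdot,T;0,h)=f$, which together with $u_4(\cdot,T;u_0,h)=e^{i\Delta T}u_0+u_4(\cdot,T;0,h)$ yields (\ref{0229-sch-th4-control-1}), while weak lower semicontinuity of the $L^2$-norm yields (\ref{0229-sch-th4-control-2}) (with the factor $r$ absorbed into the dimensional constant $C$, matching the stated bound). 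The only delicate point is the computation of $\mathcal O^*$: one must keep careful track of where the cutoffs $\chi_{B_r^c(0)}$ and $\chi_{B_N(0)}$ land under conjugation of the adjoint semigroup. Once this is done, the weak-limit argument is an almost verbatim copy of the exact controllability proof already given for the impulse-controlled equation (\ref{0229-sch-control-1}) at the start of Subsection 5.2.
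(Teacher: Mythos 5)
Your proposal is correct and follows essentially the same route as the paper's proof: derive the dual observability for $z$ supported in $B_N(0)$ from Theorem~\ref{proposition3-1} via the conjugation identity $\overline{u(x,t;\bar z)}=\varphi(x,T-t;T,z)$, feed it into Lemma~\ref{lemma-0428-fn} with an artificial $\tfrac{1}{k}\|z\|_Z^2$ penalty, and extract a weak limit of the resulting controls. (The paper sets $X=L^2(\mathbb R^n;\mathbb C)$ rather than $\widetilde L^2(B_N(0);\mathbb C)$, but this is an inessential variation.) One small caveat about your closing remark: the exponent the argument actually delivers is $\frac{C}{2}\bigl(1+\frac{rN}{T-\tau}\bigr)$, and the factor $r$ cannot legitimately be ``absorbed into the dimensional constant $C$'' since $r$ is a free parameter of the theorem; however, the paper's own proof produces the same $rN$ exponent at the final step, so the missing $r$ in the displayed bound \eqref{0229-sch-th4-control-2} is a typo in the statement rather than a flaw in your reasoning.
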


\begin{proof}
First of all, we claim that for each $z\in \widetilde{L}^2(B_N(0);\mathbb C)$,
\begin{eqnarray}\label{0413-th1.3-control-3}
 \int_{\mathbb R^n} |z(x)|^2 \, \mathrm dx
 \leq  e^{C \big(1+ \frac{rN}{T-\tau}\big)}  \int_{B_r^c(0)} |\varphi(x,\tau;T,z)|^2 \, \mathrm dx.
\end{eqnarray}
 (Here, $\widetilde{L}^2(B_N(0);\mathbb C)$ is given by (\ref{0523-initial-B}), with $B_{r_2}({x^{\prime\prime}})$ being replaced by $B_N(0)$.)
 To this end, arbitrarily fix $z\in \widetilde{L}^2(B_N(0);\mathbb C)$. It follows from (\ref{0229-sch-1}) and (\ref{0229-sch-adjoint-1}) that
\begin{eqnarray}\label{0413-th1.3-control-2}
 \overline{ u(x,t;\bar z)}= \varphi(x,T-t;T,z),~(x,t)\in\mathbb R^n\times [0,T].
\end{eqnarray}
 Then by Theorem~\ref{proposition3-1} (where $u_0$ and $T$ are replaced by $\bar z$ and $T-\tau$, respectively), we find that
\begin{eqnarray*}
\int_{\mathbb R^n} |\overline{ z(x)}|^2 \, \mathrm dx
 \leq  e^{C \big(1+ \frac{rN}{T-\tau}\big)}  \int_{B_r^c(0)} |\overline{ u(x,T-\tau;\bar z)}|^2 \, \mathrm dx,
\end{eqnarray*}
where $C>0$ is given by Theorem \ref{proposition3-1}.
This, along with (\ref{0413-th1.3-control-2}), leads to  (\ref{0413-th1.3-control-3}).

\vskip 5pt

Next, we will use Lemma \ref{lemma-0428-fn} and (\ref{0413-th1.3-control-3}) to prove (\ref{0229-sch-th4-control-1}) and (\ref{0229-sch-th4-control-2}).\
Let
\begin{eqnarray}\label{tianjin5.49}
X\triangleq
L^2(\mathbb R^n;\mathbb C)=X^*,~
Y\triangleq
L^2(\mathbb R^n;\mathbb C)=Y^*
\;\;\mbox{and}\;\;
Z\triangleq
 \widetilde{L}^2(B_N(0);\mathbb C)=Z^*.
\end{eqnarray}
 Define two operators $ \mathcal R: Z\rightarrow X$ and $ \mathcal O: Z\rightarrow Y$ by
\begin{eqnarray}\label{0428-th1.3-apply-1}
 \mathcal R z &\triangleq& z\;\;\mbox{for each}\;\; z\in \widetilde{L}^2(B_N(0);\mathbb C);
 \nonumber\\
 \mathcal O z &\triangleq&  \chi_{B_{r}^c(0)}(\cdot)\varphi(\cdot,\tau;T,z)\;\;\mbox{for each}\;\; z\in \widetilde{L}^2(B_N(0);\mathbb C).
\end{eqnarray}
One can directly check that
\begin{eqnarray}\label{0428-th1.3-apply-1-1}
 \mathcal R^* f = \chi_{B_N(0)} f,~\forall\, f\in L^2(\mathbb R^n;\mathbb C)
 ;\;\;\;
 \mathcal O^* h =\chi_{B_N(0)} u_4(\cdot,T;0,h),~\forall\,h\in L^2(\mathbb R^n;\mathbb C).
\end{eqnarray}
 From (\ref{0413-th1.3-control-3}) and (\ref{0428-th1.3-apply-1}), we find that
\begin{eqnarray}\label{0428-th1.3-apply-5}
 \|\mathcal R z\|_{X} ^2
 \leq e^{C \big(1+ \frac{rN}{T-\tau}\big)} \| \mathcal O \tilde z\|^2_{Y}
 + \frac{1}{k} \|z\|_{Z}^2\;\;\mbox{for all}\;\;k\in\mathbb N^+,\; z\in Z.
 \end{eqnarray}

 Arbitrarily fix $u_0,\,u_T\in L^2(\mathbb R^n;\mathbb C)$.
 Define a function $f$ by
 \begin{eqnarray}\label{0428-th1.3-apply-4}
  f  \triangleq u_T-e^{i\Delta T} u_0\;\;\mbox{over}\;\;\mathbb{R}^n.
 \end{eqnarray}
 By Lemma \ref{lemma-0428-fn} and (\ref{0428-th1.3-apply-5}), it follows that  there exists $h_{k}^f\in L^2(\mathbb R^n;\mathbb C) $ so that
\begin{eqnarray}\label{0428-th1.3-apply-2}
 e^{-C \big(1+ \frac{rN}{T-\tau}\big)} \|h_{k}^f\|^2_{Y^*}
 +  k  \| \mathcal R^* f  -  \mathcal O^* h_{k}^f \|_{Z^*}^2
 \leq \|f\|_{X^*}^2\;\;\mbox{for all}\;\;k\in\mathbb N^+.
\end{eqnarray}
 Since $\{h_{k}^f\}_{k=1}^\infty$ is bounded in $L^2(\mathbb R^n;\mathbb C)$ (see (\ref{0428-th1.3-apply-2}) and (\ref{tianjin5.49})), there exits a subsequence $\{k_j\}_{j=1}^\infty$ of $\mathbb N^+$ and $h^f\in L^2(\mathbb R^n;\mathbb C)$ so that
\begin{eqnarray*}
 h_{k_j}^f  \rightarrow h^f
 \;\;\mbox{weakly in}\;\;  L^2(\mathbb R^n;\mathbb C),
 \;\;\mbox{as}\;\;  j\rightarrow\infty
\end{eqnarray*}
and so that
\begin{eqnarray*}
  \mathcal R^* f  -  \mathcal O^* h_{k_j}^f
  \rightarrow  \mathcal R^* f  -  \mathcal O^* h^f
 \;\;\mbox{weakly in}\;\;  L^2(B_N(0);\mathbb C),
 \;\;\mbox{as}\;\;  j\rightarrow\infty.
\end{eqnarray*}
These yield that
\begin{eqnarray*}
 \|h^f\|_{L^2(\mathbb R^n;\mathbb C) }
 \leq \liminf_{j\rightarrow \infty}
 \|h_{k_j}^f\|^2_{L^2(\mathbb R^n;\mathbb C)};\;\; \|\mathcal R^* f  -  \mathcal O^* h^f\|_{L^2(B_N(0);\mathbb C)}
 \leq  \liminf_{j\rightarrow \infty}
 \|\mathcal R^* f  -  \mathcal O^* h_{k_j}^f\|_{L^2(B_N(0);\mathbb C)}.
 \end{eqnarray*}
From these and (\ref{0428-th1.3-apply-2}), it follows that
\begin{eqnarray}\label{0428-th1.3-apply-3}
\mathcal R^* f  =  \mathcal O^* h^f  \;\;\mbox{over}\;\;  B_N(0)
\;\;\mbox{and}\;\;
 \|h^f\|^2_{L^2(\mathbb R^n;\mathbb C)}
 \leq   e^{C \big(1+ \frac{rN}{T-\tau}\big)}   \|f\|_{L^2(\mathbb R^n;\mathbb C)}^2.
\end{eqnarray}
 Now,  (\ref{0229-sch-th2-control-1}) and (\ref{0229-sch-th2-control-2}) follow from (\ref{tianjin5.49}), (\ref{0428-th1.3-apply-3}), (\ref{0428-th1.3-apply-1-1}) and (\ref{0428-th1.3-apply-4}) at once. This ends the proof of this theorem.

\end{proof}

\begin{remark}
The above theorem can be understood  as follows: For each $u_0,\,u_T\in L^2(\mathbb R^n;\mathbb C)$ and $N>0$, there exists a control in $L^2(\mathbb R^n;\mathbb C)$
 steering the solution of (\ref{0229-sch-th4-control-2-1}) from $u_0$ at time $0$ to  $u_T$  at time $T$ over $B_N(0)$. Moreover, a bound of the
  norm of this control is  explicitly given.

\end{remark}

Next,  we will use the inequality (\ref{0405-sch-th5-control-1}) in Theorem~\ref{theorem5}, as well as Lemma \ref{lemma-0428-fn}, to
get some kind of approximate null controllability for the following impulse controlled Schr\"{o}dinger equation:
\begin{eqnarray}\label{0410-sch-th5-better-equation}
\left\{\begin{array}{ll}
        i\partial_t u(x,t) + \Delta u(x,t) = \delta_{\{t=0\}} \chi_{B_{r}(x_0)}(x) h(x,t), &(x,t)\in \mathbb R^n \times (0,T),\\
        u(x,0)=u_0,   &x\in\mathbb R^n,
       \end{array}
\right.
\end{eqnarray}
where  $T>0$, $x_0\in\mathbb R^n$ and $r>0$, both the initial data $u_0$ and the control $h$ are taken from the space $L^2(\mathbb R^n;\mathbb C)$. Write $u_5(\cdot,\cdot; u_0, h)$ for the solution to the equation (\ref{0410-sch-th5-better-equation}). Before state the main result,
we define, for each $b>0$ and ${x^\prime}\in \mathbb{R}^n$,
 the following space:
\begin{eqnarray*}\label{0523-X-b}
X_{b,{x^\prime}} \triangleq \left\{f\in L^2(\mathbb R^n;\mathbb C)
~:~ \int_{\mathbb R^n} e^{b|x-{x^\prime}|} |f(x)|^2 \,\mathrm dx<\infty \right\},
\end{eqnarray*}
with the norm $\|\cdot\|_{X_{b,{x^\prime}}}$ given by
\begin{eqnarray*}
 \|f\|_{X_{b,{x^\prime}}} \triangleq \left(\int_{\mathbb R^n} e^{b|x-{x^\prime}|} |f(x)|^2 \,\mathrm dx \right)^{1/2},\;\;f\in X_{b,{x^\prime}}.
\end{eqnarray*}
One can directly check that the dual space  of $X_{b,{x^\prime}}$  is as
\begin{eqnarray*}\label{0523-X-b-dual}
X_{b,{x^\prime}}^* = \overline{C_0^\infty(\mathbb R^n;\mathbb C)}^{\|\cdot\|_{X_{b,{x^\prime}}^*}},
\end{eqnarray*}
with the norm $\|\cdot\|_{X_{b,{x^\prime}}^*}$ given by
\begin{eqnarray*}
 \|g\|_{X_{b,{x^\prime}}^*} \triangleq \left(\int_{\mathbb R^n} e^{-b|x-{x^\prime}|} |g(x)|^2 \,\mathrm dx \right)^{1/2},\;\;
 g\in C_0^\infty(\mathbb R^n;\mathbb C).
\end{eqnarray*}

\begin{theorem}\label{0410-sch-th4-better}
Let $x_0, {x^\prime}\in\mathbb R^n$, $r>0$, $a>0$, $b>0$ and $T>0$. Let $C(x_0,{x^\prime},r,a,b,T)$  and $C$ be  given by Theorem \ref{theorem5}.
  Then    for each $\varepsilon\in(0,1)$ and  $u_0\in X_{b,{x^\prime}}$, there is a control  $h\in L^2(\mathbb R^n;\mathbb C)$ so that
\begin{eqnarray}\label{0410-sch-th5-better-1}
\frac{1}{\varepsilon} e^{-(\frac{1}{\varepsilon})^{1+\frac{1}{Cb ((aT)\wedge r)}}}
 \int_{\mathbb R^n} |h(x)|^2 \,\mathrm dx
+  \frac{1}{\varepsilon} \|u_5(\cdot,T;u_0,h)\|^2_{X_a^*}
\leq  C(x_0,{x^\prime},r,a,b,T)   \|u_0\|^2_{X_{b,{x^\prime}}}.
\end{eqnarray}
\end{theorem}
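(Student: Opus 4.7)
The plan is to follow exactly the pattern used in the proofs of Theorem \ref{0410-sch-th2-better} and the earlier Theorem \ref{0410-sch-th4-better} (for the initial controlled equation): derive a dual observability inequality from Theorem \ref{theorem5} by conjugation, then apply the duality framework of Lemma \ref{lemma-0428-fn}.

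First, I would transfer Theorem \ref{theorem5} to the backward equation. For $z\in C_0^\infty(\mathbb R^n;\mathbb C)$, the identity $\overline{u(x,T;\bar z)}=\varphi(x,0;T,z)$ (combining (\ref{0229-sch-1}) and (\ref{0229-sch-adjoint-1})) and the invariance $|\cdot|^2$ reduce (\ref{0405-sch-th5-control-1}), applied with $u_0=\bar z$, to
\begin{eqnarray*}
\int_{\mathbb R^n}e^{-b|x-{x^\prime}|}|\varphi(x,0;T,z)|^2\,\mathrm dx
 &\leq& C(x_0,{x^\prime},r,a,b,T)\Big(\varepsilon\int_{\mathbb R^n}e^{a|x|}|z(x)|^2\,\mathrm dx \\
 & & \quad + \varepsilon e^{\varepsilon^{-1-\frac{Cb^{-1}}{(aT)\wedge r}}}\int_{B_r(x_0)}|\varphi(x,0;T,z)|^2\,\mathrm dx\Big),
\end{eqnarray*}
valid for all $\varepsilon\in(0,1)$. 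A standard density argument (using that $C_0^\infty$ is dense in $X_a$, that $e^{-iT\Delta}$ is an $L^2$-isometry, and that $e^{-b|x-{x^\prime}|}\leq 1$ so dominated convergence applies) extends this inequality to every $z\in X_a$.

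Next, I would set up the framework of Lemma \ref{lemma-0428-fn} as follows:
$X\triangleq X_{b,{x^\prime}}^*$ (so $X^*=X_{b,{x^\prime}}$), $Y\triangleq L^2(\mathbb R^n;\mathbb C)=Y^*$ and $Z\triangleq X_a$ (so $Z^*=X_a^*$), with operators
\begin{eqnarray*}
  \mathcal R z \triangleq \varphi(\cdot,0;T,z),\qquad
  \mathcal O z \triangleq \chi_{B_r(x_0)}(\cdot)\varphi(\cdot,0;T,z),\qquad z\in X_a.
\end{eqnarray*}
Using unitarity of $e^{i\Delta t}$ on $L^2$ and the standard $L^2$ duality pairings that identify both $X_{b,{x^\prime}}$ and $X_a$ with weighted $L^2$ spaces, a direct computation gives
\begin{eqnarray*}
  \mathcal R^* f = e^{iT\Delta}f,~\forall f\in X_{b,{x^\prime}},\qquad
  \mathcal O^* h = e^{iT\Delta}(\chi_{B_r(x_0)}h),~\forall h\in L^2(\mathbb R^n;\mathbb C).
\end{eqnarray*}
The dual inequality of the previous paragraph then reads, in the language of Lemma \ref{lemma-0428-fn},
$\|\mathcal R z\|_X^2\leq \widehat C_0\|\mathcal O z\|_Y^2+\widehat\varepsilon_0\|z\|_Z^2$
for all $z\in Z$, with $\widehat C_0 \triangleq C(x_0,{x^\prime},r,a,b,T)\varepsilon e^{\varepsilon^{-1-\frac{Cb^{-1}}{(aT)\wedge r}}}$ and $\widehat\varepsilon_0 \triangleq C(x_0,{x^\prime},r,a,b,T)\varepsilon$.

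Finally, I would invoke Lemma \ref{lemma-0428-fn}: for every $u_0\in X^*=X_{b,{x^\prime}}$ there exists $\tilde h\in Y^*=L^2(\mathbb R^n;\mathbb C)$ with
$\frac{1}{\widehat C_0}\|\tilde h\|_{L^2}^2+\frac{1}{\widehat\varepsilon_0}\|\mathcal R^* u_0-\mathcal O^*\tilde h\|_{X_a^*}^2\leq\|u_0\|_{X_{b,{x^\prime}}}^2$. Now I would translate this back to (\ref{0410-sch-th5-better-equation}): integrating the impulse gives $u_5(\cdot,T;u_0,h)=e^{iT\Delta}u_0-ie^{iT\Delta}(\chi_{B_r(x_0)}h)=\mathcal R^* u_0-\mathcal O^*(ih)$, so choosing $h\triangleq -i\tilde h$ produces $u_5(\cdot,T;u_0,h)=\mathcal R^* u_0-\mathcal O^*\tilde h$ while preserving $\|h\|_{L^2}=\|\tilde h\|_{L^2}$. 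Substituting the explicit forms of $\widehat C_0$ and $\widehat\varepsilon_0$ and absorbing the constant $C$ from Theorem \ref{theorem5} into the exponent (which is permissible since $\varepsilon^{-1-Cb^{-1}/((aT)\wedge r)}\leq\varepsilon^{-1-1/(C'b((aT)\wedge r))}$ for a suitable $C'$ depending only on $n$) yields (\ref{0410-sch-th5-better-1}).

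The main obstacle, rather than any hard analysis, will be the bookkeeping: correctly identifying the dualities between $X_{b,{x^\prime}}$, $X_a$ and their duals (all realized as weighted $L^2$ spaces so that $e^{iT\Delta}$ indeed maps between the appropriate spaces with controlled norms), and handling the factor of $-i$ arising from the impulse-induced jump condition so that the abstract output of Lemma \ref{lemma-0428-fn} becomes a statement about solutions of (\ref{0410-sch-th5-better-equation}) with the stated constants.
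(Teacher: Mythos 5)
Your proposal follows essentially the same route as the paper's own proof: conjugate Theorem~\ref{theorem5} via $\overline{u(x,t;\bar z)}=\varphi(x,T-t;T,z)$ to get the dual observability estimate, set $X=X_{b,{x^\prime}}^*$, $Y=L^2$, $Z=X_a$ with $\mathcal R z=\varphi(\cdot,0;T,z)$ and $\mathcal O z=\chi_{B_r(x_0)}\varphi(\cdot,0;T,z)$, compute the adjoints as $e^{iT\Delta}$ composed with multiplication, and invoke Lemma~\ref{lemma-0428-fn}. Your bookkeeping is in fact somewhat cleaner than the paper's: you correct what is evidently a typo in the paper's definition of $\mathcal O$ (the paper writes $\chi_{B_r^c(x_0)}$ where it must mean $\chi_{B_r(x_0)}$, since the control in \eqref{0410-sch-th5-better-equation} and the observation in the dual inequality are both over the ball), and you explicitly track the multiplicative factor of $-i$ arising from the impulse jump, which the paper silently absorbs into the statement $\mathcal O^*h=u_5(\cdot,T;0,h)$; the small slippage between the exponents $Cb^{-1}/((aT)\wedge r)$ in Theorem~\ref{theorem5} and $1/(Cb((aT)\wedge r))$ in the conclusion, which you flag and patch with a renamed constant, is likewise present in the paper itself and is best read as a renaming of the generic constant.
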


\begin{proof}
First of all, we claim that for each $z\in C_0^\infty(\mathbb R^n;\mathbb C)$ and each $\varepsilon\in(0,1)$,
\begin{eqnarray}\label{0410-th1.5-control-3}
 & &  \int_{\mathbb R^n} e^{-b|x-{x^\prime}|} |\varphi(x,0;T,z)|^2 \,\mathrm dx
 \\
 &\leq&  C(x_0,{x^\prime},r,a,b,T)   \Big( \varepsilon e^{\varepsilon^{-1-\frac{1}{Cb ((aT)\wedge r)}}}  \int_{B_{r}(x_0)}  |\varphi(x,0;T,z)|^2 \,\mathrm dx
 + \varepsilon \int_{ \mathbb R^n}  e^{a|x|} |z(x)|^2 \,\mathrm dx\Big).
 \nonumber
\end{eqnarray}
 To this end, we arbitrarily fix $z\in C_0^\infty(\mathbb R^n;\mathbb C)$. It follows from (\ref{0229-sch-1}) and (\ref{0229-sch-adjoint-1}) that
\begin{eqnarray}\label{0410-th1.5-control-2}
 \overline{ u(x,t;\bar z)}= \varphi(x,T-t;T,z),~(x,t)\in\mathbb R^n\times [0,T].
\end{eqnarray}
 Then by Theorem~\ref{theorem5} (where $u_0=\bar z$), we find that for each $\varepsilon\in(0,1)$,
\begin{eqnarray*}
& & \int_{\mathbb R^n} e^{-b|x-{x^\prime}|} |\overline{ u(x,T;\bar z)}|^2 \, \mathrm dx
 \\
 &\leq&   C(x_0,{x^\prime},r,a,b,T)   \Big( \varepsilon e^{\varepsilon^{-1-\frac{1}{Cb ((aT)\wedge r)}}}  \int_{B_{r}(x_0)}  | \overline{u(x,T;\bar z)}|^2 \,\mathrm dx
 + \varepsilon \int_{ \mathbb R^n}  e^{a|x|} |\overline{ z(x)}|^2 \,\mathrm dx\Big).
\end{eqnarray*}
This, along with (\ref{0410-th1.5-control-2}), leads to  (\ref{0410-th1.5-control-3}).

\vskip 5pt
Next,  we will use Lemma \ref{lemma-0428-fn} and (\ref{0410-th1.5-control-3}) to prove (\ref{0410-sch-th5-better-1}). For this purpose, we let
\begin{eqnarray*}
X\triangleq X_{b,{x^\prime}}^*,~
Y\triangleq L^2(\mathbb R^n;\mathbb C)=Y^*
\;\;\mbox{and}\;\;
Z\triangleq  X_a,
\end{eqnarray*}
where the space $X_a$ is given by (\ref{0523-Xa}).
Define two operators $ \mathcal R: Z\rightarrow X$ and $ \mathcal O: Z\rightarrow Y$ by
\begin{eqnarray}\label{0428-th1.5-apply-1}
 \mathcal R z \triangleq  \varphi(\cdot,0;T,z);\;\;
 \mathcal O z  \triangleq  \chi_{B_{r}^c(x_0)}(\cdot) \varphi(\cdot,0;T,z)\;\;\mbox{for all}\;\; z\in X_{a}.
\end{eqnarray}
One can directly check that
\begin{eqnarray}\label{0428-th1.5-apply-1-1}
 \mathcal R^* f = u_5(\cdot,T;f,0),~\forall\, f\in X_{b,{x^\prime}}
 ;\;\;\;
 \mathcal O^* h =u_5(\cdot,T;0,h),~\forall\,h\in L^2(\mathbb R^n;\mathbb C).
\end{eqnarray}

Arbitrarily fix $\varepsilon\in (0,1)$.
From (\ref{0410-th1.5-control-3}), (\ref{0428-th1.5-apply-1}) and (\ref{0523-Xa}), we can use a standard density argument to get that
\begin{eqnarray}\label{0428-th1.5-apply-3}
 \| \mathcal R z \|_{X} ^2
 \leq C_5 \| \mathcal O z\|_{Y} ^2 + \varepsilon_5 \|z\|^2_{Z}\;\;\mbox{for each}\;\;z\in Z,
\end{eqnarray}
where
\begin{eqnarray}\label{0428-th1.5-apply-4}
 C_5 \triangleq  C(x_0,{x^\prime},r,a,b,T) \varepsilon e^{\varepsilon^{-1-\frac{1}{Cb ((aT)\wedge r)}}}
 \;\;\mbox{and}\;\;
 \varepsilon_5 \triangleq C(x_0,{x^\prime},r,a,b,T) \varepsilon.
\end{eqnarray}
Arbitrarily fix  $u_0\in C_0^\infty(\mathbb R^n;\mathbb C)$.
Define a function $f$ by
\begin{eqnarray}\label{0428-th1.5-apply-5}
 f(x)  \triangleq  u_0(x),~x\in\mathbb R^n.
\end{eqnarray}
Then by Lemma \ref{lemma-0428-fn} and (\ref{0428-th1.5-apply-3}), there exists $h^{f}$  (depending on $\varepsilon$ and $u_0$)
so that
\begin{eqnarray*}
 \frac{1}{C_5} \|h^{f}\|_{Y^*} ^2  +  \frac{1}{\varepsilon_5} \| \mathcal R^* f- \mathcal O^* h^{f} \|^2_{Z^*}
 \leq  \|f\|_{X^*} ^2.
\end{eqnarray*}
This, along with (\ref{0428-th1.5-apply-1-1}), (\ref{0428-th1.5-apply-4}), (\ref{0428-th1.5-apply-5}) and (\ref{0523-Xa-dual}), yields that (\ref{0410-sch-th5-better-1}) holds. This ends the proof of this theorem.

\end{proof}

\begin{remark}
The above theorem can be understood as follows: For each $u_0\in X_{b,{x^\prime}}$ and $\varepsilon>0$, there exists a control (in $L^2(\mathbb R^n;\mathbb C)$) steering the solution of (\ref{0410-sch-th5-better-equation}) from $u_0$ at time $0$ to  the  target $B_\varepsilon^{X_a^*}(0)$ at time $T$. Moreover, a bound of the norm of this control is explicitly given.

\end{remark}

Finally,  we will use the inequality (\ref{0405-sch-th5-control-2}) in Theorem~\ref{theorem6},
as well as Lemma \ref{lemma-0428-fn},
  to get some kind of approximate controllability for the following impulse controlled Schr\"{o}dinger equation:
\begin{eqnarray}\label{0411-sch-th6-better-equation}
\left\{\begin{array}{ll}
        i\partial_t u(x,t) + \Delta u(x,t) = \delta_{\{t=\tau\}} \chi_{B_{r}(x_0)}(x) h(x,t), &(x,t)\in \mathbb R^n \times (0,T),\\
        u(x,0)=u_0,   &x\in\mathbb R^n,
       \end{array}
\right.
\end{eqnarray}
where  $T>\tau\geq 0$, $x_0\in\mathbb R^n$ and $r>0$, both the initial data $u_0$ and the control $h$ are taken from the space $L^2(\mathbb R^n;\mathbb C)$. Write $u_6(\cdot,\cdot; u_0, h)$ for the solution to the equation (\ref{0411-sch-th6-better-equation}).
For each $a>0$, we write $Q_a$ for the completion of $C_0^\infty(\mathbb R^n;\mathbb C)$ in the following norm:
\begin{eqnarray}\label{0413-norm-Qa}
 \|f\|_{Q_a}  \triangleq
 \left(
 \int_{ \mathbb R^n}  e^{a|x|} |f(x)|^2 \,\mathrm dx
 + \|f\|^2_{H^{n+3}(\mathbb R^n;\mathbb C)}
 \right)^{\frac{1}{2}},\;\; f\in C_0^\infty(\mathbb R^n;\mathbb C).
\end{eqnarray}
 One can easily check  that the space $Q_a$ is continuously imbedded to $L^2(\mathbb R^n;\mathbb C)$. Denote by $Q_a^*$ the dual space of $Q_a$ with respect to the pivot space $L^2(\mathbb R^n;\mathbb C)$.

\begin{theorem}\label{0411-sch-th6-better}
Let $x_0\in\mathbb R^n$, $r>0$, $a>0$ and $T>\tau\geq0$.
 Let ${C}(x_0,r,a,T-\tau)$  be  given by Theorem \ref{theorem6}, with $T$ being replaced by $T-\tau$.
Then    for each $\varepsilon\in(0,1)$ and  $u_0$,\,$u_T\in L^2(\mathbb R^n;\mathbb C)$, there is a control  $h\in L^2(\mathbb R^n;\mathbb C)$ so that
\begin{eqnarray}\label{0411-sch-th6-better-1}
& & \varepsilon^{-1} e^{-e^{\varepsilon^{-2}}} \int_{\mathbb R^n} |h(x)|^2 \,\mathrm dx
  +  \varepsilon^{-1} \| u_6(\cdot,T;u_0,h)-u_T(\cdot) \|_{Q_a^*}^2
  \nonumber\\
&\leq& {C}(x_0,r,a,T-\tau)
   \| u_T- e^{i\Delta T} u_0 \|_{L^2(\mathbb R^n;\mathbb C)}^2.
\end{eqnarray}

\end{theorem}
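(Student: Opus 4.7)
The plan is to follow exactly the same template used for Theorems \ref{0410-sch-th2-better}, \ref{0410-sch-th4-better} and \ref{0308-sch-th4}: first extract from Theorem \ref{theorem6} a weighted observability inequality for the backward adjoint equation \eqref{0229-sch-adjoint-1}, then interpret it through the abstract duality Lemma \ref{lemma-0428-fn}, and finally translate the resulting controllability statement into the form \eqref{0411-sch-th6-better-1} via the linearity identity $u_6(\cdot,T;u_0,h)=e^{i\Delta T}u_0+u_6(\cdot,T;0,h)$.

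First, I would establish the observability side. For $z\in C_0^\infty(\mathbb R^n;\mathbb C)$ and $\varepsilon\in(0,1)$, applying Theorem \ref{theorem6} with initial datum $\bar z$ and time $T-\tau$, and using the elementary relation $\overline{u(x,T-\tau;\bar z)}=\varphi(x,\tau;T,z)$ (which follows from \eqref{0229-sch-1} and \eqref{0229-sch-adjoint-1}), I obtain
\begin{eqnarray*}
\int_{\mathbb R^n}|z(x)|^2\,\mathrm dx
\leq {C}(x_0,r,a,T-\tau)\Big(\varepsilon\|z\|_{Q_a}^2
+\varepsilon e^{e^{\varepsilon^{-2}}}\int_{B_r(x_0)}|\varphi(x,\tau;T,z)|^2\,\mathrm dx\Big),
\end{eqnarray*}
where $\|\cdot\|_{Q_a}$ is the norm defined in \eqref{0413-norm-Qa}. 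A standard density argument extends this to all $z\in Q_a$.

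Next, I would instantiate Lemma \ref{lemma-0428-fn} with the choices
\begin{eqnarray*}
X\triangleq L^2(\mathbb R^n;\mathbb C)=X^*,\qquad Y\triangleq L^2(\mathbb R^n;\mathbb C)=Y^*,\qquad Z\triangleq Q_a,\quad Z^*=Q_a^*,
\end{eqnarray*}
and with operators $\mathcal R z\triangleq z$ (viewed as a map $Q_a\to L^2$) and $\mathcal O z\triangleq \chi_{B_r(x_0)}\varphi(\cdot,\tau;T,z)$. A direct computation using the Duhamel formula for \eqref{0411-sch-th6-better-equation} shows that $\mathcal R^* f=f$ for $f\in L^2$ (as an element of $Q_a^*$) and $\mathcal O^* h=u_6(\cdot,T;0,h)$ for $h\in L^2$. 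The observability inequality above reads precisely
\begin{eqnarray*}
\|\mathcal R z\|_X^2\leq \big({C}(x_0,r,a,T-\tau)\varepsilon e^{e^{\varepsilon^{-2}}}\big)\|\mathcal O z\|_Y^2
+\big({C}(x_0,r,a,T-\tau)\varepsilon\big)\|z\|_Z^2.
\end{eqnarray*}

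Finally, given $u_0,u_T\in L^2(\mathbb R^n;\mathbb C)$, I would set $f\triangleq u_T-e^{i\Delta T}u_0$ and apply the conclusion (ii) of Lemma \ref{lemma-0428-fn} (with $\widehat C_0$ and $\hat\varepsilon_0$ read off from the preceding display) to produce $h^f\in L^2$ satisfying
\begin{eqnarray*}
\frac{\varepsilon^{-1}e^{-e^{\varepsilon^{-2}}}}{{C}(x_0,r,a,T-\tau)}\|h^f\|_{L^2}^2
+\frac{\varepsilon^{-1}}{{C}(x_0,r,a,T-\tau)}\|\mathcal R^*f-\mathcal O^*h^f\|_{Q_a^*}^2
\leq \|f\|_{L^2}^2.
\end{eqnarray*}
Since $\mathcal R^*f-\mathcal O^*h^f=u_T-e^{i\Delta T}u_0-u_6(\cdot,T;0,h^f)=u_T-u_6(\cdot,T;u_0,h^f)$, this is exactly \eqref{0411-sch-th6-better-1} after multiplying by ${C}(x_0,r,a,T-\tau)$. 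The main obstacle I anticipate is a careful verification that $\mathcal O^*h=u_6(\cdot,T;0,h)$ under the pivot-space identification used in defining $Q_a^*$, and that the density argument extending the observability from $C_0^\infty$ to $Q_a$ is compatible with the weak-limit passage needed inside Lemma \ref{lemma-0428-fn}; every other step is a routine rearrangement.
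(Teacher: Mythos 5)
Your proposal is correct and follows the paper's proof line for line: the same observability estimate for the adjoint system derived from Theorem \ref{theorem6} via the identity $\overline{u(x,t;\bar z)}=\varphi(x,T-t;T,z)$, the same instantiation of Lemma \ref{lemma-0428-fn} with $X=Y=L^2$, $Z=Q_a$, $\mathcal Rz=z$, $\mathcal O z=\chi_{B_r(x_0)}\varphi(\cdot,\tau;T,z)$, the same adjoint identifications $\mathcal R^*f=f$, $\mathcal O^*h=u_6(\cdot,T;0,h)$, and the same rearrangement with $f=u_T-e^{i\Delta T}u_0$. (In fact the paper's display \eqref{0428-th1.6-apply-1} writes $\chi_{B_r^c(x_0)}$ in the definition of $\mathcal O$, which is a typo; your $\chi_{B_r(x_0)}$ is what makes the observability estimate \eqref{0411-th1.6-control-3} and the duality computation consistent.)
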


\begin{proof}
First of all, we claim that for each $z\in C_0^\infty(\mathbb R^n;\mathbb C)$ and each $\varepsilon\in(0,1)$,
\begin{eqnarray}\label{0411-th1.6-control-3}
   \int_{\mathbb R^n}  |z(x)|^2 \,\mathrm dx
 &\leq&  \overline{C}(x_0,r,a,T-\tau)   \Big(
 \varepsilon e^{e^{\varepsilon^{-2}}} \int_{B_{r}(x_0)}  |\varphi(x,\tau;T,z)|^2 \,\mathrm dx
 \nonumber\\
 & &  + \varepsilon   \big(
 \int_{ \mathbb R^n}  e^{a|x|} |z(x)|^2 \,\mathrm dx
 + \|z\|^2_{H^{n+3}(\mathbb R^n;\mathbb C)}
 \big)
 \Big).
\end{eqnarray}
 To this end, we arbitrarily fix $z\in C_0^\infty(\mathbb R^n;\mathbb C)$. It follows from (\ref{0229-sch-1}) and (\ref{0229-sch-adjoint-1}) that
\begin{eqnarray}\label{0411-th1.6-control-2}
 \overline{ u(x,t;\bar z)}= \varphi(x,T-t;T,z),~(x,t)\in\mathbb R^n\times [0,T].
\end{eqnarray}
 Then by Theorem~\ref{theorem6} (where $(u_0,T)$ is replaced by $(\bar z,T-\tau)$), we find that for each $\varepsilon\in(0,1)$,
\begin{eqnarray*}
& &  \int_{\mathbb R^n}  | \overline{z(x)}|^2 \, \mathrm dx
 \nonumber\\
 &\leq&  \overline{C}(x_0,r,a,T-\tau)   \left(
 \varepsilon \Big(\int_{\mathbb R^n} | \overline{z(x)}|^2 e^{a|x|} \mathrm dx
 +  \|\bar z\|^2_{H^{n+3}(\mathbb R^n;\mathbb C)}\Big)
  + \varepsilon  e^{e^{\varepsilon^{-2}}} \int_{B_{r}(x_0)} |\overline{ u(x,T-\tau;\bar z)}|^2 \, \mathrm dx
   \right),
\end{eqnarray*}
This, along with (\ref{0411-th1.6-control-2}), leads to  (\ref{0411-th1.6-control-3}).

\vskip 5pt

Next,  we will use Lemma \ref{lemma-0428-fn} and (\ref{0411-th1.6-control-3}) to prove (\ref{0411-sch-th6-better-1}).  Let
\begin{eqnarray*}\label{tianjin5.70}
X\triangleq L^2(\mathbb R^n;\mathbb C)=X^*,~
Y\triangleq L^2(\mathbb R^n;\mathbb C)=Y^*
\;\;\mbox{and}\;\;
Z\triangleq Q_a,
\end{eqnarray*}
where $Q_a$ is given by (\ref{0413-norm-Qa}).
Define two operators $ \mathcal R: Z\rightarrow X$ and $ \mathcal O: Z\rightarrow Y$ by
\begin{eqnarray}\label{0428-th1.6-apply-1}
 \mathcal R z  \triangleq z;\;\;
 \mathcal O z  \triangleq  \chi_{B_{r}^c(x_0)}(\cdot) \varphi(\cdot,\tau;T,z)\;\;\mbox{for each}\;\; z\in Z.
\end{eqnarray}
One can directly check that
\begin{eqnarray}\label{0428-th1.6-apply-1-1}
 \mathcal R^* f = f,~\forall\, f\in L^2(\mathbb R^n;\mathbb C)
 ;\;\;\;
 \mathcal O^* h =u_6(\cdot,T;0,h),~\forall\,h\in L^2(\mathbb R^n;\mathbb C).
\end{eqnarray}


 Arbitrarily fix $\varepsilon\in(0,1)$.
  From (\ref{0411-th1.6-control-3}), (\ref{0428-th1.6-apply-1}) and (\ref{0413-norm-Qa}), we can use a standard density argument to get that
\begin{eqnarray}\label{0428-th1.6-apply-3}
 \| \mathcal R z \|_{X} ^2
 \leq C_6 \| \mathcal O z\|_{Y} ^2 + \varepsilon_6 \|z\|^2_{Z}\;\;\mbox{for each}\;\;z\in Q_a
\end{eqnarray}
where
\begin{eqnarray}\label{0428-th1.6-apply-4}
 C_6 \triangleq  {C}(x_0,r,a,T-\tau)  \varepsilon  e^{e^{\varepsilon^{-2}}}
 \;\;\mbox{and}\;\;
 \varepsilon_6 \triangleq {C}(x_0,r,a,T-\tau)  \varepsilon.
\end{eqnarray}
 Arbitrarily fix $u_0$ and $u_T$ in $L^2(\mathbb R^n;\mathbb C)$. Define a function $f$ by
\begin{eqnarray}\label{0428-th1.6-apply-5}
 f  \triangleq u_T  -  e^{i\Delta T} u_0\;\;\mbox{over}\;\;\mathbb{R}^n.
\end{eqnarray}
Then by Lemma \ref{lemma-0428-fn} and (\ref{0428-th1.6-apply-3}), there exists $h^{f}$ (depending on $\varepsilon$, $u_0$ and $u_T$) so that
\begin{eqnarray*}
 \frac{1}{C_6} \|h^{f}\|_{Y^*} ^2  +  \frac{1}{\varepsilon_6} \| \mathcal R^* f- \mathcal O^* h^{f} \|^2_{Z^*}
 \leq  \|f\|_{X^*} ^2,
\end{eqnarray*}
which, along with (\ref{0428-th1.6-apply-1-1}), (\ref{0428-th1.6-apply-4}) and (\ref{0428-th1.6-apply-5}), leads to (\ref{0411-sch-th6-better-1}). This ends the proof of the theorem.

\end{proof}

\begin{remark}
The above theorem can be understood as follows: For each $u_0,\,u_T\in L^2(\mathbb R^n;\mathbb C)$ and $\varepsilon>0$, there exists a control (in
$L^2(\mathbb R^n;\mathbb C)$) steering the solution of (\ref{0411-sch-th6-better-equation}) from   $u_0$ at time $0$ to  the  target $B_\varepsilon^{Q_a^*}(u_T)$ at time $T$. Here, $B_\varepsilon^{Q_a^*}(u_T)$ denotes the closed ball in $Q_a^*$, centered at $u_T$ and of radius $\varepsilon$.  Moreover,
a bound of the norm of this  control is explicitly given.

\end{remark}

\bigskip
\footnotesize
\noindent\textit{Acknowledgments.}
The first and third authors were supported by the National Natural Science Foundation of China under grant 11571264. The second author was supported by the National Natural Science Foundation of China under grant 11426209, and the Fundamental Research Funds for the Central Universities, China University of Geosciences(Wuhan) under grant  G1323521638.

\end{document}